\date{}
\newtheorem{thm}{Theorem}[section]
\newtheorem*{thm*}{Theorem}
\newtheorem{defn}[thm]{Definition}
\newtheorem{Notation}[thm]{Notation}
\newtheorem{choices}[thm]{Data}
 \newtheorem{rem}[thm]{Remark}
 \newtheorem{prop}[thm]{Proposition}
 \newtheorem{lem}[thm]{Lemma}
 \newtheorem{cor}[thm]{Corollary}
\newcommand{\fonc}[5]{
 \begin{array}{cccc}
 #1: & #2 & \longrightarrow & #3\\
     & #4 & \longmapsto & #5
 \end{array}
}
\newcommand{\appl}[4]{
 \begin{array}{cccc}
   #1 & \longrightarrow & #2\\
   #3 & \longmapsto & #4
 \end{array}
}
\def\wdar[#1]{\ar@<4pt>[#1]\ar@<-4pt>[#1]}
\def\tar[#1]{\ar@<0pt>[#1]\ar@<4pt>[#1]\ar@<-4pt>[#1]}
\begin{document}

\title[The Weil-étale fundamental group II]{The Weil-étale fundamental group of a number field II}
\author{Baptiste Morin}
\maketitle

\begin{abstract}
We define the fundamental group underlying to Lichtenbaum's Weil-\'etale cohomology for number rings. To this aim, we define the Weil-\'etale topos as a refinement of the Weil-\'etale sites introduced in \cite{Lichtenbaum}. We show that the (small) Weil-\'etale topos of a smooth projective curve defined in this paper is equivalent to the natural definition given in \cite{Lichtenbaum-finite-field}. Then we compute the Weil-\'etale fundamental group of an open subscheme of the spectrum of a number ring. Our fundamental group is a projective system of locally compact topological groups, which represents first degree cohomology with coefficients in locally compact abelian groups. We apply this result to compute the Weil-étale cohomology in low degrees and to prove that the Weil-étale topos of a number ring satisfies the expected properties of the conjectural Lichtenbaum topos.
\end{abstract}

\footnotetext{ \emph{2000 Mathematics subject classification} :
14F20, 14F35, 11S40. \emph{Keywords} : étale
cohomology, Weil-étale cohomology, topos, fundamental group, Dedekind zeta function.}

\section{Introduction}
Lichtenbaum has defined in \cite{Lichtenbaum} the Weil-étale cohomology of a number ring $X=Spec(\mathcal{O}_F)$. He has shown that the resulting cohomology groups with compact support $H_{Wc}^i(X,\mathbb{Z})$ for $i\leq3$ are related to the special value of the Dedekind zeta function $\zeta_F(s)$ at $s=0$. In this paper, we refine Lichtenbaum's construction in order to define and compute the Weil-étale fundamental group.

As observed in \cite{On the WE}, the Weil-étale cohomology introduced in \cite{Lichtenbaum} is not defined as the cohomology of a Grothendieck site (i.e. of a topos). More precisely, Lichtenbaum defined in \cite{Lichtenbaum} a family of sites $T_{F/K,S}$ for any finite Galois extension $K/F$ and any suitable finite set $S$ of primes of $F$. Then he defined the Weil-\'etale cohomology as the direct limit $\underrightarrow{lim}\,H^*(T_{L/K,S},-)$. In this paper, we define a single Weil-étale topos $\bar{X}_W$  which recovers Lichtenbaum's computations. Here $\bar{X}$ denotes the Arakelov compactification of $X=Spec(\mathcal{O}_F)$. The topos $\bar{X}_W$ is endowed with a morphism to the Artin-Verdier \'etale topos $\bar{X}_{et}$. This point of view has some technical advantages. For example, the same definition is used in \cite{Flach-moi} to define the Weil-\'etale topos of an arithmetic scheme as a fiber product.

Motivated by a question asked by Lichtenbaum (see the introduction of \cite{Lichtenbaum}), we show in Section \ref{subsect-WET-fction-field-case} that our definition of the (small) Weil-\'etale topos of a function field is equivalent to the natural definition given in \cite{Lichtenbaum-finite-field}. The same result is actually false with the original definition of \cite{Lichtenbaum}. More precisely, let $Y$ be an open subscheme of a smooth projective curve over a finite field $k$, and denote by $\mathcal{S}_{et}(W_k,\overline{Y})$ the topos of $W_k$-equivariant \'etale sheaves on the geometric curve $\overline{Y}=Y\otimes_k\overline{k}$.
\begin{thm}
There is an equivalence
$$Y^{sm}_W\simeq\mathcal{S}_{et}(W_k,\overline{Y})$$
where $Y^{sm}_W$ is the (small) Weil-\'etale topos defined in this paper.
\end{thm}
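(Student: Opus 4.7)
The plan is to realize both topoi as one and the same $2$-fibered product in the $2$-category of topoi, namely
\[
Y_{et}\times_{BG_k}BW_k,
\]
where $G_k=\mathrm{Gal}(\overline{k}/k)$, the topos $(\mathrm{Spec}\,k)_{et}$ is identified with $BG_k$ via Grothendieck's equivalence, and $BW_k\to BG_k$ is induced by the canonical inclusion $W_k\simeq\mathbb{Z}\hookrightarrow G_k\simeq\widehat{\mathbb{Z}}$ of the (discrete) Weil group into the absolute Galois group. First I would unwind the definition of $Y^{sm}_W$ given earlier in the paper and exhibit it as this $2$-fiber product. Depending on whether the small Weil-\'etale topos is defined directly as a pullback or through a site of definition, this step is either essentially tautological or else requires comparing the given site to a natural site of definition for the pullback.

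Next I would show that $\mathcal{S}_{et}(W_k,\overline{Y})$ coincides with the same fiber product. The main input is classical Galois descent, which yields an equivalence $Y_{et}\simeq\mathcal{S}_{et}(G_k,\overline{Y})$ compatible with the canonical projections to $BG_k$. Base-changing along $BW_k\to BG_k$ then produces
\[
Y_{et}\times_{BG_k}BW_k\;\simeq\;\mathcal{S}_{et}(G_k,\overline{Y})\times_{BG_k}BW_k\;\simeq\;\mathcal{S}_{et}(W_k,\overline{Y}),
\]
the last step being the change-of-group formula: a $G_k$-equivariant \'etale sheaf on $\overline{Y}$ together with the universal $W_k$-torsor pulled back from $BW_k$ is exactly a $W_k$-equivariant \'etale sheaf obtained by restriction along $W_k\hookrightarrow G_k$.

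The main technical obstacle lies in the first step, i.e.\ in identifying the small Weil-\'etale topos with the fiber product $Y_{et}\times_{BG_k}BW_k$. One must exhibit a site of definition for $Y^{sm}_W$ whose objects couple \'etale $Y$-schemes with Weil-equivariant data, check that the evident functor to the standard site for the fiber product is continuous and cocontinuous, and verify it induces an equivalence of associated categories of sheaves. A subtlety to bear in mind is that $W_k$ carries its discrete topology, so $BW_k$ classifies arbitrary $W_k$-sets rather than continuous ones; this is precisely what distinguishes the Weil-\'etale topos from the \'etale topos and what makes the base change along $BW_k\to BG_k$ nontrivial. Once the site comparison is in place, the remaining arguments are formal consequences of Galois descent and the universal property of $2$-fiber products of topoi.
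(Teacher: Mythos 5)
Your approach is genuinely different from the paper's. You propose to identify both $Y^{sm}_W$ and $\mathcal{S}_{et}(W_k,\overline{Y})$ with the $2$-fiber product $Y_{et}\times_{B^{sm}_{G_k}}B^{sm}_{W_k}$, and then reduce the theorem to Galois descent plus a change-of-group formula. The paper instead constructs a morphism $f:Y^{sm}_W\rightarrow B^{sm}_{W_k}$ directly from the sites, proves via Proposition \ref{prop-equiv-sites-etale-barY} that the localization $Y^{sm}_W/f^*(EW_k)$ is equivalent to $\overline{Y}_{et}$, and then applies descent theory along the epimorphism $f^*(EW_k)\rightarrow *$: the three truncated simplicial topoi $\mathcal{S}^1_\bullet\simeq\mathcal{S}^2_\bullet\simeq\mathcal{S}^3_\bullet$ are identified, and $Y^{sm}_W\simeq Desc(\mathcal{S}^1_\bullet)\simeq Desc(\mathcal{S}^3_\bullet)=\mathcal{S}_{et}(W_k,\overline{Y})$. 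This bypasses the profinite group $G_k$ entirely.

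There are two genuine gaps in your proposal. First, and most importantly, the identification $Y^{sm}_W\simeq Y_{et}\times_{B^{sm}_{G_k}}B^{sm}_{W_k}$ is the whole technical content of the theorem; you flag it as ``the main obstacle'' but do not carry it out. Observe that the site $T^{sm}_Y$ is not presented as a fiber-product site, and the map $B^{sm}_{W_k}\rightarrow B^{sm}_{G_k}$ (induced by the dense, non-closed inclusion $\mathbb{Z}\hookrightarrow\widehat{\mathbb{Z}}$) is not a localization nor a closed embedding, so none of the standard pullback lemmas (\cite{SGA4} IV Prop.\ 5.11, or the formula $B_{\mathcal{E}}(f^*\mathcal{G})\simeq\mathcal{E}\times_{\mathcal{S}}B_{\mathcal{S}}(\mathcal{G})$) applies directly. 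Proving this would require essentially the same site comparison that the paper does in Proposition \ref{prop-equiv-sites-etale-barY} and the descent argument, so no work is saved.

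Second, the last step relies on a description of the $2$-fiber product that is not correct: you write that an object of $\mathcal{S}_{et}(G_k,\overline{Y})\times_{B^{sm}_{G_k}}B^{sm}_{W_k}$ is ``a $G_k$-equivariant \'etale sheaf together with the universal $W_k$-torsor,'' but the $2$-fiber product of topoi is \emph{not} a category of pairs-with-an-isomorphism; it is defined by a universal property and in general must be presented by a site. Moreover the paper only defines $\mathcal{S}_{et}(G,\overline{Y})$ for \emph{discrete} $G$, and for the (profinite, non-discrete) group $G_k$ the category $\mathcal{S}_{et}(G_k^{\delta},\overline{Y})$ is \emph{not} equivalent to $Y_{et}$ --- the continuity condition on the $G_k$-action is precisely what distinguishes them, and it is precisely that distinction that the theorem is about. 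To make your step 2 correct you would have to use continuous $G_k$-equivariance, at which point the ``change-of-group formula'' in step 3 ceases to be a formal fact about discrete groups and requires a separate argument handling the passage from continuous $\widehat{\mathbb{Z}}$-actions to arbitrary $\mathbb{Z}$-actions.
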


Section \ref{sect-fund-group} is devoted to the computation of the Weil-étale fundamental group. Let $\bar{U}$ be a connected \'etale $\bar{X}$-scheme. We define the Weil-\'etale topos of $\bar{U}$ as the slice topos $\bar{U}_W:=\bar{X}_W/\gamma^*\bar{U}$. Let $K$ be the number field corresponding to the generic point of $\bar{U}$, and let $q_{\bar{U}}:Spec(\overline{K})\rightarrow\bar{U}$ be a geometric point. The \'etale fundamental group $\pi_1(\bar{U}_{et},q_{\bar{U}})$ is a (strict) projective system of finite quotients of the Galois group $G_K$. Replacing Galois groups with Weil-groups, we define the analogous (strict) projective system $\underline{W}(\bar{U},q_{\bar{U}})$ of locally compact quotients of the Weil group $W_K$. The following theorem gives a computation of the fundamental group of $\bar{U}_W$.
\begin{thm}
The Weil-\'etale topos  $\bar{U}_W$ is connected and locally connected over the topos $\mathcal{T}$ of locally compact spaces. The geometric point $q_{\bar{U}}$ defines a $\mathcal{T}$-valued point $p_{\bar{U}}$ of the topos $\bar{U}_W$, and we have an isomorphism
$$\pi_1(\bar{U}_W,p_{\bar{U}})\simeq\underline{W}(\bar{U},q_{\bar{U}})$$
of topological pro-groups.
\end{thm}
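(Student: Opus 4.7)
The plan is to verify the three assertions in the stated order, using the standard theory of fundamental pro-groups of connected locally connected toposes over a base (here $\mathcal{T}$).

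For connectedness and local connectedness, since $\bar{U}_W$ is by definition the slice topos $\bar{X}_W/\gamma^*\bar{U}$, it suffices to show that $\bar{X}_W$ is connected and locally connected over $\mathcal{T}$ and that $\gamma^*\bar{U}$ is a connected $\bar{X}_W$-object. The topos $\bar{X}_W$ is built by gluing the Artin-Verdier \'etale topos with classifying topoi $BW_{K_v}$ at the archimedean places; each piece is locally connected over $\mathcal{T}$ (the \'etale part because its structure morphism to $\mathcal{T}$ factors through the punctual topos, the archimedean pieces because each $W_{K_v}$ is locally compact), and the gluing construction preserves this property. Connectedness of $\bar{X}_W$ then reduces to connectedness of $\bar{X}$, and connectedness of $\gamma^*\bar{U}$ to that of $\bar{U}$.

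For the point $p_{\bar{U}}$, the geometric point $q_{\bar{U}}:\mathrm{Spec}(\overline{K})\to\bar{U}$ defines a fiber functor on the \'etale site of $\bar{U}$. I would lift this through the canonical morphism $\bar{U}_W\to\bar{U}_{et}$ using the local description of Weil-\'etale sheaves: at the generic point they are continuous $W_K$-sets (rather than discrete $G_K$-sets), and the stalk at $q_{\bar{U}}$ naturally refines to a $\mathcal{T}$-valued fiber functor by recording the underlying locally compact space together with its continuous $W_K$-action.

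The main work, and the principal obstacle, is the identification $\pi_1(\bar{U}_W,p_{\bar{U}})\simeq\underline{W}(\bar{U},q_{\bar{U}})$. By the representability criterion for the fundamental pro-group of a connected locally connected $\mathcal{T}$-topos, it suffices to show (i) that $\underline{W}(\bar{U},q_{\bar{U}})$ is a strict pro-group of locally compact groups, and (ii) that $A\mapsto\mathrm{Hom}_{\mathcal{T}}(\underline{W}(\bar{U},q_{\bar{U}}),A)$ is canonically isomorphic to $H^1(\bar{U}_W,A)$ for every locally compact abelian group $A$. Point (i) reduces to the surjectivity of the transition maps between finite-level Weil quotients, a consequence of Hilbert 90 for the corresponding Galois extensions. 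For (ii), my plan is to factor through classifying topoi: each finite Weil-\'etale cover $\bar{V}\to\bar{U}$ with group $W_i$ in the system $\underline{W}(\bar{U},q_{\bar{U}})$ yields a $\mathcal{T}$-morphism $\bar{U}_W\to BW_i$, and one must check that the induced map to the pro-limit $\varprojlim BW_i$ is an $H^1$-isomorphism on locally compact abelian coefficients.

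The hardest ingredient is the $H^1$-computation itself: it requires a Mayer-Vietoris-type decomposition of $1$-cocycles on $\bar{U}_W$ according to the covering by its \'etale part and the archimedean classifying pieces $BW_{K_v}$, and then matching the resulting data with continuous $1$-cocycles on the Weil pro-group. The archimedean contribution is where Weil groups rather than Galois groups appear naturally, since $BW_{K_v}$ is precisely the part of $\bar{X}_W$ classifying the extra $\mathbb{R}$-torsor data. Once $H^1(\bar{U}_W,A)$ is so described, the identification with continuous cocycles on $\underline{W}(\bar{U},q_{\bar{U}})$ follows by passing to the limit over finite \'etale covers, using the continuity of cocycles with values in the locally compact group $A$.
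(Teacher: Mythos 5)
Your overall plan has a genuine conceptual gap at the step where you claim it ``suffices to show ... that $A\mapsto\mathrm{Hom}_{\mathcal{T}}(\underline{W}(\bar{U},q_{\bar{U}}),A)$ is canonically isomorphic to $H^1(\bar{U}_W,A)$ for every locally compact abelian group $A$.'' This criterion cannot identify $\pi_1$: first-degree cohomology with abelian coefficients only sees the abelianization $\pi_1^{ab}$, not the full (non-abelian) fundamental pro-group. Two pro-groups with the same abelianization, say a finite group and its abelianization, give the same $H^1(-,A)$ functor on abelian $A$. The fundamental pro-group is determined instead by the full category of locally constant (over $\mathcal{T}$) objects, and one must classify \emph{torsors}, not merely first cohomology. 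The paper therefore argues differently: it produces the candidate pro-torsor $\{\textsc{Tors}(\bar{U},L)=\pi^*EW(\bar{U},L)\}_L$ from the morphisms of sites $T_{\bar{U}}\to B_{Top}W(\bar{U},L)$, and then proves this pro-torsor is \emph{universal} by showing that every locally constant object of $\bar{U}_W$ over $\mathcal{T}$ is trivialized by some $\textsc{Tors}(\bar{U},L)$. That step requires the explicit generating family $\mathbb{G}_{\bar{U}}$ of the site (Theorem \ref{thm-site}), the analysis of the $yW_K$-action on stalks factoring through $yW(\bar{U},L)$, and the delicate representability Lemmas \ref{painfullemma1} and \ref{lem-la-clef} about normal subgroups in $\mathcal{T}$ generated by inertia. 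None of this is captured by your proposed ``Mayer--Vietoris on $1$-cocycles'' strategy, and I do not see how that strategy would be made to work for non-abelian torsors.

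Two further points. First, your appeal to ``Hilbert 90'' to get strictness of the pro-system is off target: what is needed is that each transition map $W(\bar{U},L')\to W(\bar{U},L)$ admits \emph{local continuous sections}, and the paper derives this from Mostert's theorem on principal fibre spaces (Theorem \ref{thm-loc-sections}) together with the finite-dimensionality of $W_{L/K}$ (Lemma \ref{lem-C_L-finite-dim}) --- a point-set-topological fact about locally compact groups, not a cohomological vanishing. Second, for connectedness and local connectedness your reduction to the slice topos is the right move, but the ``gluing'' justification you sketch for $\bar{X}_W$ being locally connected over $\mathcal{T}$ is not how the paper proceeds and isn't correct as stated (the closed pieces over closed points are $B_{W_{k(v)}}$, not $B_{W_{K_v}}$, and the Artin--Verdier étale topos does not sit over $\mathcal{T}$ in the relevant sense). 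The paper instead proves local connectedness of $t:\bar{X}_W\to\mathcal{T}$ directly (Proposition \ref{prop-nice}) by constructing $t_!:=u_!\circ j^*$ and checking that it is a $\mathcal{T}$-indexed left adjoint; then $\bar{U}_W\to\mathcal{T}$ is locally connected because localization morphisms are, and connectedness follows from $t_{\bar{U}!}(1)=t_!(\gamma^*\bar{U})=*$ since $\bar{U}$ is connected. Your construction of the point $p_{\bar{U}}$ is close in spirit to the paper's pull-back description $p_{\bar{U}}=q_{\bar{U}}\times_{q_{\bar{X}}}p_{\bar{X}}$, though the paper realizes it cleanly through the fiber-product presentation $\bar{U}_W\simeq\bar{U}_{et}\times_{\bar{X}_{et}}\bar{X}_W$.
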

The consequences of this result are given in Section \ref{sect-consequences}. We denote by $C_{\bar{U}}:=C_{K,S}$ the $S$-id\`ele class group associated to $\bar{U}$ (here $S$ is the set of places of $K$ not corresponding to a point of $\bar{U}$).
\begin{cor}
For any connected \'etale $\bar{X}$-scheme $\bar{U}$, we have an isomorphism of topological groups $\pi_1(\bar{U}_W,p_{\bar{U}})^{ab}\simeq C_{\bar{U}}$. In particular, for any locally compact abelian group $A$, we have
$$H^1(\bar{U}_W,A)=Hom_{cont}(C_{\bar{U}},A)$$
\end{cor}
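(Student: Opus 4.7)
\medskip
\noindent\textbf{Proof plan.} The strategy is to combine the previous theorem with global class field theory for Weil groups. The theorem identifies $\pi_1(\bar{U}_W,p_{\bar{U}})$ with the topological pro-group $\underline{W}(\bar{U},q_{\bar{U}})$, so the problem reduces to two tasks: (i) computing the abelianization of the pro-group $\underline{W}(\bar{U},q_{\bar{U}})$ and identifying it with $C_{\bar{U}}$, and (ii) deducing the $H^1$ statement from the representability property of $\pi_1(\bar{U}_W,p_{\bar{U}})$ advertised in the abstract.

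For step (i), I would abelianize the strict projective system $\underline{W}(\bar{U},q_{\bar{U}}) = \{W_{L/K,S_L}\}$ term by term. Each $W_{L/K,S_L}$ is a locally compact extension of $\mathrm{Gal}(L/K)$ by an idele-class-type connected piece, and the classical reciprocity isomorphism $W_K^{ab} \simeq C_K$ descends to identify each $W_{L/K,S_L}^{ab}$ with a locally compact quotient of $C_{\bar{U}} = C_{K,S}$, with transition maps dual to the inclusions of these quotients. One then has to verify that the resulting pro-abelian system, viewed as a topological pro-group, is isomorphic to the constant pro-group $C_{\bar{U}}$; equivalently, that the inverse limit is $C_{\bar{U}}$ on the nose rather than some completion.

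Step (ii) is then immediate: for any locally compact abelian group $A$, the representability property gives
$$H^1(\bar{U}_W,A) \simeq \mathrm{Hom}_{cont}\bigl(\pi_1(\bar{U}_W,p_{\bar{U}}),A\bigr),$$
and since $A$ is abelian, every continuous pro-group homomorphism factors uniquely through the abelianization pro-group, which by step (i) is $C_{\bar{U}}$. This yields $H^1(\bar{U}_W,A) = \mathrm{Hom}_{cont}(C_{\bar{U}},A)$.

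The main obstacle is step (i), and specifically the fact that one recovers $C_{\bar{U}}$ itself rather than its profinite completion $\widehat{C_{\bar{U}}}$, which is what the analogous computation with the \'etale fundamental group $\pi_1(\bar{U}_{et},q_{\bar{U}})$ would produce. This is precisely the advantage of replacing Galois groups by Weil groups in the construction of $\underline{W}(\bar{U},q_{\bar{U}})$: the Weil groups see the archimedean places and the connected component of $C_{\bar{U}}$. Making this precise requires a careful topological compatibility argument between the reciprocity isomorphisms at each finite level and the filtering structure of the pro-system, showing that continuous homomorphisms out of the abelianized pro-group are exactly continuous characters of the locally compact group $C_{\bar{U}}$.
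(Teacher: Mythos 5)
Your outline --- reduce to $\underline{W}(\bar{U},q_{\bar{U}})$ via the main theorem, abelianize term by term using class field theory, then deduce the $H^1$ statement by factoring continuous maps to an abelian $A$ through the abelianized fundamental pro-group --- is exactly the paper's route. The gap lies in your description of step (i). You picture each $W(\bar{U},L)^{ab}$ as ``a locally compact quotient of $C_{\bar{U}}$ with transition maps dual to inclusions,'' as though the abelianized pro-system were a nonconstant increasing system (as it would be in the Galois-group setting), and you then worry that its limit might be a completion of $C_{\bar{U}}$ rather than $C_{\bar{U}}$ itself. Following that picture would leave you stuck precisely at the worry you raise at the end.

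The mechanism that closes the argument --- and what the paper's lemma on $W(\bar{U},L)^{ab}$ establishes --- is that the abelianized pro-system is \emph{constant}: $W(\bar{U},L)^{ab}\simeq C_{\bar{U}}$ for every finite Galois $L/K$, with identity transition maps. This follows from two facts which hold uniformly in $L$: the class formation property gives $W_{L/K}^{ab}\simeq C_K$ for \emph{every} finite Galois $L/K$, not just in the limit, and the image of the inertia-generated subgroup $N_{\bar{U},L}\subset W_{L/K}$ in $C_K\simeq W_{L/K}^{ab}$ is $\prod_{v\in\bar{U}}\mathcal{O}_{K_v}^{\times}$, again independently of $L$; quotienting gives $C_{\bar{U}}$ at every level. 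Once constancy is in hand there is no ``compatibility argument along the filtering structure'' left to run and no limit to identify: $\pi_1(\bar{U}_W,p_{\bar{U}})^{ab}\simeq C_{\bar{U}}$ is immediate, and $H^1(\bar{U}_W,A)=\underrightarrow{lim}\,Hom_{cont}(W(\bar{U},L)^{ab},A)=Hom_{cont}(C_{\bar{U}},A)$ drops out. (A minor side slip in your sketch: the kernel $C_L$ of $W_{L/K}\to G_{L/K}$ is an id\`ele class group, which is not connected.)
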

In particular $\pi_1(\bar{X}_{W})^{ab}$ is topologically isomorphic to the Arakelov Picard group $Pic(\bar{X})$ of the number field $F$, and the canonical class is the canonical continuous morphism
$$\theta\in H^1(\bar{X}_W,\widetilde{\mathbb{R}})=Hom_{cont}(Pic(\bar{X}),\mathbb{R}).$$
The previous corollary allows one to compute the cohomology of the Weil-étale topos in low degrees and to recover Lichtenbaum's computations.

Matthias Flach has shown in \cite{MatFlach} that the current definition of the Weil-étale cohomology is not yet the right one. More precisely, the groups $H_{W}^i(\bar{X},\mathbb{Z})$ are infinitely generated for any $i\geq4$ even. But the conjectural picture still stands. Indeed, Lichtenbaum conjectures in \cite{Lichtenbaum} the existence of a Grothendieck topology for an arithmetic scheme $X$ such that the Euler characteristic of the cohomology groups of the constant sheaf $\mathbb{Z}$ with compact support at infinity gives, up to sign, the leading term of the zeta-function $\zeta_X(s)$ at $s=0$. In \cite{Fund-group-I} we gave a list of axioms that should be satisfied by the category of sheaves on this conjectural Grothendieck topology for $X=Spec(\mathcal{O}_F)$. We denote by $\bar{X}_L$ this conjectural category of sheaves, and we refer to the list of axioms that must be satisfied by $\bar{X}_L$ as Axioms $(1)-(9)$. We also showed in \cite{Fund-group-I} that any topos satisfying these axioms gives rise to complexes of \'etale sheaves computing the expected Lichtenbaum cohomology. The author's main motivation for the present work was to provide an example of a topos satisfying Axioms $(1)-(9)$.
\begin{cor}
The Weil-\'etale topos $\bar{X}_W$ satisfies Axioms $(1)-(9)$.
\end{cor}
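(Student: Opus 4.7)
The plan is to verify each of the nine axioms of \cite{Fund-group-I} in turn, grouping them according to the nature of the required verification. I would first dispose of the structural axioms: the existence of the morphism $\gamma:\bar{X}_W\to\bar{X}_{et}$, the functoriality in connected \'etale $\bar{X}$-schemes via the slice construction $\bar{U}_W:=\bar{X}_W/\gamma^*\bar{U}$, the compatibility with the open immersion $X\hookrightarrow\bar{X}$ and with the archimedean fibers, and the prescribed shape of the points of $\bar{X}_W$. These axioms are built into the very construction of $\bar{X}_W$ carried out earlier in the paper, and I expect that they require only a careful unwinding of definitions together with the general formalism of slice topoi and inverse images along $\gamma$.

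The second group consists of the cohomological axioms concerning connectedness over $\mathcal{T}$, the abelianized fundamental group, and first-degree cohomology with coefficients in locally compact abelian groups. These follow directly from the preceding Theorem, giving $\pi_1(\bar{U}_W,p_{\bar{U}})\simeq\underline{W}(\bar{U},q_{\bar{U}})$, and from its Corollary identifying the abelianization with $C_{\bar{U}}$. In particular the axiom on $H^1$ is literally the equality $H^1(\bar{U}_W,A)=Hom_{cont}(C_{\bar{U}},A)$, while the axiom postulating a canonical class is witnessed by $\theta\in H^1(\bar{X}_W,\widetilde{\mathbb{R}})=Hom_{cont}(Pic(\bar{X}),\mathbb{R})$ defined by the Arakelov degree, whose functoriality in $\bar{U}$ follows from the functoriality of $C_{\bar{U}}$.

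The remaining axioms concern local information: the stalks of $\gamma$ and the sheaves $R^i\gamma_*A$ at each point of $\bar{X}_{et}$, including the archimedean ones. I would treat them by first identifying the stalk of $\bar{X}_W\to\bar{X}_{et}$ at a finite closed point $v$ with the classifying topos $BW_v$ of the local Weil group, and similarly at an archimedean place; this is the local analogue of the global fundamental group computation and should follow from the same techniques, applied to the strict henselization or to the small archimedean neighborhoods produced in the construction of $\bar{X}_W$. Once these stalks are in hand, the Leray spectral sequence for $\gamma$ together with local class field theory translates the global $\pi_1$-computation into the pointwise data demanded by the axioms.

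The main obstacle, in my view, lies in the archimedean contribution and in the interplay between the topological pro-group structure of $\pi_1$ and the cohomology with \emph{locally compact} (rather than discrete) coefficients. The classical Weil-group formalism is adapted to locally compact coefficients, but the axioms of \cite{Fund-group-I} also require comparison with complexes of \'etale sheaves on $\bar{X}_{et}$; checking that the stalk at each archimedean place yields \emph{exactly} the prescribed cohomology, and that everything fits into the Artin-Verdier framework compatibly with the canonical class $\theta$, is where I expect the delicate bookkeeping to be concentrated. Once the archimedean stalks are correctly pinned down, the remaining verifications should reduce to the Corollary above and to standard local computations.
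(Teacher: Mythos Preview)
Your overall strategy—verify the nine axioms by citing results already established in the paper—is exactly what the paper does. However, you appear to be guessing at the content of the axioms from context, and several of those guesses are off in ways that matter.

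First, there is no axiom concerning the sheaves $R^i\gamma_*A$ or ``comparison with complexes of \'etale sheaves on $\bar{X}_{et}$.'' The complex $\tau_{\leq 2}R\gamma_*(\varphi_!\mathbb{Z})$ and its hypercohomology appear only in the \emph{subsequent} corollary (Lichtenbaum's formalism), which is a consequence derived in \cite{Fund-group-I} from Axioms $(1)$--$(9)$, not part of the axioms themselves. Your entire ``main obstacle'' paragraph is therefore aimed at the wrong target: once the nine axioms hold, that comparison is automatic by \cite{Fund-group-I} Theorem~6.3 and needs no further work here.

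Second, and more seriously, you have not identified Axiom~$(9)$, which asserts $H^n(\bar{U}_W,\widetilde{\mathbb{R}})=0$ for all $n\geq 2$ and every \'etale $\bar{X}$-scheme $\bar{U}$. This is not a formal consequence of the fundamental-group computation; it requires its own argument (Proposition~\ref{prop-R-cohomology}), namely showing $R^nj_*\widetilde{\mathbb{R}}=0$ for $j:B_{W_K}\to\bar{U}_W$ via the explicit generating family $\mathbb{G}_{\bar{U}}$ and then invoking Flach's vanishing $H^n(B_\Lambda,V)=0$ for $\Lambda$ compact and $V$ a suitable real vector space. Your plan never touches this.

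Finally, the ``local'' Axioms~$(7)$ and~$(8)$ are not about stalks in the sense you describe. Axiom~$(7)$ is the assertion that the square with $B_{W_{k(v)}}$, $B^{sm}_{G_{k(v)}}$, $\bar{X}_W$, $\bar{X}_{et}$ is a genuine 2-pullback of topoi; in the paper this is Theorem~\ref{thm-pull-back}, proved (in \cite{Flach-moi}) directly from the site description, not by a stalk-and-Leray argument. Axiom~$(8)$ is a compatibility of the closed-point embedding with the map to $B_{C_{\bar{U}}}$, and follows by inspecting morphisms of left exact sites. Neither requires the machinery you propose.
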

This result shows that Axioms $(1)-(9)$ are consistent. Moreover, it gives a natural computation of the base change from the Weil-étale cohomology to the étale cohomology (see Corollary \ref{thm-Lichtenbaum-conj}). More precisely, let $\gamma:\bar{X}_W\rightarrow\bar{X}_{et}$ be the canonical map, and let $\varphi:X_W\rightarrow\bar{X}_W$ be the open embedding. For any abelian sheaf $\mathcal{A}$, we denote by $\tau_{\leq2}R\gamma_*\mathcal{A}$ the truncated complex.
\begin{cor}
Assume that $F$ is totally imaginary. Then the Euler characteristic of the hypercohomology groups of the complex of étale sheaves $\tau_{\leq2}R\gamma_*(\varphi_!\mathbb{Z})$ gives, up to sign,  the leading term of the Dedekind zeta-function $\zeta_F(s)$ at $s=0$.
\end{cor}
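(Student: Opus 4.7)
The plan is to reduce the statement to Lichtenbaum's original special-value theorem via the axiomatic machinery of \cite{Fund-group-I}. The preceding corollary tells us that $\bar{X}_W$ satisfies Axioms $(1)-(9)$, and by the main result of \cite{Fund-group-I} these axioms force the hypercohomology groups $\mathbb{H}^i(\bar{X}_{et},\tau_{\leq 2}R\gamma_*(\varphi_!\mathbb{Z}))$ to be finitely generated and to fit into a canonical regulator complex whose Euler characteristic, by design, is the conjectural Lichtenbaum expression for the leading coefficient at $s=0$.

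The core task is then to show that this Euler characteristic actually equals, up to sign, $\zeta_F^*(0)$ when $F$ is totally imaginary. First, I would identify $\mathbb{H}^i(\bar{X}_{et},\tau_{\leq 2}R\gamma_*(\varphi_!\mathbb{Z}))$ in low degrees with Lichtenbaum's compactly supported Weil-\'etale cohomology $H^i_{Wc}(X,\mathbb{Z})$ from \cite{Lichtenbaum}. This uses the Leray spectral sequence for $\gamma:\bar{X}_W\rightarrow\bar{X}_{et}$ together with the explicit computations of Section \ref{sect-consequences} (in particular $\pi_1(\bar{X}_W)^{ab}\simeq Pic(\bar{X})$ and the resulting determination of $H^*(\bar{X}_W,\mathbb{Z})$ in low degrees), and the observation that $\varphi_!$ is extension by zero across the archimedean places and therefore encodes Lichtenbaum's compact-support construction. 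Next, I would verify that the fundamental class $\theta\in H^1(\bar{X}_W,\widetilde{\mathbb{R}})\simeq Hom_{cont}(Pic(\bar{X}),\mathbb{R})$ highlighted after the earlier corollary produces, via the axiomatic regulator of \cite{Fund-group-I}, precisely the Weil-\'etale regulator used by Lichtenbaum. With these identifications in place, Lichtenbaum's theorem in \cite{Lichtenbaum} immediately yields the stated equality up to sign.

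The main obstacle is the compatibility of regulators: one must check that the determinant trivialization furnished by Axioms $(1)-(9)$ coincides with Lichtenbaum's cup-product-with-$\theta$ map, and not merely up to an unidentified rational scalar. This amounts to tracing through the construction in \cite{Fund-group-I} and matching it term by term with the dualities obtained in Section \ref{sect-consequences}. The totally imaginary hypothesis simplifies the archimedean bookkeeping considerably, since all archimedean places are complex and the relevant groups are torsion-free modulo finite ones; in particular the truncation $\tau_{\leq 2}$ matches Lichtenbaum's vanishing range, so no spurious contribution in high degrees alters the Euler characteristic, and the overall sign is pinned down unambiguously.
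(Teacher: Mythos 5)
You correctly identify the two ingredients — Theorem \ref{thm-existence-proof} (Axioms $(1)-(9)$ hold for $\bar{X}_W$) and the axiomatic machinery of \cite{Fund-group-I} — so your plan and the paper's proof share the same core mechanism. But the paper's actual proof is a one-sentence citation: \cite{Fund-group-I} Theorem 6.3 takes any topos satisfying Axioms $(1)-(9)$ and delivers the entire Lichtenbaum formalism as a package, including the leading-term formula. The verifications you frame as the remaining ``core task'' — identifying $\mathbb{H}^n(\bar{X}_{et},\tau_{\leq 2}R\gamma_*(\varphi_!\mathbb{Z}))$ with Lichtenbaum's $H^n_{Wc}(X,\mathbb{Z})$ and matching the determinant trivialization with cup product by $\theta$ — are exactly the content of that cited theorem, so your proposal is effectively a plan for re-deriving \cite{Fund-group-I} Theorem 6.3 before invoking Lichtenbaum's original theorem from \cite{Lichtenbaum}. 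Both routes are valid, but the paper deliberately treats the axiomatic reduction as a black box: that was the whole point of isolating Axioms $(1)-(9)$. One small correction on the side: your heuristic conflates two separate issues — the truncation $\tau_{\leq 2}$ is there to avoid Flach's pathology ($H^i_W(\bar{X},\mathbb{Z})$ is infinitely generated for even $i\geq 4$), which occurs regardless of the signature of $F$, whereas the totally-imaginary hypothesis is inherited from Lichtenbaum's theorem and concerns $2$-torsion contributions at real places, not the high-degree vanishing range.
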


\footnotetext{ \emph{2000 Mathematics subject classification} :
14F20 (primary) 14G10 (secondary). \emph{Keywords} : étale
cohomology, Weil-étale cohomology, topos, fundamental group, Dedekind zeta function.}

$\mathbf{Acknowledgments}.$ I am very grateful to Matthias
Flach for his comments and for many stimulating discussions related to Weil-étale cohomology.

\tableofcontents

\section{Preliminaries}

\subsection{Left exact sites}
The category of sheaves of sets on a Grothendieck site $(\mathcal{C},\mathcal{J})$ is denoted by $\widetilde{(\mathcal{C},\mathcal{J})}$ while the category of presheaves on $\mathcal{C}$ is denoted by $\widehat{\mathcal{C}}$. A Grothendieck topology $\mathcal{J}$ on a category $\mathcal{C}$ is said to be \emph{sub-canonical} if $\mathcal{J}$ is
coarser than the canonical topology. This is the case precisely when
any representable presheaf on $\mathcal{C}$ is a sheaf for the
topology $\mathcal{J}$. A family of morphisms $\{X_i\rightarrow X\}$ in $\mathcal{C}$ is said to be a \emph{covering family} for the topology $\mathcal{J}$ when the sieve generated by this family of morphisms belongs to $\mathcal{J}(X)$. A category $\mathcal{C}$ is said to be \emph{left exact} when finite projective limits exist in
$\mathcal{C}$, i.e. when $\mathcal{C}$ has a final object and fiber
products. A functor between left exact categories is said to be left exact
if it commutes with finite projective limits.
\begin{defn}
A Grothendieck site $(\mathcal{C},\mathcal{J})$ is said to be \emph{left
exact} if $\mathcal{C}$ is a left exact category endowed with a
subcanonical topology $\mathcal{J}$. A \emph{morphism of left exact sites}
$(\mathcal{C}',\mathcal{J}')\rightarrow (\mathcal{C},\mathcal{J})$
is a continuous left exact functor
$\mathcal{C}'\rightarrow\mathcal{C}$.
\end{defn}
Note that any Grothendieck topos, i.e. any category satisfying Giraud's axioms, is equivalent to the category of sheaves of sets on a left exact site. Note also that a Grothendieck site $(\mathcal{C},\mathcal{J})$ is left exact if and only if the canonical functor (given in general by Yoneda and sheafification) $y:\mathcal{C}\rightarrow\widetilde{(\mathcal{C},\mathcal{J})}$
identifies $\mathcal{C}$ with a left exact full subcategory of $\widetilde{(\mathcal{C},\mathcal{J})}$. A \emph{morphism of left exact sites}
$f^*:(\mathcal{C}',\mathcal{J}')\rightarrow
(\mathcal{C},\mathcal{J})$ induces a morphism of topoi
$f:(\widetilde{\mathcal{C},\mathcal{J}})\rightarrow
(\widetilde{\mathcal{C}',\mathcal{J}'})$, such that the following diagram is
commutative
\[ \xymatrix{
(\widetilde{\mathcal{C},\mathcal{J}})& (\widetilde{\mathcal{C}',\mathcal{J}'})\ar[l]_{f^*}\\
\mathcal{C}\ar[u]&\mathcal{C}'\ar[l]_{f^*}\ar[u] }\]
where the vertical arrows are the fully faithful Yoneda functors.

Finally, recall that for any object $X$ of $\mathcal{C}$, one has a canonical equivalence
$$\widetilde{(\mathcal{C},\mathcal{J})}/yX\simeq\widetilde{(\mathcal{C}/X,\mathcal{J}_{ind})}$$
where $\mathcal{J}_{ind}$ is the topology on $\mathcal{C}/X$ induced by  $\mathcal{J}$ via the forgetful functor $\mathcal{C}/X\rightarrow\mathcal{C}$ (forget the map to $X$).

\subsection{Basic properties of geometric morphisms}

Let $\mathcal{S}$ and $\mathcal{S}'$ be two Grothendieck topoi. A \emph{(geometric) morphism of topoi}
$$f:=(f^*,f_*):\mathcal{S}'\longrightarrow\mathcal{S}$$
is defined as a pair of functors $(f^*,f_*)$, where $f^*:\mathcal{S}\rightarrow \mathcal{S}'$ is left adjoint to $f_*:\mathcal{S}'\rightarrow \mathcal{S}$ and $f^*$ is left exact (i.e. $f^*$ commutes with finite projective limits). One can also define such a morphism as a left exact functor $f^*:\mathcal{S}\rightarrow \mathcal{S}'$ commuting with arbitrary inductive limits. Indeed, in this case $f^*$ has a uniquely determined right adjoint $f_*$.

If $X$ is an object of $\mathcal{S}$, then the slice category $\mathcal{S}/X$, of objects of $\mathcal{S}$ over $X$, is a topos as well. The base change functor
$$\appl{\mathcal{S}}{\mathcal{S}/X}{Y}{Y\times X}$$
is left exact and commutes with arbitrary inductive limits, since inductive limits are universal in a topos. We obtain a morphism
$$\mathcal{S}/X\longrightarrow \mathcal{S}.$$
Such a morphism is said to be a \emph{localization morphism} or a \emph{local homeomorphism} (the term local homeomorphism is inspired by the case when $\mathcal{S}$ is the topos of sheaves on some topological space). For any morphism $f:\mathcal{S}'\rightarrow\mathcal{S}$ and any object $X$ of $\mathcal{S}$, there is a natural morphism
$$f_{/X}:\mathcal{S}'/f^*X\longrightarrow\mathcal{S}/X.$$
The functor $f_{/X}^*$ is defined in the obvious way: $f^*_{/X}(Y\rightarrow X)=(f^*Y\rightarrow f^*X)$. The direct image functor $f_{/X,*}$ sends $Z\rightarrow f^*X$ to $f_*Z\times_{f_*f^*X}X\rightarrow X$, where $X\rightarrow f_*f^*X$ is the adjunction map. The morphism $f_{/X}$ is a pull-back of $f$, in the sense that the square
\[\xymatrix{
\mathcal{S}'/f^*X\ar[r]^{f_{/X}}\ar[d]&\mathcal{S}/X\ar[d]\\
\mathcal{S}'\ar[r]^{f}&\mathcal{S}
}\]
is commutative and 2-cartesian. In other words, the 2-fiber product $\mathcal{S}'\times_{\mathcal{S}}\mathcal{S}/X$ can be defined as the slice topos $\mathcal{S}'/f^*X$.

A morphism $f:\mathcal{S}'\rightarrow\mathcal{S}$ is said to be \emph{connected} if $f^*$ is fully faithful. It is \emph{locally connected} if $f^*$ has an $\mathcal{S}$-indexed left adjoint $f_!$ (see \cite{elephant}
C3.3). These definitions generalize the usual ones for topological spaces: if $T$ is a topological space, consider the unique morphism $Sh(T)\rightarrow\underline{Sets}$ where $Sh(T)$ is the category of \'etal\'e spaces over $T$. For example a localization morphism $\mathcal{S}/X\rightarrow \mathcal{S}$ is always locally connected (here $f_!(Y\rightarrow X)=Y$), but is connected if and only if $X$ is the final object of $\mathcal{S}$.

A morphism $f:\mathcal{S}'\rightarrow\mathcal{S}$ is said to be an \emph{embedding} when $f_*$ is fully faithful. It is an \emph{open embedding} if $f$ factors through $f:\mathcal{S}'\simeq\mathcal{S}/X\rightarrow\mathcal{S}$, where $X$ is a subobject of the final object of $\mathcal{S}$. Then the essential image $\mathcal{U}$ of the functor $f_*$ is said to be an \emph{open subtopos} of $\mathcal{S}$. The \emph{closed complement} $\mathcal{F}$ of $\mathcal{U}$ is the strictly full subcategory of $\mathcal{S}$ consisting in objects $Y$ such that $Y\times X$ is the final object of $\mathcal{U}$ (i.e. $f^*Y$ is the final object of $\mathcal{S}'$). A \emph{closed subtopos} $\mathcal{F}$ of $\mathcal{S}$ is a strictly full subcategory which is the closed complement of an open subtopos. A morphism of topoi $i:\mathcal{E}\rightarrow\mathcal{S}$ is said to be a \emph{closed embedding} if $i$ factors through $i:\mathcal{E}\simeq\mathcal{F}\rightarrow\mathcal{S}$ where $\mathcal{F}$ is a closed subtopos of $\mathcal{S}$.

A \emph{subtopos} of $\mathcal{S}$ is a strictly full subcategory $\mathcal{S}'$ of $\mathcal{S}$ such that the inclusion functor $i:\mathcal{S}'\hookrightarrow\mathcal{S}$ is the direct image of a morphism of topoi (i.e. $i$ has a left exact left adjoint). A morphism $f:\mathcal{S}'\rightarrow\mathcal{S}$ is said to be \emph{surjective} if $f^*$ is faithful. Any morphism $f:\mathcal{E}\rightarrow\mathcal{S}$ can be decomposed as a surjection $\mathcal{E}\rightarrow Im(f)$ followed by an embedding $Im(f)\rightarrow\mathcal{S}$, where $Im(f)$ is a subtopos of $\mathcal{S}$, which is called the \emph{image of $f$} (see \cite{SGA4} IV. 9.1.7.2).
\subsection{The topos $\mathcal{T}$ of locally compact topological spaces}

In this paper, we denote by $Top$ the category of locally compact topological spaces and continuous maps. Locally compact spaces are assumed to be Hausdorff. This category is endowed with the open cover topology $\mathcal{J}_{op}$, which is generated by the following pretopology: a family of morphisms $(X_{\alpha}\rightarrow X)_{\alpha\in A}$ is in $Cov(X)$ if and only if $(X_{\alpha}\rightarrow X)_{\alpha\in A}$ is an open cover in the usual sense. We denote by $\mathcal{T}$ the topos of sheaves of sets on this left exact site:
$$\mathcal{T}:=\widetilde{(Top,\mathcal{J}_{op})}.$$
The family of compact spaces is a topologically generating family for the site $(Top,\mathcal{J}_{op})$. Indeed, if $X$ is a locally compact space, then any $x\in X$ has a compact neighborhood $K_x\subseteq X$, so $(K_x\hookrightarrow X)_{x\in X}$ is a local section cover, hence a covering family for $\mathcal{J}_{op}$. In particular, if we denote by $Top^c$ the category of compact spaces, then the canonical morphism $\mathcal{T}\rightarrow\widetilde{(Top^c,\mathcal{J}_{op})}$ is an equivalence.

The Yoneda functor
$$\fonc{y}{Top}{\mathcal{T}}{X}{y(X)=Hom_{Top}(-,X)}$$
sending a locally compact space to the sheaf represented by this space
is fully faithful (since $\mathcal{J}_{op}$ is subcanonical) and
commutes with arbitrary projective limits. Since the Yoneda functor is left exact, any locally compact topological group $G$ represents a group object of $\mathcal{T}$. In what follows we consider
$Top$ as a (left exact) full subcategory of $\mathcal{T}$. For
example the sheaf of $\mathcal{T}$ represented by a locally compact space $Z$
is sometimes also denoted by $Z$.

\begin{rem}
In this paper, we consider topoi defined over the topos $\mathcal{T}$ of locally compact spaces since all sheaves, cohomology groups and fundamental groups that we use are defined by locally compact spaces. In order to use non-locally compact coefficients, one can consider the topos $$\mathcal{T}':=\widetilde{(Top^h,\mathcal{J}_{op})}$$ where $Top^h$ is the category of Hausdorff spaces. Then for any topos $\mathcal{E}$ (connected and locally connected) over $\mathcal{T}$, we consider the base change $\mathcal{E}\times_{\mathcal{T}}\mathcal{T}'$ to obtain a (connected and locally connected) topos over $\mathcal{T}'$.
\end{rem}
\subsection{Topological pro-groups}
In this paper, a \emph{filtered category} $I$ is a non-empty small category such that the following holds. For any objects $i$ and $j$ of $I$, there exists a third object $k$ and maps $i\leftarrow k\rightarrow j$. For any pair of maps $i\rightrightarrows j$ there exists a map $k\rightarrow i$ such that the diagram $k\rightarrow i\rightrightarrows j$ is commutative. Let $C$ be any category. A \emph{pro-object} of $C$ is a functor $\underline{X}:I\rightarrow C$, where $I$ is a filtered category. One can see a pro-object in $C$ as a diagram in $C$.
One can define the \emph{category $Pro(C)$ of pro-objects in $C$} (see \cite{SGA4} I. 8.10). The morphisms in this category can be made explicit as follows. Let $\underline{X}:I\rightarrow C$ and $\underline{Y}:J\rightarrow C$ be two pro-objects in $C$. Then one has
$$Hom_{Pro(C)}(\underline{X},\underline{Y}):=\underleftarrow{lim}_{_{j\in J}}\,\underrightarrow{lim}_{_{i\in I}}\, Hom(X_i,Y_j).$$
A pro-object $\underline{X}:I\rightarrow C$ is \emph{constant} if it is a constant functor, and $\underline{X}:I\rightarrow C$ is \emph{essentially constant} if $\underline{X}$ is isomorphic (in the category $Pro(C)$) to a constant pro-object.

\begin{defn}\label{defn-progroup}
A \emph{locally compact topological pro-group} $\underline{G}$ is a pro-object in the category of locally compact topological groups. A locally compact topological pro-group is said to be \emph{strict} if any transition map $G_j\rightarrow G_i$ has local sections.
\end{defn}
If the category $C$ is a topos, then a pro-object $\underline{X}:I\rightarrow C$ in $C$ is said to be \emph{strict} when the transition map $X_i\rightarrow X_j$ is an epimorphism in $C$, for any $i\rightarrow j\in Fl(I)$. In particular, a locally compact topological pro-group $\underline{G}:I\rightarrow Gr(Top)$ pro-represents a strict pro-group-object in $\mathcal{T}$:
$$y\circ\underline{G}:I\rightarrow Gr(Top)\rightarrow Gr(\mathcal{T})$$
where $Gr(Top)$ and  $Gr(\mathcal{T})$ are the categories of group-objects in $Top$ and $\mathcal{T}$ respectively. Indeed, a continuous map of locally compact spaces $X_i\rightarrow X_j$ has local sections if and only if it induces an epimorphism $y(X_i)\rightarrow y(X_j)$ in $\mathcal{T}$.

\subsection{The classifying topos of a group-object}

\subsubsection{General case.}
For any topos $\mathcal{S}$ and any group object $\mathcal{G}$ in $\mathcal{S}$, we denote by $B_{\mathcal{G}}$ the category of (left) $\mathcal{G}$-object in $\mathcal{S}$. Then $B_{\mathcal{G}}$ is a topos, as it follows from Giraud's axioms, and $B_{\mathcal{G}}$ is endowed with a canonical morphism $B_{\mathcal{G}}\rightarrow\mathcal{S}$, whose inverse image functor sends an object $\mathcal{F}$ of $\mathcal{S}$ to $\mathcal{F}$ with trivial $\mathcal{G}$-action. If there is a risk of ambiguity, we denote the topos $B_{\mathcal{G}}$ by $B_{\mathcal{S}}(\mathcal{G})$. The topos $B_{\mathcal{G}}$ is said to be the classifying topos of $\mathcal{G}$ since it classifies $\mathcal{G}$-torsors. More precisely, for any topos $f:\mathcal{E}\rightarrow\mathcal{S}$ over $\mathcal{S}$, the category $\underline{Homtop}_{\mathcal{S}}\,(\mathcal{E},B_{\mathcal{G}})$ is anti-equivalent to the category of $f^*\mathcal{G}$-torsors in $\mathcal{E}$ (see \cite{SGA4} IV Exercise 5.9). It follows that the induced morphism
$$B_{\mathcal{E}}(f^*\mathcal{G})\longrightarrow \mathcal{E}\times_\mathcal{S}B_{\mathcal{S}}(\mathcal{G})$$
is an equivalence (see \cite{these} Corollary 10.14).

\subsubsection{Examples.}

Let $G$ be a discrete group, i.e. a group object of the final topos $\underline{Sets}$. We denote the category of $G$-sets by $B_G^{sm}:=B_{\underline{Sets}}(G)$. The topos $B_G^{sm}$ is called the small classifying topos of the discrete group $G$. If $G$ is a profinite group, then the small classifying topos $B_G^{sm}$ is defined as the category of sets on which $G$ acts continuously.

Let $G$ be a locally compact topological group. Then $G$ represents a group object of $\mathcal{T}$, where $\mathcal{T}:=\widetilde{(Top,\mathcal{J}_{op})}$ is defined above. Then $$B_G:=B_{\mathcal{T}}(G)$$ is the classifying topos of the locally compact topological group $G$. One can define the classifying topos of an arbitrary topological group by enlarging the topos $\mathcal{T}$.

\subsubsection{The local section site.}\label{subsect-loc-section-site-BG}

For any locally compact topological group $G$, we denote by $B_{Top}(G)$ the category
of $G$-equivariant locally compact topological spaces (elements of a given universe). The category $B_{Top}(G)$ is endowed
with the local section topology $\mathcal{J}_{ls}$, which can be described as follows. A family of morphisms $\{X_i\rightarrow X,\,i\in I\}$ in $B_{Top}(G)$ is a covering family for $\mathcal{J}_{ls}$ if and only if the continuous map $\coprod_{i\in I}X_i\rightarrow X$ has local sections. Equivalently, $\mathcal{J}_{ls}$ is the topology induced by the open cover topology on $Top$ via the forgetful functor
$B_{Top}(G)\rightarrow Top$. The Yoneda functor yields a continuous fully faithful functor
$$B_{Top}(G)\longrightarrow B_G,$$
and the induced morphism
$$B_G\longrightarrow\widetilde{(B_{Top}(G),\mathcal{J}_{ls})}$$
is an equivalence (see \cite{MatFlach}).

\subsection{The classifying topos of a strict topological pro-group}

Topos theory provides a natural way to define the limit of a strict topological pro-group without any loss of information.
\begin{defn}\label{def-class-strict-top-progrp}
The classifying topos of a strict locally compact topological pro-group $\underline{G}:I\rightarrow Gr(Top)$ is defined as
$$B_{\underline{G}}:=\underleftarrow{lim}_{I}\, B_{G_i},$$
where the the projective limit is computed in the 2-category of topoi.
\end{defn}

By (\cite{SGA4} VI.8.2.3), a site for the projective limit topos $B_{\underline{G}}$ is given by $(\underrightarrow{lim}_I\,B_{Top}G_i,\mathcal{J})$, where $\underrightarrow{lim}_I\,B_{Top}G_i$ is the direct limit category and $\mathcal{J}$ is the coarsest topology such that all the functors
$$(B_{Top}G_i,\mathcal{J}_{ls})\longrightarrow (\underrightarrow{lim}_I\,B_{Top}G_i,\mathcal{J})$$
are continuous. The direct limit category
$$B_{Top}\underline{G}:=\underrightarrow{lim}_{I}\,B_{Top}G_i$$ can be made explicit as follows. An object of this category is given by a locally compact topological space on which $G_i$ acts continuously for some $i\in I$. Let $Z_i$ and $Z_j$ be two objects of $B_{Top}\underline{G}$, such that $Z_i$ and $Z_j$ are given with an action of $G_i$ and $G_j$ respectively. Then there exists an index $k\in I$ and maps $G_k\rightarrow G_i$ and $G_k\rightarrow G_i$ admitting local sections. Then a morphism $Z_j\rightarrow Z_i$ is a $G_k$-equivariant continuous map
$Z_j\rightarrow Z_i$. Consider the forgetful functor
$$B_{Top}\underline{G}\longrightarrow Top.$$
One can prove that the topology $\mathcal{J}$ on $B_{Top}\underline{G}$ is induced by the local section topology on $Top$ via this forgetful functor, so that the topology $\mathcal{J}$ can be denoted by $\mathcal{J}_{ls}$. We have obtained the following result.
\begin{prop}
The site $(B_{Top}\underline{G},\mathcal{J}_{ls})$ is a site for the classifying topos of the strict topological pro-group $\underline{G}$. In other words, the natural morphism
$$B_{\underline{G}}\longrightarrow\widetilde{(B_{Top}\underline{G},\mathcal{J}_{ls})}$$
is an equivalence.
\end{prop}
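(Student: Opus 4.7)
The plan is to invoke the general result of (\cite{SGA4} VI.8.2.3) on sites for projective limits of topoi, which was already announced in the paragraph preceding the statement, and then identify the resulting topology explicitly. The content of the proposition is almost entirely this identification.

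First, I would apply (\cite{SGA4} VI.8.2.3) to the filtered projective system of topoi $(B_{G_i})_{i\in I}$, each equipped with its defining site $(B_{Top}G_i,\mathcal{J}_{ls})$ from Section \ref{subsect-loc-section-site-BG}. This yields that $B_{\underline{G}}=\underleftarrow{lim}_I B_{G_i}$ admits as a site the direct limit category $B_{Top}\underline{G}=\underrightarrow{lim}_I B_{Top}G_i$ endowed with the coarsest topology $\mathcal{J}$ for which all the canonical functors $u_i:(B_{Top}G_i,\mathcal{J}_{ls})\to (B_{Top}\underline{G},\mathcal{J})$ are continuous. So the only thing left to prove is that this $\mathcal{J}$ coincides with the local section topology $\mathcal{J}_{ls}$ on $B_{Top}\underline{G}$, meaning the topology induced from $(Top,\mathcal{J}_{op})$ via the forgetful functor $v:B_{Top}\underline{G}\to Top$.

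The easy inclusion is $\mathcal{J}\subseteq\mathcal{J}_{ls}$. For every $i\in I$, the forgetful functor $B_{Top}G_i\to Top$ factors as $v\circ u_i$, and by definition of $\mathcal{J}_{ls}$ on $B_{Top}G_i$ this topology is precisely the one induced from $\mathcal{J}_{op}$. Hence $u_i$ is continuous when $B_{Top}\underline{G}$ is given the induced topology $\mathcal{J}_{ls}$; by minimality of $\mathcal{J}$, we conclude $\mathcal{J}\subseteq\mathcal{J}_{ls}$.

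For the converse $\mathcal{J}_{ls}\subseteq\mathcal{J}$, the point where strictness enters is that any covering family in $B_{Top}\underline{G}$ can be refined so as to lie in the image of some single $u_k$. Concretely, given an object $Z$ with $G_i$-action and a local section cover $\{Z_\alpha\to Z\}$ in $B_{Top}\underline{G}$, each $Z_\alpha$ carries a $G_{j_\alpha}$-action for some $j_\alpha$ and each transition morphism is $G_{k_\alpha}$-equivariant for some $k_\alpha$ dominating $i$ and $j_\alpha$. After passing to a common refinement (available because $I$ is filtered and all relevant transition maps $G_k\to G_i$, $G_k\to G_{j_\alpha}$ admit local sections, hence are epimorphisms in $\mathcal{T}$ so do not collapse the equivariant structure), the whole cover becomes a family in $B_{Top}G_k$ whose underlying continuous map admits local sections, i.e.\ a $\mathcal{J}_{ls}$-covering of $B_{Top}G_k$. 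Its image under $u_k$ is then a covering for $\mathcal{J}$, which proves $\mathcal{J}_{ls}\subseteq\mathcal{J}$.

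The main technical point is thus the refinement step in the last paragraph: one must check that in the filtered $2$-colimit $B_{Top}\underline{G}$ the strictness hypothesis on $\underline{G}$ lets us genuinely realize a given local section cover at a finite level $k$ without losing the local section property along the transition maps. Once this is granted, combining the two inclusions identifies $\mathcal{J}$ with $\mathcal{J}_{ls}$ and the proposition follows from (\cite{SGA4} VI.8.2.3).
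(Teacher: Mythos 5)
Your overall strategy is the one the paper has in mind: invoke SGA4 VI.8.2.3 to realize $B_{\underline{G}}$ as the topos of sheaves on the colimit site with the coarsest topology $\mathcal{J}$ making the $u_i$ continuous, and then identify $\mathcal{J}$ with $\mathcal{J}_{ls}$. The paper leaves that identification unproven ("one can prove that\ldots"), so you are filling in the genuinely missing part, and your argument for $\mathcal{J}\subseteq\mathcal{J}_{ls}$ via the factorization of the forgetful functors is fine.

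The gap is in the converse inclusion $\mathcal{J}_{ls}\subseteq\mathcal{J}$. You assert that a local section cover $\{Z_\alpha\to Z\}$ in $B_{Top}\underline{G}$ can be moved into a single $B_{Top}G_k$ "after passing to a common refinement, available because $I$ is filtered". But filteredness only supplies common upper bounds for \emph{finite} sets of indices, while the indexing set of the cover may be infinite, and the levels $k_\alpha$ at which the arrows $Z_\alpha\to Z$ live need then have no common upper bound $k$. In $Top$ one could collapse an infinite local section cover to the singleton $\coprod_\alpha Z_\alpha\to Z$, but here the $Z_\alpha$ carry actions of pairwise different $G_{j_\alpha}$, so the coproduct has no common equivariant structure and is not an object of $B_{Top}\underline{G}$. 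As written, the "common refinement" step therefore does not go through, and this is exactly the non-trivial content the paper is leaving to the reader. A repair requires a further ingredient, which is one reason the paper restricts to \emph{locally compact} $G$-spaces: one first covers $Z$ at level $i$ by objects $G_i\times C$ with $C\subseteq Z$ compact, over each such $C$ one extracts a \emph{finite} subfamily of the given cover by compactness, and for that finite subfamily filteredness really does produce a common $k$; the local-character (transitivity) axiom of the Grothendieck topology $\mathcal{J}$ then lets one conclude. Some such compactness reduction, or an assumption such as countable cofinality of $I$, is needed to make your refinement argument valid.
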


\section{The Weil-étale topos}
In this section we define a topos satisfying the expected
properties of the conjectural Lichtenbaum topos (see \cite{Fund-group-I}). This yields a new computation of the Weil-étale cohomology. Our construction is a suitable refinement of the family of  Weil-\'etale sites introduced by Lichtenbaum in
\cite{Lichtenbaum}.
We denote by $\bar{X}=(Spec\,\mathcal{O}_F,X_{\infty})$ the Arakelov
compactification of the ring of integers in a number field $F$ (i.e. $X_{\infty}$ is the set of archimedean places of $F$).

\subsection{The Weil-\'etale topos}\label{section-Lichtenbaum-topos}
As an illustration of the artificiality of the following
construction, we start by making several non-canonical choices.
\begin{choices}\label{choices-X}
~
\begin{enumerate}
\item We choose an algebraic closure $\bar{F}/F$.
\item We choose a Weil group
$W_F$.
\item For any place $v$ of $F$, we choose an algebraic
closure $\bar{F_v}/F_v$.
\item For any place $v$ of $F$, we choose a local Weil group $W_{F_v}$.
\item For any place $v$ of $F$, we choose an embedding $\bar{F}\rightarrow \bar{F_v}$ over $F$.
\item For any place $v$ of $F$, we choose a Weil map $\theta_v:W_{F_v}\longrightarrow
W_F$.
\end{enumerate}
\end{choices}
These choices are required to be compatible in the sense that the diagram
\[ \xymatrix{
W_{F_v}\ar[d]_{\theta_v}\ar[r]&G_{F_v}\ar[d]\\
W_F\ar[r]&G_F }\]
is commutative for any place $v$.

Recall that if $\bar{F}/F$ is an algebraic closure and $\bar{F}/K/F$ a finite Galois extension then the relative Weil group $W_{K/F}$ is defined by the extension of topological groups
$$1\rightarrow C_K \rightarrow W_{K/F}\rightarrow G_{K/F}\rightarrow 1$$
corresponding to the fundamental class in $H^2(G_{K/F},C_K)$ given by class field theory, where $C_K$ is the id\`ele class group of $K$. A Weil group of $F$ is then defined as the projective limit $W_F:=\underleftarrow{lim}\,W_{K/F}$, computed in the category of topological groups. A Weil group for the local field $F_v$ is defined as above, replacing the id\`ele class group $C_K$ with the mutiplicative group $K_w^{\times}$ where $K_w/F_v$ is finite and Galois.

\begin{defn}
Let $W^1_{F_v}$ be the maximal compact subgroup of $W_{F_v}$. The \emph{Weil group of the "residue field" at $v\in\bar{X}$}
is defined as $W_{k(v)}:=W_{F_v}/W^1_{F_v}$. We denote by
$$q_v:W_{F_v}\longrightarrow W_{F_v}/W^1_{F_v}=:W_{k(v)}$$ the map from
the local Weil group $W_{F_v}$ to the Weil group of the residue
field at $v$.
\end{defn}

Lichtenbaum defined in \cite{Lichtenbaum} a family of sites $T_{K/F,S}$ for $K/F$ Galois and $S$ a suitable finite set of primes of $F$. Then he defined the Weil-\'etale cohomology as the direct limit of the cohomologies of the sites $T_{K/F,S}$. Here we define a single site $T_{\bar{X}}$ inspired by a closer look at the \'etale site. This allows us to define a Weil-\'etale topos, over the Artin-Verdier \'etale topos, giving rise to the Weil-\'etale cohomology without the direct limit process.

\begin{defn}\label{defn-TX}
Let $T_{\bar{X}}$ be the category of objects $(Z_0,Z_v,f_v)$ defined
as follows. The topological space $Z_0$ is endowed with a continuous
$W_F$-action. For any place $v$ of $F$, $Z_v$ is a topological space
endowed with a continuous $W_{k(v)}$-action. The continuous map
$f_v:Z_v\rightarrow Z_0$ is $W_{{F_v}}$-equivariant, when $Z_v$ and
$Z_0$ are seen as $W_{{F_v}}$-spaces via the maps
$\theta_v:W_{F_v}\rightarrow W_{F}$ and $q_v:W_{F_v}\rightarrow
W_{k(v)}$. Moreover, we require the following facts.
\begin{itemize}
\item The map $f_v$ is an \emph{homeomorphism for almost all
places $v$ of $F$ and a continuous injective map for all places}.
\item  For any valuation $v$, the space $Z_v$ is locally compact.
\item The action of $W_F$ on $Z_0$ factors through $W_{K/F}$, for
some finite Galois subextension $\bar{F}/K/F$.
\end{itemize}
A \emph{morphism} $$\phi:(Z_0,Z_v,f_v)\longrightarrow(Z'_0,Z'_v,f'_v)$$
in the category $T_{\bar{X}}$ is a continuous $W_F$-equivariant map
$\phi:Z_0\rightarrow Z'_0$ \emph{inducing} a continuous map
$\phi_v:Z_v\rightarrow Z_v$ for any place $v$. Then $\phi_v$ is
necessarily $W_{k(v)}$-equivariant.

The category $T_{\bar{X}}$ is endowed with the local section
topology $\mathcal{J}_{ls}$, i.e. the topology generated by the
pretopology for which a family
$$\{\varphi_i:(Z_{i,0},Z_{i,v},f_{i,v})\rightarrow
(Z_0,Z_v,f_v),\,i\in I\}$$ is a covering family if the continuous map
$\coprod_{i\in I}Z_{i,v}\rightarrow Z_v$ has local sections, for any
place $v$.
\end{defn}

\begin{defn}\label{defn-wettoposX}
We define the \emph{Weil-\'etale topos} $\bar{X}_{W}$ as the topos of
sheaves of sets on the site defined above:
$$\bar{X}_{W}:=\widetilde{(T_{\bar{X}},\mathcal{J}_{ls})}.$$
\end{defn}

\begin{lem}
The site $(T_{\bar{X}},\mathcal{J}_{ls})$ is a left exact site.
\end{lem}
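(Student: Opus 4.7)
The claim decomposes into two checks: (a) $T_{\bar X}$ admits all finite projective limits (a terminal object and fibre products), and (b) the local-section topology $\mathcal{J}_{ls}$ is sub-canonical, i.e.\ every representable presheaf is a sheaf. I would treat these two points separately, with the bulk of the work lying in (b).

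For (a), I would take $(\{\ast\},\{\ast\}_v,\mathrm{id})$ with trivial group actions as the terminal object, and form fibre products componentwise: given morphisms $\phi\colon(Z_0,Z_v,f_v)\to(V_0,V_v,g_v)$ and $\psi\colon(Z'_0,Z'_v,f'_v)\to(V_0,V_v,g_v)$ in $T_{\bar X}$, set
\[
(Z_0,Z_v,f_v)\times_{(V_0,V_v,g_v)}(Z'_0,Z'_v,f'_v):=\bigl(Z_0\times_{V_0}Z'_0,\ Z_v\times_{V_v}Z'_v,\ f_v\times_{g_v}f'_v\bigr).
\]
The verifications of the conditions in Definition~\ref{defn-TX} are routine: the diagonal actions are continuous and still factor through some finite $W_{K/F}$ (taking $K$ to be a Galois closure of the compositum of the relevant fields); $W_{F_v}$-equivariance of the fibre product structure map is built in; injectivity is preserved by fibre products of injections; the map $f_v\times_{g_v}f'_v$ is a homeomorphism whenever each of $f_v,f'_v,g_v$ is, so for cofinitely many $v$; and local compactness holds because the fibre product embeds as a closed subspace of a product of two locally compact Hausdorff spaces.

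For (b), I would fix a representable $\mathrm{Hom}_{T_{\bar X}}\bigl(-,(Y_0,Y_v,g_v)\bigr)$ and a covering family $\{\varphi_i\colon W_i\to W\}$ together with a compatible family $\phi_i\colon W_i\to(Y_0,Y_v,g_v)$ agreeing on each fibre product $W_i\times_W W_j$, and verify the existence and uniqueness of a gluing $\phi$. Uniqueness reduces to the fact that $\coprod_i Z_{i,v}\twoheadrightarrow Z_v$ is an effective epimorphism in $Top$ (hence in $W_{k(v)}$-spaces), so the $\phi_{i,v}$ determine $\phi_v$, together with the remark that picking any place $v_0$ with $f_{v_0}$ a homeomorphism then forces $\phi_0$ to be determined by $\phi_{v_0}$. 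For existence, I would first glue each place-component $\phi_v\colon Z_v\to Y_v$ using the sub-canonicity of $\mathcal{J}_{ls}$ on the classifying topos $B_{Top}(W_{k(v)})$ established in Section~\ref{subsect-loc-section-site-BG}.

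The main obstacle is constructing the $W_F$-equivariant component $\phi_0\colon Z_0\to Y_0$. The key observation is that, for any place $v_0$ at which $f_{v_0}$ is a homeomorphism, a local section $s\colon U\to\coprod_i Z_{i,v_0}$ of the cover at $v_0$ lifts through $\coprod_i f_{i,v_0}\colon\coprod_i Z_{i,v_0}\hookrightarrow\coprod_i Z_{i,0}$ to a local section $\tilde{s}\colon U\to\coprod_i Z_{i,0}$ of $\coprod_i Z_{i,0}\to Z_0$, using the morphism identity $\varphi_{i,0}\circ f_{i,v_0}=f_{v_0}\circ\varphi_{i,v_0}$ and the fact that $f_{v_0}$ is a homeomorphism identifying $U$ with its image in $Z_0$. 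I would then define $\phi_0$ locally by $\phi_{i,0}\circ\tilde{s}$. Continuity is automatic, and compatibility on overlaps is exactly the hypothesis that the $\phi_i$ agree on $W_i\times_W W_j$. For $W_F$-equivariance, given $w\in W_F$ and $z\in Z_0$, one chooses local sections around $z$ and $wz$, landing in $Z_{i,0}$ and $Z_{j,0}$ respectively; the pair $\bigl(w\cdot\tilde{s}(z),\tilde{s}'(wz)\bigr)$ then lies in the fibre product $Z_{i,0}\times_{Z_0}Z_{j,0}$, and the compatibility of the $\phi_i$'s on this fibre product yields $\phi_0(wz)=w\cdot\phi_0(z)$. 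Finally, the $\phi_v$ induced by $\phi_0$ agrees with the one glued at place $v$ by the uniqueness step, giving a well-defined morphism $\phi\colon W\to(Y_0,Y_v,g_v)$ with $\phi\circ\varphi_i=\phi_i$.
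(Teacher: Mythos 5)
Your proof is correct and follows the same route as the paper's — reduce to the known subcanonicity of $\mathcal{J}_{ls}$ on $B_{Top}(G)$ for a single topological group, and handle finite limits componentwise — but where the paper disposes of subcanonicity with ``the result follows easily from this fact,'' you actually carry out the verification. The nonobvious step you correctly isolate is that a covering family in $T_{\bar X}$ is only required to have local sections place-by-place, so one must argue that $\coprod_i Z_{i,0}\to Z_0$ nevertheless admits local sections; your device of fixing a place $v_0$ where $f_{v_0}$ is a homeomorphism and transporting a local section of $\coprod_i Z_{i,v_0}\to Z_{v_0}$ across $f_{v_0}$ and the inclusions $f_{i,v_0}\colon Z_{i,v_0}\hookrightarrow Z_{i,0}$ (using $\varphi_{i,0}\circ f_{i,v_0}=f_{v_0}\circ\varphi_{i,v_0}$) is exactly the right observation, and notably does \emph{not} need the $f_{i,v_0}$ themselves to be homeomorphisms, only the single $f_{v_0}$. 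Your equivariance check via the fibre-product compatibility and your handling of the induced maps $\phi_v$ are also sound. Two small remarks: in part (a) you do not actually need $g_v$ to be a homeomorphism for $f_v\times_{g_v}f'_v$ to be one (injectivity of $g_v$ suffices), though for the ``almost all $v$'' statement this is harmless; and when you say the action on the fibre product factors through ``a Galois closure of the compositum,'' note that the compositum $KK'$ of two finite Galois extensions of $F$ is already Galois, and $W_F/(W_K^c\cap W_{K'}^c)$ is a quotient of $W_{KK'/F}$, which is what you need.
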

\begin{proof}
The category $T_{\bar{X}}$ has fiber products and a final object, hence finite projective limits are representable in $T_{\bar{X}}$. It
remains to show that $\mathcal{J}_{ls}$ is subcanonical. But for any topological group $G$, the local section topology
$\mathcal{J}_{ls}=\mathcal{J}_{op}$ on $B_{Top}{G}$ is nothing else than the open cover topology (see \cite{MatFlach} Corollary 2), which
is easily seen to be subcanonical. The result follows easily from this fact.
\end{proof}

\begin{prop}
We have a morphism of topoi
$$j:B_{W_{F}}\longrightarrow\bar{X}_W.$$
\end{prop}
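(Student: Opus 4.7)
The plan is to use the correspondence, recalled earlier, between morphisms of left exact sites and morphisms of topoi in the opposite direction. Concretely, I will exhibit a continuous left exact functor
$$u : T_{\bar{X}} \longrightarrow B_{Top}(W_F),$$
where $B_{Top}(W_F)$ is equipped with the local section topology $\mathcal{J}_{ls}$, seen above as a site for $B_{W_F}$. By the general principle, such a $u$ induces a morphism of topoi $j : B_{W_F} \to \bar{X}_W$. On objects, $u$ is the forgetful functor $(Z_0, Z_v, f_v) \mapsto Z_0$, and on morphisms $\phi \mapsto \phi_0$. The $W_F$-action on $Z_0$ is continuous by definition, and $Z_0$ is locally compact because there exists at least one place $v$ for which $f_v : Z_v \to Z_0$ is a homeomorphism, with $Z_v$ locally compact by assumption. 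Hence $u$ indeed lands in $B_{Top}(W_F)$.

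Left exactness is straightforward: finite limits in $T_{\bar{X}}$ are computed componentwise (in the $Z_0$-slot and in each $Z_v$-slot), and so the forgetful functor $u$ preserves the terminal object and fiber products. The substantive step is continuity. Given a covering family $\{\varphi_i : (Z_{i,0}, Z_{i,v}, f_{i,v}) \to (Z_0, Z_v, f_v)\}_{i \in I}$ in $T_{\bar{X}}$, I must show that $\coprod_{i} Z_{i,0} \to Z_0$ admits local sections. Given $x \in Z_0$, pick a place $v$ for which $f_v$ is a homeomorphism (cofinitely many such $v$ exist) and set $y := f_v^{-1}(x) \in Z_v$. By hypothesis, $\coprod_{i} Z_{i,v} \to Z_v$ admits a continuous local section $s$ defined on an open neighborhood $U$ of $y$ and landing in some $Z_{i_0, v}$. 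The composite
$$s_0 := f_{i_0, v} \circ s \circ f_v^{-1} : f_v(U) \longrightarrow Z_{i_0, 0}$$
is then continuous on the neighborhood $f_v(U)$ of $x$, and the commutativity $f_v \circ \varphi_{i_0, v} = \varphi_{i_0, 0} \circ f_{i_0, v}$ built into the definition of a morphism in $T_{\bar{X}}$ yields $\varphi_{i_0, 0} \circ s_0 = \mathrm{id}_{f_v(U)}$, providing the required local section.

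The main obstacle is precisely this transfer of local sections from the $Z_v$-level to the $Z_0$-level; it is what forces the "$f_v$ is a homeomorphism for almost all $v$" clause in Definition~\ref{defn-TX} to be there. Once continuity is established, the correspondence between morphisms of left exact sites and morphisms of topoi produces the desired morphism $j : B_{W_F} \to \bar{X}_W$.
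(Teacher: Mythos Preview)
Your proof is correct and follows exactly the same approach as the paper: exhibit the forgetful functor $(Z_0,Z_v,f_v)\mapsto Z_0$ as a morphism of left exact sites from $(T_{\bar X},\mathcal{J}_{ls})$ to $(B_{Top}W_F,\mathcal{J}_{ls})$ and invoke the induced morphism of topoi. The paper's own proof is a one-line assertion of this fact, whereas you have supplied the verifications the paper leaves implicit---in particular that $Z_0$ is locally compact and that covering families are preserved, both via the ``$f_v$ is a homeomorphism for almost all $v$'' clause.
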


\begin{proof}
The classifying topos $B_{W_{F}}$ is defined as the topos of
$y(W_{F})$-objects of $\mathcal{T}$ and the site
$(B_{Top}{W_{F}},\mathcal{J}_{ls})$ is a defining site for $B_{W_{F}}$ (see section \ref{subsect-loc-section-site-BG}). We have a morphism of left exact sites
$$\fonc{j^*}{(T_{\bar{X}},\mathcal{J}_{ls})}{(B_{Top}{W_{F}},\mathcal{J}_{ls})}{(Z_0,Z_v,f_v)}{Z_0}.$$
inducing the morphism of topoi $j$.
\end{proof}
We have a morphism of left exact sites
\begin{equation}\label{morphism-sites-toT}
\fonc{t^*}{(Top,\mathcal{J}_{op})}{(T_{\bar{X}},\mathcal{J}_{ls})}{Z}{(Z,Z,Id_Z)}
\end{equation}
\begin{defn} The canonical morphism from $\bar{X}_{W}$ to $\mathcal{T}$ is the morphism of topoi
$$t:\bar{X}_{W}\longrightarrow\mathcal{T}$$
induced by the morphism of left exact sites
(\ref{morphism-sites-toT}).
\end{defn}

Consider the functor $u^*:\mathcal{T}\rightarrow B_{W_F}$ sending an object $\mathcal{L}$
of $\mathcal{T}$ to $\mathcal{L}$ with trivial $y(W_F)$-action. This
functor commutes with arbitrary inductive and arbitrary projective
limits. Therefore we have a sequence of three adjoint functors
$u_!\,,\,u^*\,,\,u_*$. More explicitly, one has
$u_!\mathcal{\mathcal{F}}=\mathcal{F}/{y(W_F)}$ and
$u_*(\mathcal{F})=\underline{Hom}_{W_F}(\{*\},\mathcal{F})$, where
$\{*\}$ is the final object of $B_{W_F}$. We have a
connected ($u^*$ is clearly fully faithful) and locally connected
morphism
$$u:B_{W_F}\longrightarrow \mathcal{T}.$$
The topos $B_{W_F}$ has a canonical point $q$ over $\mathcal{T}$. In other words there exists a section $q:\mathcal{T}\rightarrow B_{W_F}$ of the structure map $u:B_{W_F}\rightarrow \mathcal{T}$. Indeed, the inverse image of the morphism $q$ is the functor $q^*:B_{W_F}\rightarrow\mathcal{T}$ sending
a $y(W_F)$-object $\mathcal{F}$ to $\mathcal{F}$ with no action. Therefore we have a canonical isomorphism of functor $Id\simeq q^*\circ u^*$, hence an isomorphism of morphisms of topoi :
\begin{equation}\label{point-B_W}
Id\simeq u\circ q: \mathcal{T}\rightarrow B_{W_F}\rightarrow
\mathcal{T}.
\end{equation}
Of course everything above is valid for any topological
group $G$ (and more generally for any group object $\mathcal{G}$ in
any topos $\mathcal{E}$).
\begin{prop}\label{prop-point-XL-and-j}
One has a canonical isomorphism
$$u\simeq t\circ j:B_{W_F}\longrightarrow\bar{X}_W\longrightarrow\mathcal{T}$$
In particular the morphism $j\circ q$ is a point of $\bar{X}_W$ over
$\mathcal{T}$, i.e. the following diagram is commutative.
\[ \xymatrix{
B_{W_F}\ar[r]^j&\bar{X}_W\ar[d]^t\\
\mathcal{T}\ar[u]^q\ar[ru]^p\ar[r]^{Id} &\mathcal{T} }\] If there is a risk of ambiguity, the point $p$ of $\bar{X}_W$ over $\mathcal{T}$ will be denoted by $p_{\bar{X}}$.
\end{prop}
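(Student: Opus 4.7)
The plan is to reduce the claim to a comparison of the three inverse image functors on the generating subcategory $Top \subset \mathcal{T}$, where everything becomes a direct inspection, and then to derive the point statement as a formal consequence.

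First, I would recall that $(t\circ j)^{*} = j^{*}\circ t^{*}$, so the task is to produce a natural isomorphism of functors $j^{*}\circ t^{*}\simeq u^{*}:\mathcal{T}\to B_{W_{F}}$. Since $t^{*}$ and $u^{*}$ are defined via morphisms of left exact sites out of $(Top,\mathcal{J}_{op})$, and since $Top$ is a topologically generating subcategory of $\mathcal{T}$ whose Yoneda embedding is left exact, it suffices to compare the three inverse image functors on representable sheaves $y(Z)$ for $Z\in Top$: the functor $u^{*}\circ y$ (resp.\ $j^{*}\circ t^{*}\circ y$) is left exact and commutes with arbitrary colimits, and any such functor out of $\mathcal{T}$ is determined by its restriction to a topologically generating subcategory.

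Second, I would compute both sides on a locally compact space $Z$. By the defining formula (\ref{morphism-sites-toT}), we have $t^{*}(Z) = (Z, Z, \mathrm{Id}_{Z})$ in $T_{\bar{X}}$, viewed in $\bar{X}_{W}$ via Yoneda. Applying the inverse image $j^{*}$ from the previous proposition, which sends $(Z_{0}, Z_{v}, f_{v})\mapsto Z_{0}$, one obtains $j^{*}t^{*}(Z) = Z$ equipped with the $W_{F}$-action on the first coordinate of $(Z, Z, \mathrm{Id}_{Z})$; but by construction of $t^{*}$ this action is trivial. On the other hand $u^{*}(Z)$ is $Z$ with trivial $y(W_{F})$-action by definition. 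Hence the two functors agree on representables via the identity natural transformation, and this extends uniquely to the desired isomorphism $u\simeq t\circ j$.

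For the point statement, set $p := j\circ q$. Composing with $t$ and using the isomorphism just established, together with (\ref{point-B_W}), gives
\[
t\circ p \;=\; t\circ j\circ q \;\simeq\; u\circ q \;\simeq\; \mathrm{Id}_{\mathcal{T}},
\]
so $p$ is a section of $t$, i.e.\ a $\mathcal{T}$-valued point of $\bar{X}_{W}$, and the triangle in the statement commutes up to the canonical 2-isomorphism.

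The only real subtlety will be the first reduction: verifying that a natural isomorphism defined on the generating subcategory $Top$ extends canonically to all of $\mathcal{T}$, and that this extension is compatible with the already-constructed morphisms of topoi. This is a standard application of the universal property of a site (any left exact, colimit-preserving functor out of $\mathcal{T}$ is determined by its restriction to a topologically generating left exact subcategory), so the argument is essentially formal once the values on $Top$ are computed.
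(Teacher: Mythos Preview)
Your proof is correct and follows essentially the same approach as the paper. The paper's proof is even more terse: it simply observes that the composite of the site-level functors $j^{*}\circ t^{*}$ (sending $Z\mapsto (Z,Z,\mathrm{Id}_Z)\mapsto Z$ with trivial $W_F$-action) coincides with $u^{*}$ as a morphism of left exact sites, and invokes the functoriality of the passage from morphisms of left exact sites to morphisms of topoi. This bypasses the ``extension from representables'' step you flag as a subtlety, since composition of site morphisms automatically induces composition of the associated topos morphisms; but your version is an equally valid unpacking of the same argument.
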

\begin{proof}
The first claim of the proposition follows immediately from the
description of these morphisms of topoi in terms of morphisms of
left exact sites. The second claim then follows from
(\ref{point-B_W}).
\end{proof}

\begin{prop}\label{prop-nice}
The morphism $t:\bar{X}_{W}\rightarrow\mathcal{T}$ is connected
and locally connected.
\end{prop}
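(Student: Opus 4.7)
The plan is to establish both properties simultaneously by constructing an explicit left adjoint $t_!$ of $t^*$ at the site level, and then verifying (i) that the counit $t_!t^*\to\mathrm{id}$ is an isomorphism (giving connectedness via fully faithfulness of $t^*$), and (ii) that $t_!$ satisfies the Frobenius reciprocity formula required for it to be $\mathcal{T}$-indexed (giving local connectedness).

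First I would set $t_!(Z_0,Z_v,f_v):=Z_0/W_F$, the orbit space of $Z_0$ under the $W_F$-action (which, recall, factors through some $W_{K/F}$). To see that this gives a left adjoint to $Z\mapsto (Z,Z,\mathrm{id}_Z)$ at the site level, I would verify the adjunction
\[ Hom_{T_{\bar{X}}}((Z_0,Z_v,f_v),\,t^*W)\;\simeq\;Hom_{Top}(Z_0/W_F,W). \]
A morphism on the left is a $W_F$-equivariant continuous map $\phi\colon Z_0\to W$ (with $W$ carrying trivial actions), whose restriction along each $f_v$ is $W_{k(v)}$-equivariant. The $W_F$-invariance identifies such maps with continuous maps $Z_0/W_F\to W$, and the compatibility on each $Z_v$ is automatic: since $f_v$ is $W_{F_v}$-equivariant, with $W_{F_v}$ acting on $Z_0$ via $\theta_v$ and on $Z_v$ via $q_v$, the composite $\phi\circ f_v$ is $W_{F_v}$-invariant and hence descends to a $W_{k(v)}$-invariant map on $Z_v$.

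I would then lift this adjunction to the topos level and extract both conclusions. Note that $Z_0$ is actually locally compact, since $f_v\colon Z_v\xrightarrow{\sim}Z_0$ for almost every $v$ and each such $Z_v$ is locally compact; hence $Z_0/W_F$ is a Hausdorff space and the extension of $t_!$ to all sheaves is given by the usual left Kan extension followed by sheafification. Connectedness is immediate from evaluating on representables: $t_!t^*(Z)=t_!(Z,Z,\mathrm{id}_Z)=Z/W_F=Z$, because the $W_F$-action on $t^*(Z)$ is trivial, so the counit is an identity and $t^*$ is fully faithful. Frobenius reciprocity---the content of the $\mathcal{T}$-indexed condition---is equally direct on representables: for $X=(Z_0,Z_v,f_v)$ and $Y\in Top$ one has $X\times t^*Y=(Z_0\times Y,\,Z_v\times Y,\,f_v\times\mathrm{id}_Y)$ with $W_F$ acting only on the first factor, so
\[ t_!(X\times t^*Y)=(Z_0\times Y)/W_F=(Z_0/W_F)\times Y=t_!(X)\times Y \]
naturally in both variables.

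The one genuine obstacle is that the orbit space $Z_0/W_F$ need not be locally compact in general, since the continuous $W_F$-action on $Z_0$ is typically not proper. I would handle this either by working inside the topos $\mathcal{T}'=\widetilde{(Top^h,\mathcal{J}_{op})}$ of Hausdorff spaces introduced in the earlier remark and descending along $\mathcal{T}\hookrightarrow\mathcal{T}'$, or by checking directly that the presheaf on $Top$ represented by $Z_0/W_F$ is a sheaf for $\mathcal{J}_{op}$ with the required functoriality. Once this point is dispatched, universality of colimits in a topos propagates both the counit identity and the Frobenius formula from representables to arbitrary sheaves, finishing the proof.
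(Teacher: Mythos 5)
Your overall strategy is sound and close in spirit to the paper's, but the proposed construction of $t_!$ has a genuine gap that your ``dispatch'' does not repair.

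The key issue is that you define $t_!$ at the site level as the topological orbit space $Z_0/W_F$, and this is simply not the right object in general. What one actually needs is the quotient \emph{in the topos $\mathcal{T}$}, i.e.\ the coequalizer $y(Z_0)/y(W_F)$ of $y(W_F)\times y(Z_0)\rightrightarrows y(Z_0)$. There is always a canonical comparison map $y(Z_0)/y(W_F)\to y(Z_0/W_F)$, but it is an isomorphism only when the quotient map $Z_0\to Z_0/W_F$ admits local sections -- and nothing in the definition of $T_{\bar{X}}$ forces this. Moreover, since the $W_F$-action is not assumed proper, $Z_0/W_F$ need not even be Hausdorff, so your fallback to $\mathcal{T}'=\widetilde{(Top^h,\mathcal{J}_{op})}$ does not help, and ``checking that $Z_0/W_F$ is a sheaf'' misses the point: the obstacle is not sheafiness of a representable presheaf, but representability of the sheaf quotient. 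Consequently the adjunction you write down at the site level does extend to $\mathcal{L}$ representable, but your appeal to ``universality of colimits'' to push it to arbitrary $\mathcal{L}$ fails, because you are taking colimits in the \emph{right} Hom-variable, where Hom does not commute with colimits.

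The paper resolves exactly this by defining $t_!:=u_!\circ j^*$, where $j\colon B_{W_F}\to\bar{X}_W$ is the generic-point morphism and $u_!$ is the sheaf-theoretic quotient by $y(W_F)$ coming from the already-established connectedness and local connectedness of $u\colon B_{W_F}\to\mathcal{T}$. This is always defined, and the whole adjunction reduces to establishing $Hom_{\bar{X}_W}(\mathcal{F},t^*\mathcal{L})=Hom_{B_{W_F}}(j^*\mathcal{F},u^*\mathcal{L})$. That formula is proved by an argument with colimits in the \emph{left} variable (where Hom does dualize colimits), applied to a generating family. The topological-quotient picture you want is then isolated in the subcategory $\mathbb{C}_{\bar{X}}$ of objects for which $Z_0/W_F$ is locally compact and the comparison $y(Z_0)/y(W_F)\to y(Z_0/W_F)$ is an isomorphism; the paper shows in Corollary~\ref{cor-sympa} that $\mathbb{C}_{\bar{X}}$ is topologically generating, using the explicit generating objects $\mathcal{G}_{L,V,u,T}$ and the local-section results (Theorem~\ref{thm-loc-sections}). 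If you restrict your site-level computation of $t_!$ to $\mathbb{C}_{\bar{X}}$ and then argue as above, your proof would converge with the paper's; as written, the extension from representables is unjustified.

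Your Frobenius/indexed-left-adjoint computation on representables is morally correct, and the paper does an analogous verification, but again at the topos level using $t_!=u_!j^*$ and the identity $u_!u^*\simeq\mathrm{id}$ (connectedness of $u$) plus universality of colimits in $\mathcal{T}$, not a pointwise formula on the site.
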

\begin{proof}
Let us first make the inverse image functor $t^*$ explicit. Consider
the full subcategory $\mathbb{C}_{\bar{X}}$ of  $T_{\bar{X}}$
consisting in objects $(Z_0,Z_v,f_v)$ such that the quotient space $Z_0/W_F$ is locally compact and such that the canonical morphism in
$\mathcal{T}$ $$y(Z_0)/y(W_F)\longrightarrow y(Z_0/W_F)$$ is an isomorphism.
By Corollary \ref{cor-sympa}, $\mathbb{C}_{\bar{X}}$ is a
\emph{topologically generating} family of $T_{\bar{X}}$ (see
\cite{SGA4} II 3.0.1). Hence the sheaf $t^*\mathcal{L}$ is
completely determined by its restriction to $\mathbb{C}_{\bar{X}}$,
for any $\mathcal{L}$ of $\mathcal{T}$.

One can prove that that one has
\begin{equation}\label{description-t^*-representable}
t^*\mathcal{L}(Z_0,Z_v,f_v)=Hom_{B_{W_F}}(y(Z_0),u^*\mathcal{L})=u^*\mathcal{L}(Z_0).
\end{equation}
for any object $\mathcal{L}$ of $\mathcal{T}$ and any
$(Z_0,Z_v,f_v)\in\mathbb{C}_{\bar{X}}$. Indeed, we check that one
has a bifunctorial isomorphism, in $\mathcal{L}\in\mathcal{L}$ and
$\mathcal{F}\in\bar{X}_W$ :
$$Hom_{\bar{X}_W}(t^*\mathcal{L},\mathcal{F})\simeq Hom_{\mathcal{T}}(\mathcal{L},t_*\mathcal{F}),$$
where $t^*\mathcal{L}$ is defined as above and
$t_*\mathcal{F}(Z):=\mathcal{F}(Z,Z,Id_Z)$ for any $Z\in Ob(Top)$.
The proof of this fact is tedious but straightforward, using the
fact that $(Z_0/W_F,Z_0/W_F,Id)\in\mathbb{C}_{\bar{X}}$ and the
identification $y(Z_0/W_F)=y(Z_0)/y(W_F)$.

More generally, we have
\begin{equation}\label{description-t^*}
Hom_{\bar{X}_W}(\mathcal{F},t^*\mathcal{L})=Hom_{B_{W_F}}(j^*\mathcal{F},u^*\mathcal{L})
\end{equation}
for any object $\mathcal{F}$ of $\bar{X}_W$ and any object
$\mathcal{L}$ of $\mathcal{T}$. Indeed the family of representable
objects $y(Z_0,Z_v,f_v)$ is a generating family of $\bar{X}_W$ (see
\cite{SGA4} II Proposition 4.10) hence any object $\mathcal{F}$ of
$\bar{X}_W$ can be written as an (arbitrary) inductive limit of such
representable objects (see \cite{SGA4} I Proposition 7.2). Therefore
(\ref{description-t^*}) follows from
(\ref{description-t^*-representable}) and from the fact that $j^*$
commutes with inductive limits and with the Yoneda embedding.

If $\mathcal{L}$ and $\mathcal{L}'$ are two objects of
$\mathcal{T}$, then one has
$$Hom_{\bar{X}_W}(t^*\mathcal{L}',t^*\mathcal{L})=Hom_{B_{W_F}}(j^*t^*\mathcal{L}',u^*\mathcal{L})
=Hom_{B_{W_F}}(u^*\mathcal{L}',u^*\mathcal{L})=Hom_{\mathcal{T}}(\mathcal{L}',\mathcal{L})$$
since $t\circ j\simeq u$ and $u^*$ is fully faithful. Hence $t^*$ is
fully faithful, i.e. $t$ is connected.

Let us define the left adjoint of $t^*$. We consider the functor
defined by
$$t_!\mathcal{F}:=u_!j^*\mathcal{F}=j^*\mathcal{F}/y(W_F)$$
where the quotient $j^*\mathcal{F}/y(W_F)$ is defined in
$\mathcal{T}$, for any object $\mathcal{F}$ of $\bar{X}_W$. The
following identifications show that $t_!$ is left adjoint to $t^*$.
$$Hom_{\mathcal{T}}(t_!\mathcal{F},\mathcal{L})=Hom_{\mathcal{T}}(u_!j^*\mathcal{F},\mathcal{L})
=Hom_{B_{W_F}}(j^*\mathcal{F},u^*\mathcal{L})=Hom_{\bar{X}_W}(\mathcal{F},t^*\mathcal{L}).$$
It remains to show that the functor $t_!$ is a $\mathcal{T}$-indexed left adjoint to $t^*$. This means that for any morphism $x:I\rightarrow J$
in $\mathcal{T}$, the natural transformation
\begin{equation}\label{nat-transform}
t_!^I\circ(t^*x)^*\rightarrow x^*\circ t_!^J
\end{equation}
defined by the square
\[ \xymatrix{
\bar{X}_W/t^*I\ar[r]^{t_!^I}&\mathcal{T}/I\\
\bar{X}_W/t^*J\ar[u]^{(t^*x)^*}\ar[r]^{t_!^J}&\mathcal{T}/J\ar[u]^{x^*}
}\] should be an isomorphism (see \cite{elephant} B.3.1.1). Here the functor
$$
\fonc{x^*}{\mathcal{T}/J}{\mathcal{T}/I}{(\mathcal{L}\rightarrow
J)}{(\mathcal{L}\times_JI\rightarrow I)}
$$
is the usual base change and one has
$$
\fonc{t_!^J}{\bar{X}_W/t^*J}{\mathcal{T}/J}{(\mathcal{F}\rightarrow
t^*J)}{(t_!\mathcal{F}\rightarrow t_!t^*J\rightarrow J)}
$$
where the map $t_!t^*J\rightarrow J$ is given by adjunction. Let
$\mathcal{F}\rightarrow t^*J$ be an object of $\bar{X}_W/t^*J$, and
denote it by $\mathcal{F}$. On the one hand one has
$$t_!^I\circ(t^*x)^*\mathcal{F}=t_!(\mathcal{F}\times_{t^*J}t^*I)$$
and
$$x^*\circ t_!^J(\mathcal{F}\rightarrow t^*J)=t_!(\mathcal{F})\times_{J}I$$
one the other. Hence the natural transformation
(\ref{nat-transform}) is given by the canonical morphism from
$$t_!(\mathcal{F}\times_{t^*J}t^*I)=u_!j^*(\mathcal{F}\times_{t^*J}t^*I)
=u_!(j^*\mathcal{F}\times_{j^*t^*J}j^*t^*I)=u_!(j^*\mathcal{F}\times_{u^*J}u^*I)$$
to
$$u_!j^*\mathcal{F}\times_{u_!u^*J}u_!u^*I\simeq u_!j^*\mathcal{F}\times_{J}I.$$
This morphism is an isomorphism because
$u:B_{W_F}\rightarrow\mathcal{T}$ is connected and locally
connected. Indeed, the adjunction map $u_!u^*I\rightarrow I$ is an
isomorphism since $u^*$ is fully faithful. Then
$$u_!(j^*\mathcal{F}\times_{u^*J}u^*I)=(j^*\mathcal{F}\times_{J}I)/yW_F$$
is canonically isomorphic to
$$(j^*\mathcal{F}/yW_F)\times_{J}I$$
since inductive limits (in particular quotients of group actions)
are universal in $\mathcal{T}$. For a down to earth argument proving
the very last claim of this proof, one can use the fact that
$\mathcal{T}$ has enough $\underline{Sets}$-valued points, and check
that $(j^*\mathcal{F}\times_{J}I)/yW_F\rightarrow
(j^*\mathcal{F}/yW_F)\times_{J}I$ induces isomorphisms on stalks.

\end{proof}

\begin{defn}
An object $\mathcal{F}$ of $\bar{X}_W$ is said to be \emph{constant
over $\mathcal{T}$} if there is an isomorphism $\mathcal{F}\simeq
t^*\mathcal{L}$, where $\mathcal{L}$ is an object of $\mathcal{T}$.
\end{defn}
\begin{cor}\label{cor-constant-generic}
If $\mathcal{F}$ is a constant object over $\mathcal{T}$ then the
adjunction map
$$\mathcal{F}\longrightarrow j_*j^*\mathcal{F}$$
is an isomorphism.
\end{cor}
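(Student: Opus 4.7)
The plan is to identify the source and target of the adjunction map as representing the same functor on $\bar{X}_W$. The key input is equation~(\ref{description-t^*}) established inside the proof of Proposition~\ref{prop-nice}, namely
\[
Hom_{\bar{X}_W}(\mathcal{F},t^*\mathcal{L}) \;\simeq\; Hom_{B_{W_F}}(j^*\mathcal{F},u^*\mathcal{L})
\]
bifunctorially in $\mathcal{F}\in\bar{X}_W$ and $\mathcal{L}\in\mathcal{T}$, together with the canonical identification $j^*\circ t^*\simeq u^*$ supplied by Proposition~\ref{prop-point-XL-and-j}.

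First, using $j^*t^*\mathcal{L}\simeq u^*\mathcal{L}$, the target of the adjunction map can be rewritten as $j_*j^*t^*\mathcal{L}\simeq j_*u^*\mathcal{L}$. Applying the $(j^*,j_*)$-adjunction on the right-hand side of (\ref{description-t^*}) yields
\[
Hom_{B_{W_F}}(j^*\mathcal{F},u^*\mathcal{L}) \;\simeq\; Hom_{\bar{X}_W}(\mathcal{F},j_*u^*\mathcal{L}).
\]
Combining the two displays, both $t^*\mathcal{L}$ and $j_*j^*t^*\mathcal{L}$ represent the same functor $\mathcal{F}\mapsto Hom_{B_{W_F}}(j^*\mathcal{F},u^*\mathcal{L})$ on $\bar{X}_W$, so the Yoneda lemma produces a canonical isomorphism between them.

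It remains to verify that this canonical isomorphism coincides with the unit $\eta:\mathcal{F}'\to j_*j^*\mathcal{F}'$ evaluated at $\mathcal{F}'=t^*\mathcal{L}$. By construction, a morphism $t^*\mathcal{L}\to j_*u^*\mathcal{L}$ corresponds under the $(j^*,j_*)$-adjunction to a morphism $j^*t^*\mathcal{L}=u^*\mathcal{L}\to u^*\mathcal{L}$, and $\eta$ corresponds precisely to the identity. On the other hand, tracing the composite of the two natural isomorphisms above with $\mathcal{F}=t^*\mathcal{L}$ shows that the canonical Yoneda isomorphism comes from the identity morphism of $t^*\mathcal{L}$, which by (\ref{description-t^*}) also corresponds to the identity of $u^*\mathcal{L}=j^*t^*\mathcal{L}$. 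Hence the two morphisms agree, and the adjunction map is an isomorphism.

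The only non-formal ingredient is (\ref{description-t^*}), which is already established in the proof of Proposition~\ref{prop-nice}; everything else is pure adjunction and Yoneda yoga, so I do not expect any serious obstacle. The one point requiring a small amount of care is the last verification that the abstract Yoneda isomorphism is indeed the adjunction unit, and this is settled by the standard correspondence between units of adjunctions and identity morphisms.
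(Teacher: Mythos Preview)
Your argument is correct and rests on the same key input as the paper's proof, namely the identity established inside Proposition~\ref{prop-nice}. The paper simply evaluates both $\mathcal{F}=t^*\mathcal{L}$ and $j_*j^*\mathcal{F}$ on objects $(Z_0,Z_v,f_v)$ of the generating family using (\ref{description-t^*-representable}), while you run the equivalent Yoneda/representability argument using the Hom-level version (\ref{description-t^*}); these are two phrasings of the same computation.
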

\begin{proof}
This follows immediately from (\ref{description-t^*-representable}).
Indeed, if $\mathcal{F}=t^*\mathcal{L}$ then
$$\mathcal{F}(Z_0,Z_v,f_v)=t^*\mathcal{L}(Z_0,Z_v,f_v)=u^*\mathcal{L}(Z_0)=j^*\mathcal{F}(Z_0)=j_*j^*\mathcal{F}(Z_0,Z_v,f_v).
$$
for any object $(Z_0,Z_v,f_v)$ of $T_{\bar{X}}$.
\end{proof}

\begin{defn}
Let $\mathcal{F}$ be an object of $\bar{X}_W$. The object of
$\mathcal{T}$
$$t_!\mathcal{F}:=(j^*\mathcal{F})/y(W_F)$$
is called the \emph{space of connected components} of $\mathcal{F}$.
\end{defn}

\begin{defn}
An object  $\mathcal{F}$ of $\bar{X}_W$ is said to be
\emph{connected over $\mathcal{T}$} if its space of connected
components $t_!\mathcal{F}$ is the final object of $\mathcal{T}$.
\end{defn}
Consider for example a constant object $\mathcal{F}=t^*\mathcal{L}$
over $\mathcal{T}$. Then the space of connected components of
$\mathcal{F}$ is
$$t_!\mathcal{F}=t_!t^*\mathcal{L}\simeq\mathcal{L}$$
since $t^*$ is fully faithful. Therefore a constant object
$\mathcal{F}=t^*\mathcal{L}$ of $\bar{X}_W$ over $\mathcal{T}$ is
connected over $\mathcal{T}$ if and only if $\mathcal{F}$ is the
final object of $\bar{X}_W$.

\begin{rem}
Note that $t_!\mathcal{F}$ is not a topological space in general.
However this is a topological space when $\mathcal{F}$ is
representable by an object $(Z_0,Z_v,f_v)$ such that
$y(Z_0)/y(W_F)=y(Z_0/W_F)$. Our terminology is justified by the fact
that any object of $\mathcal{T}$ is topological in nature.
\end{rem}

\subsection{The morphism from the Weil-\'etale topos to the Artin-Verdier étale topos}\label{subsect-gamma}

Let $\bar{X}$ be the Arakelov compactification of a number ring
$\mathcal{O}_F$. The set $\bar{X}$ is given with the Zariski topology. We recall below the definition of the Artin-Verdier
étale site of $\bar{X}$. We refer to \cite{On the WE} for more details.

A \emph{connected étale $\bar{X}$-scheme} is a map
$$\bar{U}=(U;U_{\infty})\longrightarrow\bar{X}=(X;X_{\infty}),$$ where $U$
is a connected étale $X$-scheme in the usual sense. The set
$U_{\infty}$ is a subset of $U(\mathbb{C})/\sim$, where
$U(\mathbb{C})/\sim$ is the quotient of the set of complex valued
points of $U$ under the equivalence relation defined by complex
conjugation. Moreover $U_{\infty}/X_{\infty}$ is unramified in the
sense that if $v\in X_{\infty}$ is real, then so is any point $w$ of
$U_{\infty}$ lying over $v$. An \emph{étale $\bar{X}$-scheme} is a
finite sum of connected étale $\bar{X}$-schemes, called the
\emph{connected components} of $\bar{X}$. A morphism
$\bar{U}\rightarrow\bar{V}$ in the category $Et_{\bar{X}}$ is a
morphism of $X$-schemes $U\rightarrow V$ inducing a map
$U_{\infty}\rightarrow V_{\infty}$ over $X_{\infty}$. The
\emph{Artin-Verdier étale site of $\bar{X}$} is defined by the
category $Et_{\bar{X}}$ endowed with the topology $\mathcal{J}_{et}$
generated by the pretopology for which the coverings are the
surjective families.

\begin{defn}
The \emph{Artin-Verdier étale topos} of
$\bar{X}$ is the category of sheaves of sets on the Artin-Verdier
étale site:
$$\bar{X}_{et}:=\widetilde{(Et_{\bar{X}},\mathcal{J}_{et})}.$$
\end{defn}

Let $v$ be a closed point of $\bar{X}$. Data \ref{choices-X} gives an embedding $G_{F_v}\hookrightarrow G_F$, hence we have an inertia subgroup $I_v\subset G_F$ at $v$. One can define the \emph{strict henselization} of $\bar{X}$ at $v$ as the projective
limit $\bar{X}^{sh}_v=\underleftarrow{lim}\,\,\bar{U},$
where $\bar{U}$ runs over the filtered system of étale neighborhoods in $\bar{X}$
of a geometric point over $v$. We refer to \cite{On the WE} Section 6.2 for a precise definition. For $v$ ultrametric, one has $\bar{X}^{sh}_v:=Spec(\mathcal{O}^{sh}_{\bar{X},v})$ where the ring $\mathcal{O}^{sh}_{\bar{X},v}$ is the strict henselization of the local ring $\mathcal{O}_{X,v}$. The generic point of $\bar{X}^{sh}_v$ is $Spec(\bar{F}^{I_v})$. For an archimedean valuation $v$, $\bar{X}^{sh}_v$ can be formally defined as the pair $(Spec(\bar{F}^{I_v}),v)\rightarrow(X,X_{\infty})$.
Hence for any closed point $v$ of $\bar{X}$, Data \ref{choices-X} gives a \emph{specialization map over $\bar{X}$}
\begin{equation}\label{specialization-map}
Spec(\bar{F})\rightarrow Spec(\bar{F}^{I_v})\hookrightarrow\bar{X}^{sh}_v.
\end{equation}

\begin{prop}\label{prop-morph-sites-etale-loc-sections}
There exists a morphism of left exact sites
$$\fonc{\gamma^*}{(Et_{\bar{X}};\mathcal{J}_{et})}{(T_{\bar{X}};\mathcal{J}_{ls})}{\bar{U}}{(U_0,U_v,f_v)}.$$
The functor $\gamma^*$ is fully faithful and the essential image of $\gamma^*$ consists in objects
$(U_0,U_v,f_v)$ where $U_0$ is a finite $W_F$-set.
\end{prop}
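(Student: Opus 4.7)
The plan is to define $\gamma^*(\bar{U}) := (U_0, U_v, f_v)$ where $U_0 := U(\bar{F})$ is the discrete set of geometric generic points with the $W_F$-action obtained from $W_F\to G_F$; $U_v$ is the geometric fiber of $\bar{U}$ over $v$ (for non-archimedean $v$, the discrete set of sections $\bar{X}^{sh}_v\to U$; for archimedean $v$, the set of places of $U_\infty$ over $v$) with $W_{k(v)}$-action via $W_{F_v}\to W_{k(v)}$; and $f_v$ is the map induced by the specialization (\ref{specialization-map}). I would first check that this triple lies in $T_{\bar{X}}$: the $W_F$-action factors through $W_{K/F}$ for any finite Galois $K/F$ splitting $U_F$; $f_v$ is injective because an \'etale cover of a strictly henselian local scheme splits completely, so distinct points of the closed fiber have distinct generic specializations; $f_v$ is bijective (hence a homeomorphism, both spaces being finite discrete) at all but finitely many places, namely those where $U\to X$ is locally a trivial cover; the $W^1_{F_v}$-action on $U_v$ is trivial (for archimedean $v$, by the unramifiedness condition in $Et_{\bar{X}}$; for non-archimedean $v$, because $W^1_{F_v}$ maps to the inertia, which acts trivially on the geometric fiber of an \'etale cover). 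Left exactness of $\gamma^*$ is then immediate since $U\mapsto U(\bar{F})$ and $U\mapsto U(\bar{X}^{sh}_v)$ are representable, and continuity is routine: a surjective family of \'etale $\bar{X}$-schemes is surjective on each geometric fiber, and a surjection of discrete spaces automatically has local sections.

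For full faithfulness, a morphism $\gamma^*\bar{U}\to\gamma^*\bar{V}$ is by definition a $W_F$-equivariant map $\phi:U_0\to V_0$ of finite discrete sets such that $\phi(f_v^{\bar{U}}(U_v))\subset f_v^{\bar{V}}(V_v)$ for every $v$. The data of $\phi$ on the finite $W_F$-sets $U_0,V_0$ is equivalent (via the natural map $W_F\to G_F$) to a $G_F$-equivariant map, which by Grothendieck--Galois theory corresponds to a morphism of finite \'etale $F$-schemes $U_F\to V_F$; this extends uniquely (by normalization/integral closure) to a morphism of the corresponding finite $X$-schemes $\tilde{U}\to\tilde{V}$. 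The fiberwise compatibility at each $v$ is then precisely the condition that this map restricts to a morphism $\bar{U}\to\bar{V}$ in $Et_{\bar{X}}$: at finite $v$ it sends points of $U$ over $v$ into points of $V$ over $v$, and at archimedean $v$ it maps $U_\infty$ into $V_\infty$.

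The main obstacle is the essential image. Given $(U_0,U_v,f_v)\in T_{\bar{X}}$ with $U_0$ a finite $W_F$-set, the argument of the previous paragraph associates to $U_0$ a finite \'etale $F$-algebra $E$, and its integral closure in $\mathcal{O}_F$ yields a finite $X$-scheme $\tilde{U}$. I would then show that the $W_{F_v}$-equivariant subsets $f_v(U_v)\subset U_0$ --- which coincide with $U_0$ for all but finitely many $v$ by the axioms of $T_{\bar{X}}$ --- determine a unique open subscheme $U\subset\tilde{U}$ \'etale over $X$ and a compatible archimedean part $U_\infty$ satisfying the unramifiedness axiom of $Et_{\bar{X}}$: at finite $v$, $f_v(U_v)$ decomposes into $G_{F_v}/I_v$-orbits corresponding to the fibers of the \'etale locus of $\tilde{U}\to X$ above $v$; at archimedean $v$, triviality of the $W^1_{F_v}$-action on $U_v$ forces each chosen place over a real place to be real. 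The final step --- that $\gamma^*\bar{U}\simeq(U_0,U_v,f_v)$ functorially --- is a direct unwinding of the two constructions.
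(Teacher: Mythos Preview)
Your proposal is correct and follows essentially the same approach as the paper: define $\gamma^*$ via geometric points and the specialization map, verify left exactness and continuity, and handle full faithfulness and the essential image through Grothendieck--Galois theory. Two small remarks: the paper makes explicit the point that a finite discrete $W_F$-set is automatically a $G_F$-set (since the continuous action must kill the connected component $W_F^0$, and $W_F/W_F^0\simeq G_F$), which you use implicitly in the essential-image step; and your phrase ``integral closure in $\mathcal{O}_F$'' should read ``integral closure of $\mathcal{O}_F$ in $E$''. The paper organizes the essential-image argument by first decomposing $U_0$ into $W_F$-orbits and identifying each with $\gamma^*(\overline{\mathrm{Spec}(\mathcal{O}_{K_i})}-S_i)$, whereas you pass through the full normalization $\tilde U$ and then excise points according to the $f_v(U_v)$; these are equivalent descriptions of the same construction.
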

This result is a reformation of \cite{these} Proposition 4.61
and \cite{these} Proposition 4.62. Let us fix some notations. For any point $v\in\bar{X}$ we define the Galois group of the "residue field at $v$" as follows :
$$G_{k(v)}:=G_{F_v}/I_v$$
while the Weil group of the residue field at $v$ is defined as $W_{k(v)}:=W_{F_v}/W^1_{F_v}$. Note that we have a morphism
$\alpha_v:W_{k(v)}\rightarrow G_{k(v)}$ compatible with the Weil map $\theta_v:W_{F_v}\rightarrow W_F$ for any $v\in\bar{X}$. Note also that,  for an archimedean valuation $v$, the group $G_{k(v)}$ is trivial and $W_{k(v)}$ is isomorphic to $\mathbb{R}^{\times}_+$ as a topological group.

\begin{proof}
For any étale $\bar{X}$-scheme $\bar{U}$, we define an object
$\gamma^*(\bar{U})=(U_0,U_v,f_v)$ of $T_{\bar{X}}$ as follows.
An algebraic closure $\bar{F}/F$ has been chosen in Data
\ref{choices-X}. The generic point $\bar{U}\times_{\bar{X}}Spec\,F$
is the spectrum of a finite \'etale $F$-algebra. The Grothendieck-Galois
theory shows that this \'etale $F$-algebra is uniquely determined by the
finite $G_F$-set
$$U_0:=Hom_{Spec\, F}(Spec\, \bar{F},\bar{U}\times_{\bar{X}}Spec\,F)=Hom_{\bar{X}}(Spec\,\bar{F},\bar{U}).$$
Let $v$ be an ultrametric place of $F$. The maximal unramified
sub-extension of the algebraic closure $\bar{F_v}/F_v$ chosen in
Data \ref{choices-X} yields an algebraic closure of the residue
field $\overline{k(v)}/k(v)$. The scheme
$\bar{U}\times_{\bar{X}}Spec\,k(v)$ is the spectrum of an étale
$k(v)$-algebra, corresponding to the finite $G_{k(v)}$-set
$$U_v:=Hom_{Spec\, k(v)}(Spec\, \overline{k(v)},\bar{U}\times_{\bar{X}}Spec\,k(v))=Hom_{\bar{X}}(Spec\,\overline{k(v)},\bar{U})$$
Let $v$ be an ultrametric place of $F$. Here we define the set
$$U_v:=Hom_{\bar{X}}((\emptyset,v),\bar{U})=v\times_{X_{\infty}}U_{\infty}$$
For any closed point $v$ of $\bar{X}$, we have $U_v=Hom_{\bar{X}}(\bar{X}_v^{sh},\bar{U})$ hence the specialization map (\ref{specialization-map}) gives a $G_{F_v}$-equivariant map
$$f_v:U_v\longrightarrow U_0.$$
This map is bijective for almost all valuations and injective for all
valuations. For any place $v$ of $F$, the set $U_v$ is viewed as a
$W_{k(v)}$-topological space via the morphism $W_{k(v)}\rightarrow
G_{k(v)}$. Respectively, $U_0$ is viewed as a $W_{F}$-topological
space via $W_{F}\rightarrow G_{F}$. Then the map $f_v$ defined above
is $W_{F_v}$-equivariant. We obtain a functor
$$\gamma^*:Et_{\bar{X}}\longrightarrow T_{\bar{X}}.$$
Note that if $\bar{U}$ is the a finite sum of connected \'etale $\bar{X}$-schemes
$\bar{U}=\coprod\bar{U}_i$, then we have
$$\gamma^*(\bar{U})=\coprod\gamma^*(\bar{U}_i)$$
where the sum one the right hand side is understood in $T_{\bar{X}}$.
The functor $\gamma^*$ is easily seen to be left exact (i.e. it preserves the final object and
fiber products) and continuous (i.e. it preserves covering families). Hence we do have a morphism of left exact sites.

Let $\mathcal{U}=(U_0,U_v,f_v)$ be an object of $T_{\bar{X}}$ such that $U_0$ is a finite $W_F$-set. Writing $U_0$ as the sum of its $W_F$-orbits, we can decompose $\mathcal{U}=\coprod_{i\in I}\mathcal{U}_i$ as a sum in $T_{\bar{X}}$. The action of $W_F$ on $U_0$ factors through $W_F/W_F^0=G_F$, where $W_F^0$ is the connected of $1$ in $W_F$, since $U_0$ is finite. Hence we can see $U_0$ as finite $G_F$-sets. By Galois theory, $U_0$ corresponds to an essentially unique \'etale $F$-algebra $A=\prod_{i\in I} K_i$. Then for any $i\in I$ one has a finite set $S_i$ of places of $K_i$ and an isomorphism in $T_{\bar{X}}$:
$$\mathcal{U}_i\simeq \gamma^*(\overline{Spec(\mathcal{O}_{K_i})}-S_i)$$
This shows that the essential image of $\gamma^*$ consists in objects
$(U_0,U_v,f_v)$ such that $U_0$ is a finite $W_F$-set.

Let $\bar{U}$ and $\bar{U}'$ be two objects of $Et_{\bar{X}}$. We set $\gamma^*(\bar{U})=(U_0,U_v,f_v)$ and $\gamma^*(\bar{U}')=(U'_0,U'_v,f'_v)$. By functoriality, we have a map
\begin{equation}\label{fonctoriality}
Hom_{\bar{X}}(\bar{U},\bar{U}')\rightarrow Hom_{T_{\bar{X}}}((U_0,U_v,f_v),(U'_0,U'_v,f'_v)).
\end{equation}
We define the inverse map as follows. A morphism $\phi:(U_0,U_v,f_v)\rightarrow(U'_0,U'_v,f'_v)$ is given by a map of finite $G_F$-sets $\phi_0:U_0\rightarrow U'_0$. This map gives a uniquely determined morphism of $F$-algebras $A'\rightarrow A$, where $Spec(A):=\bar{U}\times_{\bar{X}}Spec(F)$ and $Spec(A'):=\bar{U}'\times_{\bar{X}}Spec(F)$, again by Galois theory. The morphism $A'\rightarrow A$ induces a morphism of \'etale $\bar{X}$-schemes $\widetilde{\phi}:\bar{U}\rightarrow\bar{U}'$ precisely because $\phi_0$ induces a map $\phi_v:U_v\rightarrow U'_v$ for any closed point $v$ of $\bar{X}$. Then $\phi\mapsto\widetilde{\phi}$ is the inverse isomorphism to (\ref{fonctoriality}). This shows that $\gamma^*$ is fully faithful.

\end{proof}

The next corollary follows immediately from the fact that a morphism
of left exact sites induces a morphism of topoi.
\begin{cor}\label{cor-morphism-gamma-WE-et}
There is a morphism of topoi
$$\gamma:\bar{X}_W\longrightarrow\bar{X}_{et}.$$
\end{cor}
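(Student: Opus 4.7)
The plan is to invoke directly the functoriality principle recalled in Section 2.1: any morphism of left exact sites $(\mathcal{C}',\mathcal{J}')\rightarrow(\mathcal{C},\mathcal{J})$---that is, a continuous left exact functor---induces a geometric morphism of the associated sheaf topoi $\widetilde{(\mathcal{C},\mathcal{J})}\rightarrow\widetilde{(\mathcal{C}',\mathcal{J}')}$ in the opposite direction. Proposition \ref{prop-morph-sites-etale-loc-sections} has just produced precisely such a functor $\gamma^{*}:(Et_{\bar{X}},\mathcal{J}_{et})\rightarrow(T_{\bar{X}},\mathcal{J}_{ls})$, so the required morphism
$$\gamma:\bar{X}_{W}=\widetilde{(T_{\bar{X}},\mathcal{J}_{ls})}\longrightarrow\widetilde{(Et_{\bar{X}},\mathcal{J}_{et})}=\bar{X}_{et}$$
is obtained for free.

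Concretely, the inverse image $\gamma^{*}:\bar{X}_{et}\rightarrow\bar{X}_{W}$ is the unique left exact, colimit-preserving extension of the site-level functor of the same name (obtained by composing with Yoneda and sheafifying), and its right adjoint $\gamma_{*}:\bar{X}_{W}\rightarrow\bar{X}_{et}$ exists automatically by the adjoint functor theorem for Grothendieck topoi; on representables, one has the explicit formula $(\gamma_{*}\mathcal{F})(\bar{U})=\mathcal{F}(\gamma^{*}\bar{U})$ for any \'etale $\bar{X}$-scheme $\bar{U}$.

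There is essentially no obstacle to overcome: the real work---checking that $\gamma^{*}$ preserves the final object and fiber products, and sends surjective families in $Et_{\bar{X}}$ to local-section covers in $T_{\bar{X}}$---was already carried out in the proof of Proposition \ref{prop-morph-sites-etale-loc-sections}. The corollary is thus a one-line consequence of that proposition together with the general machinery of left exact sites.
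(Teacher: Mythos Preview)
Your proof is correct and follows exactly the paper's own argument: the paper simply states that the corollary ``follows immediately from the fact that a morphism of left exact sites induces a morphism of topoi,'' which is precisely the functoriality principle you invoke. Your additional remarks on the explicit description of $\gamma^{*}$ and $\gamma_{*}$ are accurate but go slightly beyond what the paper bothers to record.
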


\begin{rem}\label{rem-connected-U-connected-tU}
Let $\mathcal{F}$ be an object of $\bar{X}_W$ represented by an
étale $\bar{X}$-scheme $\bar{U}$. In other words, we assume that
$$\mathcal{F}=\gamma^*y(\bar{U})=y(\gamma^*\bar{U})=y(U_0,U_v,f_v)$$ where
$U_0$ is a  finite $G_F$-set. The space of connected components
$$t_!\mathcal{F}:=(j^*\mathcal{F})/y(W_F)=U_0/G_F$$ is the object of
$\mathcal{T}$ represented by the finite set $U_0/G_F$, which is the
set of connected components of $\bar{U}$ in the usual sense.
\end{rem}

\subsection{Structure of $\bar{X}_W$ at the closed points}

Let $v$ be a place of $F$. We consider the Weil group $W_{k(v)}$ and
the Galois group $G_{k(v)}$ of the residue field $k(v)$ at
$v\in\bar{X}$. Recall that, for $v$ archimedean, one has
$W_{k(v)}\simeq\mathbb{R}$ and $G_{k(v)}=\{1\}$. We consider the big
classifying topos $B_{W_{k(v)}}$, i.e. the category of
$y(W_{k(v)})$-objects in $\mathcal{T}$, and the small
classifying topos $B^{sm}_{G_{k(v)}}$, which is defined as the
category of continuous $G_{k(v)}$-sets. The category of
$W_{k(v)}$-topological spaces $B_{Top}{W_{k(v)}}$ is endowed with
the local section topology $\mathcal{J}_{ls}$. Then
$(B_{Top}{W_{k(v)}},\mathcal{J}_{ls})$ is a site for the classifying
topos $B_{W_{k(v)}}$. Respectively let $B_{fSets}G_{k(v)}$ be the
category of finite $G_{k(v)}$-sets endowed with the canonical
topology $\mathcal{J}_{can}$. The site
$(B_{fSets}G_{k(v)},\mathcal{J}_{can})$ is a site for the small
classifying topos $B^{sm}_{G_{k(v)}}$.

For any place $v$ of $F$, we have a morphism of left exact sites
$$\fonc{i_v^*}{(T_{\bar{X}},\mathcal{J}_{ls})}{(B_{Top}{W_{k(v)}},\mathcal{J}_{ls})}{(Z_0,Z_v,f_v)}{Z_v}$$
hence a morphism of topoi
$$i_v:B_{W_{k(v)}}\longrightarrow\bar{X}_W.$$
Assume that $v$ is ultrametric. The morphism of schemes
$$Spec\,k(v)\longrightarrow\bar{X}$$
induces a morphism of topoi
$$u_v:B^{sm}_{G_{k(v)}}\longrightarrow\bar{X}_{et}$$
since the étale topos of $Spec\,k(v)$ is equivalent to the category
$B^{sm}_{G_{k(v)}}$ of continuous $G_{k(v)}$-sets. This
equivalence is induced by the choice of an algebraic closure of
${k(v)}$ given in Data \ref{choices-X}. For $v$ archimedean, we still have a morphism
$$u_v:B^{sm}_{G_{k(v)}}=\underline{Sets}=Sh(v)\longrightarrow\bar{X}_{et}.$$
The category of finite
$G_{k(v)}$-sets endowed with the canonical topology is a site for
the small classifying topos $B^{sm}_{G_{k(v)}}$. We have a
commutative diagram of left exact sites
\[ \xymatrix{
(B_{Top}W_{k(v)},\mathcal{J}_{ls})&(B_{fSets}{G_{k(v)}},\mathcal{J}_{ls})\ar[l]_{\alpha^*_v}\\
(T_{\bar{X}},\mathcal{J}_{ls})\ar[u]^{i_v^*}&
(Et_{\bar{X}},\mathcal{J}_{ls})\ar[u]_{u_v^*}\ar[l]_{\gamma^*} }\]
where $u_v^*(\bar{U})$ is the finite $G_{k(v)}$-set
$$U_v:=Hom_{\bar{X}}(Spec\,\overline{k(v)},\bar{U}).$$
The diagram of sites above induces the commutative of topoi of the
following result, which is proven in \cite{Flach-moi}.

\begin{thm}\label{thm-pull-back}
For any closed point $v$ of $\bar{X}$, the following diagram is a
pull-back of topoi.
\[ \xymatrix{
B_{W_{k(v)}}\ar[d]_{i_v}\ar[r]^{\alpha_v}&B^{sm}_{G_{k(v)}}\ar[d]_{u_v}\\
\bar{X}_W\ar[r]^{\gamma}&\bar{X}_{et} }\]
\end{thm}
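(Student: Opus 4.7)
The plan is to verify that the canonical morphism
$$\Phi: B_{W_{k(v)}} \longrightarrow \bar{X}_W \times_{\bar{X}_{et}} B^{sm}_{G_{k(v)}}$$
induced by the commutative diagram of left exact sites displayed just before the theorem is an equivalence of topoi. I would proceed by computing a site for the 2-fibered product on the right and exhibiting an equivalence with the site $(B_{Top}W_{k(v)}, \mathcal{J}_{ls})$, which is a site for $B_{W_{k(v)}}$ by Section \ref{subsect-loc-section-site-BG}. The equivalence obtained this way will be the one induced by $i_v^*$ and $\alpha_v^*$, hence it will factor through $\Phi$ and identify $\Phi$ with an equivalence.

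To compute a site for $\bar{X}_W \times_{\bar{X}_{et}} B^{sm}_{G_{k(v)}}$, I would invoke \cite{SGA4} VI.8.2.3 (already used in Definition \ref{def-class-strict-top-progrp}): the fibered product is the category of sheaves on a direct-limit category built from the three sites, equipped with the coarsest topology making all pullback functors continuous. Concretely, an object of such a site may be described as a triple consisting of $(Z_0, Z_w, f_w) \in T_{\bar{X}}$, a finite continuous $G_{k(v)}$-set $T$, and a compatibility of their images in $Et_{\bar{X}}$ through $\gamma^*$ and $u_v^*$. Running through the explicit description of these two functors, such data packages into a locally compact $W_{k(v)}$-space: the finite $G_{k(v)}$-set $T$ together with the continuous action of $W_{k(v)}$ on $Z_v$ via $\alpha_v: W_{k(v)} \to G_{k(v)}$ yields, after an appropriate pullback, a $W_{k(v)}$-equivariant locally compact space. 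The quasi-inverse sends a $W_{k(v)}$-equivariant space $Y$ to the pair $((Y,Y,\mathrm{id}_Y),\,Y/W^1_{F_v})$, the second component carrying its natural $G_{k(v)}$-action.

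The main obstacle will be matching the Grothendieck topologies: one must verify that the local section topology on $B_{Top}W_{k(v)}$ coincides with the one induced on the fibered product site by $\mathcal{J}_{ls}$ on $T_{\bar{X}}$ and $\mathcal{J}_{can}$ on $B_{fSets}G_{k(v)}$. This is particularly delicate at archimedean places, where $G_{k(v)}$ is trivial but $W_{k(v)} \simeq \mathbb{R}^{\times}_+$ carries nontrivial topology; in that case the entire topological content of the fibered product must come from the local-section covers in $T_{\bar{X}}$, which is ensured by the defining condition in Definition \ref{defn-TX} that $Z_v$ be a locally compact $W_{k(v)}$-space at every place. Once this site-level equivalence is established, the resulting equivalence of sheaf topoi is canonically identified with $\Phi$ by the universal property of the 2-fibered product, proving that the diagram of the theorem is 2-cartesian.
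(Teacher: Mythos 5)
The paper does not actually prove this statement: after exhibiting the commutative diagram of left exact sites it simply notes that the resulting square of topoi ``is proven in \cite{Flach-moi}'' to be a pull-back. So you are not replicating or diverging from an in-text argument; you are attempting to supply a proof the paper itself delegates elsewhere.

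Your proposal contains several genuine gaps. The invocation of \cite{SGA4} VI.8.2.3 is misplaced: that result computes projective limits of topoi over a \emph{cofiltered} index category by passing to a direct-limit category of sites, but a cospan $\bar{X}_W \rightarrow \bar{X}_{et} \leftarrow B^{sm}_{G_{k(v)}}$ is not cofiltered, so that theorem does not apply. Your description of the fibered-product site as triples $\bigl((Z_0,Z_w,f_w),\,T,\,\text{compatibility}\bigr)$ is also not coherent at the level of the data: the functors $\gamma^*$ and $u_v^*$ go \emph{out of} $Et_{\bar{X}}$, so there are no canonical ``images in $Et_{\bar{X}}$'' of an object of $T_{\bar{X}}$ or of a finite $G_{k(v)}$-set to compare, and a $2$-fibered product of topoi is in any case not simply a category of compatible pairs. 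The proposed quasi-inverse $Y\mapsto \bigl((Y,Y,\mathrm{id}_Y),\,Y/W^1_{F_v}\bigr)$ fails to produce an object of $T_{\bar{X}}$: such an object must consist of a $W_F$-space $Z_0$, a $W_{k(w)}$-space $Z_w$ and a map $f_w$ at \emph{every} place $w$ of $F$ satisfying the constraints of Definition \ref{defn-TX}, none of which is addressed (and since $W^1_{F_v}$ acts trivially on a $W_{k(v)}$-space, $Y/W^1_{F_v}=Y$, which is not finite). Finally the topology-matching step, which you correctly flag as the main obstacle, is asserted rather than argued. A more promising line, given that $u_v$ is a closed embedding, is the general stability of closed subtopoi under pull-back: if $B^{sm}_{G_{k(v)}}$ is the closed complement of an open subobject $\mathcal{U}$ of the final object of $\bar{X}_{et}$, then the fiber product is the closed complement of $\bar{X}_W/\gamma^*\mathcal{U}$, which one must then identify with $B_{W_{k(v)}}$; even this, however, requires the substantive verification that the paper delegates to \cite{Flach-moi}.
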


\begin{cor} For any closed point $v$ of $\bar{X}$, the
morphism $i_v$ is a closed embedding.
\end{cor}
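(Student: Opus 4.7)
The plan is to deduce this from Theorem~\ref{thm-pull-back} together with two standard facts of topos theory: first, that the morphism $u_v:B^{sm}_{G_{k(v)}}\rightarrow \bar{X}_{et}$ coming from the closed immersion of a point into $\bar{X}$ is itself a closed embedding of topoi; and second, that closed embeddings of topoi are stable under pullback along an arbitrary geometric morphism. Granting these, the $2$-cartesian square
\[ \xymatrix{
B_{W_{k(v)}}\ar[d]_{i_v}\ar[r]^{\alpha_v}&B^{sm}_{G_{k(v)}}\ar[d]^{u_v}\\
\bar{X}_W\ar[r]^{\gamma}&\bar{X}_{et} }\]
exhibits $i_v$ as the pullback of $u_v$ along $\gamma$, which forces $i_v$ to be a closed embedding.

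For the first fact, let $j_v:\bar{X}-\{v\}\hookrightarrow\bar{X}$ denote the complementary open immersion; then $j_v$ induces an open embedding of the associated Artin--Verdier étale topoi $(\bar{X}-\{v\})_{et}\hookrightarrow\bar{X}_{et}$, corresponding to the subobject $y(\bar{X}-\{v\})$ of the final object. An object $\mathcal{G}$ of $\bar{X}_{et}$ lies in the essential image of $u_{v,*}$ precisely when its restriction to $\bar{X}-\{v\}$ is trivial, i.e.\ when $\mathcal{G}\times y(\bar{X}-\{v\})$ is the final object; this identifies $u_v$ with the closed embedding complementary to $j_v$ in the sense recalled in the section on embeddings.

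For the second fact, given any closed embedding $i:\mathcal{F}\hookrightarrow\mathcal{S}$ with complementary open subobject $U\subset 1_{\mathcal{S}}$ and any morphism $f:\mathcal{S}'\rightarrow\mathcal{S}$, the pullback $\mathcal{F}\times_{\mathcal{S}}\mathcal{S}'$ is the closed subtopos of $\mathcal{S}'$ complementary to the open subobject $f^*U\subset 1_{\mathcal{S}'}$: this is because for any $\mathcal{H}\in\mathcal{S}'$ one has $\mathcal{H}\times f^*U\simeq 1_{\mathcal{S}'}$ if and only if $\mathcal{H}$ is sent to the final object by the inverse image of the projection $\mathcal{S}'/f^*U\rightarrow\mathcal{S}'$, and this last topos is canonically identified with $\mathcal{S}'\times_{\mathcal{S}}(\mathcal{S}/U)$.

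Applying these facts to the diagram of Theorem~\ref{thm-pull-back} completes the argument: $i_v$ is the closed embedding of $\bar{X}_W$ complementary to the open subobject $\gamma^*\,y(\bar{X}-\{v\})$ of the final object of $\bar{X}_W$. The only step that requires any real care is verifying that $u_v$ is a closed embedding of $\bar{X}_{et}$, and in particular that $u_{v,*}$ is fully faithful with the expected essential image; this is a routine but slightly tedious check in the Artin--Verdier étale topos, using that archimedean points behave formally like ordinary closed points in the definition of $Et_{\bar{X}}$.
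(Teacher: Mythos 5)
Your proof is correct and follows essentially the same route as the paper: both invoke Theorem~\ref{thm-pull-back} together with the facts that $u_v$ is a closed embedding and that closed embeddings are stable under pullback; you merely unwind the two ``well-known'' facts in more detail (identifying the complementary open as $\gamma^*\,y(\bar{X}-\{v\})$), whereas the paper phrases the stability step in terms of the image factorization $Im(i_v)=\gamma^{-1}Im(u_v)$.
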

\begin{proof}
It is well know that the morphism of \'etale topoi
$$u_v:B^{sm}_{G_{k(v)}}\longrightarrow\bar{X}_{et}$$
is a closed embedding. The result then follows from the fact that closed embeddings are stable under pull-backs.
Indeed, the image of $u_v$ is a closed subtopos $Im(u_v)$ of $\bar{X}_{et}$. But the inverse image of $Im(u_v)$ under $\gamma$ is precisely the image of $i_v$, as it follows from the previous theorem. Hence $Im(i_v)$ is a closed subtopos of $\bar{X}_W$, and $i_v$ induces an equivalence $B_{W_{k(v)}}\simeq Im(i_v)$.
\end{proof}

\subsection{The Weil-\'etale topos of an étale $\bar{X}$-scheme}

\begin{rem}
In this section we define the Weil-étale topos $\bar{U}_W$ for any étale $\bar{X}$-scheme $\bar{U}$. Such a definition must be functorial, i.e. one needs to obtain a pseudo-functor
$$\appl{Et_{\bar{X}}}{\mathfrak{Top}}{\bar{U}}{\bar{U}_W}$$
where $\mathfrak{Top}$ is the 2-category of topoi. According to Proposition \ref{prop-site-local-Licht-topos} below, there are two possible definitions for $\bar{U}_{W}$. If one defines $\bar{U}_W$ as in Definition \ref{defn-wettoposX} for any $\bar{U}$ étale over $\bar{X}$, then $\bar{U}\mapsto\bar{U}_W$ is not functorial. In order to get functoriality, we define $\bar{U}_W$ as a slice topos (see Definition \ref{def-wettoposUslice} below). The fact that $\bar{U}_W$ is equivalent to $\widetilde{(T_{\bar{U}},\mathcal{J}_{ls})}$ will be used as a technical tool in the remaining part of this paper.
\end{rem}
\begin{defn}\label{def-wettoposUslice}
Let $\bar{U}$ be an étale $\bar{X}$-scheme. We define the
\emph{Weil-\'etale topos} of $\bar{U}$ as the slice topos
$$\bar{U}_{W}:=\bar{X}_{W}/\gamma^*(\bar{U}).$$
\end{defn}
\begin{prop}
We have a pull-back of topoi
\[ \xymatrix{
\bar{U}_W\ar[d]\ar[r]^{\gamma_{\bar{U}}}&\bar{U}_{et}\ar[d]\\
\bar{X}_W\ar[r]^{\gamma}&\bar{X}_{et} }\] In other words one has an
equivalence $\bar{U}_{W}\simeq
\bar{X}_W\times_{\bar{X}_{et}}\bar{U}_{et}$, where the fiber product
is defined in the 2-category of topoi.
\end{prop}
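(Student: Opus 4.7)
The plan is to reduce this to the general pull-back property of slice topoi recorded in the section on basic properties of geometric morphisms, namely that for any morphism $f\colon\mathcal{S}'\to\mathcal{S}$ and any object $X$ of $\mathcal{S}$, the square $\mathcal{S}'/f^*X\to\mathcal{S}/X$ over $\mathcal{S}'\to\mathcal{S}$ is 2-cartesian. I would apply this with $f=\gamma$, $\mathcal{S}'=\bar{X}_W$, $\mathcal{S}=\bar{X}_{et}$ and $X=y(\bar{U})$, and then match the resulting slice topoi with the two sides of the claimed equivalence.

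First I would identify $\bar{U}_{et}$ with the slice topos $\bar{X}_{et}/y(\bar{U})$. By the general equivalence $\widetilde{(\mathcal{C},\mathcal{J})}/yX\simeq\widetilde{(\mathcal{C}/X,\mathcal{J}_{ind})}$ recalled in the preliminaries, applied to $(\mathcal{C},\mathcal{J})=(Et_{\bar{X}},\mathcal{J}_{et})$ and $X=\bar{U}$, it suffices to check that the induced topology on $Et_{\bar{X}}/\bar{U}$ coincides with the étale topology on $Et_{\bar{U}}$. But the forgetful functor $Et_{\bar{X}}/\bar{U}\to Et_{\bar{X}}$ is an equivalence onto $Et_{\bar{U}}$ (an étale $\bar{X}$-scheme over $\bar{U}$ being the same as an étale $\bar{U}$-scheme, taking into account archimedean places) and surjective families are preserved in both directions, so the two sites agree.

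Second I would observe that $\gamma^*y(\bar{U})=y(\gamma^*\bar{U})$ in $\bar{X}_W$. This is a consequence of the fact that $\gamma$ is the morphism of topoi induced by the morphism of left exact sites $\gamma^*\colon(Et_{\bar{X}},\mathcal{J}_{et})\to(T_{\bar{X}},\mathcal{J}_{ls})$ of Proposition \ref{prop-morph-sites-etale-loc-sections}: the commutative square between the sites and their topoi via the Yoneda functors, displayed in the preliminaries on left exact sites, says exactly that the inverse image of the representable sheaf is representable by the image of $\bar{U}$ under the underlying functor on sites. Thus $\bar{U}_W=\bar{X}_W/\gamma^*\bar{U}=\bar{X}_W/y(\gamma^*\bar{U})=\bar{X}_W/\gamma^*y(\bar{U})$.

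Combining the two steps, the 2-cartesian square
\[
\xymatrix{
\bar{X}_W/\gamma^* y(\bar{U})\ar[r]\ar[d] & \bar{X}_{et}/y(\bar{U})\ar[d]\\
\bar{X}_W\ar[r]^{\gamma} & \bar{X}_{et}
}
\]
is precisely the square of the proposition, and the top horizontal arrow is $\gamma_{\bar{U}}$ by the very construction of $f_{/X}$ recalled in the preliminaries. I do not expect a serious obstacle here; the only step that is not completely formal is the identification of $Et_{\bar{X}}/\bar{U}$ with $Et_{\bar{U}}$ in the Arakelov setting, where one must remember to track the archimedean data $U_\infty$, but the definition of $Et_{\bar{X}}$ given in Section \ref{subsect-gamma} makes this bookkeeping transparent.
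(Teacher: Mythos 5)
Your proof is correct and follows essentially the same route as the paper: identify $\bar{U}_{et}$ with the slice $\bar{X}_{et}/y\bar{U}$, note that $\gamma^*(y\bar{U})=y(\gamma^*\bar{U})$ because $\gamma$ comes from a morphism of left exact sites, and then invoke the general $2$-cartesian slice square $\mathcal{S}'/f^*X\to\mathcal{S}/X$ (SGA4 IV Prop.~5.11). The only cosmetic difference is that the paper cites SGA4 III Prop.~5.4 for the site-level identification where you argue it directly; the content is the same.
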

\begin{proof}
One has a canonical equivalence $\bar{X}_{et}/\bar{U}\simeq\bar{U}_{et}$, as it follows from (see \cite{SGA4} III Prop 5.4)
$$\bar{X}_{et}/y\bar{U}:=\widetilde{(Et_{\bar{X}},\mathcal{J}_{et})}/\bar{U}\simeq
\widetilde{(Et_{\bar{X}}/\bar{U},\mathcal{J}_{ind})}=\widetilde{(Et_{\bar{U}},\mathcal{J}_{et})}=:\bar{U}_{et}.$$
We write below $\gamma^*\bar{U}$ (respectively $\bar{U}$) for the object $y(\gamma^*\bar{U})=\gamma^*(y\bar{U})$ (respectively $y\bar{U}$) of the topos $\bar{X}_W$ (respectively of $\bar{X}_{et}$). By (\cite{SGA4} IV Prop 5.11), the following commutative diagram
\[ \xymatrix{
\bar{X}_W/\gamma^*\bar{U}\ar[d]\ar[r]^{\gamma_{/\bar{U}}}&\bar{X}_{et}/\bar{U}\ar[d]\\
\bar{X}_W\ar[r]^{\gamma}&\bar{X}_{et} }\]
is a pull-back, i.e. 2-cartesian in the terminology of \cite{SGA4}, where the vertical arrows are the localization morphisms. The result then follows from the definition $\bar{U}_W:=\bar{X}_W/\gamma^*\bar{U}$.
\end{proof}

For any étale $\bar{X}$-scheme $\bar{U}$, a site for the topos
$\bar{U}_W$ is given by the category $T_{\bar{X}}/\gamma^*\bar{U}$
endowed with the topology induced by the local sections topology via
the forgetful functor $T_{\bar{X}}/\gamma^*\bar{U}\rightarrow
T_{\bar{X}}$. We want to define a site for $\bar{U}_W$ analogous to
$T_{\bar{X}}$. Let $\bar{U}$ be a connected étale $\bar{X}$-scheme.
Again, we need to make non-canonical choices.
\begin{choices}\label{choices-U}
\begin{enumerate}
\item We choose a geometric point
$q_{\bar{U}}:Spec\,\bar{F}\rightarrow\bar{U}$ over the geometric
point $Spec\,\bar{F}\rightarrow\bar{X}$ chosen in section
\ref{section-Lichtenbaum-topos} (1). In other words, the following
triangle
\[ \xymatrix{
&\bar{U}\ar[d]\\
Spec\,\bar{F}\ar[r]\ar[ru]^{q_{\bar{U}}}&\bar{X} }\] is required to
be commutative.  The geometric point $q_{\bar{U}}$ yields a
sub-extension $\bar{F}/K/F$, where $K$ is the function field of
$\bar{U}$.
\item For any closed point $u$ of $\bar{U}$ over $v\in\bar{X}$, we choose an embedding
$K_u\rightarrow\bar{F_v}$ such that the following diagram commutes.
\[ \xymatrix{
F_v\ar[r]&K_u\ar[r]&\bar{F_v}\\
F\ar[u]\ar[r]&K\ar[u]\ar[r]&\bar{F}\ar[u] }\]
\end{enumerate}
\end{choices}
Then the Weil group of $\bar{F}/K$ is defined by
$$W_K:=\varphi^{-1}G_K$$
where $\varphi:W_F\rightarrow G_F$ is the map chosen in
\ref{choices-X}(2). For any closed point $u$ of $\bar{U}$ over
$v\in\bar{X}$, the Weil group of $\bar{F}_v/K_u$ is defined by
$$W_{K_u}:=\varphi_v^{-1}G_{K_u}$$
where $\varphi_v:W_{F_v}\rightarrow G_{F_v}$ is the map chosen in
\ref{choices-X}(4). Finally, the Weil map
$\theta_v:W_{F_v}\rightarrow W_F$ of Data \ref{choices-X}(6) induces
a Weil map
$$\theta_u:W_{K_u}\rightarrow W_K.$$

\begin{defn}
Let $\bar{U}$ be a connected étale $\bar{X}$-scheme endowed with the
data \ref{choices-U}. We consider the category $T_{\bar{U}}$ of
objects $(Z_0,Z_u,f_u)_{u\in\bar{U}}$ defined as follows. The space $Z_0$
is locally compact and given with a continuous
action of $W_K$. For any point $u$ of $\bar{U}$, $Z_u$ is a locally compact topological space endowed with a continuous action of
$W_{k(u)}$. The map $f_u:Z_u\rightarrow Z_0$ is continuous and
$W_{{K_u}}$-equivariant .

The action of $W_K$ on $Z_0$ factors through $W_{L/K}$ for a finite
Galois sub-extension $\bar{F}/L/K$. The map $f_u$ is an
\emph{homeomorphism for almost all points $u$ of $\bar{U}$ and a
continuous injective map for all points of $\bar{U}$}. An arrow
$\phi:(Z_0,Z_u,f_u)\rightarrow(Z'_0,Z'_u,f'_u)$ in the category
$T_{\bar{U}}$ is a $W_K$-equivariant continuous map
$\phi:Z_0\rightarrow Z'_0$ inducing a continuous map $Z_u\rightarrow
Z'_u$ for any $u\in\bar{U}$. The category $T_{\bar{U}}$ is endowed
with the local section topology $\mathcal{J}_{ls}$.
\end{defn}
The argument of the proof of Proposition
\ref{prop-morph-sites-etale-loc-sections} gives a morphism of topoi
$$\widetilde{(T_{\bar{U}},\mathcal{J}_{ls})}\longrightarrow\bar{U}_{et}.$$
Moreover, the choices \ref{choices-U}
above define a morphism of topoi
$$\widetilde{(T_{\bar{U}},\mathcal{J}_{ls})}\longrightarrow\widetilde{(T_{\bar{X}},\mathcal{J}_{ls})}=:\bar{X}_{W}.$$
Indeed, we have a morphism of left exact sites
$$
\appl{(T_{\bar{X}},\mathcal{J}_{ls})}{(T_{\bar{U}},\mathcal{J}_{ls})}{(Z_0,Z_v,f_v)_{v\in\bar{X}}}{(Z_0,Z_u,f_u)_{u\in\bar{U}}}
$$
defined as follows. The space $Z_0$ on the right hand side is given with the action of $W_K$ induced by the morphism $W_K\hookrightarrow W_F$. For any closed point $u$ of $\bar{U}$ lying above $v\in\bar{X}$, the space $Z_u$ is $Z_v$ endowed with the action of $W_{k(u)}$ induced by the morphism $W_{k(u)}\hookrightarrow W_{k(v)}$, which in turn induced by the morphism $W_{K_u}\hookrightarrow W_{F_v}$.

We obtain a commutative square
\[ \xymatrix{
\widetilde{(T_{\bar{U}},\mathcal{J}_{ls})}\ar[d]\ar[r]&\bar{U}_{et}\ar[d]\\
\bar{X}_W\ar[r]^{\gamma}&\bar{X}_{et} }\]
since the corresponding diagram of sites is commutative. By the universal property
of fiber products in the 2-category of topoi, this commutative square gives rise to an essentially unique morphism
$$\widetilde{(T_{\bar{U}},\mathcal{J}_{ls})}\longrightarrow\bar{X}_W\times_{\bar{X}_{et}}\bar{U}_{et}\simeq \bar{U}_W.$$
\begin{prop}\label{prop-site-local-Licht-topos}
Let $\bar{U}$ be a connected étale $\bar{X}$-scheme endowed with the
data \ref{choices-U}. Then the morphism defined above
$$\widetilde{(T_{\bar{U}},\mathcal{J}_{ls})}\longrightarrow\bar{U}_W.$$
is an equivalence.
\end{prop}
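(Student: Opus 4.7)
The plan is to unwind the slice description $\bar{U}_W:=\bar{X}_W/\gamma^*\bar{U}$ and then present an explicit equivalence of sites that realizes the morphism already constructed. Using the standard identification $\widetilde{(\mathcal{C},\mathcal{J})}/yX\simeq\widetilde{(\mathcal{C}/X,\mathcal{J}_{ind})}$ recalled in Section~2.1, we first identify
$$\bar{U}_W\simeq \widetilde{(T_{\bar{X}}/\gamma^*\bar{U},\,\mathcal{J}_{ls,ind})},$$
where $\mathcal{J}_{ls,ind}$ is induced by $\mathcal{J}_{ls}$ via the forgetful functor. The map constructed above factors, by the universal property of the 2-fiber product, through the morphism of left exact sites
$$\Phi^*:(T_{\bar{X}}/\gamma^*\bar{U},\mathcal{J}_{ls,ind})\longrightarrow(T_{\bar{U}},\mathcal{J}_{ls})$$
obtained by pulling back along the chosen points: given $((Z_0,Z_v,f_v)\to(U_0,U_v,f_v))\in T_{\bar{X}}/\gamma^*\bar{U}$, set $\Phi^*(Z_0,Z_v,f_v)=(Z_0',Z_u',f_u')$ where $Z_0'$ is the fiber of $Z_0\to U_0$ over the chosen point $q_{\bar{U}}\in U_0$ (on which the stabilizer $W_K\subset W_F$ acts), and, for $u\in\bar{U}$ above $v$, $Z_u'$ is the fiber of $Z_v\to U_v$ over the point of $U_v$ determined by Data~\ref{choices-U}(2) (carrying its natural $W_{k(u)}$-action). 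So the goal is to prove that $\Phi^*$ is an equivalence of sites.

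To construct the inverse, I would write down $\Psi:T_{\bar{U}}\to T_{\bar{X}}/\gamma^*\bar{U}$ by induction: send $(Z_0',Z_u',f_u')$ to the object whose generic component is the $W_F$-space $W_F\times^{W_K}Z_0'$ (pushforward along $W_K\hookrightarrow W_F$), equipped with its natural $W_F$-equivariant projection to $U_0\cong W_F/W_K$, and whose local components are, for each $v\in\bar{X}$, the disjoint union $\coprod_{u\mid v}W_{k(v)}\times^{W_{k(u)}}Z_u'$ with its equivariant map to $U_v$, the $f_v$ being induced from the $f_u'$. The key point is that because $U_0$ is a finite $W_F$-set (the $W_F$-action factors through $W_{L/F}$ for some finite Galois $L/F$ containing $K$), taking the fiber over $q_{\bar{U}}$ and inducing back are inverse equivalences between the category of locally compact $W_F$-spaces over $U_0$ and the category of locally compact $W_K$-spaces — this is the standard induction/restriction equivalence along $W_K\hookrightarrow W_F$, here applied internally in $\mathcal{T}$. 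The same reasoning at each $v$ gives the corresponding equivalence between $W_{k(v)}$-spaces over $U_v$ and families of $W_{k(u)}$-spaces indexed by $u\mid v$.

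It then remains to check three compatibilities. First, that $\Psi$ lands in $T_{\bar{X}}/\gamma^*\bar{U}$, i.e.\ that the induced $f_v$ are homeomorphisms for almost all $v$ and injective for all $v$: this follows from the corresponding property of the $f_u'$ together with the fact that $\gamma^*\bar{U}$ itself satisfies this property and that induction along finite coset spaces preserves both properties. Second, that $\Phi^*$ and $\Psi$ are quasi-inverse, both as functors of categories and as functors preserving finite projective limits (the functors above are both built from pullbacks along finite covers, so they are left exact). Third, that both functors are continuous and cocontinuous for the local section topologies: since $\mathcal{J}_{ls}$ on both $T_{\bar{X}}$ and $T_{\bar{U}}$ is the topology induced from $(Top,\mathcal{J}_{op})$ via the forgetful functor $(Z_0,\ldots)\mapsto Z_0$ (or, using Corollary~1 of~\cite{MatFlach}, from the open cover topology on the underlying topological spaces), and since induction/restriction preserve the class of maps admitting local sections, this is automatic.

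The main obstacle is the first bullet of the third point: making rigorous the claim that the induction–restriction equivalence is an equivalence of $\mathcal{T}$-valued categories (not merely of set-level categories), so that it descends to an equivalence of the sheaf topoi. I would handle this by remarking that for a finite $W_F$-set $U_0$ with stabilizer $W_K$ at a chosen base point, the slice topos $B_{W_F}/yU_0$ is canonically equivalent to $B_{W_K}$, via the very constructions used for $\Phi^*$ and $\Psi$; the analogous statement at each place $v$ is equally standard. Granted these local equivalences, gluing them along the structure maps $f_v$ yields the required equivalence of sites, and uniqueness of the morphism to the 2-fiber product ensures that the resulting equivalence coincides with the morphism constructed before the proposition.
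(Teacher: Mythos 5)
Your overall strategy — identify $\bar{U}_W$ with sheaves on $T_{\bar{X}}/\gamma^*\bar{U}$, then produce an explicit equivalence of left exact sites with $T_{\bar{U}}$ built from the identification $U_0\simeq W_F/W_K$ — is exactly the paper's. You spell out a quasi-inverse by induction $W_F\times^{W_K}(-)$, which the paper does not bother to construct (it simply observes that the fiber functor $B_{Top}W_F/U_0\to B_{Top}W_K$ is an equivalence, then extends to the local data by taking $Z_u:=f_v^{-1}(Z_{u_0})$). Your elaboration is reasonable, though the well-definedness of the induced $f_v$ on $\coprod_{u\mid v}W_{k(v)}\times^{W_{k(u)}}Z'_u$ over the ramified places is not as automatic as you suggest and deserves a check (lifts of elements of $W_{k(v)}$ to $W_{F_v}$ differ by $W_{F_v}^1$, which need not land in $W_K$ after $\theta_v$); but this is a detail, not a change of route.

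There is, however, a genuine error in your treatment of the topology. You assert that $\mathcal{J}_{ls}$ on $T_{\bar{X}}$ and on $T_{\bar{U}}$ is induced from $(Top,\mathcal{J}_{op})$ via the forgetful functor $(Z_0,Z_v,f_v)\mapsto Z_0$. This is not what Definition~\ref{defn-TX} says: a family is covering for $\mathcal{J}_{ls}$ precisely when $\coprod_i Z_{i,v}\to Z_v$ has local sections for \emph{every} place $v$, and at the finitely many $v$ where $f_v$ is only an injection (not a homeomorphism) this is a genuine extra condition, since a local section of $\coprod_i Z_{i,0}\to Z_0$ need not take values in the images of the $Z_{i,v}$. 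So the induced-from-$Z_0$ characterization fails, and your conclusion that continuity and cocontinuity are "automatic" rests on a false premise. The correct check — the one the paper makes — is component-by-component: one verifies that the category equivalence matches the local section topology on the $Z_0$-component \emph{and} on each $Z_v$-component, using for each place that the forgetful functor from the slice $B_{Top}W_{k(v)}/U_v$ to $B_{Top}W_{k(v)}$ still induces the local section topology on the slice category.
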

\begin{proof}
Recall that $\gamma^*\bar{U}=(U_0,U_v,h_v)$, where
$U_0:=Hom_{\bar{X}}(Spec\,\bar{F},\bar{U})$ as a $W_F$-set. The
sub-extension $\bar{F}/K/F$ given by the point $q_{\bar{U}}$ yields an isomorphism of $W_F$-sets
$$U_0:=Hom_{F}(K,\bar{F})\simeq G_F/G_K\simeq W_F/W_K,$$
sending $q_{\bar{U}}\in U_0$ to the distinguished element of $W_F/W_K$.
This gives an isomorphism of categories
$$B_{Top}W_F/U_0\simeq B_{Top}W_F/(W_F/W_K).$$ Hence the functor
$$
\fonc{\Psi_0}{B_{Top}W_F/U_0}{B_{Top}W_K}{\phi_0:Z_0\rightarrow U_0}{Z_{u_0}:=\phi_0^{-1}(q_{\bar{U}})}
$$
is an equivalence of categories. Let $\phi:(Z_0,Z_v,f_v)\rightarrow (U_0,U_v,h_v)$ be an object of the slice category $T_{\bar{X}}/\gamma^*\bar{U}$ and let $u\in\bar{U}$ be a closed point lying above $v\in\bar{X}$. The action of $W_{K_u}$ on $$f_v^{-1}(Z_{u_0})\hookrightarrow Z_{u_0}:=\phi_0^{-1}(q_{\bar{U}})$$
via the map $W_{K_u}\rightarrow W_K$ is unramified, in the sense that $W^1_{K_u}$ acts trivially on $f_v^{-1}(Z_{u_0})$. Then we see $f_v^{-1}(Z_{u_0})$ as a $W_{k(u)}$-space, where $f_v^{-1}(Z_{u_0})$ is given with the topology induced by the inclusion $f_v^{-1}(Z_{u_0})\subseteq Z_v$. We define $Z_u$ to be the space
$$Z_u:=f_v^{-1}(Z_{u_0})$$
endowed with its $W_{k(u)}$-action. Finally, the $W_{F_v}$-equivariant map $f_v:Z_v\rightarrow Z_0$ induces a
$W_{K_u}$-equivariant map $g_u:Z_u\rightarrow Z_{u_0}$, which is injective and continuous everywhere and an homeomorphism almost everywhere. Then the equivalence $\Psi_0$ induces a functor
$$
\fonc{\Psi}{T_{\bar{X}}/\gamma^*\bar{U}}{T_{\bar{U}}}{(Z_0,Z_v,f_v)\rightarrow (U_0,U_v,h_v)}{(Z_{u_0},Z_u,g_u)}
$$
which is an equivalence as well. Moreover, the topology induced on $B_{Top}W_K$ by the local section
topology on $B_{Top}W_F$ via the functor (forget the map to $U_0$)
$$B_{Top}W_K\simeq
B_{Top}W_F/U_0\rightarrow B_{Top}W_F$$ is still the local section
topology on $B_{Top}W_K$. The same is true for any place $v$ of $F$, and we obtain an equivalence of sites:
$$(T_{\bar{X}}/\gamma^*\bar{U},\mathcal{J}_{ls})\longrightarrow (T_{\bar{U}},\mathcal{J}_{ls})$$
Therefore the induced morphism of topoi
$$\widetilde{(T_{\bar{U}},\mathcal{J}_{ls})}\longrightarrow\widetilde{(T_{\bar{X}}/\gamma^*\bar{U},\mathcal{J}_{ls})}
\simeq
\widetilde{(T_{\bar{X}},\mathcal{J}_{ls})}/\gamma^*y(\bar{U})=:\bar{U}_W$$
is an equivalence (see \cite{SGA4} III Prop. 5.4 for the last equivalence).
\end{proof}

\subsection{The Weil-\'etale topos of a function field}\label{subsect-WET-fction-field-case}

In this section we show that our definition of the (small) Weil-\'etale topos of a function field coincides with the definition given by Lichtenbaum in \cite{Lichtenbaum-finite-field}. More precisely, let $Y$ be an open subscheme of a smooth projective curve over a finite field $k$. The most natural definition for the Weil-\'etale topos is given by the category $\mathcal{S}_{et}(W_k,\overline{Y})$ of $W_k$-equivariant \'etale sheaves on the geometric curve $\overline{Y}=Y\otimes_k\overline{k}$. On the other hand, Definition \ref{defn-TX} yields a left exact category $T^{sm}_{Y}$ endowed with the local section topology $\mathcal{J}_{ls}$, where we replace $Top$ by $\underline{Sets}$. We define below an equivalence
$$\widetilde{(T^{sm}_{Y},\mathcal{J}_{ls})}\simeq\mathcal{S}_{et}(W_k,\overline{Y}).$$
In other words, we show that the artificial definition of the (small) Weil-\'etale topos coincides with the natural one in the case of a function field. This justifies the term "Weil-\'etale topos" for the topos defined in this paper.
\begin{choices}\label{choices-Y}
Let $Y$ be an open subscheme of a geometrically connected smooth projective curve over a finite field $k$ with function field $K$.
\begin{enumerate}
\item We choose a separable algebraic closure $\bar{K}/K$.
\item For closed point $y$ of $Y$, we choose a separable algebraic
closure $\bar{K_v}/K_v$ and a $K$-embedding $\bar{K}\rightarrow \bar{K_v}$.
\end{enumerate}
\end{choices}
We have a natural map $G_K\rightarrow G_k$ and the global Weil group $W_K$ is defined as the fiber product topological group
$W_K:=G_K\times_{G_k}W_k$. For any closed point $v$ of $Y$, one has $G_{k(v)}=G_{K_v}/I_{K_v}$, and $W_{K_v}:=G_{K_v}\times_{G_{k(v)}}W_{k(v)}$. There exists a unique Weil map $W_{K_v}\rightarrow W_K$ such that the following diagram is commutative
\[ \xymatrix{
W_{K_v}\ar[d]\ar[r]&G_{K_v}\ar[d]\\
W_K\ar[r]&G_K }\]

\begin{defn}
Let $T^{sm}_{Y}$ be the category of objects $(Z_0,Z_v,f_v)$ defined
as follows. The set $Z_0$ is endowed with a continuous
$W_K$-action. For any closed point $v$ of $Y$, $Z_v$ is a set
endowed with a continuous $W_{k(v)}$-action. The map
$f_v:Z_v\rightarrow Z_0$ is $W_{{K_v}}$-equivariant, when $Z_v$ and
$Z_0$ are seen as $W_{{K_v}}$-spaces via the maps
$W_{K_v}\rightarrow W_{K}$ and $q_v:W_{K_v}\rightarrow
W_{k(v)}$. We require the following facts:
\begin{itemize}
\item The map $f_v$ is \emph{bijective for almost all
closed points and injective for all closed points $v$ of $Y$}.
\item The action of $W_K$ on $Z_0$ factors through $W_{L/K}$, for
some finite Galois subextension $\bar{K}/L/K$.
\end{itemize}
A \emph{morphism} $$\phi:(Z_0,Z_v,f_v)\longrightarrow(Z'_0,Z'_v,f'_v)$$
in the category $T^{sm}_{Y}$ is a $W_K$-equivariant map
$\phi:Z_0\rightarrow Z'_0$ \emph{inducing} a $W_{K_v}$-equivariant map
$\phi_v:Z_v\rightarrow Z_v$ for all closed points $v$ of $Y$.

The category $T^{sm}_{Y}$ is endowed with the local section
topology $\mathcal{J}_{ls}$, i.e. the topology generated by the
pretopology for which a family
$$\{\varphi_i:(Z_{i,0},Z_{i,v},f_{i,v})\rightarrow
(Z_0,Z_v,f_v),\,i\in I\}$$ is a covering family if the map
$\coprod_{i\in I}Z_{i,v}\rightarrow Z_v$ is surjective, for any
point $v$ of $Y$.
\end{defn}

\begin{defn}
We define the \emph{small Weil-\'etale topos} $Y^{sm}_W$ as the topos of sheaves on the site $(T^{sm}_{Y},\mathcal{J}_{ls})$.
\end{defn}

Let $K\overline{k}$ be the function field of the geometric curve $\overline{Y}$. We have the sub-extension $\overline{K}/K\overline{k}/K$ and we set $G_{K\overline{k}}:=G(\overline{K}/K\overline{k})$. For any closed point $y\in Y$, we denote by $I_y$ the Galois group of the completion of $K\overline{k}$ at $y$. We choose maps $I_y\hookrightarrow G_{K\overline{k}}$ compatible with Data \ref{choices-Y}.

\begin{defn}
Let $T^{sm}_{\overline{Y}}$ be the category of objects $(Z_0,Z_y,f_y)$, where $y$ runs over the closed points of $\bar{Y}$, defined
as follows. The set $Z_0$ is endowed with a continuous
$G_{K\overline{k}}$-action. For any closed point $y$ of $\overline{Y}$, $Z_y$ is a set
endowed with a $I_y$-equivariant a map $f_y:Z_y\rightarrow Z_0$, where $I_y$ acts trivially on $Z_y$ and $I_y$ acts on
$Z_0$ via the map $I_y\rightarrow G_{K\overline{k}}$. Moreover, we assume that
\begin{itemize}
\item The map $f_y$ is \emph{bijective for almost all
closed points $y$ of $\overline{Y}$ and injective for all closed points $y$ of $Y$}.
\item The action of $G_{K\overline{k}}$ on $Z_0$ factors through $G(L/K\overline{k})$, for
some finite Galois sub-extension $\bar{K}/L/K\overline{k}$.
\end{itemize}
The morphisms in the category $T^{sm}_{\overline{Y}}$ are defined as above. The local section topology $\mathcal{J}_{ls}$ on the category  $T^{sm}_{\overline{Y}}$ is generated by the pretopology of surjective families as above.
\end{defn}
We consider below the category $Et_{\overline{Y}}$ of finitely presented \'etale $\overline{Y}$-schemes. The site $(Et_{\overline{Y}},\mathcal{J}_{et})$ is called the \emph{restricted \'etale site}. Since $\overline{Y}$ is quasi-compact and quasi-separated, the restricted \'etale site $(Et_{\overline{Y}},\mathcal{J}_{et})$ is a site for the \'etale topos of $\overline{Y}$, i.e. we have
$$\overline{Y}_{et}=\widetilde{(Et_{\overline{Y}},\mathcal{J}_{et})}.$$
\begin{prop}\label{prop-equiv-sites-etale-barY}
There is an equivalence
$$\widetilde{(T^{sm}_{\overline{Y}},\mathcal{J}_{ls})}\simeq\overline{Y}_{et}.$$
\end{prop}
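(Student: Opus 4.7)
The plan is to construct a fully faithful embedding of sites $\gamma^*_{\overline{Y}}:(Et_{\overline{Y}},\mathcal{J}_{et})\hookrightarrow(T^{sm}_{\overline{Y}},\mathcal{J}_{ls})$ and then to invoke the comparison lemma (\cite{SGA4} III 4.1) to conclude that the associated topoi coincide. First, following the recipe of Proposition \ref{prop-morph-sites-etale-loc-sections} (with profinite/discrete groups replacing locally compact ones), I would attach to any finitely presented \'etale $\overline{Y}$-scheme $\overline{V}$ the triple $\gamma^*_{\overline{Y}}(\overline{V})=(V_0,V_y,f_y)$, where $V_0:=Hom_{\overline{Y}}(Spec\,\overline{K},\overline{V})$ carries its natural continuous $G_{K\overline{k}}$-action, $V_y:=Hom_{\overline{Y}}(Spec\,\overline{k(y)},\overline{V})$ is the geometric fiber at $y$ (on which $I_y$ acts trivially, since the residue field of a closed point of the geometric curve $\overline{Y}$ is already algebraically closed), and $f_y$ is the specialization map built from Data \ref{choices-Y}.

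By exactly the same arguments as in Proposition \ref{prop-morph-sites-etale-loc-sections}, this functor is left exact, continuous, and fully faithful, and its essential image consists precisely of those $(Z_0,Z_y,f_y)\in T^{sm}_{\overline{Y}}$ for which $Z_0$ is a \emph{finite} $G_{K\overline{k}}$-set. Next, I would verify that this essential image $C'$ is a topologically generating full subcategory of $T^{sm}_{\overline{Y}}$ for the local section topology. Given an arbitrary object $(Z_0,Z_y,f_y)$, the $G_{K\overline{k}}$-action factors through some finite Galois quotient $G(L/K\overline{k})$ by definition, so $Z_0=\coprod_{\alpha}Z_{0,\alpha}$ splits as a coproduct of its $G_{K\overline{k}}$-orbits, each finite. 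Pulling back this decomposition along $f_y$ for every $y$ produces a coproduct decomposition of $(Z_0,Z_y,f_y)$ in $T^{sm}_{\overline{Y}}$ whose summands all lie in $C'$, and the family of canonical inclusions is trivially a local section (in fact surjective) cover.

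It remains to identify the topology on $Et_{\overline{Y}}$ induced from $\mathcal{J}_{ls}$ via $\gamma^*_{\overline{Y}}$ with the \'etale topology $\mathcal{J}_{et}$: in both cases the coverings are characterized by surjectivity, and a family $\{\overline{V}_i\to\overline{V}\}$ of \'etale $\overline{Y}$-schemes is surjective precisely when the induced map $\coprod V_{i,0}\to V_0$ is surjective and the induced maps $\coprod V_{i,y}\to V_y$ are surjective for every closed point $y$ of $\overline{Y}$, which is exactly the local section covering condition on $\gamma^*_{\overline{Y}}$ (the maps of finite sets having local sections iff being surjective). Once these three verifications are in hand, \cite{SGA4} III 4.1 yields the desired equivalence $\widetilde{(T^{sm}_{\overline{Y}},\mathcal{J}_{ls})}\simeq\widetilde{(Et_{\overline{Y}},\mathcal{J}_{et})}=\overline{Y}_{et}$.

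The main obstacle, and the reason the coproduct-decomposition step is needed, is that finitely presented \'etale $\overline{Y}$-schemes correspond only to the finite-orbit objects on the Weil-\'etale side, whereas a generic object of $T^{sm}_{\overline{Y}}$ may have an underlying $G_{K\overline{k}}$-set with infinitely many orbits (albeit of bounded "level" $L$). Everything else is a straightforward transcription of the proof of Proposition \ref{prop-morph-sites-etale-loc-sections} into the simpler discrete setting, where one can replace the delicate use of local sections of continuous maps by mere surjectivity of maps of sets.
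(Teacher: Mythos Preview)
Your proposal is correct and follows essentially the same route as the paper's own proof: construct the fully faithful functor $Et_{\overline{Y}}\to T^{sm}_{\overline{Y}}$ by transcribing Proposition~\ref{prop-morph-sites-etale-loc-sections}, observe that the only obstruction to essential surjectivity is that objects of $T^{sm}_{\overline{Y}}$ may have infinitely many $G_{K\overline{k}}$-orbits, verify that $Et_{\overline{Y}}$ is nonetheless a topologically generating subcategory carrying the induced topology, and conclude via the comparison lemma. The paper invokes \cite{SGA4} IV Corollary 1.2.1 rather than \cite{SGA4} III 4.1, but these are two formulations of the same principle; your orbit-decomposition argument for the generating property is exactly the content the paper leaves implicit.
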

\begin{proof}
The arguments of Proposition \ref{prop-morph-sites-etale-loc-sections} can be generalized to this context. This yields a natural functor
$Et_{\overline{Y}}\rightarrow T^{sm}_{\overline{Y}}$. This functor is not essentially surjective because an object $(Z_0,Z_y,f_y)$ of $T^{sm}_{\overline{Y}}$ can have an infinite number of connected components (i.e. $Z_0/G_{K\overline{k}}$ is infinite), while a finitely presented \'etale $\overline{Y}$-scheme has finitely many connected components. However, the previous functor is fully faithful, $Et_{\overline{Y}}$ is a topologically generating family of the site $(T^{sm}_{\overline{Y}},\mathcal{J}_{ls})$, and the \'etale topology on $Et_{\overline{Y}}$ is induced by the local section topology on $T^{sm}_{\overline{Y}}$. Hence the result follows from (\cite{SGA4} IV Corollary 1.2.1).
\end{proof}
We recall below some basic facts concerning truncated simplicial topoi. We refer to (\cite{these} Chapter 10 Section 1.2) for more details and references. A truncated simplicial topos $\mathcal{S}_{\bullet}$ is given by the usual diagram
$$\xymatrix{ \mathcal{S}_2\hspace{0.4cm} \tar[r] & \hspace{0.4cm}\mathcal{S}_1\hspace{0.4cm} \wdar[r] & \hspace{0.4cm}\mathcal{S}_0\ar[l]}$$
Given such truncated simplicial topos $\mathcal{S}_{\bullet}$, we define the category $Desc(\mathcal{S}_{\bullet})$ of objects of $S_0$ endowed with a descent data. One can prove that $Desc(\mathcal{S}_{\bullet})$ is always a topos. More precisely, $Desc(\mathcal{S}_{\bullet})$ is the inductive limit of the diagram $\mathcal{S}_{\bullet}$ in the 2-category of topoi.

The most simple non-trivial example is the following. Let $\mathcal{S}$ be a topos and let $U$ be an object of $\mathcal{S}$. We consider the truncated simplicial topos
$$(\mathcal{S},U)_{\bullet}:\hspace{0.5cm}\xymatrix{ \mathcal{S}/(U\times U\times U)\hspace{0.4cm} \tar[r] & \hspace{0.4cm}\mathcal{S}/(U\times U)\hspace{0.4cm} \wdar[r] & \hspace{0.4cm}\mathcal{S}/U\ar[l]}$$
where these morphisms of topoi are induced by the projections maps (of the form $U\times U\times U\rightarrow U\times U$ and $U\times U\rightarrow U$) and by the diagonal map $U\rightarrow U\times U$. It is well known that, if $U$ covers the final object of $\mathcal{S}$, then the natural morphism
$$Desc(\mathcal{S},U)_{\bullet}\longrightarrow\mathcal{S}$$
is an equivalence. In other words $\mathcal{S}/U\rightarrow\mathcal{S}$ is an effective descent morphism for any $U$ covering the final object of $\mathcal{S}$.

We will also use the following example. Let $G$ be a discrete group acting on a scheme $\bar{Y}$. The truncated simplicial scheme
$$\xymatrix{G\times G\times\overline{Y}\hspace{0.4cm} \tar[r] & \hspace{0.4cm}G\times\overline{Y}\hspace{0.4cm} \wdar[r] & \hspace{0.4cm}\overline{Y}\ar[l]}$$
defined by the action of the group $G$ on $\overline{Y}$ induces a truncated simplicial topos :
$$(G,\overline{Y}_{et})_{\bullet}:\hspace{0.5cm}\xymatrix{ (G\times G\times\overline{Y})_{et}\hspace{0.4cm} \tar[r] & \hspace{0.4cm}(G\times\overline{Y})_{et}\hspace{0.4cm} \wdar[r] & \hspace{0.4cm}\overline{Y}_{et}\ar[l]}$$
The descent topos of this truncated simplicial topos is precisely the category of $G$-equivariant \'etale sheaves on $\bar{Y}$ :
$$\mathcal{S}_{et}(G,\overline{Y}):=Desc((G,\overline{Y}_{et})_{\bullet}).$$

\begin{thm}\label{thm-equ-WEtoposY-nature-artif}
There is an equivalence
$$\widetilde{(T^{sm}_{Y},\mathcal{J}_{ls})}\simeq\mathcal{S}_{et}(W_k,\overline{Y}).$$
\end{thm}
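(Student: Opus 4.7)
The strategy is to realize $\mathcal{S}_{et}(W_k,\overline{Y})$ as sheaves on a site built from $W_k$-equivariant data in $T^{sm}_{\overline{Y}}$, and to identify that site with $(T^{sm}_Y,\mathcal{J}_{ls})$. Transporting the action of $W_k$ on $\overline{Y}_{et}$ through Proposition~\ref{prop-equiv-sites-etale-barY} gives an action of $W_k$ on the site $(T^{sm}_{\overline{Y}},\mathcal{J}_{ls})$: an element $\sigma\in W_k$ permutes closed points of $\overline{Y}$ and acts on $G_{K\overline{k}}$ by conjugation inside $W_K$. The descent topos $\mathcal{S}_{et}(W_k,\overline{Y})=Desc((W_k,\overline{Y}_{et})_{\bullet})$ is thus the topos of $W_k$-equivariant objects of $\overline{Y}_{et}$, which admits as a defining site the category $(T^{sm}_{\overline{Y}})^{W_k}$ of $W_k$-equivariant objects of $T^{sm}_{\overline{Y}}$ endowed with the induced local section topology. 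The proof then reduces to constructing an equivalence of left exact sites
$$(T^{sm}_Y,\mathcal{J}_{ls})\;\simeq\;\bigl((T^{sm}_{\overline{Y}})^{W_k},\mathcal{J}_{ls}\bigr).$$

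To build this equivalence, I would start from $(Z_0,Z_v,f_v)\in T^{sm}_Y$. The exact sequence $1\to G_{K\overline{k}}\to W_K\to W_k\to 1$ arising from $W_K=G_K\times_{G_k}W_k$ identifies continuous $W_K$-sets with continuous $G_{K\overline{k}}$-sets equipped with a compatible $W_k$-action; under this identification $Z_0$ becomes the global part of a $W_k$-equivariant object of $T^{sm}_{\overline{Y}}$. For the local data, the fiber of $|\overline{Y}|\to|Y|$ over $v\in|Y|$ is a single $W_k$-orbit, and its stabilizer at a chosen representative $\bar{v}_0$ is exactly the subgroup $W_{k(v)}\subset W_k$ used in the definition of $T^{sm}_Y$. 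Hence a $W_k$-equivariant family $(Z_{\bar{v}},f_{\bar{v}})_{\bar{v}|v}$ with trivial $I_{\bar{v}}$-action and $G_{K\overline{k}}$-equivariant maps to $Z_0$ is determined by its value at $\bar{v}_0$, namely a $W_{k(v)}$-set $Z_{\bar{v}_0}$ together with a $W_{K_v}$-equivariant map to $Z_0$ (using $I_v\subset G_{K\overline{k}}\subset W_K$ and $W_{K_v}/I_v=W_{k(v)}$). This is precisely the local datum $(Z_v,f_v)$ of $T^{sm}_Y$, and the induced functor is then readily checked to be fully faithful and essentially surjective.

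Compatibility of the two topologies is immediate: a family of $W_k$-equivariant morphisms is $\mathcal{J}_{ls}$-covering on the $T^{sm}_{\overline{Y}}$ side precisely when it induces a surjection on each $Z_{\bar{v}}$, which by $W_k$-equivariance reduces to surjectivity on $Z_{\bar{v}_0}=Z_v$ for one representative $\bar{v}_0$ over each $v\in|Y|$---exactly the $\mathcal{J}_{ls}$-covering condition in $T^{sm}_Y$. I expect the main obstacle to be the initial reduction: realizing $Desc((W_k,\overline{Y}_{et})_{\bullet})$ concretely as sheaves on the site $((T^{sm}_{\overline{Y}})^{W_k},\mathcal{J}_{ls})$. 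This requires combining Proposition~\ref{prop-equiv-sites-etale-barY} with the general principle that, for a discrete group acting on a site, equivariant sheaves on the site correspond to sheaves on the site of equivariant objects; this is routine once one unwinds the truncated simplicial topos $(W_k,\overline{Y}_{et})_{\bullet}$, but it is the step that carries the essential topos-theoretic content of the argument.
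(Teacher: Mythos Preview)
Your approach is genuinely different from the paper's, and the site-level comparison $T^{sm}_Y \simeq (T^{sm}_{\overline{Y}})^{W_k}$ you sketch is correct and illuminating. However, the step you flag as ``routine'' is not: the general principle you invoke---that for a discrete group $G$ acting on a site $(\mathcal{C},J)$, $G$-equivariant sheaves coincide with sheaves on the category $\mathcal{C}^G$ of $G$-equivariant objects---is \emph{false} without further hypotheses. A minimal counterexample: let $G$ act trivially on the terminal site $\{\ast\}$; then $\mathcal{C}^G=\{\ast\}$ and sheaves on it give $\underline{Sets}$, whereas $G$-equivariant sheaves give $B_G^{sm}$. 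What is missing is the verification that $(T^{sm}_{\overline{Y}})^{W_k}$ is a \emph{generating} family of $\mathcal{S}_{et}(W_k,\overline{Y})$. This is not automatic here: the naive induction $\coprod_{\sigma\in W_k}\sigma^*V$ of an object $V\in T^{sm}_{\overline{Y}}$ need not lie in $T^{sm}_{\overline{Y}}$, because the $G_{K\overline{k}}$-action on the $\sigma$-component factors through $G(\sigma L/K\overline{k})$ and these need not share a common finite quotient as $\sigma$ ranges over the infinite group $W_k$. One can repair this by first passing to the Galois closure of $L$ over $K$ (not merely over $K\overline{k}$), but this argument must be supplied.

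The paper circumvents this entirely by working at the level of topoi rather than sites. It produces a morphism $f:Y^{sm}_W\to B^{sm}_{W_k}$, pulls back the universal torsor to obtain $f^*EW_k$, and identifies the slice $Y^{sm}_W/f^*EW_k$ with $\overline{Y}_{et}$ (this is the analogue of your site identification, read through localization). Since $f^*EW_k$ covers the final object, effective descent gives $Y^{sm}_W\simeq Desc(\mathcal{S}^1_\bullet)$ for the associated \v{C}ech simplicial topos; one then identifies $\mathcal{S}^1_\bullet$ with the simplicial topos $(W_k,\overline{Y}_{et})_\bullet$ built from the $W_k$-action on $\overline{Y}$, whose descent topos is by definition $\mathcal{S}_{et}(W_k,\overline{Y})$. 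This route never needs to exhibit equivariant site objects as a generating family, which is precisely the point where your argument is incomplete. If you want to salvage the site-level approach, the cleanest fix is to note that $f^*EW_k$ is itself an object of $(T^{sm}_{\overline{Y}})^{W_k}$ on which $W_k$ acts freely; products with it then furnish enough equivariant objects to generate---but at that point you are essentially reproducing the paper's descent argument in different language.
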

\begin{proof}
Firstly, there is a canonical morphism of topoi
$$f:\widetilde{(T^{sm}_{Y},\mathcal{J}_{ls})}\longrightarrow B^{sm}_{W_k}$$
induced by the morphism $f^*$ of left exact sites defined as follows. The functor $f^*$ sends a $W_k$-set $Z$ to the object $(Z,Z,Id_Z)$ of $T^{sm}_{Y}$, where $W_K$ (respectively $W_{k(v)}$) acts on $Z$ via the map $W_K\rightarrow W_k$ (respectively via $W_{k(v)}\rightarrow W_k$). Let $EW_k$ be the object of $B^{sm}_{W_k}$ defined by the action of $W_k$ on itself by multiplications. One has $f^*(EW_k)=y(EW_k,EW_k,Id)$. Adapting the proof of Proposition \ref{prop-site-local-Licht-topos} to this context, we obtain the following equivalences :
$$\widetilde{(T^{sm}_{Y},\mathcal{J}_{ls})}/f^*(EW_k)\simeq\widetilde{(T^{sm}_{Y}/f^*EW_k,\mathcal{J}_{ls})}\simeq \widetilde{(T^{sm}_{\overline{Y}},\mathcal{J}_{ls})}.$$
By Proposition \ref{prop-equiv-sites-etale-barY}, we have
\begin{equation}\label{equiv-barY-et-localisation}
Y^{sm}_W/f^*(EW_k):=\widetilde{(T^{sm}_{Y},\mathcal{J}_{ls})}/f^*(EW_k)\simeq\widetilde{(T^{sm}_{\overline{Y}},\mathcal{J}_{ls})}\simeq
\widetilde{(Et_{\overline{Y}},\mathcal{J}_{et})}=:\overline{Y}_{et}.
\end{equation}
The morphism from $f^*EW_k$ to the final object of $\widetilde{(T^{sm}_{Y},\mathcal{J}_{ls})}$ is certainly an epimorphism, i.e. a covering morphism. We consider the truncated simplicial topos $\mathcal{S}^1_{\bullet}$ obtained by localization:
$$\xymatrix{Y_W^{sm}/(f^*EW_k\times f^*EW_k \times f^*EW_k) \tar[r] & Y_W^{sm}/(f^*EW_k\times f^*EW_k) \wdar[r] & Y_W^{sm}/f^*EW_k\ar[l]}$$
The descent topos of this truncated simplicial topos $\mathcal{S}^1_{\bullet}$ is canonically equivalent to $Y^{sm}_W$ since $f^*EW_k$ covers the final object:
$$Y^{sm}_W\simeq Desc(\mathcal{S}^1_{\bullet})$$
By (\ref{equiv-barY-et-localisation}), the truncated simplicial topos $\mathcal{S}^1_{\bullet}$ is equivalent to
$$\mathcal{S}^2_{\bullet}:\hspace{0.5cm}\xymatrix{ \overline{Y}_{et}/(g^*W_k\times g^*W_k)\hspace{0.4cm} \tar[r] & \hspace{0.4cm}\overline{Y}_{et}/g^*W_k\hspace{0.4cm} \wdar[r] & \hspace{0.4cm}\overline{Y}_{et}\ar[l]}$$
where $g:\overline{Y}_{et}\rightarrow\underline{Sets}$ is the unique morphism. One has
$(g^*W_k\times g^*W_k)=g^*(W_k\times W_k)$ and  $$g^*W_k=g^*(\coprod_{W_k}\{*\})=\coprod_{W_k}g^*(\{*\})=\coprod_{W_k}y(\overline{Y})=y(\coprod_{W_k}\overline{Y})=y(W_k\times\overline{Y})$$ where $\{*\}$ and $y(\overline{Y})$ are the final objects of $\underline{Sets}$ and $\overline{Y}_{et}$ respectively, since $g^*$ commutes with finite projective limits and arbitrary inductive limits. We obtain
$$\overline{Y}_{et}/g^*W_k=\overline{Y}_{et}/y(W_k\times\overline{Y})=(W_k\times\overline{Y})_{et}$$
since the projection $W_k\times\overline{Y}\rightarrow\overline{Y}$ is an \'etale morphism of schemes, and
$$\overline{Y}_{et}/(g^*W_k\times g^*W_k)=\overline{Y}_{et}/y(W_k\times W_k\times\overline{Y})=(W_k\times W_k\times\overline{Y})_{et}$$
The truncated simplicial topos $\mathcal{S}^2_{\bullet}$ is equivalent to
$$\mathcal{S}^3_{\bullet}:\hspace{0.5cm}\xymatrix{(W_k\times W_k\times\overline{Y})_{et}\hspace{0.4cm} \tar[r] & \hspace{0.4cm}(W_k\times\overline{Y})_{et}\hspace{0.4cm} \wdar[r] & \hspace{0.4cm}\overline{Y}_{et}\ar[l]}$$
where the maps of this simplicial topos are given by the group structure of $W_k$ and its action on $\overline{Y}$. Hence we have equivalences of truncated simplicial topoi :
$$\mathcal{S}^1_{\bullet}\simeq \mathcal{S}^2_{\bullet}\simeq \mathcal{S}^3_{\bullet}$$
inducing equivalences between the associated descent topoi:
$$Y^{sm}_W\simeq Desc(\mathcal{S}^1_{\bullet})\simeq Desc(\mathcal{S}^2_{\bullet})\simeq Desc(\mathcal{S}^3_{\bullet})\simeq\mathcal{S}_{et}(W_k,\overline{Y}).$$

\end{proof}

Recall that Lichtenbaum has defined in \cite{Lichtenbaum-finite-field} the Weil-\'etale cohomology as follows:
$$H_W^n(Y,\mathcal{A}):=R^n(\Gamma^{W_k}_{\overline{Y}})\mathcal{A}$$
Here $\mathcal{A}$ is a $W_k$-equivariant abelian \'etale sheaf on $\overline{Y}$ and $\Gamma^{W_k}_{\overline{Y}}$ is the functor of $W_k$-invariant global sections on $\overline{Y}$. This cohomology is precisely the cohomology of the Weil-\'etale topos $\mathcal{S}_{et}(W_k,\overline{Y})$. Indeed, the latter is defined as $R^n(\alpha_*)\mathcal{A}$
where $\alpha:\mathcal{S}_{et}(W_k,\overline{Y})\rightarrow \underline{Sets}$ is the unique map from the Weil-\'etale topos to the final topos. But we have canonically $\alpha_*=\Gamma^{W_k}_{\overline{Y}}$.

\section{The Weil-\'etale fundamental group}\label{sect-fund-group}

\subsection{Local sections}
For $W$ a locally compact topological group and $I$ a closed subgroup of $W$, it is not known in general that the continuous projection $W\rightarrow W/I$ admits local sections. The result below, due to P. Mostert , shows that local sections do exist when $W/I$ is finite dimensional. We denote below by $\textrm{dim}(X)$ the covering dimension of the space $X$ in the sense of Lebesgue. Note that, for any locally compact space $X$, we have the inequality
$$\textrm{cd}(X)\leq\textrm{dim}(X)$$
where $\textrm{cd}(X)$ denotes the cohomological dimension that is used in \cite{local sections}.
\begin{thm}\label{thm-loc-sections}
Let $W$ be a locally compact topological group and let $I\subseteq W$ be a closed subgroup such that $W/I$ is finite dimensional. Then the continuous projection $W\rightarrow W/I$
has local sections. If $W/I$ is 0-dimensional, then the projection $W\rightarrow W/I$ has a global continuous section.
\end{thm}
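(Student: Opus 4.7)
The plan is to invoke Mostert's theorem directly from \cite{local sections}; the result is essentially a black box from the theory of locally compact groups, and no independent argument would improve on it in the context of this paper. Nevertheless, here is how I would structure the proof if I were producing it from scratch. The strategy is to handle the $0$-dimensional case first by constructing a global continuous section, and then to bootstrap from it to obtain local sections when $W/I$ has arbitrary finite dimension.

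For the $0$-dimensional case, I would exploit the fact that a $0$-dimensional locally compact Hausdorff space admits a basis of compact-open sets. Consider the poset $\mathcal{P}$ of pairs $(U,s)$ where $U\subseteq W/I$ is open and $s\colon U\to W$ is a continuous section of the projection, ordered by extension of domain. This poset is nonempty (the empty section is in it) and inductive: the union of a chain of sections defines a section on the union of the domains by continuity and compatibility. By Zorn's lemma pick a maximal $(U_0,s_0)$, and claim $U_0=W/I$. Otherwise choose $x\in W/I\setminus U_0$ and a compact-open neighborhood $V$ of $x$; lift $x$ to some $w\in W$ and define a continuous map $\sigma\colon V\to W$ by $\sigma(y)=w$ composed with a suitable adjustment using the group structure along a small compact-open set. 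The key point is that the group structure on $W$ together with the clopen separation afforded by $V$ (and the clopenness of $U_0\cap V$ inside $V$) lets one glue $s_0$ and $\sigma$ on the overlap after a translation, contradicting maximality.

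For the finite-dimensional case I would reduce to the $0$-dimensional case. The cleanest route uses the structure theory of locally compact groups: after passing to a compact neighborhood of the identity and using the Gleason--Yamabe theorem, one may approximate $W$ by Lie group quotients $W/N$ with $N$ a small compact normal subgroup. When $W/I$ is a Lie-group homogeneous space the projection $W\to W/I$ is a principal $I$-bundle locally trivialized by the exponential map, so local sections exist. One then lifts these local sections through the approximating quotients, using finite-dimensionality of $W/I$ to guarantee that the approximation eventually stabilizes enough to produce an honest local section of $W\to W/I$.

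The main obstacle is the finite-dimensional step, specifically the reduction to Lie groups and the verification that a local section on a Lie quotient lifts back to $W$. The $0$-dimensional case is essentially a Zorn/gluing argument and presents no conceptual difficulty once one notices that compact-open neighborhoods allow surgery on partial sections; by contrast, the positive-dimensional case genuinely requires the Gleason--Montgomery--Zippin machinery (or some substitute), and controlling the interaction between the dimension hypothesis and these approximations is where the real work lies.
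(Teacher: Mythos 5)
Your primary move---invoke \cite{local sections} Theorem 8 as a black box---is exactly what the paper does; the paper's proof consists of the single sentence ``This is \cite{local sections} Theorem 8'' and nothing else, so at the level the paper operates you agree.

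Since you offered a from-scratch reconstruction, two cautions are worth recording. First, Mostert's actual argument is organized around \emph{cohomological} dimension and an obstruction-theoretic induction on the dimension of the base; this is precisely why the paper records the inequality $\mathrm{cd}(X)\leq\dim(X)$ immediately before the statement---Mostert's hypothesis is on $\mathrm{cd}$, and the paper is translating from covering dimension to satisfy it. The Gleason--Yamabe reduction-to-Lie-quotients route you sketch is a genuinely different strategy, and you would have to do real work to see that local sections lift coherently through the approximating quotients $W/N$; as you yourself note, you leave this step to ``the real work.'' Second, and more seriously, your $0$-dimensional Zorn argument has a gap: at the extension step you posit a continuous map $\sigma\colon V\to W$ over a compact-open neighborhood $V$ of a point $x\notin U_0$ and then glue with $s_0$, but you never explain where this local section $\sigma$ comes from. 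Producing that first local section near an arbitrary point is the actual content of the theorem---once local sections exist, the $0$-dimensional gluing via a clopen refinement is the easy part---so the sketch is circular as written. None of this affects the correctness of the paper, which simply defers to Mostert.
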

\begin{proof}
This is \cite{local sections} Theorem 8.
\end{proof}

\begin{cor}\label{cor-local-sections}
Let $W$ be a finite dimensional locally compact topological group and let $I\subseteq W$ be a closed subgroup. Then the continuous projection $W\rightarrow W/I$
has local sections.
\end{cor}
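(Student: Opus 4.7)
The strategy is straightforward: reduce to Theorem \ref{thm-loc-sections} by verifying its hypothesis that $W/I$ is finite dimensional. Once this is done, that theorem directly yields local sections of $q:W\rightarrow W/I$, so the whole content of the corollary lies in the implication
$$\textrm{dim}\,W<\infty \Longrightarrow \textrm{dim}\,(W/I)<\infty$$
for closed subgroups $I$ of locally compact groups $W$.

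The quotient map $q:W\rightarrow W/I$ is a continuous open surjection of locally compact Hausdorff spaces. Given $y\in W/I$, I would choose a preimage $w\in q^{-1}(y)$ and a compact neighborhood $K$ of $w$ in $W$; since $q$ is open, the image $q(K)$ is then a compact neighborhood of $y$ in $W/I$. Because $W/I$ is locally compact Hausdorff its covering dimension is controlled locally, so it is enough to bound $\textrm{dim}\,q(K)$ by $\textrm{dim}\,W$. The desired inequality $\textrm{dim}\,q(K)\leq\textrm{dim}\,K\leq\textrm{dim}\,W$ is a standard fact for quotients of locally compact groups by closed subgroups; it is a consequence of Pasynkov's theorem, which gives the stronger formula $\textrm{dim}\,W=\textrm{dim}\,I+\textrm{dim}\,(W/I)$. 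With this in hand, Theorem \ref{thm-loc-sections} applies directly.

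The main obstacle is exactly this dimension inequality. It is not a general fact about open continuous surjections between locally compact Hausdorff spaces without some additional structure: dimension-raising continuous surjections exist, for instance space-filling curves. The argument therefore genuinely uses that $q$ arises from the action of a closed subgroup on a locally compact group, and one must appeal either to Pasynkov's theorem, or alternatively to the structure theory of finite dimensional locally compact groups (Montgomery--Zippin, Yamabe), which reduces the question to the case of Lie groups, where the finite dimensionality of $W/I$ is immediate. Everything else in the proof is formal.
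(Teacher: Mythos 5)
Your proposal is correct and follows the same route as the paper: reduce to Theorem \ref{thm-loc-sections} by showing that $\dim(W/I)<\infty$, which is exactly the one step with any content. The only difference is the reference: the paper invokes Mostert's own Corollary 2 from \cite{local sections} for the implication $\dim W<\infty\Rightarrow\dim(W/I)<\infty$, whereas you appeal to Pasynkov's stronger additivity formula (or to Montgomery--Zippin/Yamabe structure theory); both are valid, and your remark that the inequality is not a formal consequence of openness is a correct and useful observation.
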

\begin{proof}
By \cite{local sections} Corollary 2, if $W$ is finite dimensional then so is $W/I$, and the result follows from the previous theorem.
\end{proof}

\begin{lem}\label{lem-C_L-finite-dim}
Let $L/K$ be a finite Galois extension of number fields. The the idèle class group $C_L$ and relative Weil group $W_{L/K}$ have both finite dimension.
\end{lem}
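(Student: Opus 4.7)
The plan is to reduce the finite-dimensionality of $W_{L/K}$ to that of $C_L$ via the defining topological extension, and to establish the latter by analyzing the id\`elic decomposition.

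For $C_L$ I would begin from the identification
$$\mathbb{A}_L^{\times}=\mathbb{A}_{L,\infty}^{\times}\times\mathbb{A}_{L,f}^{\times},$$
where $\mathbb{A}_{L,\infty}^{\times}=\prod_{v\mid\infty}L_v^{\times}$ is a finite product of copies of $\mathbb{R}^{\times}$ and $\mathbb{C}^{\times}$, hence a real Lie group of dimension $r_1+2r_2$, and $\mathbb{A}_{L,f}^{\times}$ is the restricted direct product of the groups $L_v^{\times}$ over nonarchimedean $v$. Each local factor $L_v^{\times}$ for $v$ finite is totally disconnected, and $\mathbb{A}_{L,f}^{\times}$ admits the profinite compact open subgroup $\prod_{v\nmid\infty}\mathcal{O}_v^{\times}$ as a neighborhood of the identity; hence $\mathbb{A}_{L,f}^{\times}$ is totally disconnected and locally compact, and therefore of covering dimension $0$. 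The standard product estimate $\dim(X\times Y)\leq \dim X+\dim Y$ on locally compact second-countable spaces then yields $\dim\mathbb{A}_L^{\times}\leq r_1+2r_2$. Since $L^{\times}$ is a discrete subgroup of $\mathbb{A}_L^{\times}$, the quotient map $\mathbb{A}_L^{\times}\to C_L$ is a local homeomorphism and thus preserves covering dimension, so $\dim C_L$ is finite.

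For $W_{L/K}$ I would invoke the defining extension of topological groups recalled earlier in the paper,
$$1\to C_L\to W_{L/K}\to G(L/K)\to 1.$$
Since $G(L/K)$ is finite and discrete, $C_L$ is open and of finite index in $W_{L/K}$, so $W_{L/K}$ is a finite disjoint union of translates of $C_L$. Therefore $\dim W_{L/K}=\dim C_L$, which is finite.

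The only real thing to track carefully is the behavior of the Lebesgue covering dimension under these operations: subadditivity on finite products of locally compact second countable spaces, vanishing on totally disconnected locally compact groups, preservation under quotients by a discrete normal subgroup, and invariance under finite disjoint unions. Each of these is classical, but it is worth being explicit in the id\`elic setting that the restricted product really is totally disconnected, since this is what prevents the infinite number of local factors from contributing to the dimension.
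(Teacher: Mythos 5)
Your proof is correct, and it takes a genuinely different route from the one in the paper. After the common first step (reducing $\dim W_{L/K}$ to $\dim C_L$ via openness of $C_L$ in $W_{L/K}$), the paper bounds $\dim C_L$ from the class field theory sequence $1\to C_L^0\to C_L\to G_L^{ab}\to 1$: it applies Mostert's theorem to obtain a global continuous section over the profinite group $G_L^{ab}$, whence a homeomorphism $C_L\simeq C_L^0\times G_L^{ab}$, and then invokes the classical structure of the connected component $C_L^0$ as a product of $\mathbb{R}$ with $r_1+r_2-1$ solenoids and $r_2$ circles, getting $\dim C_L\leq [L:\mathbb{Q}]$. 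You instead work one level up on the id\`ele group itself, splitting $\mathbb{A}_L^{\times}$ into the archimedean Lie factor (dimension $r_1+2r_2$) and the totally disconnected restricted product over the finite places (dimension $0$), applying subadditivity of covering dimension on second-countable locally compact spaces, and then passing to the quotient by the discrete subgroup $L^{\times}$ via the local-homeomorphism argument. The two bounds agree. Your argument is somewhat more elementary and self-contained: it avoids the structure theory of $C_L^0$ and does not need Mostert's section theorem at this point (the paper does of course need Mostert elsewhere, so invoking it here costs nothing in context); in exchange, you need to be a little careful that the dimension-theoretic facts you use (subadditivity on products, zero-dimensionality of locally compact totally disconnected groups, invariance under local homeomorphisms) hold in the relevant generality, which they do since $\mathbb{A}_L^{\times}$ is second countable, locally compact, and Hausdorff, hence metrizable.
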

\begin{proof}
Let us first note that $C_L$ is an open (and closed) subgroup of $W_{L/K}$, hence $$\textrm{dim}(W_{L/K})=\textrm{dim}(C_L).$$
Global class field theory provides us with an exact sequence of topological groups
$$1\rightarrow C^0_L\rightarrow C_L\rightarrow G_L^{ab}\rightarrow 1,$$
where $C_L^0$ is the connected component of $C_L$. We mean by the term exact sequence that $C_L^0$ is a closed normal subgroup of $C_L$ endowed with the induced topology and that $G_L^{ab}$ becomes isomorphic to the group $C_L/C_L^0$ endowed with the quotient topology. The space $G_L^{ab}$ is profinite hence compact and totally disconnected. Hence $\textrm{dim}(G_L^{ab})=0$. By Theorem \ref{thm-loc-sections}, the continuous map $C_L\rightarrow G_L^{ab}$ has a global continous section. We obtain an homeomorphism $C_L\simeq C_L^0\times G_L^{ab}$ (which is not a group morphism). We obtain
$$\textrm{dim}(C_L)\leq \textrm{dim}(C^0_L)+ \textrm{dim}(G_L^{ab})= \textrm{dim}(C^0_L).$$
But the connected component $C^0_L$ is the product of $\mathbb{R}$ with $r_1(L)+r_2(L)-1$ solenoids and $r_2(L)$ circles. Recall that a solenoid is a filtered projective limit of circles:
$$\mathbb{V}:=\underleftarrow{lim}\,\mathbb{S}^1$$
hence $\mathbb{V}$ is of dimension 1. We obtain
$$\textrm{dim}(W_{L/K})=\textrm{dim}(C_L)\leq \textrm{dim}(C^0_L)\leq r_1(L)+2\,r_2(L)=[L:\mathbb{Q}]$$
\end{proof}

\begin{cor}\label{cor-exple-of-loc-sections}
Let $L/K$ be a finite Galois extension of number fields. The map $W_K\rightarrow W_{L/K}$ has local sections. The relative Weil group $W_{L/K,S}$ has finite dimension and $W_K\rightarrow W_{L/K,S}$ has local sections. For two Galois extensions $L'/L/K$, the map $W_{L'/K,S}\rightarrow W_{L/K,S}$ has local sections.
\end{cor}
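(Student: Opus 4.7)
The plan is to derive all three assertions as consequences of Mostert's theorem (Theorem \ref{thm-loc-sections}) and Corollary \ref{cor-local-sections}, once the appropriate finite-dimensionality statements are in hand. Since Lemma \ref{lem-C_L-finite-dim} already handles $W_{L/K}$, the main work lies in showing that the $S$-truncated version $W_{L/K,S}$ is still finite-dimensional; the rest is essentially formal.

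First, for the map $W_K\rightarrow W_{L/K}$: I would recall that $W_K$ is locally compact (it is the topological group underlying the projective limit defining the absolute Weil group) and that the projection has closed kernel. Since $W_{L/K}$ is finite-dimensional by Lemma \ref{lem-C_L-finite-dim}, Theorem \ref{thm-loc-sections} applies directly to give local sections.

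For the finite-dimensionality of $W_{L/K,S}$, I would mimic the argument used for $W_{L/K}$. The relative Weil group $W_{L/K,S}$ fits into an extension
\[
1\longrightarrow C_{L,S}\longrightarrow W_{L/K,S}\longrightarrow G_{L/K}\longrightarrow 1,
\]
with finite quotient, so it suffices to bound $\textrm{dim}(C_{L,S})$. The $S$-idèle class group $C_{L,S}$ is a quotient of $C_L$ by a compact subgroup (the product of unit groups at places outside $S$), hence its connected component satisfies $\textrm{dim}(C_{L,S}^0)\leq\textrm{dim}(C_L^0)\leq[L:\mathbb{Q}]$. Moreover, class field theory identifies $C_{L,S}/C_{L,S}^0$ with a profinite (so zero-dimensional) Galois group, and Theorem \ref{thm-loc-sections} provides a continuous global section $C_{L,S}/C_{L,S}^0\rightarrow C_{L,S}$. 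This gives a homeomorphism $C_{L,S}\simeq C_{L,S}^0\times(C_{L,S}/C_{L,S}^0)$ and hence $\textrm{dim}(C_{L,S})\leq\textrm{dim}(C_{L,S}^0)<\infty$, exactly as in the proof of Lemma \ref{lem-C_L-finite-dim}. Consequently $W_{L/K,S}$ is finite-dimensional.

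With this in hand, Theorem \ref{thm-loc-sections} applied to the locally compact group $W_K$ and the closed normal subgroup $\mathrm{ker}(W_K\rightarrow W_{L/K,S})$ yields local sections of $W_K\rightarrow W_{L/K,S}$, since the quotient $W_{L/K,S}$ is now known to be finite-dimensional. For the last assertion concerning $W_{L'/K,S}\rightarrow W_{L/K,S}$, both groups are finite-dimensional, so Corollary \ref{cor-local-sections} applies directly. The only delicate point in the whole argument is the finite-dimensionality of $C_{L,S}$; everything else is an invocation of Mostert's theorem.
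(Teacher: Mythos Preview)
Your proof is correct and follows the same overall strategy as the paper: establish finite-dimensionality of the relevant quotient groups and then invoke Mostert's theorem (Theorem \ref{thm-loc-sections}). The only difference is in how you obtain the finite-dimensionality of $W_{L/K,S}$. You redo the analysis of Lemma \ref{lem-C_L-finite-dim} for $C_{L,S}$ (splitting off the connected component, etc.), whereas the paper simply observes that $W_{L/K,S}$ is a quotient of $W_{L/K}$ by a closed subgroup and appeals to \cite{local sections} Corollary 2 (quotients of finite-dimensional locally compact groups remain finite-dimensional), together with Lemma \ref{lem-C_L-finite-dim}. This is shorter and avoids re-examining the structure of $C_{L,S}$; in fact your own inequality $\dim(C_{L,S}^0)\leq\dim(C_L^0)$ already implicitly uses this same quotient principle, so you may as well apply it directly to $W_{L/K}\twoheadrightarrow W_{L/K,S}$ and skip the rest of that paragraph.
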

\begin{proof}
The group $W_{L/K,S}$ is the quotient of $W_{L/K}$ by a closed subgroup. Hence $W_{L/K,S}$ has finite dimension by Lemma \ref{lem-C_L-finite-dim} and \cite{local sections} Corollary 2. The maps $W_K\rightarrow W_{L/K}$, $W_K\rightarrow W_{L/K,S}$ and
$W_{L'/K,S}\rightarrow W_{L/K,S}$ are all quotient maps of locally compact groups by closed subgroups with finite dimensional targets. Those results follow from Theorem \ref{thm-loc-sections}.
\end{proof}

\subsection{Weil groups}\label{subsect-Weil-group}

\subsubsection{}
Again we consider a connected étale $\bar{X}$-scheme $\bar{U}$
endowed with the data \ref{choices-U}. Thus we have a geometric
point $q_{\bar{U}}:Spec\,\bar{F}\rightarrow\bar{U}$ over
$Spec\,\bar{F}\rightarrow\bar{X}$, i.e. a sub-extension
$\bar{F}/K/F$, where $K$ is the function field of $\bar{U}$. The
Weil group of $K$ is $W_K:=\varphi^{-1}G_K$. If $u$ is a closed
point of $\bar{U}$ lying over $v\in\bar{X}$, then the Weil map
$\theta_v:W_{F_v}\rightarrow W_F$ of Data \ref{choices-X}(6) induces
a Weil map $\theta_u:W_{K_u}\rightarrow W_K$.
We denote by $W^1_{K_u}\subset W_{K_u}$ the maximal compact subgroup, which should be thought of as the inertia subgroup.
\begin{defn}
We define the Weil group $W(\bar{U},q_{\bar{U}})$ of the pair
$(\bar{U},q_{\bar{U}})$ as the quotient
$$W(\bar{U},q_{\bar{U}}):=W_{K}/N_{\bar{U}}$$
where $N_{\bar{U}}$ is the closure of the normal subgroup in $W_K$
generated by the images of the maps
$$W^1_{K_u}\hookrightarrow W_{K_u}\rightarrow W_K$$
where $u$ runs through the closed points of $\bar{U}$.
\end{defn}
We will show below that this group $W(\bar{U},q_{\bar{U}})$ is the limit in the category of topological
groups of a projective system of topological groups, i.e. of a topological pro-group. We can either consider this topological pro-group or we can consider its limit as a topological group. It turns out that a topological pro-group contains more information than its limit computed in the category of topological groups. For example, there exist non-trivial
strict pro-groups whose limit, computed in the category of topological
groups, is the trivial group. Topos theory
provides a natural framework to overcome this kind of pathologies.

Let $(\bar{U},q_{\bar{U}})$ be as above and let $\bar{F}/L/K$ be a
finite Galois subextension, where the algebraic closure $\bar{F}/K$
is given by the geometric point $q_{\bar{U}}$.
\begin{defn}\label{def-W_LU}
We consider the topological group $W(\bar{U},L)$ defined
as the quotient
$$W(\bar{U},L):=W_{L/K}/N_{\bar{U},L},$$
where $N_{\bar{U},L}$ is the closure of the normal subgroup in
$W_{L/K}$ generated by the images of the maps
$$W^1_{K_u}\hookrightarrow W_{K_u}\rightarrow W_{L/K}$$
where $u$ runs through the closed points of $\bar{U}$.
\end{defn}

\begin{defn}\label{def-pro-top-grp}
We denote by $\underline{W}(\bar{U},q_{\bar{U}})$ the strict
topological pro-group
$$\underline{W}(\bar{U},q_{\bar{U}})\,:=\,\{{W}(\bar{U},L)\,\,;\,\,\mbox{$\overline{F}/L/K$ \emph{finite Galois}}\}$$
indexed over the system of finite Galois subextension of
$\bar{F}/K$.
\end{defn}
Recall that the term "strict" means that the transition maps
$${W}(\bar{U},L')\longrightarrow {W}(\bar{U},L)$$
have local sections. This follows from Theorem \ref{thm-loc-sections}, Lemma \ref{lem-C_L-finite-dim} and from the fact that ${W}(\bar{U},L)$ is the quotient of $W_{L/K}$ by a compact subgroup.

\begin{prop}\label{prop-prop8}
The canonical morphism of topological groups
$$\alpha:{W}(\bar{U},q_{\bar{U}})\longrightarrow\underleftarrow{lim}\,{W}(\bar{U},L)$$
is an isomorphism, where the right hand side is the projective limit computed in the category of topological groups. Moreover the map
$${W}(\bar{U},q_{\bar{U}})\longrightarrow\,{W}(\bar{U},L)$$
has local sections.
\end{prop}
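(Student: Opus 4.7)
The plan is to construct $\alpha$ as a continuous group homomorphism via the universal property of the projective limit, then to show it is an isomorphism of topological groups by identifying its kernel and using the strict pro-group structure. The local sections claim will follow along the way.

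First I would construct $\alpha$. The compatible family of continuous surjections $W_K \to W_{L/K} \to W(\bar{U},L)$ annihilates the images of the compact inertia subgroups $W^1_{K_u}$ (since these land in $N_{\bar{U},L}$ for each $L$), so it factors through $W(\bar{U},q_{\bar{U}}) = W_K/N_{\bar{U}}$. The universal property of $\underleftarrow{\lim}\,W(\bar{U},L)$ in topological groups then yields the continuous group homomorphism $\alpha$.

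Next I would establish the ``moreover'' statement by showing that $W_K \to W(\bar{U},L)$ admits local sections. This map factors as $W_K \to W_{L/K} \to W(\bar{U},L)$: the first has local sections by Corollary \ref{cor-exple-of-loc-sections}; for the second, $W_{L/K}$ is finite-dimensional by Lemma \ref{lem-C_L-finite-dim}, hence so is its Hausdorff quotient $W(\bar{U},L)$ by the cited result of \cite{local sections}, and Corollary \ref{cor-local-sections} applies. Composing local sections (shrinking domains so the intermediate values land in the domain of the outer section) gives local sections of $W_K \to W(\bar{U},L)$. Post-composing with the open quotient $W_K \twoheadrightarrow W(\bar{U},q_{\bar{U}})$ produces local sections of $W(\bar{U},q_{\bar{U}}) \to W(\bar{U},L)$.

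For the isomorphism claim, I would first identify the kernel of $W_K \to G := \underleftarrow{\lim}\,W(\bar{U},L)$ with $N_{\bar{U}}$. Let $A \subseteq W_K$ denote the normal subgroup generated by the images of the $W^1_{K_u}$, so $N_{\bar{U}} = \overline{A}$, and set $A_L := p_L(A) \subseteq W_{L/K}$, so $N_{\bar{U},L} = \overline{A_L}$. The kernel in question is $\underleftarrow{\lim}_L N_{\bar{U},L}$, which clearly contains $N_{\bar{U}}$ (both are closed and contain $A$). For the reverse inclusion, any $w \in \underleftarrow{\lim}_L N_{\bar{U},L}$ has a basis of neighborhoods in $W_K$ of the form $p_L^{-1}(V_L)$ with $V_L$ open in $W_{L/K}$ and $p_L(w) \in V_L \cap N_{\bar{U},L}$; since $A_L$ is dense in $N_{\bar{U},L}$, one has $V_L \cap A_L \neq \emptyset$, so $p_L^{-1}(V_L) \cap A \neq \emptyset$, giving $w \in \overline{A} = N_{\bar{U}}$. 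Surjectivity of $W_K \to G$ is then obtained from the strict pro-group structure: the compatible family of surjective maps $W_K \to W(\bar{U},L)$ with local sections allows a Mittag-Leffler-type lifting of any compatible family $(x_L) \in G$ to an element of $W_K$, using the existence of surjective transition maps at all three levels of the short exact sequences $1 \to N_{\bar{U},L} \to W_{L/K} \to W(\bar{U},L) \to 1$. This makes $\alpha$ a continuous group bijection. To upgrade to a homeomorphism, the maps $W_K \to W(\bar{U},L)$ are open (having local sections), which means the quotient topology on $W_K/N_{\bar{U}}$ coincides with the topology pulled back from $\prod_L W(\bar{U},L)$ via $\alpha$, i.e. with the inverse-limit topology on $G$.

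The main obstacle is the delicate step of showing $N_{\bar{U}} = \underleftarrow{\lim}_L N_{\bar{U},L}$ and the accompanying surjectivity of $W_K \to G$: both rely on carefully exploiting that $A_L$ is dense in $N_{\bar{U},L}$, that all transition maps are strict (have local sections, hence are open surjections), and that basic opens in the inverse-limit topology on $W_K$ pull back from finite levels. The verification that composition of maps with local sections has local sections is routine but must be done with some care regarding the domains.
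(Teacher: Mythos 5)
Your route is genuinely different from the paper's. The paper first splits each group as $W(\bar{U},L)\simeq W^1(\bar{U},L)\times\mathbb{R}$ and $W(\bar{U},q_{\bar{U}})\simeq W^1(\bar{U},q_{\bar{U}})\times\mathbb{R}$, observes that projective limits commute with products (so the $\mathbb{R}$-factor is inert), and then works entirely with the maximal compact subgroups $W^1(\cdot)$. There, exactness of $\underleftarrow{\lim}$ of the sequences $1\to N_{\bar{U},L}\to W^1_{L/K}\to W^1(\bar{U},L)\to 1$, together with the elementary fact that a continuous bijection from a compact space onto a Hausdorff space is automatically a homeomorphism, gives both injectivity (via $N_{\bar{U}}\simeq\underleftarrow{\lim}\,N_{\bar{U},L}$) and surjectivity-plus-bicontinuity (dense image, compact source) in one stroke. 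You instead work directly at the level of $W_K=\underleftarrow{\lim}\,W_{L/K}$, identify $\ker(W_K\to G)=\underleftarrow{\lim}\,N_{\bar{U},L}=N_{\bar{U}}$ by a density argument, and lift compatible families to get surjectivity. This is a sensible alternative, but it pushes all the topological delicacy into steps that in the paper's approach are trivialized by compactness.

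Two of these steps need more than you supply. First, the lifting argument for surjectivity of $W_K\to G$ requires compactness of the fibers: given $(x_L)\in G$, the sets of admissible lifts $w_L N_{\bar{U},L}\subseteq W_{L/K}$ form a projective system of \emph{nonempty compact} Hausdorff spaces, and one needs the nonempty-inverse-limit theorem for compacta; "Mittag-Leffler-type lifting" plus surjective transition maps is not by itself enough for topological groups. Second, and more seriously, the upgrade to a homeomorphism does \emph{not} follow merely from openness of the maps $W_K\to W(\bar{U},L)$: for a single $L$ those maps are open yet the pullback topology on $W_K/N_{\bar{U}}$ is strictly coarser than the quotient topology. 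What one actually needs is that $\alpha q:W_K\to G$ is an open map, and verifying $\alpha q\bigl(p_L^{-1}(U_L)\bigr)=G\cap\pi_L^{-1}\bigl(r_L(U_L)\bigr)$ requires knowing that $p_L(N_{\bar{U}})=N_{\bar{U},L}$, i.e.\ that $N_{\bar{U}}\to N_{\bar{U},L}$ is \emph{surjective}. That surjectivity is available (the image is compact, hence closed, and contains the dense $A_L$), but you never invoke it, and without it the claim "the quotient topology coincides with the pulled-back topology" is unjustified. If you add the observation that $N_{\bar{U}}$ is compact and hence $p_L(N_{\bar{U}})=N_{\bar{U},L}$, both gaps close and your proof goes through; alternatively, the paper's reduction to the compact factor makes all of this automatic.
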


\begin{proof}
Consider the product decompositions
$${W}(\bar{U},q_{\bar{U}})\simeq{W}^1(\bar{U},q_{\bar{U}})\times\mathbb{R}\mbox{ and }
{W}(\bar{U},L)\simeq{W}^1(\bar{U},L)\times\mathbb{R}$$ where
$W^1(-)$ is the maximal compact subgroup of $W(-)$. We have
$$\underleftarrow{lim}\,{W}(\bar{U},L)\simeq\underleftarrow{lim}\,{W}^1(\bar{U},L)\times\mathbb{R}$$
since projective limits commute between themselves (in particular with products). Hence it is enough to show that
$$\alpha^1:{W}^1(\bar{U},q_{\bar{U}})\longrightarrow\underleftarrow{lim}\,{W}^1(\bar{U},L)$$
is an isomorphism of topological groups. For any $L/K$ finite
Galois, we have an exact sequence of topological groups:
$$1\rightarrow N_{\bar{U},L}\rightarrow W^1_{L/K}\rightarrow{W}^1(\bar{U},L)\rightarrow
1$$ Passing to the limit we obtain an exact sequence (since projective limits are left exact)
\begin{equation}\label{uneexactsequ}
1\rightarrow \underleftarrow{lim}\,N_{\bar{U},L}\rightarrow
\underleftarrow{lim}\,W^1_{L/K}\rightarrow\underleftarrow{lim}\,{W}^1(\bar{U},L)
\end{equation}
By definition of $N_{\bar{U}}$ and $N_{\bar{U},L}$, the inclusion
$N_{\bar{U}}\hookrightarrow W^1_{K}$ factors through
$\underleftarrow{lim}\,N_{\bar{U},L}$. We obtain an injective
continuous map
$$n:N_{\bar{U}}\hookrightarrow \underleftarrow{lim}\,N_{\bar{U},L}$$
which has dense image, since all the maps $N_{\bar{U}}\hookrightarrow N_{\bar{U},L}$ are surjective.
This morphism $n$ is an isomorphism of topological
groups because $N_{\bar{U}}$ is compact. The exact sequence
(\ref{uneexactsequ}) then shows that $\alpha^1$ is injective. On the other hand, the map
$${W}^1(\bar{U},q_{\bar{U}})\longrightarrow\,{W}^1(\bar{U},L)$$
is surjective for any $L$, hence $\alpha^1$ has dense image. Therefore
$\alpha^1$ is surjective and bicontinuous, since ${W}^1(\bar{U},q_{\bar{U}})$ is
compact. Finally, the map $${W}(\bar{U},q_{\bar{U}})\longrightarrow\,{W}(\bar{U},L)$$ has local sections by Theorem \ref{thm-loc-sections}.
\end{proof}

\subsubsection{}
Let $\bar{V}$ be another connected étale $\bar{X}$-scheme. The generic point of
$\bar{V}$ is the spectrum of a number field $L$ and we denote by $S$ the finite set of places
of $L$ not corresponding to a point of $\bar{V}$. The $S$-idèle class group $C_{L,S}$ of $L$ is defined by the following
exact sequence of topological groups
\begin{equation}\label{exact-sequence-S-idele}
0\rightarrow
\prod_{w\in\bar{V}}\mathcal{O}^{\times}_{L_w}\rightarrow
C_L\rightarrow C_{L,S}\rightarrow0
\end{equation}
where $\prod_{w\in\bar{V}}\mathcal{O}^{\times}_{L_w}$ is the product
of the local units
$\mathcal{O}^{\times}_{L_w}:=Ker(L_w^*\rightarrow\mathbb{R}_{>0})$
indexed by the sets of places of $L$ corresponding to a point of
$\bar{V}$.
\begin{defn}\label{def-CU}
For any connected étale $\bar{X}$-scheme $\bar{V}$ with function
field $L$, we define the \emph{formation module $C_{\bar{V}}$ of
$\bar{V}$} as the $S$-idèle class group of $L$
$$C_{\bar{V}}:=C_{L,S}$$
where $S$ is the set of places
of $L$ not corresponding to a point of $\bar{V}$.
\end{defn}

The geometric point $q_{\bar{U}}:Spec\,\bar{F}\rightarrow\bar{U}$
gives a point of the étale topos
$$q_{\bar{U}}:\underline{Sets}\longrightarrow\bar{U}_{et}$$
and the étale fundamental group $\pi_1(\bar{U}_{et},q_{\bar{U}})$ is
well defined as a profinite group. This group is the Galois group of
the maximal sub-extension of $\bar{F}/K$ unramified at any place of
$K$ corresponding to of $\bar{U}$ (regardless if such a place is
ultrametric or archimedean). More geometrically, we consider the
filtered set of pointed Galois étale cover
$\{(\bar{V},q_{\bar{V}})\rightarrow(\bar{U},q_{\bar{U}})\}$ to
define the étale fundamental group
$$\pi_1(\bar{U}_{et},q_{\bar{U}}):=\underleftarrow{lim}_{(\bar{V},q_{\bar{V}})}\,Gal(\bar{V}/\bar{U})$$
The pair
\begin{equation}\label{class-formation-pi}
(\pi_1(\bar{U}_{et},q_{\bar{U}}),\underrightarrow{lim}_{(\bar{V},q_{\bar{V}})}\,C_{\bar{V}})
\end{equation}
is a (topological) class formation (see \cite{Neukirch} Proposition 8.3.8 and \cite{Neukirch} Theorem 8.3.12). This follows from the fact that if $L/K$ is a Galois extension unramified over $\bar{U}$, then the
$G_{L/K}$-module $\prod_{w\in\bar{V}}\mathcal{O}^{\times}_{L_w}$ in
(\ref{exact-sequence-S-idele}) is cohomologically trivial. Therefore, one can consider the Weil group associated to this class formation (see \cite{Tate}).
More precisely, one has a compatible system of fundamental class
leading to a projective system of extensions
$$1\rightarrow C_{\bar{V}}\rightarrow W_{\bar{V}/\bar{U}}\rightarrow Gal(\bar{V}/\bar{U})\rightarrow 1.$$
This projective system is indexed by the filtered set of pointed
Galois cover of $(\bar{U},q_{\bar{U}})$.
\begin{defn}
The \emph{Weil group of the class formation
(\ref{class-formation-pi})} is the projective limit
$$W_{\bar{U},q_{\bar{U}}}:=\underleftarrow{lim}_{(\bar{V},q_{\bar{V}})}\,W_{\bar{V}/\bar{U}}$$
computed in the category of topological groups.
\end{defn}
We have a canonical map
$W(\bar{U},p_{\bar{U}})\rightarrow\underleftarrow{lim}_{(\bar{V},q_{\bar{V}})}\,W_{\bar{V}/\bar{U}}=:W_{\bar{U},q_{\bar{U}}}$.

\subsubsection{}
If $W$ is an Hausdorff topological group, we denote $W^c$ the
closure of the commutator subgroup of $W$, and by $W^{ab}=W/W^c$ the
maximal Hausdorff abelian quotient of $W$.
\begin{lem}
We have topological isomorphisms
$${W}(\bar{U},q_{\bar{U}})^{ab}\simeq C_{\bar{U}}\mbox{ and }{W}(\bar{U},L)^{ab}\simeq C_{\bar{U}}$$
for any finite Galois extension $L/K$.
\end{lem}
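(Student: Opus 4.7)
The plan is to translate the statement, via class field theory, into an identity of quotients of the id\`ele class group $C_K$. Two ingredients from class field theory are needed. First, the reciprocity isomorphism of topological groups $W_K^{ab} \simeq C_K$ (and, more generally, $W_{L/K}^{ab} \simeq C_K$ for any finite Galois $L/K$, since $W_{L/K}$ is the Weil group of the class formation $(G_{L/K}, C_L)$ and the abelianization of the Weil group of a class formation is the formation module at the base). Second, local class field theory, which identifies $W_{K_u}^{ab}$ with $K_u^\times$ in such a way that the maximal compact subgroup $W^1_{K_u}$ has image the local units $\mathcal{O}_{K_u}^\times$. The local-global compatibility of reciprocity then ensures that the composition
\[
W^1_{K_u} \hookrightarrow W_{K_u} \xrightarrow{\theta_u} W_K \twoheadrightarrow W_K^{ab} \simeq C_K
\]
agrees with $W^1_{K_u} \twoheadrightarrow \mathcal{O}_{K_u}^\times \hookrightarrow C_K$, where the last arrow is the canonical embedding of the $u$-th local component into the id\`ele class group.

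Next I would compute $W(\bar{U}, q_{\bar{U}})^{ab}$. Since the abelianization of $W_K/N_{\bar{U}}$ is $W_K^{ab}$ divided by the closed image of $N_{\bar{U}}$, and $W_K^{ab}$ being abelian forces the image of the normal subgroup $N_{\bar{U}}$ to coincide with the image of its topological generators, we get
\[
W(\bar{U}, q_{\bar{U}})^{ab} \simeq C_K \bigg/ \overline{\bigl\langle \mathcal{O}_{K_u}^\times \,:\, u \in \bar{U} \bigr\rangle}.
\]
By definition, the $S$-id\`ele class group $C_{\bar{U}} := C_{K,S}$ (with $S$ the complement of $\bar{U}$ in $\bar{X}$) is the quotient of $C_K$ by the image of the compact group $\prod_{u \in \bar{U}} \mathcal{O}_{K_u}^\times$. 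So it remains to check that the closure of the subgroup generated by the individual $\mathcal{O}_{K_u}^\times$ coincides with the image of the full product. One containment is obvious; for the reverse, the restricted direct sum is dense in the product $\prod_{u \in \bar{U}} \mathcal{O}_{K_u}^\times$ for the product topology (which is precisely the topology induced from $\mathbb{A}_K^\times$), and since the quotient map $\mathbb{A}_K^\times \to C_K$ is continuous and open, densities are preserved, giving the desired equality.

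For the second claim $W(\bar{U}, L)^{ab} \simeq C_{\bar{U}}$, the same argument applies verbatim with $W_K$ replaced by $W_{L/K}$ and $W_K^{ab}\simeq C_K$ replaced by $W_{L/K}^{ab}\simeq C_K$; the images of the local inertia subgroups $W^1_{K_u}$ are unchanged because the map $W_K\to W_{L/K}$ is compatible with reciprocity on abelianizations.

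The main obstacle is the precise identification of the image of $W^1_{K_u}$ in $C_K$, which relies on the nontrivial local-global compatibility of the reciprocity maps; once this is in hand, the rest reduces to the elementary topological observation about the restricted direct sum being dense in the full product. A minor point to verify is that the resulting bijection $W(\bar{U}, q_{\bar{U}})^{ab} \simeq C_{\bar{U}}$ is bicontinuous, but this is automatic since both sides carry quotient topologies induced from $C_K$.
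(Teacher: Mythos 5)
Your proof is correct and follows essentially the same strategy as the paper: identify $W_K^{\mathrm{ab}}$ (resp.\ $W_{L/K}^{\mathrm{ab}}$) with $C_K$ via class field theory, use local–global compatibility to see that the image of each inertia subgroup $W^1_{K_u}$ in $C_K$ is the local unit group $\mathcal{O}_{K_u}^\times$, and conclude that the abelianization of the quotient is $C_K$ modulo the closed subgroup generated by those local units, which is $\prod_{u\in\bar U}\mathcal{O}_{K_u}^\times$, i.e.\ $C_{\bar U}$. The paper phrases the topological bookkeeping slightly differently — it observes that $W^1_{L/K}\to W^1(\bar U,L)$ is a closed surjection of compact groups (so the closure of the commutator subgroup passes to the quotient cleanly) and asserts directly that the image of the compact group $N_{\bar U,L}$ is $\prod_{v\in\bar U}\mathcal{O}_{K_v}^\times$ — whereas you make explicit the density of the restricted direct sum in the full product to produce the same identification; these are just two routes through the same compactness argument, and the content is the same.
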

\begin{proof}
Recall that $K$ is the number field of $\bar{U}$ and that $L/K$ is a
finite Galois extension. We have $W_{L/K}^{ab}\simeq C_K$. The
morphism $W^1_{L/K}\rightarrow{W}^1(\bar{U},L)$ is surjective and
closed. Hence ${W}(\bar{U},L)^c$ is the image of $W_{L/K}^c$. On the
other hand, the image of $N_{\bar{U},L}\subset W_{L/K}$ in $C_K$ is
$\prod_{v\in\bar{U}}\mathcal{O}_{K_v}^{\times}$. Since quotients
commute between themselves, we have
$${W}(\bar{U},L)^{ab}=W_{L/K}^{ab}/\prod_{v\in\,\bar{U}}\mathcal{O}_{K_v}^{\times}=:C_{\bar{U}}.$$
The proof concerning ${W}(\bar{U},q_{\bar{U}})$ is similar.
\end{proof}

\begin{cor}
The topological pro-group
$$\underline{W}(\bar{U},q_{\bar{U}})^{ab}\,:=\,\{{W}(\bar{U},L)^{ab}\,\,;\,\,\mbox{$L/K$ \emph{finite Galois}}\}$$
is constant and can therefore be identified with a usual
topological group. One has an isomorphism of topological groups
$$\underline{W}(\bar{U},q_{\bar{U}})^{ab}\simeq C_{\bar{U}}.$$
\end{cor}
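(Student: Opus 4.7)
The plan is to deduce the corollary from the previous lemma by showing that all transition maps in the pro-system become the identity once we identify each term with $C_{\bar{U}}$.

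First, recall that for each finite Galois sub-extension $\bar{F}/L/K$ the previous lemma gives a canonical topological isomorphism $\rho_L\colon W(\bar{U},L)^{ab}\xrightarrow{\sim} C_{\bar{U}}$, obtained by applying global class field theory $W_{L/K}^{ab}\simeq C_K$ and then passing to the quotient by $\prod_{v\in\bar{U}}\mathcal{O}_{K_v}^{\times}$ (which is the image of $N_{\bar{U},L}$ in $C_K$). For $\bar{F}/L'/L/K$ two finite Galois sub-extensions, the transition map $W(\bar{U},L')\to W(\bar{U},L)$ is induced by the natural surjection $W_{L'/K}\to W_{L/K}$, which sends $N_{\bar{U},L'}$ onto $N_{\bar{U},L}$ by definition.

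The key point is then the following compatibility from class field theory: the diagram
\[ \xymatrix{
W_{L'/K}^{ab}\ar[r]\ar[d]^{\simeq}& W_{L/K}^{ab}\ar[d]^{\simeq}\\
C_K\ar[r]^{Id}& C_K
}\]
commutes, where the vertical arrows are the reciprocity isomorphisms. This is the classical compatibility of the reciprocity maps for a tower $L'/L/K$: the transition between relative Weil groups covers the identity on the formation module. Quotienting by $\prod_{v\in\bar{U}}\mathcal{O}_{K_v}^{\times}$ on both sides, we obtain a commutative square
\[ \xymatrix{
W(\bar{U},L')^{ab}\ar[r]\ar[d]^{\rho_{L'}}_{\simeq}& W(\bar{U},L)^{ab}\ar[d]^{\rho_L}_{\simeq}\\
C_{\bar{U}}\ar[r]^{Id}& C_{\bar{U}}
}\]
so every transition map in the pro-system $\underline{W}(\bar{U},q_{\bar{U}})^{ab}$ is identified, via the $\rho_L$, with the identity on $C_{\bar{U}}$.

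It follows that the collection $(\rho_L)_L$ defines an isomorphism, in $Pro(Gr(Top))$, between $\underline{W}(\bar{U},q_{\bar{U}})^{ab}$ and the constant pro-group with value $C_{\bar{U}}$. In particular the pro-group is essentially constant and can be identified with the locally compact topological group $C_{\bar{U}}$. The only non-formal step is the compatibility of reciprocity in the tower $L'/L/K$ used above; that this compatibility is built into the definition of the Weil group attached to a class formation (and already used implicitly to define the projective system of the $W_{L/K}$) is what makes the argument go through, and this is where I would expect the main bookkeeping to lie.
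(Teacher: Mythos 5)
Your argument is correct and is exactly the proof that the paper leaves implicit (the corollary is stated without a proof block, as an immediate consequence of the preceding lemma). You are right that the only non-formal ingredient is the compatibility of the reciprocity isomorphisms $W_{L/K}^{ab}\simeq C_K$ along the tower $L'/L/K$; this is built into the construction of $W_K=\underleftarrow{lim}\,W_{L/K}$ via compatible fundamental classes, and passing to the quotient by $\prod_{v\in\bar{U}}\mathcal{O}_{K_v}^{\times}$ then gives exactly the commutative square you wrote, so the pro-system is essentially constant with value $C_{\bar{U}}$.
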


\subsection{Normal subgroups}

Let $\mathcal{G}$ be a group object in a topos $\mathcal{S}$. A \emph{subgroup} of $\mathcal{G}$ is an
equivalence class of monomorphisms of group objects
$\mathcal{H}\hookrightarrow\mathcal{G}$. A \emph{quotient} of
$\mathcal{G}$, the dual notion, is an equivalence class of
epimorphisms of group objects
$\mathcal{G}\twoheadrightarrow\mathcal{Q}$.

Such a subgroup is said to be \emph{normal} if the conjugation
action of $\mathcal{G}$ on itself induces an action on
$\mathcal{H}$. In other words, $\mathcal{H}$ is normal if there
exists a commutative diagram
\[ \xymatrix{
\mathcal{G}\times\mathcal{G}\ar[r]^{c}&\mathcal{G}\\
\mathcal{G}\times\mathcal{H}\ar[r]\ar[u]&\mathcal{H}\ar[u] }\] where
$c$ is the conjugation action on $\mathcal{G}$ (which can be defined
on sections, or more directly as the conjugation action on a group
object in any category). If such an induced action of $\mathcal{G}$
on $\mathcal{H}$ does exist, then it is unique since
$\mathcal{H}\hookrightarrow\mathcal{G}$ is mono.

Let $\mathcal{H}\hookrightarrow\mathcal{G}$ be a subgroup. Consider
the quotient $\mathcal{G}/\mathcal{H}$ in $\mathcal{T}$ of the
equivalence relation
$$\mathcal{H}\times\mathcal{G}\rightrightarrows\mathcal{G},$$
where the arrows are given by projection and multiplication. Then
$\mathcal{G}/\mathcal{H}$ has a group structure compatible with the
group structure on $\mathcal{G}$ (i.e. the map
$\mathcal{G}\rightarrow\mathcal{G}/\mathcal{H}$ is a group morphism)
if and only if $\mathcal{H}$ is normal in $\mathcal{G}$.

In particular, let $f:\mathcal{G}'\rightarrow \mathcal{G}$ be a
morphism of group objects. The kernel of $f$ is defined as
$$Ker(f):=\mathcal{G}'\times_{\mathcal{G}}*$$
where $*\rightarrow\mathcal{G}$ is the unit section. Then $Ker(f)$
is a normal subgroup of $\mathcal{G}'$.

\subsubsection{Normal subgroup generated by a subgroup.}

Let $i:\mathcal{H}\hookrightarrow\mathcal{G}$ be a subgroup.
Consider the category of triangles
\[ \xymatrix{
&{N}\ar[d]\\
\mathcal{H}\ar[r]^i\ar[ur]&\mathcal{G} }\] where the maps are all
monomorphisms of groups and ${N}$ is normal in $\mathcal{G}$. This
category is not empty since it contains $Id_{G}$ as the final
object. The \emph{normal subgroup generated by $\mathcal{H}$} in
$\mathcal{G}$ is the projective limit in $\mathcal{T}$
$$\mathcal{N}(\mathcal{H}):=\underleftarrow{lim}\,{N},$$
More precisely, $\mathcal{N}(\mathcal{H})$ is the projective limit
of the functor that sends a triangle as above to ${N}$. We check
immediately that $\mathcal{N}(\mathcal{H})$ is the smallest normal
subgroup of $\mathcal{G}$ containing $\mathcal{H}$.

\subsubsection{Subgroup generated by a family of subgroups.}

Let $\{\mathcal{H}_j\hookrightarrow\mathcal{G},\,j\in J\}$ be a
family of subgroup of $\mathcal{G}$. We define analogously the
subgroup
$$<\mathcal{H}_j,\,j\in J>\hookrightarrow\mathcal{G}$$
generated by the $\mathcal{H}_j$'s in $\mathcal{G}$. In what
follows, we denote by $$\mathcal{H}_1\vee \mathcal{H}_2$$ the
subgroup generated by two subgroups $\mathcal{H}_1$ and
$\mathcal{H}_2$ in $\mathcal{G}$.

\subsubsection{} We consider now subgroups of representable group objects in
$\mathcal{T}$. Let $G$ is a (locally compact) topological group. A \emph{topological subgroup of $G$} is a subgroup $H\subseteq G$ endowed with the induced topology. A \emph{topological quotient of $G$} is a quotient $G/H$ endowed with the quotient topology, where $H$ is a normal subgroup.
\begin{lem}\label{lemma-pblms}
Let $y(G)$ be a group of $\mathcal{T}$ representable by a
topological group $G$. The following are equivalent.
\begin{enumerate}
\item $G$ is discrete.
\item Any subgroup of $yG$ is representable by a topological
subgroup of $G$.
\item Any quotient of $yG$ is representable by a topological
quotient of $G$.
\end{enumerate}
\end{lem}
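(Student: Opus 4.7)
The plan is to prove $(1)\Rightarrow(2)$ and $(1)\Rightarrow(3)$ by classifying sub- and quotient objects of $yG$ in $\mathcal{T}$ when $G$ is discrete, and to establish both converses using a single explicit counterexample based on the discretization $G_d$ of $G$. The continuous bijective group morphism $G_d\to G$ (where $G_d$ denotes $G$ equipped with the discrete topology, which is locally compact) induces a monomorphism of group objects $yG_d\hookrightarrow yG$ in $\mathcal{T}$. When $G$ is non-discrete, this subobject is strictly proper: the identity $\mathrm{id}_G\in yG(G)=\mathrm{Cts}(G,G)$ is not locally constant, since no point of a non-discrete topological group is isolated, so $\mathrm{id}_G\notin yG_d(G)$.

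For $(1)\Rightarrow(2)$: when $G$ is discrete, $yG\simeq\coprod_{g\in G}1$ in $\mathcal{T}$. For a subgroup object $\mathcal{H}\hookrightarrow yG$, set $H:=\mathcal{H}(\ast)\subseteq G$. Naturality with respect to the maps $\ast\rightleftarrows U$ for connected $U$, combined with the sheaf condition applied to covers by connected opens, identifies $\mathcal{H}(U)$ with $\{f\in yG(U):f(U)\subseteq H\}=yH(U)$, where $H$ carries its discrete (=subspace) topology; thus $\mathcal{H}$ is represented by the topological subgroup $H$. The parallel computation gives $(1)\Rightarrow(3)$: the kernel of any quotient is $yN$ for a normal $N\subseteq G$, and the sheafification of the pointwise quotient $U\mapsto yG(U)/yN(U)$ is locally isomorphic to $\mathrm{Cts}(-,G/N)=y(G/N)$.

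For $(2)\Rightarrow(1)$, by contrapositive: assume $G$ non-discrete. If $yG_d$ were representable by a topological subgroup $H\subseteq G$, evaluation at $\ast$ would give $H=G_d=G$ as subsets of $G$; but the subspace topology on $G\subseteq G$ is the original topology of $G$, so $yH=yG$, contradicting $yG_d\subsetneq yG$. For $(3)\Rightarrow(1)$, also by contrapositive: take $\mathcal{N}\subseteq yG$ to be $yG_d$ when $G$ is abelian (in which case $yG_d$ is automatically normal) or its normal closure in $yG$ in general. Then $\mathcal{N}(\ast)=G$, while $\mathrm{id}_G\notin\mathcal{N}(G)$ by a direct analysis of the generators of the normal closure — iterated products of conjugates $u\mapsto\phi(u)g\phi(u)^{-1}$ of locally constant sections by continuous sections cannot reproduce $\mathrm{id}_G$ globally when $G$ is non-discrete. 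So the quotient $\mathcal{Q}:=yG/\mathcal{N}$ satisfies $\mathcal{Q}(\ast)=\{e\}$ yet $\mathcal{Q}(G)\neq\{e\}$. If $\mathcal{Q}$ were representable by $y(G/H)$ for some closed normal $H$, evaluation at $\ast$ would force $H=G$, hence $\mathcal{Q}=1$, contradicting nontriviality on $G$.

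The main technical obstacle is verifying that $[\mathrm{id}_G]$ remains nontrivial in $\mathcal{Q}(G)$ after sheafification of the pointwise quotient. This reduces to the topological observation that any open cover of a non-discrete topological group contains an open set with a non-isolated point, on which $\mathrm{id}_G$ cannot be locally constant, and hence cannot coincide locally modulo $\mathcal{N}$ with the identity section.
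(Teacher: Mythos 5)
Your treatment of $(1)\Rightarrow(2)$, $(1)\Rightarrow(3)$, and $(2)\Rightarrow(1)$ is essentially sound and follows the same route as the paper (the paper invokes hyperconnectedness of $\mathcal{T}\to\underline{Sets}$ for the forward directions, whereas you re-derive the subobject classification from the sheaf condition; note, though, that locally compact spaces need not admit bases of \emph{connected} opens — the Cantor set is the standard example — so you should replace ``covers by connected opens'' with the open partition $\{f^{-1}(j)\}_{j}$ attached to a locally constant section $f$). The real problem is your argument for $(3)\Rightarrow(1)$. You correctly spot an issue that the paper glosses over: $yG_d$ is not automatically a normal subgroup of $yG$ when $G$ is non-abelian, so one cannot directly form a group quotient $yG/yG_d$. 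But your proposed repair — replace $yG_d$ by its normal closure $\mathcal{N}$ and claim $\mathrm{id}_G\notin\mathcal{N}(G)$ — fails: the normal closure of $yG_d$ in $yG$ can be \emph{all} of $yG$.

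Concretely, take $G=\mathbb{R}\rtimes\mathbb{Z}/2$ with multiplication $(t,\epsilon)(t',\epsilon')=(t+\epsilon t',\epsilon\epsilon')$. For continuous $\alpha=(f,\epsilon)$ and locally constant $\beta=(c,\delta)$ on a connected $Z$, one computes $\alpha\beta\alpha^{-1}=((1-\delta)f+\epsilon c,\,\delta)$, so taking $\delta=-1$ produces sections of the form $(2f+\epsilon c,-1)$ with $f$ an \emph{arbitrary} continuous function. Multiplying two of these gives $(2(f-f')+c-c',1)$, and since $f\mapsto 2f$ already runs over all continuous $\mathbb{R}$-valued functions, the subgroup generated by conjugates already contains every section $(h,\eta)$ with $h$ continuous and $\eta$ locally constant — that is, every continuous section of $yG$. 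Hence $\mathcal{N}=yG$, the quotient $\mathcal{Q}=yG/\mathcal{N}$ is trivial, and your contradiction with ``$\mathcal{Q}(G)\neq\{e\}$'' never materializes, even though $G$ is manifestly non-discrete. Your heuristic that conjugating locally constant sections by continuous ones ``cannot reproduce $\mathrm{id}_G$'' is exactly what breaks down: the iterated products of such conjugates can produce arbitrary continuous sections once the group is sufficiently non-abelian. To salvage $(3)\Rightarrow(1)$ one must choose a different normal non-representable subobject (the paper's own argument, taken literally, has the same normality lacuna — it works without further comment only in the abelian case, which is where the lemma is actually applied later on).
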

\begin{proof}
By (\cite{these} Lemma 10.29) the unique morphism
$$e_{\mathcal{T}}:\mathcal{T}\longrightarrow\underline{Sets}$$
is \emph{hyperconnected}. This means that, for any set $I$, the
(ordered) set $Sub_{\underline{Sets}}(I)$ of subobjects of $I$ in
$\underline{Sets}$ is in $1-1$ correspondence  with the set
$Sub_{\mathcal{T}}(e_{\mathcal{T}}^*I)$ of subobjects of
$e_{\mathcal{T}}^*I$ in $\mathcal{T}$. Note that
$Sub_{\underline{Sets}}(I)$ is just the family of subsets of $I$,
and that $e_{\mathcal{T}}^*I=y(I)$ is the sheaf of $\mathcal{T}$
represented by the discrete topological space $I$. Thus we have
$(1)\Rightarrow(2)$. We have also $(1)\Rightarrow(3)$ for the same reason. Let us write a more direct proof of this fact using $(1)\Rightarrow(2)$. Let $G$ be a discrete group. If
$$f:yG\twoheadrightarrow\mathcal{Q}$$ is a quotient in $\mathcal{T}$,
then the kernel
$$Ker(f):=yG\times_{\mathcal{Q}}*\,\hookrightarrow\,yG$$ is a subobject of $yG$
in $\mathcal{T}$. Therefore $Ker(f)=y(K)$ is a representable by a
usual subgroup $K\subseteq G$, and we have
$$\mathcal{Q}=y(G)/y(K)=y(G/K)$$
since the map $G\rightarrow G/K$ has (obviously) local sections (see
\cite{MatFlach} Lemma 4).

We claim that $(2)\Rightarrow(1)$ and $(3)\Rightarrow(1)$. Let $G$
be a non-discrete topological group. We denote by $G^{\delta}$ the
abstract group $G$ endowed with the discrete topology. The injective
continuous map $G^{\delta}\rightarrow G$ yields a monomorphism in
$\mathcal{T}$ :
$$yG^{\delta}\hookrightarrow yG.$$
This map is not an isomorphism. Indeed, the Yoneda functor is fully faithful and the identity map $G\rightarrow G^{\delta}$ is
not continuous. Hence $yG^{\delta}$ is a proper subgroup of $yG$.
But $yG^{\delta}$ is not representable by a topological subgroup of $G$, since
the induced morphism on global sections
$$e_{\mathcal{T},*}(yG^{\delta}):=Hom_{Top}(*,G^{\delta})=G^{\delta}\longrightarrow e_{\mathcal{T},*}(yG):=Hom_{Top}(*,G)=G^{\delta}$$
is an isomorphism. Similarly the quotient
$$yG/yG^{\delta}$$
is not representable by a quotient of $G$, since the kernel of the
map $yG\rightarrow yG/yG^{\delta}$ is not representable by a
topological subgroup of $G$ (this is $yG^{\delta}$).
\end{proof}

\begin{lem}\label{painfullemma1}
Let $W$ be a locally compact finite dimensional topological group and let
$N_1$ and $N_2$ be two normal compact subgroups of $W$. Let $N_1\vee
N_2$ be the normal topological subgroup of $W$ generated by $N_1$
and $N_2$, endowed with the induced topology. Then $N_1\vee
N_2$ is compact and the canonical map
$$yN_1\vee yN_2\longrightarrow y(N_1\vee N_2)$$
is an isomorphism of subgroups of $yW$. Moreover, one has
$$yW/(yN_1\vee yN_2)\,\simeq y(W/N_1\vee N_2).$$
\end{lem}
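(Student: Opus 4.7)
The plan is to reduce the statement to the single technical fact that the multiplication map $m\colon N_1\times N_2\to N_1\vee N_2$ admits local sections. First observe that since $N_1$ and $N_2$ are both normal in $W$, the set-theoretic product $N_1 N_2$ is already a normal subgroup of $W$ and coincides with $N_1\vee N_2$. As the continuous image of the compact space $N_1\times N_2$, this subgroup is compact (hence closed in the Hausdorff group $W$), which proves the first assertion.

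The main obstacle is establishing local sections of $m$, and this is not a formal consequence of Corollary \ref{cor-local-sections}, because $m$ is \emph{not} the quotient of a locally compact group by a closed subgroup: its fibers are parametrized by the antidiagonal $\{(n,n^{-1}):n\in N_1\cap N_2\}$, which is not a subgroup of $N_1\times N_2$ in general. My approach is to factor through the projection $p\colon N_1\vee N_2\to (N_1\vee N_2)/N_2\simeq N_1/(N_1\cap N_2)$. Since $N_1$ is a compact subgroup of the finite-dimensional group $W$, it is itself finite-dimensional by \cite{local sections} Corollary 2, so Corollary \ref{cor-local-sections} supplies a continuous local section $\sigma\colon U\to N_1$ of the quotient $N_1\to N_1/(N_1\cap N_2)$ around any chosen base-point. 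Then $s(y):=(\sigma(p(y)),\,\sigma(p(y))^{-1}y)$ is a continuous local section of $m$ defined on $p^{-1}(U)$, because $\sigma(p(y))$ and $y$ determine the same class in $(N_1\vee N_2)/N_2$, forcing $\sigma(p(y))^{-1}y\in N_2$.

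With local sections of $m$ in hand, the induced morphism $y(N_1\times N_2)\to y(N_1\vee N_2)$ is an epimorphism in $\mathcal{T}$. Since $N_1\vee N_2\hookrightarrow W$ is a topological embedding of groups, $y(N_1\vee N_2)$ is a subgroup of $yW$ containing both $yN_1$ and $yN_2$, giving the inclusion $yN_1\vee yN_2\subseteq y(N_1\vee N_2)$ by the universal property of $yN_1\vee yN_2$. Conversely, the multiplication morphism $y(N_1\times N_2)\to yW$ factors through the subgroup $yN_1\vee yN_2\hookrightarrow yW$ and also admits the epi-mono factorization $y(N_1\times N_2)\twoheadrightarrow y(N_1\vee N_2)\hookrightarrow yW$; uniqueness of the image in $\mathcal{T}$ then forces $y(N_1\vee N_2)\subseteq yN_1\vee yN_2$, establishing the desired isomorphism of subgroups. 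For the final statement, Corollary \ref{cor-local-sections} applies directly to $W\to W/(N_1\vee N_2)$ (since $W$ is locally compact finite-dimensional and $N_1\vee N_2$ is closed), so $yW\to y(W/(N_1\vee N_2))$ is an epimorphism in $\mathcal{T}$ whose kernel is precisely the subgroup $y(N_1\vee N_2)=yN_1\vee yN_2$, yielding $yW/(yN_1\vee yN_2)\simeq y(W/(N_1\vee N_2))$.
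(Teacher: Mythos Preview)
Your proof is correct and follows essentially the same strategy as the paper's: both establish that the multiplication map $m\colon N_1\times N_2\to N_1\vee N_2$ has local sections (hence induces an epimorphism in $\mathcal{T}$), then use the epi--mono factorisation of $y(N_1\times N_2)\to yW$ together with the observation that this map lands in $yN_1\vee yN_2$ on sections to conclude $y(N_1\vee N_2)=yN_1\vee yN_2$; the final statement is deduced identically from Corollary~\ref{cor-local-sections}.

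The only difference is in how the local sections of $m$ are produced. The paper identifies $N_1\vee N_2$ with the orbit space of $N_1\times N_2$ under the action $\sigma\cdot(x,y)=(x\sigma^{-1},\sigma y)$ of $N_1\cap N_2$ and invokes Corollary~\ref{cor-local-sections} directly; you are right that this is not literally a coset projection, though it becomes one after the homeomorphism $(x,y)\mapsto(x,y^{-1})$, which turns $m$ into the genuine quotient of $N_1\times N_2$ by the diagonal copy of $N_1\cap N_2$. Your explicit construction via a local section of $N_1\to N_1/(N_1\cap N_2)$ is a clean alternative that avoids this wrinkle entirely.
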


\begin{proof}

The subgroup $y(N_1\vee N_2)\hookrightarrow y(W)$ contains both
$yN_1$ and $yN_2$. Hence it contains $(yN_1\vee yN_2)$ as well, i.e.
one has
$$(yN_1\vee yN_2)\hookrightarrow y(N_1\vee N_2)\hookrightarrow yW$$
We show below that the inverse inclusion holds.

Any element of $N_1\vee N_2$ is of the form $xy$ for $x\in N_1$ and
$y\in N_2$. We have a continuous map
$$N_1\times
N_2\rightarrow W\times W\rightarrow W,$$ where the second map is the
multiplication. The image of this map is precisely $N_1\vee N_2$
hence we obtain a surjective continuous map
$$N_1\times N_2\longrightarrow N_1\vee N_2.$$
This shows that $N_1\vee N_2$ is compact. Note that this map is not a morphism of groups in general. This map
induces a bijective continuous map
\begin{equation}\label{map-painfullemma}
N_1\times N_2/(N_1\cap N_2)\longrightarrow N_1\vee N_2
\end{equation}
where the group $(N_1\cap N_2)$ acts on the space $N_1\times N_2$ by
$$\sigma (x,y)=(x\sigma^{-1},\sigma y)$$
for any $\sigma\in(N_1\cap N_2)$ and $(x,y)\in N_1\times N_2$. The
map (\ref{map-painfullemma}) is also closed since $N_1\times N_2/(N_1\cap N_2)$ is compact,
hence we get an homeomorphism
$$N_1\times N_2/(N_1\cap N_2)\simeq N_1\vee N_2.$$
The map $N_1\times N_2\rightarrow N_1\times N_2/(N_1\cap N_2)$ is a
local section cover by Corollary \ref{cor-local-sections}. Indeed, $(N_1\cap N_2)$ is a closed subgroup of
the compact group $N_1\times N_2$, and $N_1\times N_2$ is finite dimensional since $N_1$ and $N_2$ are two compact subgroups of $W$ which is finite dimensional (see \cite{local sections} Corollary 2). Hence the map
$$y(N_1\times N_2)\rightarrow y(N_1\vee N_2)$$
is an epimorphism in $\mathcal{T}$ (again, this is not a morphism of
groups in general).

It follows that $y(N_1\vee N_2)$ is the image of the map
$$y(N_1\times N_2)\rightarrow y(W\times W)\rightarrow yW,$$ where
the second map is the multiplication. In other words, one has the
epi-mono factorization
$$y(N_1\times N_2)\twoheadrightarrow y(N_1\vee N_2)\hookrightarrow yW.$$
But the image of $y(N_1\times N_2)$ in $y(W)$ is contained in
$(yN_1\vee yN_2)$ (check this on sections), hence we have
$$y(N_1\vee N_2)\hookrightarrow(yN_1\vee yN_2)\hookrightarrow yW$$
We obtain $y(N_1\vee N_2)=(yN_1\vee yN_2)$ (recall that the set of
subgroups of $yW$ has is an ordered set). In particular, one has
$$yW/(y(N_1)\vee y(N_2))=yW/y(N_1\vee N_2)=y(W/N_1\vee N_2)$$
where the last equality follows from the fact that $W\rightarrow
W/N_1\vee N_2$ has local sections, since $N_1\vee N_2$ is a compact
subgroup of the locally compact and finite dimensional group $W$ (see Corollary \ref{cor-local-sections}).
\end{proof}

\begin{rem}
The previous result generalizes immediately to the case of a finite
number of compact normal topological subgroups $\{N_j\subset
W,\,1\leq j\leq n\}$.
\end{rem}
Let $L/K$ be a finite Galois extension of number fields (inside the
fixed algebraic closure $\bar{F}/K$) and let
$$1\rightarrow C_L\rightarrow W_{L/K}\rightarrow G_{L/K}\rightarrow 1$$
be the associated relative Weil group. Let $v$ be a place of $K$ and
let $\widetilde{W}^1_{K_v}$ be the image of the composite morphism
$${W}^1_{K_v}\hookrightarrow{W}_{K_v}\hookrightarrow W_K\twoheadrightarrow W_{L/K}$$
endowed with the induced topology. We consider the topological
normal subgroup $N(\widetilde{W}^1_{K_v})$ of $W_{L/K}$ generated by
$\widetilde{W}^1_{K_v}$. We consider also the normal subgroup
$\mathcal{N}(y\widetilde{W}^1_{K_v})$ of $yW_{L/K}$ generated by
$y\widetilde{W}^1_{K_v}$.
\begin{lem}\label{lemma-compactitude}
With the notations above, the group $N(\widetilde{W}^1_{K_v})$ is a
compact subgroup of $W_{L/K}$.
\end{lem}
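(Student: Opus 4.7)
Write $N := \widetilde{W}^1_{K_v}$, $W := W_{L/K}$, $G := G_{L/K}$, and let $N^W := N(\widetilde{W}^1_{K_v})$ denote the closed normal subgroup of $W$ generated by $N$. Since $W^1_{K_v}$ is compact, so is its continuous image $N$. The image $\bar{N}$ of $N$ in the finite group $G$ is finite, and so is its normal closure $\bar{N}^W$ in $G$. The projection $W \twoheadrightarrow G$ sends $N^W$ onto $\bar{N}^W$, giving an exact sequence
\[
1 \to N^W \cap C_L \to N^W \to \bar{N}^W \to 1
\]
with finite quotient. It therefore suffices to prove that $N^W \cap C_L$ is compact.

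The strategy is to show that $N^W \cap C_L$ is contained in the norm-one id\`ele class group $C_L^1$, which is compact by Dirichlet's unit theorem together with finiteness of the class number. Consider the continuous homomorphism
\[
\phi : W = W_{L/K} \twoheadrightarrow W_{L/K}^{ab} \cong C_K \xrightarrow{|\cdot|_K} \mathbb{R}_+^{\times},
\]
using the class-field-theoretic identification $W_{L/K}^{ab} \cong C_K$. The composition $W^1_{K_v} \hookrightarrow W_{K_v} \to W_{L/K} \to W_{L/K}^{ab}$ factors through the local abelianization $W_{K_v}^{ab} \cong K_v^{\times}$ and sends the maximal compact subgroup $W^1_{K_v}$ onto the maximal compact subgroup of $K_v^{\times}$, which has trivial $K$-adelic absolute value. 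Hence $\phi(N) = \{1\}$. Because $\mathbb{R}_+^{\times}$ is abelian, $\phi(gng^{-1}) = \phi(n)$ for all $g \in W$ and $n \in N$, so $\phi$ vanishes on every conjugate of $N$ and, by continuity, on the closed normal subgroup $N^W$ they generate.

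Finally, the restriction of the abelianization $W_{L/K} \to W_{L/K}^{ab} \cong C_K$ to the open subgroup $C_L \subseteq W_{L/K}$ is the global norm $N_{L/K}: C_L \to C_K$, a standard feature of the Weil-group formulation of class field theory. The product formula gives $|N_{L/K}(c)|_K = |c|_L$ for $c \in C_L$, so $\phi|_{C_L} = |\cdot|_L$ with kernel $C_L^1$. Combining with the previous paragraph yields $N^W \cap C_L \subseteq C_L^1$; being closed in the compact group $C_L^1$, it is compact, and together with the reduction above this shows that $N^W$ is compact.

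The substantive inputs are the class-field-theoretic identifications ($W_{L/K}^{ab} \cong C_K$ with restriction to $C_L$ equal to $N_{L/K}$, and the image of $W^1_{K_v}$ in $K_v^{\times}$ under local reciprocity being the maximal compact), together with compactness of $C_L^1$. These are the main points requiring care; the remaining ingredients --- finiteness of $G$, the product formula, and the abelianness of $\mathbb{R}_+^{\times}$ --- are formal.
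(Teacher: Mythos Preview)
Your proof is correct and takes a more direct route than the paper's. Both arguments reduce, via the short exact sequence with finite quotient in $G_{L/K}$, to showing that $N^W \cap C_L$ is compact. The paper establishes this by an elaborate analysis: it identifies $\prod_{w\mid v}\mathcal{O}_{L_w}^\times$ inside $N_v \cap C_L$, compares $C_L/(N_v\cap C_L)$ with $C^v_{K^{un}}$ through the norm map, and uses finiteness of the Tate cohomology groups $\widehat{H}^{-1}$ and $\widehat{H}^0$ to deduce that $C_L/(N_v\cap C_L) \to C^v_{K^{un}}$ has finite kernel, hence that $C_L/(N_v\cap C_L)$ is Hausdorff and $N_v \cap C_L$ is closed in $C_L^1$. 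Your argument bypasses all of this with the single observation that the absolute-value homomorphism $\phi: W_{L/K} \to C_K \to \mathbb{R}^\times_+$ kills $\widetilde{W}^1_{K_v}$ and therefore (the target being abelian) its normal closure, so that $N^W \cap C_L \subset C_L^1$. In effect, the paper also knows $N_v\subset W^1_{L/K}$ (it says so near the end of the proof), but only invokes this after the harder closedness step; you use the same containment from the outset and avoid the detour.

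Two remarks. First, the paper works with the \emph{algebraic} normal subgroup and proves closedness of $N_v\cap C_L$ as part of the argument; you take $N^W$ to be closed by definition, which makes the final ``closed in compact'' step immediate. This is a convention difference rather than a gap. Second, the paper's longer argument is not wasted effort: the explicit compact subgroup $\Theta = \langle \prod_{w\mid v}\mathcal{O}_{L_w}^\times,\ (Id-\sigma)(C_L)\ \forall\,\sigma\in G(L/K^{un})\rangle$ and the fact that it has finite index in $N_v \cap C_L$ are reused verbatim in the proof of the subsequent Lemma~\ref{lem-la-clef}, where one must show that the topos-theoretic normal subgroup $\mathcal{N}(y\widetilde{W}^1_{K_v})$ is representable. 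Your shortcut proves the lemma as stated but does not supply this finer structural information.
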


\begin{proof}
Let $\bar{F}/L/K$ be a finite Galois sub-extension. The image of
${W}^1_{K_v}$ in $W_{K/F}$ is topologically isomorphic to
$W^1_{L_w/K_v}$, i.e. one has
$$\widetilde{W}^1_{K_w}\simeq W^1_{L_w/K_v}.$$ Here $W^1_{L_w/K_v}$ is the maximal compact
subgroup of $W_{L_w/K_v}$, which is in turn given by the group
extension
$$1\rightarrow L_w^{\times}\rightarrow W_{L_w/K_v}\rightarrow G(L_w/K_v)\rightarrow1$$
where $w$ is a place of $L$ lying above $v$. More precisely,
$W^1_{L_w/K_v}$ is given by the following extension
$$1\rightarrow\mathcal{O}_{L_w}^{\times}\rightarrow W^1_{L_w/K_v}\rightarrow I(L_w/K_v)\rightarrow1$$
where $I(L_w/K_v)$ is the usual inertia subgroup of $G(L_w/K_v)$. The map $W^1_{L_w/K_v}\rightarrow W_{L/K}$ sits in the
(injective) morphism of group extensions
\[ \xymatrix{
1\ar[r]&\mathcal{O}_{L_w}^{\times}\ar[r]\ar[d]&W^1_{L_w/K_v}\ar[r]\ar[d]&I(L_w/K_v)\ar[d]\ar[r]&1\\
1\ar[r]&C_L\ar[r]&W_{L/K}\ar[r]^{\varphi}&G(L/K)\ar[r]&1 }\] We thus
have
$$W^1_{L_w/K_v}\cap C_L=\mathcal{O}_{L_w}^{\times}$$
where the intersection makes sense inside $W_{L/K}$. The
conjugation action of $W_{L/K}$ on $C_L$ corresponds to the Galois
action. Hence for any $\eta\in W_{L/K}$, we set $\sigma=\varphi(\eta)$
and we have
$$\eta\,\mathcal{O}_{L_w}^{\times}\eta^{-1}=\mathcal{O}_{L_{\sigma(w)}}^{\times}\subset C_L.$$
We denote by $$N_v:=N(\widetilde{W}^1_{K_v})$$ the normal subgroup generated by $W^1_{L_w/K_v}$
in $W_{L/K}$. We obtain
$$\prod_{w\mid v}\mathcal{O}_{L_w}^{\times}\subset N_v\cap C_L$$
and a quotient map (hence surjective, continuous and open)
$$C_L^v:=C_L/\prod_{w\mid v}\mathcal{O}_{L_w}^{\times}\longrightarrow C_L/(N_v\cap C_L).$$
On the other hand, we have
$$\varphi(N_v)=G(L/K^{un})\subset G(L/K)$$ where $K^{un}/K$ is the maximal
subextension of $L/K$ unramified above $v$, since $\varphi(N_v)$ is the
normal subgroup of $G(L/K)$ generated by $I(L_w/K_v)$. We have the
following commutative diagram with exact rows :
\[ \xymatrix{
1\ar[r]&N_v\cap C_L\ar[r]\ar[d]&N_v\ar[r]\ar[d]&G(L/K^{un})\ar[d]\ar[r]&1\\
1\ar[r]&C_L\ar[r]\ar[d]&W_{L/K}\ar[r]^{\varphi}\ar[d]&G(L/K)\ar[r]\ar[d]&1\\
1\ar[r]&C_L/C_L\cap N_v\ar[r]\ar[d]&W_{L/K}/N_v\ar[r]\ar[d]&G(K^{un}/K)\ar[d]\ar[r]&1\\
1\ar[r]&C^v_{K^{un}}\ar[r]&W^v_{K^{un}/K}\ar[r]&G(K^{un}/K)\ar[r]&1}\]
In the diagram above, $W^v_{K^{un}/K}$ is the extension of
$G(K^{un}/K)$ by $C_{K^{un}}^v:=C_{K^{un}}/\prod_{w\mid
v}\mathcal{O}_{K^{un}_w}^{\times}$ corresponding to the fundamental
class (note that $\prod_{w\mid v}\mathcal{O}_{K^{un}_w}^{\times}$ is
a cohomologically trivial $G(K^{un}/K)$-module since $K^{un}/K$ is unramified at $v$). It can be seen from the diagram above that $C_L/C_L\cap N_v$ is a $G(L/K^{un})$-invariant quotient of $C_L$. To reach the same conclusion, one can also observe that the group $C_L\cap N_v$ contains the group generated by the
family
$$\{\alpha\sigma(\alpha)^{-1}=\alpha\eta\alpha^{-1}\eta^{-1},\,\alpha\in C_L,\, \sigma \in G(L/K^{un}),\, \eta \in N_v,\, \sigma:=\varphi(\eta)\}$$
since $\alpha\sigma(\alpha)^{-1}\in C_L$ and $\alpha\eta\alpha^{-1}\eta^{-1}\in N_v$ for any $\alpha\in C_L$ and $\eta\in N_v$.
Let $H_0(G(L/K^{un}),C^v_L)$ be the maximal $G(L/K^{un})$-invariant
quotient of $C^v_L$, \emph{endowed with the quotient topology}. We
obtain a continuous surjective open map
$$H_0(G(L/K^{un}),C^v_L)\longrightarrow C_L/C_L\cap N_v.$$
Considering the norm map, we obtain a commutative triangle
\[ \xymatrix{
H_0(G(L/K^{un}),C^v_L)\ar[r]\ar[rd]^{\textmd{N}}&C_L/C_L\cap N_v\ar[d]\\
&H^0(G(L/K^{un}),C^v_L)}\]
More precisely, the norm map $\textmd{N}$ can be decomposed as follows :
\begin{equation}\label{Norm-map}
\textmd{N}:H_0(G(L/K^{un}),C^v_L)\twoheadrightarrow C_L/C_L\cap N_v\twoheadrightarrow C^v_{K^{un}}\hookrightarrow H^0(G(L/K^{un}),C^v_L).
\end{equation}
The kernel and cokernel of the norm map $\textmd{N}$ are given by the following exact sequence
{\small{
$$0\rightarrow\widehat{H}^{-1}(G(L/K^{un}),C^v_L)\rightarrow
H_0(G(L/K^{un}),C^v_L) \rightarrow
H^0(G(L/K^{un}),C^v_L)\rightarrow\widehat{H}^{0}(G(L/K^{un}),C^v_L)\rightarrow1$$}}
Using class field theory, we prove easily that
$\widehat{H}^{-1}(G(L/K^{un}),C^v_L)$ and
$\widehat{H}^{0}(G(L/K^{un}),C^v_L)$ are both finite.
In particular, the continuous open and surjective map
$$C_L/C_L\cap N_v\longrightarrow C^v_{K^{un}}$$
has finite kernel. It is a finite
étale Galois cover (in the topological sense), hence a local homeomorphism.
Hence $C_L/C_L\cap N_v$ is Hausdorff, i.e. $C_L\cap N_v$ is closed
in $C_L$. But $N_v$ is contained in $W^1_{L/K}$, hence $C_L\cap N_v$ is a closed subgroup of
$C_L^1$, where $C_L^1$ denotes the maximal compact subgroup of $C_L$. Therefore $C_L\cap
N_v$ is compact, and $N_v$ is an extension of the finite group
$G(L/K^{un})$ by $C_L\cap N_v$. Hence $N_v$ is compact as well.
\end{proof}

\begin{lem}\label{lem-la-clef}
We keep the notations of Lemma \ref{lemma-compactitude}. One has the
equality
$$\mathcal{N}(y\widetilde{W}^1_{K_v})=yN(\widetilde{W}^1_{K_v}).$$
of subgroups of $y(W_{L/K})$ in $\mathcal{T}$.
\end{lem}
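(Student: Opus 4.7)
The plan is to prove the equality of subgroups by establishing both inclusions. Write $N_v := N(\widetilde{W}^1_{K_v})$ and $\mathcal{N} := \mathcal{N}(y\widetilde{W}^1_{K_v})$.

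The inclusion $\mathcal{N} \subseteq yN_v$ is immediate: since $y$ is fully faithful and preserves finite limits, $yN_v$ is a normal subgroup of $yW_{L/K}$ containing $y\widetilde{W}^1_{K_v}$, and the universal property defining $\mathcal{N}$ forces the inclusion.

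For the reverse inclusion $yN_v \subseteq \mathcal{N}$, I would first leverage the $yW_{L/K}$-conjugation stability of $\mathcal{N}$ together with the transitive action of $G(L/K)$ on the places $w_i \mid v$ to conclude that $yW^1_{L_{w_i}/K_v} \subseteq \mathcal{N}$ for every $w_i$: each $w_i$ is reached by conjugating the distinguished place $w$ by some $\eta_i \in W_{L/K}$, and conjugation by $y\eta_i$ sends $y\widetilde{W}^1_{K_v}$ into $\mathcal{N}$. Next I would show $y(C_L \cap N_v) \subseteq \mathcal{N}$ using the decomposition from the proof of Lemma~\ref{lemma-compactitude}: $C_L \cap N_v$ is the closure of $P \cdot Q$, where $P := \prod_{w_i \mid v}\mathcal{O}_{L_{w_i}}^\times$ is Galois-stable and compact, while $Q$ is the closure of the subgroup of $C_L$ generated by commutators $\alpha\sigma(\alpha)^{-1}$ for $\alpha \in C_L$ and $\sigma \in G(L/K^{un})$. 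Working inside the compact abelian subgroup $C_L^1$ makes all relevant subgroups automatically normal, so iterated application of Lemma~\ref{painfullemma1} gives $yP = \bigvee_i y\mathcal{O}_{L_{w_i}}^\times \subseteq \mathcal{N}$ (each piece lying inside $yW^1_{L_{w_i}/K_v} \subseteq \mathcal{N}$). Similarly, each commutator morphism $yC_L \times yW^1_{L_{w_i}/K_v} \to yW_{L/K}$ has image inside $\mathcal{N}$ by normality and the subgroup property, and the cocycle identity $1-\sigma\tau = (1-\sigma) + \sigma(1-\tau)$ together with $G(L/K^{un}) = \langle I(L_{w_i}/K_v) : i \rangle$ allows one to realize arbitrary generators of $Q$ as iterated combinations of inertia-level commutators; a further application of Lemma~\ref{painfullemma1} yields $yQ \subseteq \mathcal{N}$.

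Finally, since $N_v/(C_L \cap N_v) \simeq G(L/K^{un})$ is finite, $N_v$ decomposes topologically as a finite disjoint union of cosets of $C_L \cap N_v$, each clopen in $N_v$ and homeomorphic to $C_L \cap N_v$ via translation. Choosing each coset representative $s_g$ inside the topological subgroup generated by the $W^1_{L_{w_i}/K_v}$ (which surjects onto $G(L/K^{un})$ under $\varphi$), the section $ys_g$ is a finite product of elements of $\bigcup_i yW^1_{L_{w_i}/K_v} \subseteq \mathcal{N}$, hence lies in $\mathcal{N}$. Combined with $y(C_L \cap N_v) \subseteq \mathcal{N}$, the finite clopen decomposition $yN_v = \coprod_g y(C_L \cap N_v) \cdot ys_g$ in $\mathcal{T}$ gives $yN_v \subseteq \mathcal{N}$. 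The main obstacle is the step showing $yQ \subseteq \mathcal{N}$: it requires translating topos-theoretic containment of the commutator images into an iterated vee statement and carefully verifying the hypotheses of Lemma~\ref{painfullemma1} in that context, where the subgroups $(1-\sigma)(C_L)$ must be handled via their images in the compact $C_L^1$.
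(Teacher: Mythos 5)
Your proposed proof has a genuine gap at the step where you claim ``$C_L\cap N_v$ is the closure of $P\cdot Q$'' and then argue $y(C_L\cap N_v)\subseteq\mathcal{N}$ by building that group up from $yP$, the commutator images, and iterated applications of Lemma~\ref{painfullemma1}. The proof of Lemma~\ref{lemma-compactitude} only establishes the \emph{inclusion} $\overline{P\cdot Q}\subseteq C_L\cap N_v$ (in the paper's notation, $\Theta\subseteq N_v\cap C_L$), and the Tate cohomology computation at the end of that proof explicitly exhibits the cokernel: one has an exact sequence $1\to H'\to C_L/\Theta\to C_L/(C_L\cap N_v)\to 1$ with $H'$ a subgroup of $\widehat{H}^{-1}(G(L/K^{un}),C_L^v)$, so $\Theta$ is in general a \emph{proper} open finite-index subgroup of $C_L\cap N_v$. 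Your construction therefore only reaches $y\Theta\subseteq\mathcal{N}$, not $y(C_L\cap N_v)\subseteq\mathcal{N}$, and no amount of coset-splitting by sections of $N_v\to G(L/K^{un})$ will close the gap created by this missing finite layer inside $C_L$.

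This finite discrepancy $(C_L\cap N_v)/\Theta$ is precisely the delicate point of the lemma, and the paper handles it differently: it sandwiches $\mathcal{N}_v\times_{yW_{L/K}}yC_L$ between $y\Theta$ and $y(N_v\cap C_L)$, uses the openness of $\Theta$ in $N_v\cap C_L$ together with Lemma~\ref{lemma-pblms} (via the finite discrete quotient $(N_v\cap C_L)/\Theta$) to deduce that this intermediate subobject, and then $\mathcal{N}_v$ itself, is \emph{representable} by a topological subgroup $N'_v\subseteq N_v$ carrying the induced topology; finally, since $yN'_v=\mathcal{N}_v$ is normal and contains $y\widetilde{W}^1_{K_v}$, one concludes $N'_v=N_v$ from the abstract minimality of $N_v$, rather than by exhibiting the inclusion $yN_v\subseteq\mathcal{N}_v$ directly. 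Your approach, as written, tries to build $y(C_L\cap N_v)$ generator by generator, which cannot work without first accounting for the extra finite $H'$-layer; the paper's representability-plus-characterization argument is what dispenses with it.
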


\begin{proof}
Following the notations of the previous proof, we set
$$N_v:=N(\widetilde{W}^1_{K_v})\mbox{ and }\mathcal{N}_v:=\mathcal{N}(y\widetilde{W}^1_{K_v}).$$
We have the following morphism of exact sequences of group objects
in $\mathcal{T}$, where the vertical maps are all monomorphisms.
\[ \xymatrix{
1\ar[r]&\mathcal{N}_v\times_{yW_{L/K}} yC_L\ar[r]\ar[d]&\mathcal{N}_v\ar[r]\ar[d]&yG(L/K^{un})\ar[d]\ar[r]&1\\
1\ar[r]&yC_L\ar[r]&yW_{L/K}\ar[r]^{\varphi}&yG(L/K)\ar[r]&1
}\] The subgroup $\mathcal{N}_v\times_{yW_{L/K}} yC_L$ contains
\begin{equation}\label{first-subgroup}
y{W}^1_{L_w/K_v}\times_{yW_{L/K}} yC_L=y({W}^1_{L_w/K_v}\cap
C_L)=y(\mathcal{O}^{\times}_{L_w})
\end{equation}
since the Yoneda functor commutes with fiber products. Hence
$\mathcal{N}_v\times_{yW_{L/K}} yC_L$ contains the conjugates in $yW_{L/K}$ of the
subgroup (\ref{first-subgroup}) :
$$\eta (y\mathcal{O}^{\times}_{L_w})\eta^{-1}=y(\eta\mathcal{O}^{\times}_{L_w}\eta^{-1})=y\mathcal{O}^{\times}_{L_\sigma(w)}$$
for any $\eta\in W_{L/K}$ with $\sigma=\phi(\eta)$. Thus
$\mathcal{N}_v\times_{yW_{L/K}} yC_L$ contains the subgroup of
$yW_{L/K}$ generated by those subgroups :
$$<y\mathcal{O}^{\times}_{L_\sigma(w)},\,\sigma\in G(L/K)>=y(<\mathcal{O}^{\times}_{L_\sigma(w)},\,\sigma\in G(L/K)>)
=y(\prod_{w\mid v}\mathcal{O}_{L_w}^{\times}),$$ where the first
identity follows from Lemma \ref{painfullemma1}. Let $\sigma\in
G(L/K^{un})$, and consider the topological subgroup of $C_L$ given by
$$(Id-\sigma)(C_L):=\{\alpha\sigma(\alpha)^{-1},\,\alpha\in C_L\}.$$
Then $(Id-\sigma)(C_L)$ is compact, since it is the image of the
continuous morphism
$$
\appl{C_L^1}{C_L}{\alpha}{\alpha\sigma(\alpha)^{-1}}
$$
where $C_L^1$ is the maximal compact subgroup of $C_L$. Using this
fact and an argument similar to the proof of Lemma
\ref{painfullemma1}, we see that we have the inclusion
$$y((Id-\sigma)(C_L))\hookrightarrow\mathcal{N}_v\times_{yW_{L/K}} yC_L$$
of subgroups of $yW_{L/K}$. Therefore, $\mathcal{N}_v\times_{yW_{L/K}} yC_L$ contains the subgroup
of $yW_{L/K}$ generated by all the subgroups considered above :
$$<y(\prod_{w\mid v}\mathcal{O}_{L_w}^{\times})\mbox{ ; }y(Id-\sigma)(C_L) \mbox{ $\forall$ }\sigma\in G(L/K^{un})>
\,\,\hookrightarrow\,\,\mathcal{N}_v\times_{yW_{L/K}} yC_L.$$ We
have above a finite number of compact subgroups of $W_{L/K}$. By
Lemma \ref{painfullemma1}, we obtain
$$y(\Theta):=y(<\prod_{w\mid v}\mathcal{O}_{L_w}^{\times}\mbox{ ; }(Id-\sigma)(C_L) \mbox{ $\forall$ }\sigma\in
G(L/K^{un})>) \,\,\hookrightarrow\,\,\mathcal{N}_v\times_{yW_{L/K}}
yC_L.$$ where $\Theta:=<\prod_{w\mid
v}\mathcal{O}_{L_w}^{\times}\mbox{ ; }(Id-\sigma)(C_L) \mbox{
$\forall$ }\sigma\in G(L/K^{un})>$ is a topological subgroup of $C_L$. Note that we have
$$C_L/\Theta=H_0(G(L/K^{un}),C_L^v).$$ The proof of the previous lemma
shows that $\Theta$ is a subgroup of finite index in $N_v\cap C_L$, since the norm map $\textrm{N}$ has finite kernel (see (\ref{Norm-map})). More precisely, we have the following exact sequence of topological groups
$$1\rightarrow H'\rightarrow C_L/\Theta=H_0(G(L/K^{un}),C_L^v)\rightarrow C_L/N_v\cap C_L\rightarrow1$$
where $H'$ is a finite subgroup of $\widehat{H}^{-1}(G(L/K^{un}),C_L^v)$. In particular, $\Theta$ is open in $N_v\cap
C_L$. We have monomorphisms
$$y\Theta\,\hookrightarrow\, \mathcal{N}_v\times_{yW_{L/K}} yC_L\,\hookrightarrow\, y(N_v\cap C_L).$$
This implies that $\mathcal{N}_v\times_{yW_{L/K}} yC_L$ is
representable by a topological group, as it follows from Lemma \ref{lemma-pblms}. 

Now the exact sequence
$$1\rightarrow\mathcal{N}_v\times_{yW_{L/K}} yC_L\rightarrow\mathcal{N}_v\rightarrow yG(L/K^{un})\rightarrow 1$$
and the fact that the Yoneda functor $y:Top\rightarrow\mathcal{T}$
commutes with (disjoint) sums (of topological spaces) show that
$\mathcal{N}_v$ is itself representable. Hence $\mathcal{N}_v$ is
representable by a topological group $N'_v$, and we have continuous
injective morphisms of topological groups
$$\Theta\hookrightarrow N'_v\hookrightarrow N_v$$
since the maps $y\Theta\hookrightarrow$ and  $yN'_v\hookrightarrow yN_v$
are both monomorphisms in $\mathcal{T}$. But $\Theta$ is open in
$N_v$, hence $N'_v$ is a topological subgroup of $N_v$, i.e.
$N'_v\subseteq N_v$ is endowed with the induced topology.

Moreover $\mathcal{N}_v=yN'_v$ is normal in $yW_{L/K}$, hence so is
$N'_v$ in $W_{L/K}$ (since Yoneda is fully faithful). Finally $N'_v$
must contain $\widetilde{W}^1_{K_v}$ and we get
$N'_v=N(\widetilde{W}^1_{K_v})=N_v$ hence
$$\mathcal{N}_v=yN'_v=yN(\widetilde{W}^1_{K_v}).$$
\end{proof}

\subsection{A generating family for the Weil-\'etale topos}\label{subsect-site}

Let $\bar{U}$ be a connected \'etale $\bar{X}$-scheme endowed with Data \ref{choices-U}. We denote by $K$ the function field of $\bar{U}$. In this section, we define a simple topologically generating family for the site
$(T_{\bar{U}},\mathcal{J}_{ls})$ (hence a generating family for the topos $\bar{U}_W$). This has
already been used to show that $\bar{U}_W$ is connected and locally
connected over $\mathcal{T}$, and this will be necessary to compute
the fundamental group of $\bar{U}_W$.

Let us fix a finite Galois sub-extension $\bar{F}/L/K$, an open
subset $V$ of $\bar{U}$, a point $u$ of $\bar{U}$ and a locally compact topological space $T$. In this section, we denote by $N$ the \emph{closed normal
subgroup} of $W_{L/K}$ generated by the subgroups
$\widetilde{W}_{K_v}^1\subseteq W_{L/K}$ for any closed point $v\in V$. Let
$(N,\widetilde{W}_{K_u}^1)$ be the subgroup of $W_{L/K}$ generated
by $N$ and $\widetilde{W}_{K_u}^1$. This subgroup is compact hence
closed. We define an object of $T_{\bar{U}}$
$$\mathcal{G}_{L,V,u,T}:=(G_{0}\times T,G_{v}\times T,g_{v})$$
as follows. If $u$ is not in $V$, we consider
$$G_{0}=W_{L/K}/(N,\widetilde{W}^1_{K_u})$$ as a $W_K$-space and
$$G_{v}=W_{L/K}/(N,\widetilde{W}^1_{K_u})$$ as a $W_{k(v)}$-space for
any point $v$ of $V\subseteq\bar{U}$. Then we set $G_{u}=W_{k(u)}$
on which $W_{k(u)}$ acts by multiplication, and $G_{v}=\emptyset$
anywhere else. The group $W_K$ (respectively $W_{k(v)}$) acts on the
first factor of $G_0\times T$ (respectively of $G_v\times T$). The
map
$$g_{v}:G_{v}\times T\longrightarrow G_{0}\times T$$ is the identity for
any point $v$ of $V\subseteq\bar{U}$, and the continuous map
$$g_{u}:W_{k(u)}\times T=W_{K_u}/W^1_{K_u}\times T\longrightarrow
W_{L/K}/(N,\widetilde{W}^1_{K_u})\times T$$ is induced by
the Weil map $W_{K_u}\rightarrow W_K$.

If $u\in V$ we define $\mathcal{G}_{L,V,u,T}$ as above except that
we set
$$G_{u}=W_{L/K}/(N,\widetilde{W}^1_{K_u})=W_{L/K}/N.$$

\begin{Notation}
We denote by $\mathcal{G}_{L,V,u,T}$ the object of $T_{\bar{U}}$ defined above. If $T=*$ is the one point space, then we set
$\mathcal{G}_{L,V,u}:=\mathcal{G}_{L,V,u,*}$.
\end{Notation}

For any space $T$ of $Top$, one has a product decomposition in $T_{\bar{U}}$ :
$$\mathcal{G}_{L,V,u,T}=\mathcal{G}_{L,V,u}\times t^*T$$
where $t^*T=(T,T,Id_T)$ is the constant object of $T_{\bar{U}}$ associated to the space $T$.

\begin{defn}
Let $\mathbb{G}_{\bar{U}}$ be the full subcategory of $T_{\bar{U}}$
consisting in objects of the form $\mathcal{G}_{L,V,u,T}$, where
$\bar{F}/L/K$ is a finite Galois sub-extension, $V$ is an open
subset of $\bar{U}$, $u$ is a point of $\bar{U}$ and $T$ is a locally compact topological space.
The category $\mathbb{G}_{\bar{U}}$ is
endowed with the local section topology $\mathcal{J}_{ls}$.
\end{defn}

\begin{thm}\label{thm-site}
The canonical morphism
$$\bar{U}_W\longrightarrow\widetilde{(\mathbb{G}_{\bar{U}},\mathcal{J}_{ls})}$$
is an equivalence.
\end{thm}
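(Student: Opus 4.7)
The plan is to combine Proposition \ref{prop-site-local-Licht-topos}, which identifies $\bar{U}_W$ with $\widetilde{(T_{\bar{U}},\mathcal{J}_{ls})}$, with the comparison lemma of (\cite{SGA4} IV Corollary 1.2.1) applied to the full inclusion $\mathbb{G}_{\bar{U}}\hookrightarrow T_{\bar{U}}$. Since the topology on $\mathbb{G}_{\bar{U}}$ is by definition the restriction of $\mathcal{J}_{ls}$, the theorem reduces to showing that $\mathbb{G}_{\bar{U}}$ is a topologically generating family of $(T_{\bar{U}},\mathcal{J}_{ls})$, i.e.\ that every object of $T_{\bar{U}}$ admits a $\mathcal{J}_{ls}$-covering family by objects of $\mathbb{G}_{\bar{U}}$.

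To produce such a cover of a fixed object $(Z_0,Z_v,f_v)\in T_{\bar{U}}$, I would first choose a finite Galois sub-extension $\bar{F}/L/K$ through which the $W_K$-action on $Z_0$ factors, together with a cofinite open $V\subseteq\bar{U}$ on which $f_v$ is a homeomorphism. Two preliminary observations are crucial. First, for $v\in V$, the $W_{K_v}$-equivariance of the homeomorphism $f_v$, combined with the fact that $Z_v$ is a $W_{k(v)}$-space (so $W^1_{K_v}$ acts trivially on $Z_v$), forces $\widetilde{W}^1_{K_v}$ to act trivially on $Z_0$; hence $N$ acts trivially on $Z_0$. Second, for any closed point $u\in\bar{U}$ and any $z_u\in Z_u$, the $W_{K_u}$-equivariance of $f_u$ implies that $f_u(z_u)\in Z_0$ is fixed by $\widetilde{W}^1_{K_u}$, hence by $(N,\widetilde{W}^1_{K_u})$.

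For each pair $(u,z_u)$ with $u\notin V$ and each compact neighborhood $T\subseteq Z_u$ of $z_u$, I would then define a morphism
\[\mathcal{G}_{L,V,u,T}\longrightarrow (Z_0,Z_v,f_v)\]
whose component over $Z_0$ is $(\bar g,t)\mapsto g\cdot f_u(t)$ (well-defined on the quotient $W_{L/K}/(N,\widetilde{W}^1_{K_u})$ by the two observations), whose component over $Z_u$ is the $W_{k(u)}$-equivariant map $(\bar g,t)\mapsto g\cdot t$, and whose component over $Z_v$ for $v\in V$ is obtained by composing with $f_v^{-1}$. To cover $Z_v$ for $v\in V$ (equivalently $Z_0$) at points not already lying in $\bigcup_{u\notin V}W_{L/K}\cdot f_u(Z_u)$, I would add the analogous generators $\mathcal{G}_{L,V,u_0,T_0}$ with $u_0\in V$, for which $(N,\widetilde{W}^1_{K_{u_0}})=N$, and $T_0$ a compact neighborhood of a point of $Z_0$. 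Letting the parameters range over all such data yields a family that at each place $v$ induces a continuous surjection onto $Z_v$ admitting local sections: the restriction of each generator-map to $\{\bar 1\}\times T$ is a homeomorphism onto the chosen compact neighborhood, which provides the required local sections.

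The main obstacle will be the careful bookkeeping of the equivariance and compatibility conditions defining a morphism in $T_{\bar{U}}$, in particular the $W_{K_u}$-equivariance of the component $g_u:W_{k(u)}\times T\to G_0\times T$ for each generator, which ultimately rests on the compatibility between the Weil maps $\theta_u:W_{K_u}\to W_K$ and $q_u:W_{K_u}\to W_{k(u)}$ prescribed by Data \ref{choices-X} and \ref{choices-U}. Once these verifications are complete, (\cite{SGA4} IV Corollary 1.2.1) yields the desired equivalence $\bar{U}_W\simeq\widetilde{(\mathbb{G}_{\bar{U}},\mathcal{J}_{ls})}$.
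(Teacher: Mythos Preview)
Your proposal is correct and follows essentially the same route as the paper: both reduce via (\cite{SGA4} IV Corollary 1.2.1) to showing that $\mathbb{G}_{\bar{U}}$ is a topologically generating family of $(T_{\bar{U}},\mathcal{J}_{ls})$, and both produce an explicit $\mathcal{J}_{ls}$-cover of an arbitrary $(Z_0,Z_v,f_v)$ by generators, using the action map $(\bar g,t)\mapsto g\cdot t$ (resp.\ $g\cdot f_u(t)$) together with the obvious section $t\mapsto(\bar 1,t)$. The only difference is cosmetic: the paper takes $T=Z_0$ for a single generator $\mathcal{G}_{L,V,u_0,Z_0}$ (with $u_0$ the generic point, so that $(N,\widetilde{W}^1_{K_{u_0}})=N$) and $T=Z_u$ for each $u\in\bar U\smallsetminus V$, yielding a finite cover, whereas you let $T$ range over compact neighborhoods; your two preliminary observations about the triviality of the $N$- and $\widetilde{W}^1_{K_u}$-actions are exactly what the paper uses (more implicitly) to justify that the map is well defined on the quotient.
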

\begin{proof}
We have a composition of fully faithful functors
$$\mathbb{G}_{\bar{U}}\rightarrow T_{\bar{U}}\rightarrow\bar{U}_W$$
where the second functor is the Yoneda embedding. The local section
topology on $\mathbb{G}_{\bar{U}}$ is the topology induced
by the local section topology on $T_{\bar{U}}$ via the inclusion
$\mathbb{G}_{\bar{U}}\rightarrow T_{\bar{U}}$. Hence
$\mathcal{J}_{ls}$ on $\mathbb{G}_{\bar{U}}$ is the topology induced
by the canonical topology of $\bar{U}_W$ via the composite functor defined
above. But the Yoneda embedding takes a topologically generating
family of a site to a generating family of the corresponding topos.
Hence it remains to show that $\mathbb{G}_{\bar{U}}$ is a
topologically generating family for the site $(T_{\bar{U}},\mathcal{J}_{ls})$. In other words, we
need to prove that any object of $T_{\bar{U}}$ admits a local
section cover by objects of $\mathbb{G}_{\bar{X}}$.

Let $(Z_0,Z_v,f_v)$ be an object of  $T_{\bar{U}}$. The action of $W_K$ on $Z_0$
factors through $W_{L/K}$, for a finite Galois extension $L/K$.
Since the group $W_{L/K}$ is locally compact, its action on the
space $Z_{0}$ yields a continuous morphism of topological groups
$$\rho:W_{L/K}\longrightarrow \underline{Aut}_{Top}(Z_{0})$$
where the group $\underline{Aut}_{Top}(Z_{0})$, of homemorphisms of
$Z_{0}$, is endowed with the compact-open topology. The space
$Z_{0}$ is Hausdorff hence so is the topological group
$\underline{Aut}_{Top}(Z_{0})$. It follows that the kernel of $\rho$
is a closed normal subgroup of $W_{L/K}$ : $$Ker(\rho)\subseteq
W_{K/F}.$$
Let $V$ be the open set of points of $\bar{U}$ such that $f_v$ is an
homeomorphism. Take the generic point $u=u_0$ of $\bar{U}$ and $T=Z_0$ as a topological space. Let
$N$ be the closed normal subgroup of $W_{L/K}$ generated by the
subgroups $\widetilde{W}_{K_v}^1\subseteq W_{L/K}$ for any closed point
$v\in V$. The action of $W_K$ on $Z_0$ factors through $W_{L/K}/N$,
since the kernel of the continuous morphism $\rho$ is closed in
$W_{L/K}$. Hence $\rho$ induces a continuous morphism
$$W_{L/K}/N\longrightarrow \underline{Aut}_{Top}(Z_{0}).$$
Such an action is given by a continuous map
$$G_0\times T:=W_{L/K}/N\times Z_0\longrightarrow Z_0$$
which is $W_K$-equivariant. This map has an obvious global
continuous section. We obtain a morphism in $T_{\bar{U}}$
$$\mathcal{G}_{L,V,u_0,Z_0}\longrightarrow (Z_0,Z_v,f_v)$$
which is a global section cover over any point $v\in V$.

Let $u\in\bar{U}-V$. Here we consider
$$\mathcal{G}_{L,V,u,Z_u}=(G_{0}\times Z_u,G_{v}\times Z_u,g_{v}),$$
with $G_v=G_0=W_{L/K}/(N,\widetilde{W}^1_{K_u})$ for any $v\in V$. The second projection gives a $W_{k(u)}$-equivariant continuous
map
$$\phi_u:G_u\times Z_u:=W_{k(u)}\times Z_u\longrightarrow Z_u$$
which has a global continuous section. Then there exists a
unique morphism in $T_{\bar{U}}$
$$\phi:\mathcal{G}_{L,V,u,Z_u}\longrightarrow (Z_0,Z_v,f_v)$$
inducing $\phi_u$ at the point $u\in\bar{U}$. Indeed, the given $W_{k(u)}$-equivariant continuous map $f_u:Z_u\rightarrow
Z_0$ provide us with a $W_{K}$-equivariant map
$$\phi_0:G_{0}\times Z_u:=W_{L/K}/(N,\widetilde{W}^1_{K_u})\times Z_u\longrightarrow
Z_0$$
For any point $v$ of $V$, the same map $\phi_v:=\phi_0$ is also $W_{k(v)}$-equivariant and continuous :
$$\phi_v:G_{v}\times Z_u:=W_{L/K}/(N,\widetilde{W}^1_{K_u})\times Z_u\longrightarrow
Z_0\simeq Z_v.$$

We have obtained a local section cover of $\mathcal{Z}:=(Z_0,Z_v,f_v)$ by objects of
$\mathbb{G}_{\bar{U}}$ :
$$\{\mathcal{G}_{L,V,u_0,Z_0}\rightarrow\mathcal{Z},\, \mathcal{G}_{L,V,u,Z_u}\rightarrow\mathcal{Z}\mbox{ for $u\in\bar{U}-V$}\}$$
Hence (the essential image of)
$\mathbb{G}_{\bar{U}}$ is a generating full subcategory of
$\bar{U}_W$ endowed with the topology induced by the canonical
topology. The result then follows from (\cite{SGA4} IV Corollary
1.2.1).
\end{proof}

\begin{cor}\label{cor-sympa}
Consider the full subcategory $\mathbb{C}_{\bar{U}}$ of
$T_{\bar{U}}$ consisting in objects $(Z_0,Z_v,f_v)$ such that the
canonical morphism in $\mathcal{T}$
$$yZ_0/yW_K\longrightarrow y(Z_0/W_K)$$ is an isomorphism with
$Z_0/W_K$ locally compact. Then $\mathbb{C}_{\bar{U}}$ is a topologically
generating family of $T_{\bar{U}}$.
\end{cor}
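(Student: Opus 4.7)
The plan is to leverage Theorem~\ref{thm-site}: the subcategory $\mathbb{G}_{\bar{U}}$ consisting of objects of the form $\mathcal{G}_{L,V,u,T}$ is already topologically generating for $(T_{\bar{U}},\mathcal{J}_{ls})$, so it will suffice to establish the inclusion $\mathbb{G}_{\bar{U}}\subseteq\mathbb{C}_{\bar{U}}$. For each generator $\mathcal{G}_{L,V,u,T}=(Z_0,Z_v,f_v)$ I need to verify that the topological quotient $Z_0/W_K$ is locally compact and that the canonical map $y(Z_0)/y(W_K)\to y(Z_0/W_K)$ is an isomorphism in $\mathcal{T}$.

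By construction $Z_0 = W_{L/K}/H \times T$, where $H=(N,\widetilde{W}^1_{K_u})$ is a compact subgroup of $W_{L/K}$, and the $W_K$-action factors through the surjection $W_K\twoheadrightarrow W_{L/K}$ followed by left translation on the first factor. Since $W_{L/K}$ acts transitively on $W_{L/K}/H$, the topological quotient $Z_0/W_K$ equals $T$, which is locally compact by assumption. To identify $y(Z_0)/y(W_K)$ with $y(T)$ in $\mathcal{T}$, I will verify two conditions: (a) $y(Z_0)\to y(T)$ is an epimorphism, and (b) the map $y(W_K)\times y(Z_0)\to y(Z_0)\times_{y(T)}y(Z_0)$, $(g,z)\mapsto(gz,z)$, is an epimorphism. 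Condition (a) is immediate from the global section $t\mapsto(eH,t)$ of the projection $Z_0\to T$.

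For (b) I will exhibit local continuous sections of the underlying map $W_K\times Z_0\to Z_0\times_T Z_0$ by factoring it as
\[
W_K\times W_{L/K}/H\times T \xrightarrow{\pi\times\mathrm{id}} W_{L/K}\times W_{L/K}/H\times T \xrightarrow{\mu} W_{L/K}/H\times W_{L/K}/H\times T,
\]
where $\mu(g,xH,t)=(gxH,xH,t)$. The first arrow inherits local sections from $\pi\colon W_K\to W_{L/K}$ via Corollary~\ref{cor-exple-of-loc-sections}. For the second, Theorem~\ref{thm-loc-sections} together with Lemma~\ref{lem-C_L-finite-dim} supplies local sections $s\colon U\to W_{L/K}$ of $W_{L/K}\to W_{L/K}/H$; then $(aH,bH,t)\mapsto (s(aH)s(bH)^{-1},bH,t)$ is a local section of $\mu$ on $U\times U\times T$, as one checks using $s(bH)\in bH$. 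The main delicate point is this double layer of local sections in (b); once it is granted, standard topos theory identifies $y(T)$ with the coequalizer $y(Z_0)/y(W_K)$, so $\mathcal{G}_{L,V,u,T}$ lies in $\mathbb{C}_{\bar{U}}$ and the corollary follows.
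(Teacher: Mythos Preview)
Your proof is correct and follows essentially the same approach as the paper: both reduce to the inclusion $\mathbb{G}_{\bar{U}}\subseteq\mathbb{C}_{\bar{U}}$ and invoke the same local-section results (Corollary~\ref{cor-local-sections}, Lemma~\ref{lem-C_L-finite-dim}, Corollary~\ref{cor-exple-of-loc-sections}) for $W_K\to W_{L/K}$ and $W_{L/K}\to W_{L/K}/H$. The only stylistic difference is in the final topos-theoretic step: the paper observes that the equivariant epimorphism $y(W_K)\twoheadrightarrow y(G_0)$ forces $y(G_0)/y(W_K)=\ast$ and then invokes universality of colimits to handle the factor $T$, whereas you verify the coequalizer property for $y(G_0\times T)$ directly via conditions (a) and (b).
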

\begin{proof}It is enough to show that $$\mathcal{G}_{L,V,u,T}=(G_{0}\times T,G_{v}\times T,g_{v})$$
satisfies those properties. The map $$W_{L/K}\rightarrow
W_{L/K}/(N,\widetilde{W}^1_{K_u})$$ admits local sections by Corollary \ref{cor-local-sections}, since
$W_{L/K}$ is locally compact and finite dimensional, and
$(N,\widetilde{W}^1_{K_u})$ is compact hence closed. The map $W_K\rightarrow
W_{L/K}$ admits local sections by Corollary \ref{cor-exple-of-loc-sections}. We obtain
an epimorphism in $\mathcal{T}$
$$yW_{K}\,\twoheadrightarrow\,y(W_{L/K}/(N,\widetilde{W}^1_{K_u})).$$
Hence the quotient of the action of $yW_{K}$ on
$y(W_{L/K}/(N,\widetilde{W}^1_{K_u}))$ is the final object of
$\mathcal{T}$. Thus the quotient of
$$y(G_0\times T):=y(W_{L/K}/(N,\widetilde{W}^1_{K_u})\times
T)=y(W_{L/K}/(N,\widetilde{W}^1_{K_u}))\times yT$$ under the action
of $yW_{K}$ is $yT$, since inductive limits (in particular quotients
of group actions) are universal in $\mathcal{T}$.

On the other hand, the quotient of the topological space
$$G_0\times T:=W_{L/K}/(N,\widetilde{W}^1_{K_u})\times T$$ by the
action of the topological group $W_K$ is the locally compact space $T$.
\end{proof}
\begin{rem}
The space of connected components of $\mathcal{G}_{L,V,u,T}$ is
$$t_!\mathcal{G}_{L,V,u,T}=T.$$
\end{rem}

\subsection{The category $SLC_{\mathcal{T}}(\bar{U}_W)$ of sums of locally constant sheaves}\label{subsect-loc-cstant}
Let $\bar{U}$ be a connected \'etale $\bar{X}$-scheme endowed with Data \ref{choices-U}. In this section, we denote by
$t:\bar{U}_W\rightarrow\mathcal{T}$ the canonical map. This morphism $t$ is connected and locally connected (see Theorem \ref{thm-big-WEfundgrp} (i)).

\subsubsection{}Recall that an object $\mathcal{L}$ of $\bar{U}_W$ is said to be \emph{locally
constant over $\mathcal{T}$} if there exists a covering morphism
$\mathcal{F}\rightarrow 1$ of the final object of $\bar{U}_W$,
an object ${S}$ of $\mathcal{T}$ and an isomorphism  over $\mathcal{F}$
$$\mathcal{L}\times \mathcal{F}\simeq
t^*S\times \mathcal{F}.$$
\begin{defn}
An object $\mathcal{L}$ of $\bar{U}_W$ is said to be \emph{locally
component-wise constant over $\mathcal{T}$} if there exists a epimorphism
$\mathcal{F}\rightarrow 1$ where $1$ denotes the final object of $\bar{U}_W$,
an object $S\rightarrow t_!\mathcal{F}$ of $\mathcal{T}/t_!\mathcal{F}$ and an isomorphism  over $\mathcal{F}$ $$\mathcal{L}\times\mathcal{F}\simeq
t^*S\times_{t^*t_!\mathcal{F}}\mathcal{F}.$$
\end{defn}

\begin{prop}
An object $\mathcal{L}$ of $\bar{U}_W$ is locally
component-wise constant if and only if $\mathcal{L}$ is locally constant.
\end{prop}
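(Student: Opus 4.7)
The plan is to verify both implications, one of which is immediate and the other of which needs a refinement argument together with Frobenius reciprocity.

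For the direction locally constant implies locally component-wise constant, I would keep the same cover $\mathcal{F}\to 1$ and replace the object $S\in\mathcal{T}$ by $S':=S\times t_!\mathcal{F}$ equipped with the canonical projection to $t_!\mathcal{F}$. Using that $t^*$ is left exact one computes
$$t^*S'\times_{t^*t_!\mathcal{F}}\mathcal{F}\simeq t^*S\times t^*t_!\mathcal{F}\times_{t^*t_!\mathcal{F}}\mathcal{F}\simeq t^*S\times\mathcal{F}\simeq\mathcal{L}\times\mathcal{F},$$
which is the required isomorphism.

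For the converse, assume data $(\mathcal{F}\to 1, S\to t_!\mathcal{F})$ with $\mathcal{L}\times\mathcal{F}\simeq t^*S\times_{t^*t_!\mathcal{F}}\mathcal{F}$. The idea is to refine $\mathcal{F}$ so that the fibering by $t_!\mathcal{F}$ disappears. I would set
$$\mathcal{F}':=\mathcal{F}\times_{t^*t_!\mathcal{F}}t^*S.$$
By the $\mathcal{T}$-indexed Frobenius reciprocity from Proposition \ref{prop-nice} applied to $S\to t_!\mathcal{F}$, one has $t_!\mathcal{F}'\simeq S$ canonically, and the adjunction map $\mathcal{F}'\to t^*t_!\mathcal{F}$ factors through $t^*S$. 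Substituting and using left exactness of $t^*$, the base-changed isomorphism becomes
$$\mathcal{L}\times\mathcal{F}'\simeq t^*S\times_{t^*t_!\mathcal{F}}\mathcal{F}'\simeq t^*(S\times_{t_!\mathcal{F}}S)\times_{t^*S}\mathcal{F}',$$
and the diagonal $\Delta\colon S\hookrightarrow S\times_{t_!\mathcal{F}}S$ over $S$ is meant to isolate a constant factor, allowing a rewriting as $t^*S''\times\mathcal{F}'$ with $S''\in\mathcal{T}$.

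The main obstacle is this final splitting, since the diagonal alone does not furnish a product decomposition of $S\times_{t_!\mathcal{F}}S$ over $S$ in general. To overcome this I would further refine by invoking the topologically generating family $\mathbb{G}_{\bar{U}}$ constructed in Theorem \ref{thm-site}, each of whose members $\mathcal{G}_{L,V,u,T}$ has $t_!\mathcal{G}_{L,V,u,T}=T$; covering $\mathcal{F}'$ by objects with $T$ a point reduces $t_!$ of the cover to the final object of $\mathcal{T}$, in which situation the locally component-wise constant condition collapses to the locally constant one by definition, and one then sums the resulting trivializations. A secondary bookkeeping point is ensuring that $\mathcal{F}'\to 1$ remains an epimorphism, handled by first replacing $S\to t_!\mathcal{F}$ by its image factorization (the LCC isomorphism is insensitive to this since the right-hand side $t^*S\times_{t^*t_!\mathcal{F}}\mathcal{F}$ already sees only the image).
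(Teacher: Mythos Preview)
Your first direction is correct and matches the paper. The second direction has a genuine gap.

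The detour through $\mathcal{F}':=\mathcal{F}\times_{t^*t_!\mathcal{F}}t^*S$ does not contribute anything: you yourself observe that the diagonal $S\hookrightarrow S\times_{t_!\mathcal{F}}S$ fails to give a product decomposition, and the image-factorization fix for the epimorphism issue is orthogonal to the actual difficulty. You could drop this paragraph entirely.

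The real problem is in your final step. Refining by objects $\mathcal{G}_i=\mathcal{G}_{L_i,V_i,u_i,*}$ from $\mathbb{G}_{\bar{U}}$ is exactly the right move (and is what the paper does), but your claim that this ``reduces $t_!$ of the cover to the final object of $\mathcal{T}$'' is false for the coproduct: one has $t_!\bigl(\coprod_{i\in I}\mathcal{G}_i\bigr)=I$, a discrete set, not the final object. What you actually obtain is a family of trivializations $\mathcal{L}\times\mathcal{G}_i\simeq t^*S_i\times\mathcal{G}_i$ with a priori \emph{different} objects $S_i\in\mathcal{T}$, and ``summing the resulting trivializations'' gives $\mathcal{L}\times\mathcal{G}\simeq\coprod_i(t^*S_i\times\mathcal{G}_i)$, which is not of the form $t^*S\times\mathcal{G}$ unless all $S_i$ are isomorphic. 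You have not argued this.

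The missing idea is a connectedness argument: for any $i,j$ choose an object $\mathcal{K}=\mathcal{G}_{L,V,u,*}$ of $\mathbb{G}_{\bar{U}}$ mapping to $\mathcal{G}_i\times\mathcal{G}_j$; then over $\mathcal{K}$ one has $t^*S_i\times\mathcal{K}\simeq\mathcal{L}\times\mathcal{K}\simeq t^*S_j\times\mathcal{K}$, and since $t_!\mathcal{K}=*$ the map $\bar{U}_W/\mathcal{K}\to\mathcal{T}$ is connected, so full faithfulness of its inverse image forces $S_i\simeq S_j$ in $\mathcal{T}$. Only then can you conclude $\mathcal{L}\times\mathcal{G}\simeq t^*S_{i_0}\times\mathcal{G}$ for a fixed $i_0$. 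This is precisely the argument the paper gives, and it is the heart of the proof.
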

\begin{proof}
Any locally constant object is locally component-wise
constant. Indeed, if $\mathcal{L}$ is locally constant then one has
$$\mathcal{L}\times \mathcal{F}\simeq t^*S\times \mathcal{F}=t^*S\times t^*t_!\mathcal{F}\times_{t^*t_!\mathcal{F}}\mathcal{F}
=t^*(S\times t_!\mathcal{F})\times_{t^*t_!\mathcal{F}}\mathcal{F}.$$

The converse is also true. Let $\mathcal{L}$ be a locally
component-wise constant object. There exist $\mathcal{F}$ covering the final object, $S\rightarrow t_!\mathcal{F}$ and an isomorphism
over $\mathcal{F}$ $$\mathcal{L}\times\mathcal{F}\simeq
t^*S\times_{t^*t_!\mathcal{F}}\mathcal{F}.$$
By Theorem \ref{thm-site}, there exists an epimorphic family $\{\mathcal{F}_i\rightarrow\mathcal{F},i\in I\}$
where $\mathcal{F}_i$ is represented by an object $\mathcal{G}_{L_i,V_i,u_i,T_i}$ of $\mathbb{G}_{\bar{U}}$. Choosing a point of $T_i$ for any element $i$ of the set $I$, we obtain
a map
$$\mathcal{G}:=\coprod_{i\in I}y\mathcal{G}_{L_i,V_i,u_i,*}\rightarrow \coprod_{i\in I}y\mathcal{G}_{L_i,V_i,u_i,T_i}\rightarrow\mathcal{F}\rightarrow 1$$ which is a cover of the final object of $\bar{U}_W$.
Then we have
$$\mathcal{L}\times\mathcal{G}=\mathcal{L}\times\mathcal{F}\times_{\mathcal{F}}\mathcal{G}
\simeq t^*S\times_{t^*t_!\mathcal{F}}\mathcal{F}\times_{\mathcal{F}}\mathcal{G}
=t^*S\times_{t^*t_!\mathcal{F}}\mathcal{G}
=t^*(S\times_{t_!\mathcal{F}}t_!\mathcal{G})\times_{t^*t_!\mathcal{G}}\mathcal{G}.$$
Hence one can assume that $\mathcal{F}=\mathcal{G}$. Note that $t_!\mathcal{G}$ is the object of $\mathcal{T}$ represented by the discrete set $I$, so that $S\rightarrow t_!\mathcal{G}=I$ can be seen as a family of objects ${S}_i$ of $\mathcal{T}$, indexed by the set $I$. We set $\mathcal{G}_i:=\mathcal{G}_{L_i,V_i,u_i,*}$ and we have
$\mathcal{L}\times\mathcal{G}_i\simeq{S}_i\times\mathcal{G}_i$. For any $i,j\in I$ we consider an object $\mathcal{K}=\mathcal{G}_{L,V,u,*}$
of $\mathbb{G}_{\bar{U}}$ endowed with a map $\mathcal{K}\rightarrow\mathcal{G}_i\times\mathcal{G}_j$. Then we have an isomorphism in the slice topos $\bar{U}_W/\mathcal{K}$
\begin{equation}\label{iso-for-componentwise}
{S}_i\times\mathcal{K}\simeq\mathcal{L}\times\mathcal{K}\simeq{S}_j\times\mathcal{K}.
\end{equation}
But $\mathcal{K}$ is connected over $\mathcal{T}$ (i.e. $t_!\mathcal{K}$ is the final object of $\mathcal{T}$) and it follows that
$\bar{U}_W/\mathcal{K}\rightarrow\mathcal{T}$ is connected, so that there exists a (unique) isomorphism $S_i\simeq S_j$ in $\mathcal{T}$ inducing (\ref{iso-for-componentwise}). We obtain an isomorphism $S\simeq \coprod_{I}S_i\simeq S_{i_0}\times I$ over $I$ and one has
$$\mathcal{L}\times\mathcal{G}=\mathcal{S}\times_{t_!\mathcal{G}}\mathcal{G}=\mathcal{S}\times_{I}\mathcal{G}
\simeq S_{i_0}\times\mathcal{G}$$
where $i_0$ is some fixed element of $I$. Hence $\mathcal{L}$ is locally constant.
\end{proof}
The category of "sums" of locally constant objects can be defined as follows (see \cite{Bunge-Moerdijk} section 2, and \cite{Bunge} for more details). For any $\mathcal{F}$ covering the final object of $\bar{U}_W$, one defines the push-out topos
\[ \xymatrix{
\bar{U}_W/\mathcal{F}\ar[r]\ar[d]&\bar{U}_W\ar[d]^{\sigma_{\mathcal{F}}}\\
\mathcal{T}/t!\mathcal{F}\ar[r]&\textsf{Spl}_{\mathcal{F}}(\bar{U}_W)
}\]
By definition of the push-out topos, an object of $\textsf{Spl}_{\mathcal{F}}(\bar{U}_W)$ is a triple $(\mathcal{L},S,\chi)$ where $\mathcal{L}$ is an object of $\bar{U}_W$, $S$ an object of $\mathcal{T}/t!\mathcal{F}$ and $\chi$ is an isomorphism in $\bar{U}_W/\mathcal{F}$ $$\mathcal{L}\times\mathcal{F}\simeq
t^*S\times_{t^*t_!\mathcal{F}}\mathcal{F}.$$
The morphisms in the category $\textsf{Spl}_{\mathcal{F}}(\bar{U}_W)$ are the obvious ones.

The inverse image functor
$$
\fonc{\sigma^*_{\mathcal{F}}}{\textsf{Spl}_{\mathcal{F}}(\bar{U}_W)}{\bar{U}_W}{(\mathcal{L},S,\chi)}{\mathcal{L}}
$$
is fully faithful, and its essential image is precisely the full subcategory of $\bar{U}_W$ consisting in locally component-wise constant objects split by $\mathcal{F}$.

Given two epimorphisms $\mathcal{F}\rightarrow e$ and $\mathcal{F}'\rightarrow e$ and any map $\mathcal{F}'\rightarrow\mathcal{F}$,
we have a canonical morphism $\textsf{Spl}_{\mathcal{F}'}(\bar{U}_W)\rightarrow \textsf{Spl}_{\mathcal{F}}(\bar{U}_W)$
such that the triangle
\[ \xymatrix{
\bar{U}_W\ar[r]^{\sigma_{\mathcal{F}'}\,\,\,\,\,\,\,\,\,}\ar[dr]_{\sigma_{\mathcal{F}}}&\textsf{Spl}_{\mathcal{F}'}(\bar{U}_W)
\ar[d]^{\rho_{\mathcal{F'},\mathcal{F}}}\\
&\textsf{Spl}_{\mathcal{F}}(\bar{U}_W)
}\]
is commutative. Hence two different maps $f_1:\mathcal{F}'\rightarrow\mathcal{F}$ and $f_2:\mathcal{F}'\rightarrow\mathcal{F}$ yield two morphisms $\rho^1_{\mathcal{F'},\mathcal{F}}$ and $\rho^2_{\mathcal{F'},\mathcal{F}}$ that are isomorphic.

\begin{defn}
The topos $SLC_{\mathcal{T}}(\bar{U}_W)$ is defined as the projective limit topos
$$SLC_{\mathcal{T}}(\bar{U}_W):=\underleftarrow{lim}\,\textsf{Spl}_{\mathcal{F}}(\bar{U}_W)$$
where $\mathcal{F}$ runs over a small cofinal system of coverings of the final object of $\bar{U}_W$.
\end{defn}
The canonical morphism
\begin{equation}\label{mapsigma}
\sigma:\bar{U}_W\longrightarrow SLC_{\mathcal{T}}(\bar{U}_W),
\end{equation}
induced by the compatible maps $\sigma_{\mathcal{F}}$, is connected and locally connected (see \cite{Bunge-Moerdijk} Theorem 2.2) so that $SLC_{\mathcal{T}}(\bar{U}_W)$ can be seen as a full subcategory of $\bar{U}_W$, which we call the category of sums of locally constant objects.

\subsubsection{}The purpose of the fundamental group is to classify the category of sums of locally constant objects.
The Weil-étale topos $\bar{U}_W$ is connected and locally connected over $\mathcal{T}$ (see Theorem \ref{thm-big-WEfundgrp} (i)).
Consider a $\mathcal{T}$-point $p$ of $\bar{U}_W$ (see Theorem \ref{thm-big-WEfundgrp} (ii)), i.e. a section of the structure map
$$t:\bar{U}_W\longrightarrow\mathcal{T}.$$
Composing $p$ and the morphism (\ref{mapsigma}), we obtain a point
$$\widetilde{p}:\mathcal{T}\longrightarrow\bar{U}_W\longrightarrow
SLC_{\mathcal{T}}(\bar{U}_W)$$ of the topos $SLC_{\mathcal{T}}(\bar{U}_W)$ over
$\mathcal{T}$. The theory of the fundamental group in the context of
topos theory shows the following. We refer to \cite{Moerdijk-Prodiscrete}
and \cite{Bunge-Moerdijk} Section 1, or \cite{Bunge-Moerdijk} Section 2 (and \cite{Bunge} for more details) for a different approach. There exists a "pro-discrete
localic group" $\pi_1(\bar{U}_W,p)$ in $\mathcal{T}$ well defined up to a
canonical isomorphism and an equivalence
$$B_{\pi_1(\bar{U}_W,p)}\simeq SLC_{\mathcal{T}}(\bar{U}_W),$$
where $B_{\pi_1(\bar{U}_W,p)}$ is the classifying topos of $\pi_1(\bar{U}_W,p)$. Moreover, the
equivalence above identifies the inverse image of the point
$\widetilde{p}:\mathcal{T}\rightarrow SLC_{\mathcal{T}}(\bar{U}_W)$ with the
forgetful functor $B_{\pi_1(\bar{U}_W,p)}\rightarrow\mathcal{T}$. In our situation, the "pro-discrete
localic group" $\pi_1(\bar{U}_W,p)$ is in fact (the "limit" of) a strict pro-group in $\mathcal{T}$, as it follows from Theorem \ref{thm-big-WEfundgrp}. More precisely, $\pi_1(\bar{U}_W,p)$ is pro-represented by a strict locally compact topological pro-group in the sense of Definition \ref{def-pro-top-grp}, and $B_{\pi_1(\bar{U}_W,p)}$ is the classifying topos of $\pi_1(\bar{U}_W,p)$ in the sense of Definition \ref{def-class-strict-top-progrp}.

\subsection{Computation of the fundamental group}
Recall that one has a morphism
$$j:B_{W_F}\longrightarrow\bar{X}_W.$$
\begin{lem}\label{loc-cnstant-generic}
If $\mathcal{L}$ is a locally constant object of $\bar{X}_W$ over
$\mathcal{T}$, then the adjunction map
$$\mathcal{L}\longrightarrow j_*j^*\mathcal{L}$$
is an isomorphism.
\end{lem}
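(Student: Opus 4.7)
The natural strategy is to bootstrap from Corollary \ref{cor-constant-generic}, which handles the case $\mathcal{L}=t^*S$, by descending along a cover that trivializes $\mathcal{L}$.

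By the definition of local constancy, first fix an epimorphism $p:\mathcal{F}\to 1$ in $\bar{X}_W$, an object $S$ of $\mathcal{T}$, and an isomorphism $\phi:\mathcal{L}\times\mathcal{F}\xrightarrow{\sim}t^*S\times\mathcal{F}$ over $\mathcal{F}$. Since $j^*$ is left exact and $j_*$ is a right adjoint, the composite $j_*j^*$ preserves finite products, and the adjunction unit $\eta:\mathrm{id}\to j_*j^*$ satisfies $\eta_{A\times B}=\eta_A\times\eta_B$ under the canonical identification. Applying $\eta$ to $\phi$ and using Corollary \ref{cor-constant-generic} to identify $j_*j^*(t^*S)$ canonically with $t^*S$, one obtains the commutative diagram
\[\xymatrix{
\mathcal{L}\times\mathcal{F}\ar[r]^-{\phi}\ar[d]_{\eta_{\mathcal{L}}\times\eta_{\mathcal{F}}}&t^*S\times\mathcal{F}\ar[d]^{\mathrm{id}\times\eta_{\mathcal{F}}}\\
j_*j^*\mathcal{L}\times j_*j^*\mathcal{F}\ar[r]&t^*S\times j_*j^*\mathcal{F}
}\]
whose horizontal arrows are isomorphisms.

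Now, since $\mathcal{F}\to 1$ is epic, the localization $\bar{X}_W/\mathcal{F}\to\bar{X}_W$ is a surjection of topoi, and its inverse image $-\times\mathcal{F}$ is conservative. It therefore suffices to show that $\eta_{\mathcal{L}}\times\mathrm{id}_{\mathcal{F}}$ is an isomorphism, which by the diagram above reduces to showing that $\eta_{\mathcal{F}}$ is an isomorphism. To arrange this, one refines the cover $\mathcal{F}$ to a constant cover of the form $t^*\mathcal{A}\to 1$ (with $\mathcal{A}\to\ast$ epic in $\mathcal{T}$): using the generating family $\mathbb{G}_{\bar{X}}$ of Theorem \ref{thm-site}, whose objects decompose as $\mathcal{G}_{L,V,u}\times t^*T$, the constant factors $t^*T$ assemble into a refinement of $\mathcal{F}$ that is itself of the form $t^*\mathcal{A}$ and still trivializes $\mathcal{L}$. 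For such a constant cover, Corollary \ref{cor-constant-generic} yields $\eta_{t^*\mathcal{A}}$ iso, which combined with the diagram concludes the argument.

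\textbf{Main obstacle.} The delicate step is the last one: verifying that a trivializing cover $\mathcal{F}$ of a locally constant $\mathcal{L}$ can always be refined to a constant one while preserving the trivialization. A more robust alternative is to evaluate $\eta_{\mathcal{L}}$ sectionwise on the topologically generating family $\mathbb{C}_{\bar{X}}$ of Corollary \ref{cor-sympa}: for $(Z_0,Z_v,f_v)\in\mathbb{C}_{\bar{X}}$ one has $j_*j^*\mathcal{L}(Z_0,Z_v,f_v)=j^*\mathcal{L}(Z_0)$, and the cover $\mathcal{F}\times y(Z_0,Z_v,f_v)\to y(Z_0,Z_v,f_v)$ can be refined by objects $(Z_0',Z_v',f_v')\in\mathbb{C}_{\bar{X}}$ on which $\mathcal{L}\simeq t^*S$; the identification in equation (\ref{description-t^*-representable}) gives $\mathcal{L}(Z_0',Z_v',f_v')=u^*S(Z_0')=j^*\mathcal{L}(Z_0')=j_*j^*\mathcal{L}(Z_0',Z_v',f_v')$ piece-by-piece, and the sheaf property glues these to the desired global isomorphism.
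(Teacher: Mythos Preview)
Your overall strategy---reduce to Corollary \ref{cor-constant-generic} by descent along a trivializing cover---is the same as the paper's, but the execution has a genuine gap. The refinement step at the end of your Plan is false: you cannot in general refine an arbitrary cover $\mathcal{F}\to 1$ in $\bar{X}_W$ to one of the form $t^*\mathcal{A}\to 1$. The objects $\mathcal{G}_{L,V,u}\times t^*T$ of $\mathbb{G}_{\bar{X}}$ do decompose as a product, but projecting onto the constant factor $t^*T$ does not give a map to the final object covering anything; it is the non-constant factor $\mathcal{G}_{L,V,u}$ that carries the ``support'' $V\cup\{u\}\hookrightarrow\bar{U}$. So there is no reason the constant parts alone cover $1$, and in fact the existence of nontrivial locally constant objects shows exactly that not every cover refines to a constant one. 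Consequently you never establish that $\eta_{\mathcal{F}}$ is an isomorphism, and your diagram with vertical maps $\eta_{\mathcal{L}}\times\eta_{\mathcal{F}}$ and $\mathrm{id}\times\eta_{\mathcal{F}}$ does not conclude.

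The paper sidesteps this problem entirely. Instead of comparing $\eta_{\mathcal{L}\times\mathcal{F}}=\eta_{\mathcal{L}}\times\eta_{\mathcal{F}}$ to $\eta_{t^*S\times\mathcal{F}}$, it compares the \emph{base-changed} map $\eta_{\mathcal{L}}\times\mathrm{id}_{\mathcal{F}}:\mathcal{L}\times\mathcal{F}\to (j_*j^*\mathcal{L})\times\mathcal{F}$ directly to $\eta_{t^*S}\times\mathrm{id}_{\mathcal{F}}$. The key point is an adjunction calculation: for any $\mathcal{U}\to\mathcal{F}$,
\[
\mathrm{Hom}_{\bar{X}_W/\mathcal{F}}(\mathcal{U},(j_*j^*\mathcal{L})\times\mathcal{F})
=\mathrm{Hom}_{B_{W_F}}(j^*\mathcal{U},j^*\mathcal{L})
=\mathrm{Hom}_{B_{W_F}/j^*\mathcal{F}}(j^*\mathcal{U},j^*\mathcal{L}\times j^*\mathcal{F}),
\]
and now the trivialization $j^*\mathcal{L}\times j^*\mathcal{F}\simeq j^*t^*S\times j^*\mathcal{F}$ (obtained by applying $j^*$ to $\phi$) lets one continue to $\mathrm{Hom}_{\bar{X}_W/\mathcal{F}}(\mathcal{U},(j_*j^*t^*S)\times\mathcal{F})$. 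This produces an isomorphism $(j_*j^*\mathcal{L})\times\mathcal{F}\simeq(j_*j^*t^*S)\times\mathcal{F}$ over $\mathcal{F}$ \emph{without} touching $\eta_{\mathcal{F}}$, and the square with horizontal isomorphisms and right vertical $\eta_{t^*S}\times\mathrm{id}_{\mathcal{F}}$ (iso by Corollary \ref{cor-constant-generic}) finishes by conservativity of $-\times\mathcal{F}$. Your alternative sectionwise argument in the last paragraph is essentially a hands-on version of this same idea, but as written it only matches source and target abstractly on each piece and does not verify that the specific map $\eta_{\mathcal{L}}$ realizes the identification; the adjunction computation above is what makes this precise.
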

\begin{proof}
Let $\mathcal{L}$ be a locally constant object of $\bar{X}_W$ over
$\mathcal{T}$. There exist an object $\mathcal{S}$ of $\mathcal{T}$,
an epimorphism $\mathcal{F}\rightarrow e$ where
$e$ is the final object of $\bar{X}_W$, and an isomorphism
$\mathcal{L}\times\mathcal{F}\simeq
t^*\mathcal{S}\times\mathcal{F}$ over $\mathcal{F}$. Consider the morphism defined by base change of the adjunction map:
\begin{equation}\label{adjunction-local}
\mathcal{L}\times\mathcal{F}\longrightarrow
j_*j^*\mathcal{L}\times\mathcal{F}.
\end{equation}
For any object $\mathcal{U}\rightarrow\mathcal{F}$ of
$\bar{X}_W/\mathcal{F}$ one has (using several adjunctions):
\begin{eqnarray*}
Hom_{\bar{X}_W/\mathcal{F}}(\mathcal{U},j_*j^*\mathcal{L}\times\mathcal{F})
&=&Hom_{\bar{X}_W}(\mathcal{U},j_*j^*\mathcal{L})\\
&=&Hom_{B_{W_F}}(j^*\mathcal{U},j^*\mathcal{L})\\
&=&Hom_{B_{W_F}/j^*\mathcal{F}}(j^*\mathcal{U},j^*(\mathcal{L}\times\mathcal{F}))\\
&\simeq&Hom_{B_{W_F}/j^*\mathcal{F}}(j^*\mathcal{U},j^*(t^*\mathcal{S}\times\mathcal{F}))\\
&=&Hom_{B_{W_F}}(j^*\mathcal{U},j^*t^*\mathcal{S})\\
&=&Hom_{\bar{X}_W}(\mathcal{U},j_*j^*t^*\mathcal{S})\\
&=&Hom_{\bar{X}_W/\mathcal{F}}(\mathcal{U},j_*j^*t^*\mathcal{S}\times\mathcal{F})
\end{eqnarray*}
Hence we have an isomorphism over $\mathcal{F}$
$$j_*j^*\mathcal{L}\times\mathcal{F}\simeq j_*j^*t^*\mathcal{S}\times\mathcal{F},$$
and a commutative diagram
\[ \xymatrix{
\mathcal{L}\times\mathcal{F}\ar[d]\ar[r]^{\simeq}&t^*\mathcal{S}\times\mathcal{F}\ar[d]_{\simeq}\\
j_*j^*\mathcal{L}\times\mathcal{F}\ar[r]^{\simeq}&j_*j^*t^*\mathcal{S}\times\mathcal{F}
}\] where the map $t^*\mathcal{S}\times\mathcal{F}\rightarrow
j_*j^*t^*\mathcal{S}\times\mathcal{F}$ is an isomorphism by
Corollary \ref{cor-constant-generic}. This shows that the morphism
(\ref{adjunction-local}) is an isomorphism. But $\mathcal{F}\rightarrow e$ is epimorphic, so that the base change functor $\bar{X}_W\rightarrow\bar{X}_W/\mathcal{F}$ is faithful, hence conservative.
Therefore the adjunction map
$$\mathcal{L}\longrightarrow j_*j^*\mathcal{L}$$
is an isomorphism.

\end{proof}

The following theorem is the main result of this paper.
Data \ref{choices-X} gives a geometric point $q_{\bar{X}}:Spec\,\overline{F}\rightarrow\bar{X}$.
Then we defined a $\mathcal{T}$-point of $\bar{X}_W$ (see Proposition \ref{prop-point-XL-and-j}):
$$p_{\bar{X}}:\mathcal{T}\longrightarrow \bar{X}_W.$$
Recall also that the Weil-\'etale topos of a connected étale $\bar{X}$-scheme $\bar{U}$ is defined as the slice topos $$\bar{U}_W:=\bar{X}_W/\gamma^*\bar{U}.$$
We consider below the topological pro-group $\underline{W}(\bar{U},q_{\bar{U}})$
introduced in Definition \ref{def-pro-top-grp}.

\begin{thm}\label{thm-big-WEfundgrp}
For any connected étale $\bar{X}$-scheme $\bar{U}$, one has

\emph{\,\,(i)} The topos $\bar{U}_W$ is connected and locally connected over $\mathcal{T}$.

\emph{\,(ii)} A geometric point $q_{\bar{U}}$ of the scheme $\bar{U}$ over $q_{\bar{X}}$ induces a $\mathcal{T}$-valued point
$p_{\bar{U}}$ over $p_{\bar{X}}$ of the Weil-\'etale topos $\bar{U}_W$, and respectively.

\emph{(iii)} One has an isomorphism of topological pro-groups
$$\pi_1(\bar{U}_W,p_{\bar{U}})\simeq \underline{W}(\bar{U},q_{\bar{U}}).$$
\end{thm}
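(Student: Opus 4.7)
The plan is to handle (i) and (ii) by direct translation of Propositions \ref{prop-nice} and \ref{prop-point-XL-and-j} to the site $(T_{\bar U},\mathcal{J}_{ls})$ provided by Proposition \ref{prop-site-local-Licht-topos}, and then to prove (iii) by identifying the category $SLC_{\mathcal T}(\bar U_W)$ of sums of locally constant objects with the classifying topos $B_{\underline W(\bar U,q_{\bar U})}$ of Definition \ref{def-class-strict-top-progrp}.

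For (i), the structure morphism $t_{\bar U}:\bar U_W\to \mathcal T$ is induced by the morphism of left exact sites $Z\mapsto(Z,Z,Id_Z)$ from $(Top,\mathcal J_{op})$ to $(T_{\bar U},\mathcal J_{ls})$. The proof of Proposition \ref{prop-nice} then carries over essentially verbatim, with $W_F$ replaced by $W_K$ and with the generating family $\mathbb G_{\bar U}$ produced in Theorem \ref{thm-site} playing the role of $\mathbb C_{\bar X}$. One obtains in particular that $t_{\bar U}^*$ is fully faithful and admits a $\mathcal T$-indexed left adjoint. For (ii), the morphism of sites $(Z_0,Z_u,f_u)\mapsto Z_0$ produces $j_{\bar U}:B_{W_K}\to\bar U_W$, and $p_{\bar U}:=j_{\bar U}\circ q$ is the required $\mathcal T$-point, with $q:\mathcal T\to B_{W_K}$ the canonical point from (\ref{point-B_W}); compatibility with $p_{\bar X}$ is immediate at the level of sites, using the inclusions $W_K\hookrightarrow W_F$ and $W_{K_u}\hookrightarrow W_{F_v}$.

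The main content is (iii). For each finite Galois subextension $\bar F/L/K$, the functor sending a $W(\bar U,L)$-space $Z$ to the object $(Z,Z,Id_Z)$ of $T_{\bar U}$, where $W_K$ acts via the composite $W_K\twoheadrightarrow W_{L/K}\twoheadrightarrow W(\bar U,L)$ and each $W_{k(u)}$ acts via the induced factorization $W_{k(u)}\to W(\bar U,L)$ (well-defined precisely because $W^1_{K_u}$ maps into $N_{\bar U,L}$ by construction), is a morphism of left exact sites and thus induces a morphism of topoi
$$\rho_L:\bar U_W\longrightarrow B_{W(\bar U,L)}.$$
These are compatible as $L$ varies, so by Definition \ref{def-class-strict-top-progrp} they assemble into a single morphism $\rho:\bar U_W\to B_{\underline W(\bar U,q_{\bar U})}$. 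Every object in the essential image of $\rho_L^*$ is split over $\mathcal T$ by the covering $y(\mathcal G_{L,\bar U,u_0})\twoheadrightarrow 1$ of Section \ref{subsect-site}, so $\rho$ factors through $SLC_{\mathcal T}(\bar U_W)$; let $\bar\rho$ denote the induced morphism.

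It remains to show that $\bar\rho$ is an equivalence, after which the identification $p_{\bar U}\leftrightarrow$ canonical point of $B_{\underline W(\bar U,q_{\bar U})}$ (visible at the level of sites) will yield the claimed isomorphism of topological pro-groups. Essential surjectivity of $\bar\rho$ follows from the construction of the $\rho_L^*$. For full faithfulness, an analogue of Lemma \ref{loc-cnstant-generic} for $\bar U_W$ shows that a locally constant $\mathcal L$ over $\mathcal T$ is recovered as $(j_{\bar U})_*j_{\bar U}^*\mathcal L$, hence is determined by the $W_K$-object $j_{\bar U}^*\mathcal L$ in $\mathcal T$; if in addition $\mathcal L$ is split by $\mathcal G_{L,\bar U,u_0}$, the $W_K$-action must factor through $W(\bar U,L)$. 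This last implication is the main obstacle: a priori, the splitting only forces the action to factor through some categorical quotient of $yW_{L/K}$ by the normal subgroup generated by the $y\widetilde W^1_{K_u}$ in $\mathcal T$, whereas we need the factorization through the representable topological quotient $yW(\bar U,L)=y(W_{L/K}/N_{\bar U,L})$. This coincidence of categorical and topological quotients is exactly the content of Lemma \ref{lem-la-clef}, whose proof via Lemma \ref{lemma-compactitude} and a delicate local class field theoretic analysis of the subgroup $\Theta$ is what makes the entire computation go through.
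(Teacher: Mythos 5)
Your proposal for (i) and (ii) matches the paper's approach, and for (iii) your Step 1 — building the morphism $\rho_L\colon\bar{U}_W\to B_{W(\bar{U},L)}$ from a morphism of sites and assembling the $\rho_L$ into a morphism to the classifying topos of the pro-group — is precisely the paper's construction of the pro-torsor $\{\textsc{Tors}(\bar{U},L)\}$. You also correctly single out Lemma \ref{lem-la-clef} as the technical heart, the issue being that the quotient of $yW_{L/K}$ by the (categorical) normal subgroup of $\mathcal{T}$ generated by the $y\widetilde{W}^1_{K_u}$ must agree with the representable quotient $y(W_{L/K}/N_{\bar{U},L})$.

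There is, however, a genuine gap in the universality step, and a misattribution of where the work lies. First, ``essential surjectivity of $\bar\rho$ follows from the construction of the $\rho_L^*$'' is exactly backwards: essential surjectivity of $\bar\rho^*$ says that every sum of locally constant objects of $\bar{U}_W$ is trivialized by one of the torsors $\textsc{Tors}(\bar{U},L)$, and this is the nontrivial content of the theorem (the paper's Step 2), not a formal consequence of having defined the $\rho_L$. Second, your argument for the remaining point is circular as written: you reduce to the hypothesis ``$\mathcal{L}$ is split by $\mathcal{G}_{L,\bar{U},u_0}$'', but $\mathcal{G}_{L,\bar{U},u_0}$ (with $V=\bar{U}$) is precisely $\textsc{Tors}(\bar{U},L)$, so at that point the action already visibly factors through $W(\bar{U},L)$ by elementary torsor theory and Lemma \ref{lem-la-clef} is not even needed. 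What you must actually do — and what the paper's Steps 2.1–2.3 do — is start from an arbitrary epimorphism $\coprod_i y\,\mathcal{G}_{L_i,V_i,u_i,T_i}\twoheadrightarrow 1$ splitting $\mathcal{L}$, pass to $B_{W_K}$ by $j^*$ to see that each $\rho_i\colon yW_{L_i/K}\to\underline{\mathrm{Aut}}_{\mathcal{T}}(p^*\mathcal{L})$ kills $y(N_i,\widetilde{W}^1_{K_{u_i}})$, and then exploit the covering relation $\bigcup_i(V_i\cup\{u_i\})=\bar{U}$ to deduce that, for a fixed index $i_0$, the map $\rho_{i_0}$ kills $y\widetilde{W}^1_{K_u}$ for \emph{every} $u\in\bar{U}$. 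Only after this patching across the various $\mathcal{G}_i$ — which also requires Lemma \ref{painfullemma1} and Lemma \ref{lemma-compactitude} to control the compact subgroups that arise — does Lemma \ref{lem-la-clef} enter, to upgrade the resulting kernel containment from the categorical normal closure to the representable group $yN(\bar{U},L_{i_0})$. Without that reduction, your proposal assumes what it must prove.
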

\begin{proof}

$\mathbf{(i)}$ Composing the localization map
$l_{\bar{U}}:\bar{X}_{W}/\gamma^*\bar{U}\rightarrow\bar{X}_{W}$ with $t$, we
obtain the canonical morphism
$$t_{\bar{U}}:\bar{U}_{W}:=\bar{X}_{W}/\gamma^*\bar{U}\longrightarrow\bar{X}_{W}\longrightarrow\mathcal{T}.$$
The morphism $l_{\bar{U}}$ is locally connected, since it is a localization map, i.e. a local homeomorphism (the left adjoint of $l^*_{\bar{U}}$ is $l_{\bar{U}!}(\mathcal{F}\rightarrow\gamma^*\bar{U}):=\mathcal{F}$). By \cite{elephant} C3.3.2, the class of locally connected morphisms is closed under composition. Hence $t_{\bar{U}}$ is locally connected, i.e. $t_{\bar{U}}^*$ has a $\mathcal{T}$-indexed left adjoint $t_{\bar{U}!}$. This functor is defined as follows $t_{\bar{U}!}=t_!\circ l_{\bar{U}!}$, so that we have
$$t_{\bar{U}!}=t_!\circ l_{\bar{U}!}(\mathcal{F}\rightarrow\gamma^*\bar{U})=t_!(\mathcal{F}).$$
for any object $\mathcal{F}\rightarrow\gamma^*\bar{U}$ of the slice topos $\bar{U}_{W}$. Let $Id_{\gamma^*\bar{U}}$ be the final object of $\bar{U}_{W}$. Then
$$t_{\bar{U}!}(Id_{\gamma^*\bar{U}})=t_!(\gamma^*\bar{U})=\{*\}$$
is the final object of $\mathcal{T}$ since $\bar{U}$ is connected (see Remark \ref{rem-connected-U-connected-tU}). It follows from (\cite{elephant} C3.3.3) that $t_{\bar{U}}:\bar{U}_{W}\rightarrow\mathcal{T}$ is connected and locally connected.

One can also give the following easier - but less canonical - argument. By Proposition \ref{prop-site-local-Licht-topos},
$(T_{\bar{U}},\mathcal{J}_{ls})$ is a site for the topos $U_{L}$. The proof of Proposition \ref{prop-nice} is still valid by replacing
$T_{\bar{X}}$ with $T_{\bar{U}}$ (without any other change). This shows that $\bar{U}_{W}$ is connected and locally connected
over $\mathcal{T}$. \\

$\mathbf{(ii)}$ A geometric point
$q_{\bar{U}}:Spec\,\overline{F}\rightarrow\bar{U}$ gives a point of the
étale topos
$$q_{\bar{U}}:\underline{Sets}\longrightarrow\bar{U}_{et}$$
where $q^*_{\bar{U}}$ is the usual fiber functor. We obtain a morphism
$$p_{\bar{U}}=q_{\bar{U}}\times_{q_{\bar{X}}}p_{\bar{X}}:
\mathcal{T}=\underline{Sets}\times_{\underline{Sets}}\mathcal{T}\longrightarrow\bar{U}_{et}\times_{\bar{X}_{et}}\bar{X}_{W}=:\bar{X}_{W}$$
defined over $\mathcal{T}$. One can recover the
geometric point $q_{\bar{U}}$ from $p_{\bar{U}}$. Indeed, let
$p_{\bar{U}}:\mathcal{T}\rightarrow\bar{U}_W$ be a
$\mathcal{T}$-point of $\bar{U}_W$. Then we have a point of the
étale topos
\begin{equation}\label{pointUet}
\underline{Sets}\longrightarrow\mathcal{T}\longrightarrow\bar{U}_W\longrightarrow\bar{U}_{et},
\end{equation}
where the map $\underline{Sets}\rightarrow\mathcal{T}$ is the
canonical one (see \cite{SGA4} IV4.10). By (\cite{SGA4} VIII Theorem
7.9), the category of points of the étale topos of a scheme is
equivalent to the category of geometric points (algebraic and
separable) and specialization maps. Then the map (\ref{pointUet})
corresponds to the given geometric point $q_{\bar{U}}$ of $\bar{U}$. However, two distinct $\mathcal{T}$-points of $\bar{U}_W$ over $p_{\bar{X}}$ can induce the same $\underline{Sets}$-valued point of $\bar{U}_{et}$, hence the same geometric point.\\

$\mathbf{(iii)}$  We make the choices listed in Data \ref{choices-U}. Proposition \ref{prop-site-local-Licht-topos} yields an equivalence $$\widetilde{(T_{\bar{U}},\mathcal{J}_{ls})}\longrightarrow\bar{U}_W.$$
This equivalence provides us with the morphism
$$j:B_{W_K}\longrightarrow\widetilde{(T_{\bar{U}},\mathcal{J}_{ls})}\simeq\bar{U}_W$$
corresponding to the generic point of the connected
\'etale $\bar{X}$-scheme $\bar{U}$. Then the $\mathcal{T}$-point $p_{\bar{U}}$ defined in \textrm{(ii)}, using the geometric point $q_{\bar{U}}$ of $\bar{U}$ given by Data \ref{choices-U}, is isomorphic to the map defined over $\mathcal{T}$ :
$$p:=j\circ u:\mathcal{T}\longrightarrow B_{W_K}\longrightarrow\widetilde{(T_{\bar{U}},\mathcal{J}_{ls})}\simeq\bar{U}_W$$
where $\mathcal{T}\rightarrow B_{W_K}$ is the canonical
$\mathcal{T}$-point of $B_{W_K}$ (see
Proposition \ref{prop-point-XL-and-j}). In order to ease the notations, we denote here by $j$ and $p$ the maps $j_{\bar{U}}$ and $p_{\bar{U}}$. Finally, we denote by $u:B_{W_{K}}\rightarrow\mathcal{T}$ the
canonical map, i.e. the map induced by the morphism of groups $W_K\rightarrow1$.

If $\mathcal{L}$ is an
object of $\bar{U}_W$, then $j^*\mathcal{L}$ is the object
$p^*\mathcal{L}$ of $\mathcal{T}$ endowed with an action of
$y(W_K)$. In other words, $j^*\mathcal{L}$ comes with a morphism of groups in
$\mathcal{T}$ :
$$y(W_K)\longrightarrow\underline{Aut}_{\mathcal{T}}(p^*\mathcal{L}).$$
The following proof consists in two steps :\\

\textbf{Step 1 : We define a projective system of
Galois torsors in
the topos} $\bar{U}_W$.\\

Let $\bar{F}/L/K$ be a finite Galois subextension given by a
geometric point $q_{\bar{U}}:Spec\,\bar{F}\rightarrow\bar{U}$ over
$\bar{X}$. Consider the topological group $W(\bar{U},L)$ of
definition \ref{def-W_LU}. The morphism of left exact sites
\begin{equation}\label{morphism-sites-from-UL-toT}
{t_{\bar{U}}^*}:(Top,\mathcal{J}_{op})\longrightarrow
(T_{\bar{U}},\mathcal{J}_{ls})
\end{equation}
factors through the morphism
\begin{equation}\label{morphism-sites-from-UL-to-BW(UL)}
\appl{(B_{Top}W(\bar{U},L),\mathcal{J}_{ls})}{(T_{\bar{U}},\mathcal{J}_{ls})}{Z}{(Z,Z,Id_Z)}
\end{equation}
where $W_{K}$ acts on $Z$ via the morphism $W_{K}\rightarrow
W(\bar{U},L)$. Respectively, $W_{k(u)}$ acts on $Z$ via the morphism
$W_{k(v)}=W_{K_u}/W^1_{K_u}\rightarrow W(\bar{U},L)$. We obtain a
commutative diagram of topoi
\[ \xymatrix{
B_{W_K}\ar[d]_{j_{\bar{U}}}\ar[r]&B_{W(\bar{U},L)}\ar[d]\\
\bar{U}_{W}\ar[ru]^{\pi}\ar[r]_{t_{\bar{U}}}&\mathcal{T} }\] where
the map $B_{W_K}\rightarrow B_{W(\bar{U},L)}$ is induced by the
surjection $${W_K}\longrightarrow W_K/N(\bar{U},L)=W(\bar{U},L).$$
The map $\pi:\bar{U}_W\rightarrow B_{W(\bar{U},L)}$ corresponds to
the torsor
$$\textsc{Tors}(\bar{U},L):=\pi^*EW(\bar{U},L)$$ where
$EW(\bar{U},L)$ is the universal torsor of $B_{W(\bar{U},L)}$ given
by $W(\bar{U},L)$ acting on itself by multiplications. Note that
$\textsc{Tors}(\bar{U},L)$ is a torsor of
group $W(\bar{U},L)$, which is \emph{connected} over $\mathcal{T}$. Indeed, its space of connected components
$$t_{\bar{U},!}\textsc{Tors}(\bar{U},L)=yW(\bar{U},L)/yW_K$$
is the final object of $\mathcal{T}$, since $yW_K\rightarrow yW(\bar{U},L)$ is an epimorphism in $\mathcal{T}$. 
The last claim follows from the fact that $W_K \rightarrow W(\bar{U},L)$ has local sections since $W(\bar{U},L)$ is finite dimensional (see Theorem \ref{thm-loc-sections} and Lemma \ref{lem-C_L-finite-dim}).
The topological pro-group
$$\underline{W}(\bar{U},q_{\bar{U}}):=\{W(\bar{U},L),\mbox{ for $\bar{F}/L/K$ finite Galois}\}$$
yields a projective system of connected torsors
\begin{equation}\label{pro-torsor}
\{\textsc{Tors}(\bar{U},L),\mbox{ for $\bar{F}/L/K$ finite Galois}\}
\end{equation}
This projective system of torsors is given by compatible maps to
classifying topoi. By the universal property of projective limits,
the pro-torsor (\ref{pro-torsor}) corresponds to an essentially unique morphism
$$\bar{U}_W\longrightarrow\underleftarrow{lim}\,B_{W(\bar{U},L)}=:B_{\underline{W}(\bar{U},q_{\bar{U}})}$$
into the classifying topos of the topological pro-group
$\underline{W}(\bar{U},q_{\bar{U}})$.\\

\textbf{Step 2 : The pro-torsor (\ref{pro-torsor}) is universal.}\\

In other words, we have to show that any locally constant object
$\mathcal{L}$ of $\bar{U}_W$ over $\mathcal{T}$ is trivialized by a
torsor of the form $\textsc{Tors}(\bar{U},L)$. This is the technical
part of the proof.

\textbf{Step 2.1.} Let $\mathcal{L}$ be such a locally constant
object. There exist an object $\mathcal{S}$ of $\mathcal{T}$, an
epimorphism $\mathcal{F}\rightarrow e$ where
$e$ is the final object of $\bar{U}_W$, and an isomorphism
\begin{equation}\label{iso-loc-contst}
\mathcal{L}\times\mathcal{F}\simeq
t^*\mathcal{S}\times\mathcal{F}
\end{equation}
over $\mathcal{F}$. Since the full-subcategory
$\mathbb{G}_{\bar{U}}$ of $\bar{U}_W$ defined in section
\ref{subsect-site} is a generating subcategory (see Theorem
\ref{thm-site}), one can assume that $\mathcal{F}$ is
representable by a sum of objects in $\mathbb{G}_{\bar{U}}$:
$$\mathcal{F}=\coprod_{i\in I}\mathcal{F}_i=y\,\mathcal{G}_{L_i,V_i,\mathrm{u}_i,T_i}.$$
For any index $i\in I$, a point of the topological space
$T_i\neq\emptyset$ yields a morphism
$$\mathcal{G}_{L_i,V_i,\mathrm{u}_i}:=\mathcal{G}_{L_i,V_i,\mathrm{u}_i,*}\longrightarrow\mathcal{G}_{L_i,V_i,\mathrm{u}_i,T_i}$$
in the category $T_{\bar{U}}$, where $*$ denotes the one point space
as usual.

Recall that $\mathcal{G}_{L_i,V_i,\mathrm{u}_i}$ is defined as
follows. Here $\bar{F}/L_i/K$ be a finite Galois sub-extension,
$V_i$ is an open subset of $\bar{U}$, $\mathrm{u}_i$ is a point of
$\bar{U}$ and  $T_i$ is a separated topological space. We denote by
$N_i$ the closed normal subgroup of $W_{L_i/K}$ generated by the
subgroups $\widetilde{W}_{K_v}^1\subseteq W_{L_i/K}$ for any point
$v\in V_i$. Let $(N_i,\widetilde{W}_{K_{\mathrm{u}_i}}^1)$ be the
compact subgroup of $W_{L_i/K}$ generated by $N_i$ and
$\widetilde{W}_{K_{\mathrm{u}}}^1$. The object
$$\mathcal{G}_i:=y\mathcal{G}_{L_i,V_i,\mathrm{u}_i}=y(G_{i,0},G_{i,u},g_{i,u})$$
is then defined as follows. Assume that $\mathrm{u}_i$ is not in $V_i$. We
consider
$$G_{i,0}=W_{L_i/K}/(N_i,\widetilde{W}^1_{K_{\mathrm{u}_i}})$$ as a $W_K$-space and
$$G_{i,v}=W_{L_i/K}/(N_i,\widetilde{W}^1_{K_{\mathrm{u}_i}})$$ as a $W_{k(v)}$-space for
any point $v$ of $V_i\subseteq\bar{U}$. Then we set
$G_{i,\mathrm{u}_i}=W_{k(\mathrm{u}_i)}$ on which
$W_{k(\mathrm{u}_i)}$ acts by multiplication, and
$G_{i,u}=\emptyset$ anywhere else.

Note that the image of $\mathcal{G}_i$ in the final object
$y\bar{U}$ of $\bar{U}_W$, i.e. the support of the sheaf
$\mathcal{G}_i$, is precisely the subobject of $y\bar{U}$ given by
$$V_i\cup\{\mathrm{u}_i\}\hookrightarrow\bar{U}.$$ The
family $$\{\mathcal{G}_i\rightarrow y\bar{U},\,i\in I\}$$
is an epimorphic family, i.e. a covering family of the final object of the topos $\bar{U}_W$ for the canonical topology.
Indeed, the corresponding family of $T_{\bar{U}}$ is a local section cover, as it follows from the facts that a map from an non-empty space to the one point space is a local section cover and that we have
\begin{equation}\label{recouvrement}
\bigcup_{i\in I}\,(V_i\cup\{\mathrm{u}_i\})=\bar{U}.
\end{equation}
\textbf{Step 2.2.} Applying the base change functor along the map (given by any point of $T_i$)
$$\mathcal{G}_i:=y\mathcal{G}_{L_i,V_i,\mathrm{u}_i}\longrightarrow y\mathcal{G}_{L_i,V_i,\mathrm{u}_i,T_i}=\mathcal{F}_i$$
to the trivialization (\ref{iso-loc-contst}), we obtain an
isomorphism over $\mathcal{G}_i$:
$$\mathcal{L}\times\mathcal{G}_i=(\mathcal{L}\times\mathcal{F}_i)\times_{\mathcal{F}_i}\mathcal{G}_i\simeq
(t^*\mathcal{S}\times\mathcal{F}_i)\times_{\mathcal{F}_i}\mathcal{G}_i=
t^*\mathcal{S}\times \mathcal{G}_i$$ where $\mathcal{S}$ is an
object of $\mathcal{T}$. Applying in turn the functor $j^*$ we get
an isomorphism
\begin{equation}\label{une-ptite-derniere}
j^*\mathcal{L}\times yG_{i,0}=j^*(\mathcal{L}\times
\mathcal{G}_i)\simeq
j^*(t^*\mathcal{S}\times\mathcal{G}_i)=j^*t^*\mathcal{S}\times
j^*\mathcal{G}_i=u^*\mathcal{S}\times yG_{i,0} \end{equation} over
$y(G_{i,0})=j^*\mathcal{G}_i$, i.e. an isomorphism in the topos
$B_{W_K}/yG_{i,0}$.

Assume for a moment that the action of $yW_K$ on $p^*\mathcal{L}$
(given by the object $j^*\mathcal{L}$ of $B_{W_K}$) factors through
$W_{L_i/K}$. In other words, suppose that one has a commutative
triangle
\[\xymatrix{
yW_K\ar[d]_{}\ar[rd]&\\
yW_{L_i/K}\ar[r]& \underline{Aut}_{\mathcal{T}}(p^*\mathcal{L})}\]
Then $j^*\mathcal{L}$ is an object of the full subcategory
$$B_{W_{L_i/K}}\hookrightarrow B_{W_K}.$$ Recall that
$$G_{i,0}=W_{L_i/K}/(N,\widetilde{W}^1_{K_{\mathrm{u}_i}}).$$
But there is a canonical equivalence
$$B_{W_{L_i/K}}/yG_{i,0}:=B_{W_{L_i/K}}/y(W_{L_i/K}/(N_i,W_{\widetilde{K}_{\mathrm{u}_i}}^1))=
B_{W_{L_i/K}}/(yW_{L_i/K}/y(N_i,\widetilde{W}^1_{K_{\mathrm{u}_i}})\simeq
B_{(N_i,\widetilde{W}^1_{K_{\mathrm{u}_i}})},$$ where the second
equality follows (by \cite{MatFlach} Lemma 3) from the fact that the projection $$W_{L_i/K}\longrightarrow
W_{L_i/K}/(N_i,\widetilde{W}^1_{K_{\mathrm{u}_i}})$$ admits local
sections, as it follows from Corollary \ref{cor-local-sections} and Lemma \ref{lem-C_L-finite-dim}. Let us make this equivalence
more explicit. The homogeneous space $G_{i,0}$ has a distinguished
(non-equivariant) point $*\rightarrow G_{i,0}$, and we have
$$
\appl{B_{W_{L_i/K}}/yG_{i,0}}{B_{(N_i,\widetilde{W}^1_{K_{\mathrm{u}_i}})}}{(\mathcal{X}\rightarrow
yG_{i,0})}{\mathcal{X}\times_{yG_{i,0}}*}
$$
Under this equivalence, the base change functor
$$
\appl{B_{W_{L_i/K}}}{B_{W_{L_i/K}}/yG_{i,0}}{\mathcal{F}}{\mathcal{F}\times
yG_{i,0}}
$$
takes a $yW_{L_i/K}$-object $\mathcal{F}$ of $\mathcal{T}$ to the
same object of $\mathcal{T}$ : $$\mathcal{F}=\mathcal{F}\times
yG_{i,0}\times_{yG_{i,0}}*$$ endowed with the induced
$y(N_i,\widetilde{W}^1_{K_{\mathrm{u}_i}})$-action. Therefore
(\ref{une-ptite-derniere}) means that
$y(N_i,\widetilde{W}^1_{K_{\mathrm{u}_i}})$ acts trivially on
$j^*\mathcal{L}$, i.e. $y(N_i,\widetilde{W}^1_{K_{\mathrm{u}_i}})$
is in the kernel of the map
\begin{equation}\label{vraiderniere}
yW_{L_i/K}\longrightarrow{\underline{Aut}_{\mathcal{T}}(p^*\mathcal{L})}.
\end{equation}
Hence the action of $yW_{L_i/K}$ on
$p^*\mathcal{L}$ factors through $$yW_{L_i/K}/y(N_i,\widetilde{W}^1_{K_{\mathrm{u}_i}})=y(W_{L_i/K}/(N_i,\widetilde{W}^1_{K_{\mathrm{u}_i}})).$$
The same argument shows that the action of $yW_K$ on
$p^*\mathcal{L}$ factors through $yW_{L_i/K}$, i.e. that the
commutative triangle considered above exists. Indeed, by
(\ref{une-ptite-derniere}) one has an isomorphism
$$j^*\mathcal{L}\times yW_{L_i/K}=(j^*\mathcal{L}\times yG_{i,0})\times_{yG_{i,0}} yW_{L_i/K}\simeq
(u^*\mathcal{S}\times yG_{i,0})\times_{yG_{i,0}}
yW_{L_i/K}=u^*\mathcal{S}\times yW_{L_i/K}$$ where we consider the
object of $B_{W_K}$
$$yW_{L_i/K}=y(W_{K}/W_{L_i}^c)=yW_{K}/yW_{L_i}^c.$$
Note that $W_{L_i}^c$, which is the closure of the commutator
subgroup of $W_{L_i}$, is compact in $W_K$. Then the previous argument shows that the action of $yW_{K}$ on
$p^*\mathcal{L}$ factors through $yW_{L_i/K}=yW_{K}/yW_{L_i}^c$ (this last identification is valid by Theorem \ref{thm-loc-sections}, Lemma \ref{lem-C_L-finite-dim} and \cite{MatFlach} Lemma 3). In summary, we have proven the following
\begin{prop}\label{prop-between-proof}
The action $yW_K\rightarrow
\underline{Aut}_{\mathcal{T}}(p^*\mathcal{L})$ induces a morphism
$$\rho_i:\,yW_{L_i/K}\longrightarrow{\underline{Aut}_{\mathcal{T}}(p^*\mathcal{L})}$$
for any $i\in I$, and we have
$$y(N_i,\widetilde{W}^1_{K_{\mathrm{u}_i}})\,\hookrightarrow\,Ker(\rho_i).$$
\end{prop}

\textbf{Step 2.3.}  Choose an object $\mathcal{G}_{i_0}$ (i.e. an
index $i_0\in I$). We have a morphism
$$\rho_{i_0}:yW_{L_{i_0}/K}\longrightarrow{\underline{Aut}_{\mathcal{T}}(p^*\mathcal{L})}.$$
Let $N_{i_0}$ be the closed normal subgroup of $W_{L_{i_0}/K}$
defined above. We have an open subset $V_{i_0}\subseteq\bar{U}$ such
that $\widetilde{W}_{K_v}^1\subseteq N_{i_0}$ for any $v\in
V_{i_0}$. We claim that, for any $u\in\bar{U}-V_{i_0}$, the subgroup
$y\widetilde{W}_{K_u}^1\subset yW_{L_{i_0}/K}$ is in the kernel of
$\rho_{i_0}$, i.e. one has
\begin{equation}\label{uneidee}
y\widetilde{W}_{K_u}^1\,\hookrightarrow\,Ker(\rho_{i_0})\mbox{ ; for
any }u\in\bar{U}-V_{i_0}.
\end{equation}
Let $u\in\bar{U}-V_{i_0}$. By (\ref{recouvrement}), there exists an
index $i\in I$ such that the support of $\mathcal{G}_i$ contains
$u$, i.e. one has :
$$u\in V_i\cup\{\mathrm{u}_i\}.$$
Proposition \ref{prop-between-proof} then shows that the subgroup
$y\widetilde{W}_{K_u}^1\subset yW_{L_{i}/K}$ is in the kernel of the
morphism
$$\rho_i:yW_{L_i/K}\longrightarrow{\underline{Aut}_{\mathcal{T}}(p^*\mathcal{L})}.$$
Hence the subgroup $yW_{K_u}^1\subset yW_{K}$ is in the kernel of
the morphism
$$\rho:yW_{K}\longrightarrow{\underline{Aut}_{\mathcal{T}}(p^*\mathcal{L})}.$$
It follows that the image of $yW_{K_u}^1$ in $yW_{L_{i_0}/K}$ is in
the kernel of $\rho_{i_0}$. But the continuous surjection
$$W_{K_u}^1\,\twoheadrightarrow\,\widetilde{W}_{K_u}^1\subset W_{L_{i_0}/K}$$
admits local sections (using Theorem \ref{thm-loc-sections} and the fact that $\widetilde{W}_{K_u}^1$ is finite dimensional), hence induces an epimorphism in
$\mathcal{T}$. Thus the image of $yW_{K_u}^1$ in $yW_{L_{i_0}/K}$ is
$y\widetilde{W}_{K_u}^1\subset y(W_{L_{i_0}/K})$, which is therefore
in the kernel of $\rho_{i_0}$. We have proven (\ref{uneidee}).

Let $N(\widetilde{W}^1_{K_u})$ be the normal topological subgroup of
$W_{L_{i_0}/K}$ generated by the subgroup $\widetilde{W}_{K_u}^1$.
By Lemma \ref{lem-la-clef}, $N(\widetilde{W}^1_{K_u})$ is compact
and we have
$$\mathcal{N}(y\widetilde{W}^1_{K_u})=yN(\widetilde{W}^1_{K_u}),$$
where $\mathcal{N}(y\widetilde{W}^1_{K_u})$ is the normal subgroup
of $y{W}_{L_{i_0}/K}$ generated by $y\widetilde{W}^1_{K_u}$. We
obtain
$$yN(\widetilde{W}^1_{K_u})=\mathcal{N}(y\widetilde{W}^1_{K_u})\,\,\hookrightarrow\,\,Ker(\rho_{i_0}).$$
Therefore, the subgroup of $y{W}_{L_{i_0}/K}$ generated by
$yN_{i_0}$ and $yN(\widetilde{W}^1_{K_u})$, for any
$u\in\bar{U}-V_{i_0}$, is contained in $Ker(\rho_{i_0})$ :
$$<yN_{i_0}\,;\,yN(\widetilde{W}^1_{K_u})\mbox{ ; for any }u\in\bar{U}-V_{i_0}>\,\hookrightarrow\,Ker(\rho_{i_0})$$
The topological subgroup of ${W}_{L_{i_0}/K}$, generated by
$N_{i_0}$ and $N(\widetilde{W}^1_{K_u})$ for any
$u\in\bar{U}-V_{i_0}$, is normal and compact. Hence this subgroup is
precisely $N(\bar{U},L_{i_0})$, which is the closed normal subgroup
of $W_{L_{i_0}/K}$ generated by all the subgroups
$\widetilde{W}_{K_u}^1\subset W_{L_{i_0}/K}$ for any $u\in\bar{U}$
(see section \ref{subsect-Weil-group}).

Lemma \ref{painfullemma1} then shows that
$$yN(\bar{U},L_{i_0})\,\hookrightarrow\,Ker(\rho_{i_0})$$
and that the morphism $\rho_{i_0}$ induces a morphism from
$$y{W}_{L_{i_0}/K}/yN(\bar{U},L_{i_0})=y({W}_{L_{i_0}/K}/N(\bar{U},L_{i_0}))=yW(\bar{U},L_{i_0})$$
to the automorphism group
$$\underline{Aut}_{\mathcal{T}}(p^*\mathcal{L}).$$
Therefore we have an isomorphism
$$j^*\mathcal{L}\times yW(\bar{U},L_{i_0})\simeq u^*\mathcal{S}\times yW(\bar{U},L_{i_0})$$
in the topos $B_{W_K}/yW(\bar{U},L_{i_0})$, where $\mathcal{S}$ is
an object of $\mathcal{T}$. In other words we have
$$j^*\mathcal{L}\times j^*\textrm{Tors}(\bar{U},L_{i_0})\simeq u^*\mathcal{S}\times
j^*\textsc{Tors}(\bar{U},L_{i_0})$$ since
$$j^*\textsc{Tors}(\bar{U},L_{i_0})=yW(\bar{U},L_{i_0})$$
where $\textsc{Tors}(\bar{U},L_{i_0})$ is the torsor corresponding to
the morphism
$$\pi:\bar{U}_W\longrightarrow B_{W(\bar{U},L_{i_0})}$$ defined
in Step 1.

\textbf{Step 2.4.} The torsor $\textsc{Tors}(\bar{U},L_{i_0})$ is
locally constant over $\mathcal{T}$, since any torsor is trivialized
by itself, hence Lemma \ref{loc-cnstant-generic} applies. We obtain
an isomorphism over $\textsc{Tors}(\bar{U},L_{i_0})$ :
\begin{eqnarray*}
\mathcal{L}\times\textsc{Tors}(\bar{U},L_{i_0})
&\simeq&j_*j^*(\mathcal{L}\times\textsc{Tors}(\bar{U},L_{i_0}))\\
&\simeq&j_*j^*\mathcal{L}\times j_*j^*\textsc{Tors}(\bar{U},L_{i_0})\\
&\simeq&j_*(j^*\mathcal{L}\times j^*\textsc{Tors}(\bar{U},L_{i_0}))\\
&\simeq&j_*(u^*\mathcal{S}\times j^*\textsc{Tors}(\bar{U},L_{i_0}))\\
&\simeq&j_*u^*\mathcal{S}\times j_*j^*\textsc{Tors}(\bar{U},L_{i_0})\\
&\simeq&j_*j^*t^*\mathcal{S}\times j_*j^*\textsc{Tors}(\bar{U},L_{i_0})\\
&\simeq&t^*\mathcal{S}\times\textsc{Tors}(\bar{U},L_{i_0}).
\end{eqnarray*}
This shows that any locally constant object $\mathcal{L}$ of
$\bar{U}_W$ over $\mathcal{T}$ is trivialized by a torsor of the
form $\textsc{Tors}(\bar{U},L)$. Hence the pro-torsor
(\ref{pro-torsor})
$$\{\textsc{Tors}(\bar{U},L),\mbox{ for $\bar{F}/L/K$ finite Galois}\}$$
is universal. The pro-group object of $\mathcal{T}$ defined by this
pro-torsor is the projective system of its Galois groups :
$$y\,\underline{W}(\bar{U},q_{\bar{U}}):=\{yW(\bar{U},L),\mbox{ for $\bar{F}/L/K$ finite Galois}\}.$$
Equivalently, this pro-group object of $\mathcal{T}$ is obtain by
applying the fiber functor $p_{\bar{U}}^{*}$ :
$$y\,\underline{W}(\bar{U},q_{\bar{U}}):= p_{\bar{U}}^{*}\,\,\{\textsc{Tors}(\bar{U},L),\mbox{ for $\bar{F}/L/K$ finite Galois}\}.$$
This yields an isomorphism of pro-group objects in $\mathcal{T}$
$$\pi_1(\bar{U}_W,p_{\bar{U}})\,\simeq \,y\,\underline{W}(\bar{U},q_{\bar{U}}).$$
Hence $\pi_1(\bar{U}_W,p_{\bar{U}})$ is a topological pro-group
canonically isomorphic to $\underline{W}(\bar{U},q_{\bar{U}})$.
\end{proof}

\section{Weil-\'etale Cohomology with coefficients in $\widetilde{\mathbb{R}}$}
Let $\bar{U}$ be a connected \'etale $\bar{X}$-scheme. For any topos $t:\mathcal{E}\rightarrow\mathcal{T}$ defined over $\mathcal{T}$, we set $\widetilde{\mathbb{R}}:=t^*(y\mathbb{R})$, where $y\mathbb{R}$ is the sheaf of $\mathcal{T}$ represented by the standard topological group $\mathbb{R}$.

\begin{lem}
Let $j:B_{W_K}\rightarrow \bar{U}_W$ be the canonical map. We have $j_*\widetilde{\mathbb{R}}=\widetilde{\mathbb{R}}$ and
$R^nj_*\widetilde{\mathbb{R}}=0$ for any $n\geq1$.
\end{lem}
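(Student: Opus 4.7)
The plan for the first identity is to invoke Lemma \ref{loc-cnstant-generic}: the sheaf $\widetilde{\mathbb{R}} := t_{\bar{U}}^{*}(y\mathbb{R})$ is globally constant over $\mathcal{T}$, and from $t_{\bar{U}}\circ j \simeq u$ we get $j^{*}\widetilde{\mathbb{R}} = u^{*}(y\mathbb{R}) = \widetilde{\mathbb{R}}$ in $B_{W_{K}}$, so the lemma gives the desired adjunction isomorphism $\widetilde{\mathbb{R}} \xrightarrow{\sim} j_{*}j^{*}\widetilde{\mathbb{R}} = j_{*}\widetilde{\mathbb{R}}$.

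For $R^{n}j_{*}\widetilde{\mathbb{R}} = 0$ with $n\geq 1$, I will describe $R^{n}j_{*}\widetilde{\mathbb{R}}$ as the sheafification on $\bar{U}_{W}$ of the presheaf $\mathcal{F}\mapsto H^{n}(B_{W_{K}}/j^{*}\mathcal{F},\widetilde{\mathbb{R}})$ and test the vanishing on the generating family $\mathbb{G}_{\bar{U}}$ of Theorem \ref{thm-site}. For a generator $\mathcal{F} = y\mathcal{G}_{L,V_{0},u,T}$, the pullback $j^{*}\mathcal{F}$ is represented by $G_{0}\times T$ with $G_{0} = W_{L/K}/(N,\widetilde{W}^{1}_{K_{u}})$. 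Combining Corollaries \ref{cor-exple-of-loc-sections} and \ref{cor-local-sections}, the projection $W_{K}\to G_{0}$ admits local sections, so one obtains an equivalence $B_{W_{K}}/y(G_{0}\times T)\simeq B_{H}/\widetilde{yT}$, where $H\subset W_{K}$ denotes the preimage of $(N,\widetilde{W}^{1}_{K_{u}})$ and $\widetilde{yT}$ is $yT$ endowed with the trivial $H$-action. The cohomology $H^{n}(B_{H}/\widetilde{yT},\widetilde{\mathbb{R}})$ will then be analyzed via the Leray spectral sequence of the structural projection $\pi\colon B_{H}/\widetilde{yT}\to \mathcal{T}/yT$: one has $\pi_{*}\widetilde{\mathbb{R}} = y\mathbb{R}|_{yT}$ since $H$ acts trivially, and $H^{p}(\mathcal{T}/yT,y\mathbb{R}|_{yT})$ vanishes for $p\geq 1$ because the sheaf of continuous real-valued functions on the paracompact Hausdorff space $T$ is fine. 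The higher $R^{q}\pi_{*}\widetilde{\mathbb{R}}$ will be identified with the sheafification of continuous $H$-cohomology with trivial $\mathbb{R}$-coefficients, and Hochschild--Serre for the extension $1\to W_{L}^{c}\to H\to(N,\widetilde{W}^{1}_{K_{u}})\to 1$ will dispose of the compact quotient by integration against Haar measure.

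The main obstacle will be the careful site-theoretic identification of $R^{q}\pi_{*}\widetilde{\mathbb{R}}$ with sheafified continuous $H$-cohomology, together with the vanishing $H^{q}_{\mathrm{cont}}(W_{L}^{c},\mathbb{R}) = 0$ for $q\geq 1$; the latter should follow from the description of $W_{L}^{c}$ as $\ker(W_{L}\to C_{L})$ afforded by class field theory, whose non-compact part is built out of connected and profinite pieces on which continuous cohomology with divisible $\mathbb{R}$-coefficients vanishes. If this direct argument proves awkward, the fallback plan is to resolve $\widetilde{\mathbb{R}}$ in $B_{W_{K}}$ by a fine resolution before applying $Rj_{*}$ and reduce the statement to acyclicity on the representable objects of $\mathbb{G}_{\bar{U}}$.
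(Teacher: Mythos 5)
Your handling of the first identity is fine: $\widetilde{\mathbb{R}}=t_{\bar{U}}^*(y\mathbb{R})$ is constant over $\mathcal{T}$, so Corollary \ref{cor-constant-generic} (or the stronger Lemma \ref{loc-cnstant-generic}), transposed to $\bar{U}_W$, gives the adjunction isomorphism directly; the paper instead just unwinds the site-theoretic description of $j_*$ on objects $(Z_0,Z_u,f_u)$ of $T_{\bar{U}}$. Both are valid.

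For the higher direct images, your reduction coincides with the paper's up to the identification of $B_{W_K}/j^*\mathcal{G}_{L,V,u,T}$ with $B_\Lambda/T$, where $\Lambda$ (your $H$) is the preimage in $W_K$ of $(N,\widetilde{W}^1_{K_u})\subset W_{L/K}$. After that the Leray directions diverge: the paper fibers over $B_\Lambda$, not over $\mathcal{T}/yT$. Concretely, the paper forms the pull-back square with $a:\mathcal{T}/T\to\mathcal{T}$ and $c:B_\Lambda/T\to B_\Lambda$, invokes Beck--Chevalley to identify $d^*R^m(c_*)\simeq R^m(a_*)b^*$, and then uses the fine-sheaf argument on the paracompact spaces $T\times T'$ to kill $R^m(a_*)\widetilde{\mathbb{R}}$ for $m\geq1$ and to identify $c_*\widetilde{\mathbb{R}}$ with $\underline{Hom}_{Top}(T,\mathbb{R})$ carrying trivial $\Lambda$-action. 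The Leray spectral sequence for $c$ then degenerates and everything reduces to $H^n(B_\Lambda,\underline{Hom}_{Top}(T,\mathbb{R}))$, which vanishes for $n\geq1$ by Flach's Corollary 8 because $\Lambda$ is compact. Your alternative fibering over $\mathcal{T}/yT$ is workable in principle, but it saddles you with identifying $R^q\pi_*\widetilde{\mathbb{R}}$ with sheafified continuous $H$-cohomology, which you correctly flag as the delicate point; the Beck--Chevalley step is precisely what lets the paper avoid this identification.

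The more substantive gap is that you appear not to have noticed that $H=\Lambda$ is itself compact. It fits in an extension $1\to W_L^c\to H\to(N,\widetilde{W}^1_{K_u})\to 1$ where $(N,\widetilde{W}^1_{K_u})$ is compact (generated by finitely many compact subgroups in a finite-dimensional $W_{L/K}$, see Lemma \ref{painfullemma1}) and $W_L^c$ is compact as well, being a closed subgroup of the compact group $W_L^1$: the norm $W_L\to\mathbb{R}_{>0}$ factors through $W_L^{ab}=C_L$, hence the commutator subgroup and its closure lie in $\ker\|\cdot\|=W_L^1$. Consequently the Hochschild--Serre detour is unnecessary (one applies the compact-group vanishing to $H$ directly with coefficients $\underline{Hom}_{Top}(T,\mathbb{R})$), and the description of $W_L^c$ as having a ``non-compact part built out of connected and profinite pieces'' is simply incorrect --- there is no non-compact part. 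That observation also removes the need for your fallback plan of constructing a fine resolution in $B_{W_K}$: once compactness of $\Lambda$ is in hand, the single application of Flach's result is the cleanest exit, and it is what the paper does.
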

\begin{proof}
The identification $j_*\widetilde{\mathbb{R}}=\widetilde{\mathbb{R}}$ follows immediately from
$$Hom_{T_{\bar{U}}}((Z_0,Z_u,f_u),(\mathbb{R},\mathbb{R},Id))=Hom_{B_{Top}W_K}(Z_0,\mathbb{R}).$$
where $(Z_0,Z_u,f_u)$ is any object of $T_{\bar{U}}$.
By Theorem \ref{thm-site}, the site $(\mathbb{G}_{\bar{U}},\mathcal{J}_{ls})$ is a site for $\bar{U}_W$. Then $R^nj_*\widetilde{\mathbb{R}}$
is the sheaf on $(\mathbb{G}_{\bar{U}},\mathcal{J}_{ls})$ associated to the presheaf
$$
\fonc{P^nj_*\widetilde{\mathbb{R}}}{\mathbb{G}_{\bar{U}}}{Ab}{\mathcal{G}_{L,V,u,T}}
{H^n(B_{W_K}/(j^*\mathcal{G}_{L,V,u,T}),\widetilde{\mathbb{R}})}
$$
for any $n\geq1$. Recall that one has
$$j^*\mathcal{G}_{L,V,u,T}= W_{L/K}/(N,\widetilde{W}^1_{K_u})\times T$$
where $N$ is the closure of the normal subgroup of $W_{L/K}$ generated by the images of the maps $W^1_{K_v}\rightarrow W_{L/K}$
where $v$ runs over the closed points of $V\subset\bar{U}$ (see section \ref{subsect-site}). One can write $W_{L/K}/(N,\widetilde{W}^1_{K_u})=W_K/\Lambda$
where $\Lambda$ is a compact subgroup of $W_K$. The map $W_K\rightarrow W_K/\Lambda$ has local sections as it follows from Theorem \ref{thm-loc-sections} and from the fact that $W_K/\Lambda=W_{L/K}/(N,\widetilde{W}^1_{K_u})$ is finite dimensional. We obtain $yW_K/y\Lambda=y(W_K/\Lambda)$, and the following identifications:
\begin{eqnarray*}
B_{W_K}/(j^*\mathcal{G}_{L,V,u,T})&=&B_{W_K}/y(W_{L/K}/\Lambda\times T)\\
&=&B_{W_K}/(yW_{L/K}/y\Lambda\times yT)\\
&=&(B_{W_K}/(yW_{L/K}/y\Lambda))/ yT\\
&=&B_{\Lambda}/T
\end{eqnarray*}
Therefore, for any $n\geq1$, one has
$$P^nj_*\widetilde{\mathbb{R}}(\mathcal{G}_{L,V,u,T})=H^n(B_{\Lambda}/T,\widetilde{\mathbb{R}}).$$
Consider the pull-back square
\[ \xymatrix{
\mathcal{T}/T\ar[r]^a\ar[d]^b&\mathcal{T}\ar[d]^d\\
B_{\Lambda}/T\ar[r]^c&B_{\Lambda}
}\]
This pull-back square is obtained by localization since $B_{\Lambda}/E\Lambda=\mathcal{T}$ and $(B_{\Lambda}/T)/(E\Lambda\times T)=\mathcal{T}/T$. One checks immediately that such a pull-back satisfies the Beck-Chevalley condition $d^*c_*\simeq a_*b^*$ (this is a special case of the Beck-Chevalley condition for locally connected morphisms). But $b^*$ is a localization functor, hence it preserves injective abelian objects. We obtain
\begin{equation}\label{une-BC-cond}
d^*R^m(c_*)\simeq R^m(a_*)b^*.
\end{equation}
The sheaf $R^m(a_*)(\widetilde{\mathbb{R}})$ is the sheaf associated to the presheaf
$$
\fonc{P^m(a_*)(\widetilde{\mathbb{R}})}{Top}{Ab}{T'}{H^m(\mathcal{T}/(T\times T'),\widetilde{\mathbb{R}})=H^m(Sh(T\times T'),C^0(\mathbb{R}))}
$$
where $C^0(\mathbb{R})$ denotes the sheaf of germs of continuous real valued functions on the locally compact space $T\times T'$, and $Sh(T\times T')$ is the topos of sheaves (i.e. of \'etal\'e spaces) on $T\times T'$. The isomorphism $H^m(\mathcal{T}/(T\times T'),\widetilde{\mathbb{R}})=H^m(Sh(T\times T'),C^0(\mathbb{R}))$ follows from the fact that the big topos $\mathcal{T}/(T\times T')$ of the space $T\times T'$ is cohomologically equivalent to $Sh(T\times T')$ (see \cite{SGA4} IV 4.10). But $T\times T'$ is locally compact hence paracompact, so that the sheaf $C^0(\mathbb{R})$ is "fin" on $T\times T'$ hence acyclic for the global sections functor. We obtain $P^m(a_*)(\widetilde{\mathbb{R}})=0$ for any $m\geq 1$, so $R^m(a_*)(\widetilde{\mathbb{R}})=0$ for any $m\geq 1$. Then it follows from (\ref{une-BC-cond}) that $R^m(c_*)(\widetilde{\mathbb{R}})=0$ for any $m\geq 1$, since $d^*$ is faithful. Moreover, by (\ref{une-BC-cond}) with $m=0$, the sheaf $c_*(\widetilde{\mathbb{R}})$ can be identified with $a_*(\widetilde{\mathbb{R}})$ with trivial $y\Lambda$-action, which is in turn represented by the space $\underline{Hom}_{Top}(T,\mathbb{R})$ on which $\Lambda$ acts trivially. Hence the Leray spectal sequence
$$H^n(B_{\Lambda},R^m(c_*)(\widetilde{\mathbb{R}}))\Rightarrow H^{n+m}(B_{\Lambda}/T,\widetilde{\mathbb{R}})$$
degenerates and yields
$$H^n(B_{\Lambda}/T,\widetilde{\mathbb{R}})\simeq H^n(B_{\Lambda},\underline{Hom}_{Top}(T,\mathbb{R}))$$
By (\cite{MatFlach} Corollary 8), we have $H^n(B_{\Lambda},\underline{Hom}_{Top}(T,\mathbb{R}))=0$ for any $n\geq1$, since $\Lambda$ is compact and $\underline{Hom}_{Top}(T,\mathbb{R})$ is a locally convex, Hausdorff and quasi-complete real vector space. We have shown the following:
$$P^nj_*\widetilde{\mathbb{R}}(\mathcal{G}_{L,V,u,T})=H^n(B_{\Lambda}/T,\widetilde{\mathbb{R}})=0$$ for any $n\geq1$ and any object $\mathcal{G}_{L,V,u,T}$ of $\mathbb{G}_{\bar{U}}$. Hence $R^nj_*\widetilde{\mathbb{R}}=0$ for any $n\geq1$.
\end{proof}
\begin{prop}\label{prop-R-cohomology}
We have $H^n(\bar{U}_W,\widetilde{\mathbb{R}})=\mathbb{R}$ for $n=0,1$ and $H^n(\bar{U}_W,\widetilde{\mathbb{R}})=0$ for $n\geq2$.
\end{prop}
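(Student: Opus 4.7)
The strategy is a two-stage reduction, first from $\bar{U}_W$ to the classifying topos $B_{W_K}$ (where $K$ is the function field of $\bar{U}$) using the preceding lemma, then from $B_{W_K}$ to $B_{\mathbb{R}}$ by stripping off a compact factor of the Weil group, and finally a direct computation of $H^*(B_{\mathbb{R}}, \widetilde{\mathbb{R}})$. For the first stage, the preceding lemma gives that for $j = j_{\bar{U}} : B_{W_K} \to \bar{U}_W$ we have $j_*\widetilde{\mathbb{R}} = \widetilde{\mathbb{R}}$ and $R^q j_*\widetilde{\mathbb{R}} = 0$ for $q \geq 1$, whence the Leray spectral sequence collapses to give
$$H^n(\bar{U}_W, \widetilde{\mathbb{R}}) \;\simeq\; H^n(B_{W_K}, \widetilde{\mathbb{R}})$$
for every $n \geq 0$.

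For the second stage, I would use the product decomposition $W_{L/K} \simeq W^1_{L/K} \times \mathbb{R}$ with $W^1_{L/K}$ compact (Proposition \ref{prop-prop8}, Lemma \ref{lem-C_L-finite-dim}), passing to the limit to obtain a morphism $q : B_{W_K} \to B_{\mathbb{R}}$ induced by the absolute value $W_K \to \mathbb{R}$ with compact ``fiber'' $W_K^1 := \underleftarrow{lim}\,W^1_{L/K}$. The associated Leray spectral sequence
$$H^p\bigl(B_{\mathbb{R}}, R^q q_*\widetilde{\mathbb{R}}\bigr) \;\Longrightarrow\; H^{p+q}\bigl(B_{W_K}, \widetilde{\mathbb{R}}\bigr)$$
should again collapse: by the same Beck--Chevalley / sheafification argument used in the preceding lemma (pulling $q$ back along a generating family of $B_{\mathbb{R}}$ consisting of $\mathbb{R}$-spaces $\mathbb{R}/\Lambda \times T$ with $\Lambda$ compact in $\mathbb{R}$), one reduces the stalks of $R^q q_*\widetilde{\mathbb{R}}$ to cohomology groups of a classifying topos $B_{\widetilde{\Lambda}}/T$ with $\widetilde{\Lambda}$ a compact subgroup of $W_K$ obtained from the splitting, which vanish in degrees $\geq 1$ by \cite{MatFlach} Corollary 8. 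This yields $H^n(B_{W_K}, \widetilde{\mathbb{R}}) \simeq H^n(B_{\mathbb{R}}, \widetilde{\mathbb{R}})$.

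The direct computation of $H^n(B_{\mathbb{R}}, \widetilde{\mathbb{R}})$ then gives the required answer. Clearly $H^0 = \mathbb{R}$, and $H^1(B_{\mathbb{R}}, \widetilde{\mathbb{R}}) = \mathrm{Hom}_{\mathrm{cont}}(\mathbb{R}, \mathbb{R}) = \mathbb{R}$ by the torsor classification. For $n \geq 2$, use the cover $E\mathbb{R} \twoheadrightarrow *$ of the final object of $B_{\mathbb{R}}$ by the right-regular $\mathbb{R}$-space. Its \v{C}ech nerve identifies $B_{\mathbb{R}}/(E\mathbb{R})^{\times_*^{q+1}} \simeq \mathcal{T}/\mathbb{R}^q$, and since $\mathbb{R}^q$ is paracompact the restriction of $\widetilde{\mathbb{R}}$ (which is represented by $C^0(-, \mathbb{R})$) is soft, hence acyclic. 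The \v{C}ech spectral sequence therefore collapses to the standard continuous cochain complex $\mathbb{R} \to C^0(\mathbb{R}, \mathbb{R}) \to C^0(\mathbb{R}^2, \mathbb{R}) \to \cdots$ computing the continuous group cohomology $H^*_c(\mathbb{R}, \mathbb{R})$; the latter vanishes in degrees $\geq 2$ by van Est, since $\mathbb{R}$ is a contractible Lie group with trivial maximal compact subgroup, so $H^*_c(\mathbb{R}, \mathbb{R}) = \Lambda^*\mathbb{R}^*$ is concentrated in degrees $0$ and $1$.

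The main obstacle is the Beck--Chevalley degeneration in the second stage, which is delicate because $W_K$ is only a projective limit of locally compact groups, not itself locally compact. One may need to argue at finite level, establishing the analogous vanishing for each $W_{L/K}$ (where $W_{L/K} \simeq W^1_{L/K} \times \mathbb{R}$ is a genuine product of locally compact groups with compact factor) and then passing to the limit of sites as in the construction of the classifying topos of a strict topological pro-group (Definition \ref{def-class-strict-top-progrp}).
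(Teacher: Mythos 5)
Your proposal takes essentially the same route as the paper: the Leray reduction to $H^n(B_{W_K},\widetilde{\mathbb{R}})$ via the preceding lemma is identical, and the second stage (the splitting $W_K\simeq W_K^1\times\mathbb{R}$ with $W_K^1$ compact, reducing along $q:B_{W_K}\to B_{\mathbb{R}}$ and then computing $H^*(B_{\mathbb{R}},\widetilde{\mathbb{R}})$) is precisely the content the paper delegates to its citation of \cite{MatFlach}, so you have in effect reproved that reference rather than taken a different road. One correction to your closing remark: $W_K$ \emph{is} locally compact, because $W_K^1=\underleftarrow{lim}\,W^1_{L/K}$ is a projective limit of compact Hausdorff groups (hence compact) and $W_K\simeq W_K^1\times\mathbb{R}$; the paper already treats $B_{W_K}$ as an honest classifying topos of a locally compact group throughout, so the ``obstacle'' you flag is not one and the fall-back of arguing at finite level and passing to the limit of sites is unnecessary.
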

\begin{proof}
We use the spectral sequence associated with the morphism $j:B_{W_K}\rightarrow \bar{U}_W$ and obtain
$H^n(\bar{U}_W,\widetilde{\mathbb{R}})\simeq H^n(B_{W_K},\widetilde{\mathbb{R}})$, thanks to the previous Lemma. The latter group can be computed using the product decomposition $W_K=W^1_K\times\mathbb{R}$ and the fact that $W^1_K$ is compact (see \cite{MatFlach}).
\end{proof}

\section{Consequences of the main result}\label{sect-consequences}

\subsection{Direct consequences} In this section, $\bar{U}$ denotes a connected étale
$\bar{X}$-scheme with function field $K$. We consider the classifying topos of the topological pro-group
$\underline{W}(\bar{U},q_{\bar{U}})$, which is defined as the
projective limit:
$$B_{\underline{W}(\bar{U},q_{\bar{U}})}:=\underleftarrow{lim}\,B_{{W}(\bar{U},L)}$$
Recall from section \ref{subsect-loc-cstant} the definition of the category $SLC_{\mathcal{T}}(\bar{U}_W)$
of sums of locally constant objects over
$\mathcal{T}$. The following result, which is an immediate
consequence -in fact a rewriting- of the previous theorem, gives an
explicit description of the category of sums of locally constant
objects.
\begin{cor}
There is an equivalence defined over $\mathcal{T}$ and compatible with the point $p_{\bar{U}}$:
$$SLC_{\mathcal{T}}(\bar{U}_W)\simeq B_{\underline{W}(\bar{U},q_{\bar{U}})}$$
This equivalence is canonically induced by Data \ref{choices-U}.
\end{cor}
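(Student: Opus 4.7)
The plan is to assemble the corollary by stitching together two ingredients already in place: the general topos-theoretic classification of sums of locally constant objects by the fundamental pro-group (recalled at the end of Section \ref{subsect-loc-cstant}), and the explicit computation of that pro-group supplied by Theorem \ref{thm-big-WEfundgrp}.

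First, I would recall that since $t:\bar{U}_W\rightarrow\mathcal{T}$ is connected and locally connected (Theorem \ref{thm-big-WEfundgrp}(i)) and the geometric point $q_{\bar{U}}$ gives a $\mathcal{T}$-valued point $p_{\bar{U}}$ of $\bar{U}_W$ (Theorem \ref{thm-big-WEfundgrp}(ii)), the general theory reviewed in Section \ref{subsect-loc-cstant} provides an equivalence
\[
SLC_{\mathcal{T}}(\bar{U}_W)\;\simeq\;B_{\pi_1(\bar{U}_W,p_{\bar{U}})},
\]
defined over $\mathcal{T}$ and compatible with $p_{\bar{U}}$ in the sense that the inverse image of $\widetilde{p}_{\bar{U}}$ is identified with the forgetful functor from the right-hand side to $\mathcal{T}$.

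Next, I would invoke Theorem \ref{thm-big-WEfundgrp}(iii), which gives an isomorphism of strict topological pro-groups $\pi_1(\bar{U}_W,p_{\bar{U}})\simeq\underline{W}(\bar{U},q_{\bar{U}})$, canonically determined by the choices in Data \ref{choices-U}. Since the classifying topos of a strict locally compact topological pro-group has been defined (Definition \ref{def-class-strict-top-progrp}) as the projective limit $\underleftarrow{lim}_L B_{W(\bar{U},L)}$ in the $2$-category of topoi, any isomorphism of pro-groups induces an equivalence between the associated classifying topoi, and this equivalence is naturally a morphism over $\mathcal{T}$ (each $B_{W(\bar{U},L)}$ lives over $\mathcal{T}$ via the unique morphism of groups $W(\bar{U},L)\rightarrow 1$, and the transition morphisms respect this). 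Composing this with the previous equivalence yields
\[
SLC_{\mathcal{T}}(\bar{U}_W)\;\simeq\;B_{\pi_1(\bar{U}_W,p_{\bar{U}})}\;\simeq\;B_{\underline{W}(\bar{U},q_{\bar{U}})},
\]
over $\mathcal{T}$.

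Finally, I would verify compatibility with the point $p_{\bar{U}}$. This is a matter of tracing through the constructions: the canonical $\mathcal{T}$-point of $B_{\underline{W}(\bar{U},q_{\bar{U}})}$ comes from the coherent family of points $\mathcal{T}\rightarrow B_{W(\bar{U},L)}$ produced by the structure maps $u:B_{W(\bar{U},L)}\rightarrow\mathcal{T}$ (these have a canonical section as recalled before Proposition \ref{prop-point-XL-and-j}), and by construction this is precisely the point induced from $p_{\bar{U}}$ under the equivalence via the projective system of universal torsors $\textsc{Tors}(\bar{U},L)$ exhibited in Step 1 of the proof of Theorem \ref{thm-big-WEfundgrp}. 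The canonicity of the whole equivalence in terms of Data \ref{choices-U} then follows from the canonicity of each step, since the pro-torsor (\ref{pro-torsor}) and the identification of $\pi_1$ with $\underline{W}(\bar{U},q_{\bar{U}})$ are both explicitly produced from this data. There is no genuine obstacle here—the work was done in Theorem \ref{thm-big-WEfundgrp}—so the only task is to make the composition of equivalences and the verification of the point compatibility precise.
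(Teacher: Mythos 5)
Your proposal is correct and follows the same route as the paper, which explicitly states that the corollary is ``an immediate consequence---in fact a rewriting---of the previous theorem.'' Like you, the paper combines the general equivalence $SLC_{\mathcal{T}}(\bar{U}_W)\simeq B_{\pi_1(\bar{U}_W,p_{\bar{U}})}$ recalled at the end of Section \ref{subsect-loc-cstant} with the identification $\pi_1(\bar{U}_W,p_{\bar{U}})\simeq\underline{W}(\bar{U},q_{\bar{U}})$ of Theorem \ref{thm-big-WEfundgrp}(iii), the compatibility with the point coming from the universal pro-torsor exhibited in that proof.
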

\begin{cor} The fundamental group
$\pi_1(\bar{U}_W,p_{\bar{U}})$ is pro-representable by a locally compact strict pro-group indexed over a filtered poset.
\end{cor}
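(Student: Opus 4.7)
The plan is essentially to unpack the previous theorem and the definition of $\underline{W}(\bar{U},q_{\bar{U}})$, so that the corollary reduces to verifying three properties of the pro-group already constructed in Section \ref{subsect-Weil-group}: (a) it is indexed by a filtered poset, (b) each term is locally compact, (c) the transition maps admit local sections.

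First, I would invoke Theorem \ref{thm-big-WEfundgrp}(iii), which gives a canonical isomorphism of topological pro-groups
$$\pi_1(\bar{U}_W,p_{\bar{U}})\simeq \underline{W}(\bar{U},q_{\bar{U}}).$$
By Definition \ref{def-pro-top-grp}, the right-hand side is the diagram $\{W(\bar{U},L)\}$ indexed over finite Galois subextensions $\bar{F}/L/K$ of the algebraic closure specified in Data \ref{choices-U}. This indexing set is a filtered poset: it is partially ordered by inclusion, and for any two finite Galois extensions $L_1,L_2$ their compositum $L_1L_2$ is again finite Galois and contains both, so the poset is filtered.

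Next I would check local compactness of each $W(\bar{U},L)$. By Definition \ref{def-W_LU}, $W(\bar{U},L)=W_{L/K}/N_{\bar{U},L}$ is the quotient of the relative Weil group $W_{L/K}$ (which is locally compact by class field theory) by a closed normal subgroup, hence is locally compact Hausdorff. Combined with Lemma \ref{lem-C_L-finite-dim}, $W_{L/K}$ is finite dimensional, so $W(\bar{U},L)$ is finite dimensional as well.

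Finally I would verify strictness. For $L'\supseteq L$, the transition map $W(\bar{U},L')\to W(\bar{U},L)$ is the quotient of a locally compact finite dimensional group by a closed subgroup (since $N_{\bar{U},L'}$ surjects onto $N_{\bar{U},L}$), with finite dimensional target. Mostert's theorem (Theorem \ref{thm-loc-sections}, reformulated in Corollary \ref{cor-local-sections}) then furnishes local sections. This is exactly the remark already made following Definition \ref{def-pro-top-grp}, so no new ingredient is needed; the "main obstacle", if any, is simply to gather these observations and note that the identification with $\pi_1(\bar{U}_W,p_{\bar{U}})$ transports them across the isomorphism of topological pro-groups.
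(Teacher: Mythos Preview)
Your proposal is correct and follows essentially the same approach as the paper, which states this corollary without proof as an immediate consequence of Theorem \ref{thm-big-WEfundgrp}(iii) together with the properties of $\underline{W}(\bar{U},q_{\bar{U}})$ already recorded in and after Definition \ref{def-pro-top-grp}. Your explicit unpacking of the filtered-poset structure, local compactness, and strictness (via Lemma \ref{lem-C_L-finite-dim} and Theorem \ref{thm-loc-sections}) is precisely what the paper leaves implicit.
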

If $\mathcal{G}$ is a group object of
$\mathcal{T}$, then we consider the internal Hom group object
$$\mathcal{G}^D:=\underline{Hom}_{\mathcal{T}}(\mathcal{G},y\mathbb{S}^1).$$
For a locally compact topological group $G$, one can show that
$$(yG)^{DD}\simeq y(G^{ab})$$
is represented by the maximal Hausdorff abelian quotient $G^{ab}$ of $G$ (see \cite{Fund-group-I}). Let $\underline{\mathcal{G}}$ be a pro-group object of
$\mathcal{T}$ given by a covariant functor $\underline{\mathcal{G}}:I\rightarrow Gr(\mathcal{T})$, where $Gr(\mathcal{T})$ denotes the category of groups in $\mathcal{T}$, and $I$ is a small filtered category. We consider the  pro-abelian group object $\underline{\mathcal{G}}^{DD}$ of
$\mathcal{T}$ defined as the composite functor $$(-)^{DD}\circ\underline{\mathcal{G}}:I\longrightarrow Gr(\mathcal{T})\longrightarrow Ab(\mathcal{T}).$$
Recall from Definition \ref{def-CU} the definition of the abelian topological group $C_{\bar{U}}$.
\begin{cor}\label{cor-pi1-WE-ab=CU}
The pro-group object $\pi_1(\bar{U}_W,p_{\bar{U}})^{DD}$ of
$\mathcal{T}$ is essentially constant, hence can be identified with an actual topological group.
Then we have a canonical isomorphism of topological groups
$$r_{\bar{U}}:\,C_{\bar{U}}\simeq \pi_1(\bar{U}_W,p_{\bar{U}})^{DD}.$$
\end{cor}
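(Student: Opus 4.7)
The plan is to deduce the statement directly from Theorem \ref{thm-big-WEfundgrp} together with the corollary preceding it, which identifies the abelianization of the Weil-type pro-group with $C_{\bar{U}}$. By Theorem \ref{thm-big-WEfundgrp}(iii), there is a canonical isomorphism of topological pro-groups
$$\pi_1(\bar{U}_W,p_{\bar{U}})\simeq\underline{W}(\bar{U},q_{\bar{U}}):=\{W(\bar{U},L)\}_{L}$$
indexed over the filtered poset of finite Galois sub-extensions $\bar{F}/L/K$. Applying $(-)^{DD}$ componentwise turns this into a pro-abelian group object of $\mathcal{T}$
$$\pi_1(\bar{U}_W,p_{\bar{U}})^{DD}\simeq\{y(W(\bar{U},L))^{DD}\}_{L}.$$

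The first step is to compute each component. Each $W(\bar{U},L)$ is a locally compact topological group (the quotient of the finite dimensional locally compact group $W_{L/K}$ by the compact normal subgroup $N_{\bar{U},L}$), so by the general fact recalled just before the statement (and established in \cite{Fund-group-I}), one has a canonical isomorphism of representable group objects in $\mathcal{T}$
$$y(W(\bar{U},L))^{DD}\simeq y(W(\bar{U},L)^{ab}).$$
Invoking the lemma identifying $W(\bar{U},L)^{ab}$ with $C_{\bar{U}}$, each term of the pro-object becomes canonically isomorphic to $y C_{\bar{U}}$.

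The second step is to check that the transition maps in $\pi_1(\bar{U}_W,p_{\bar{U}})^{DD}$ become identities under these identifications. This is where one uses the corollary that $\underline{W}(\bar{U},q_{\bar{U}})^{ab}$ is already constant as a topological pro-group: for $L'/L/K$ Galois finite, the induced map $W(\bar{U},L')^{ab}\to W(\bar{U},L)^{ab}$ is the identity on $C_{\bar{U}}$, hence so is the map $y(W(\bar{U},L'))^{DD}\to y(W(\bar{U},L))^{DD}$ after the identifications above (functoriality of $(-)^{DD}$ sends this identity to the identity). Consequently the pro-object $\pi_1(\bar{U}_W,p_{\bar{U}})^{DD}$ is isomorphic in $Pro(Ab(\mathcal{T}))$ to the constant pro-object with value $yC_{\bar{U}}$, which is essentially constant and may be identified with the topological group $C_{\bar{U}}$ itself. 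The resulting isomorphism $r_{\bar{U}}\colon C_{\bar{U}}\simeq\pi_1(\bar{U}_W,p_{\bar{U}})^{DD}$ is canonical since every identification used (Theorem \ref{thm-big-WEfundgrp}(iii), the biduality formula $(yG)^{DD}\simeq y(G^{ab})$, and the identification $W(\bar{U},L)^{ab}\simeq C_{\bar{U}}$) is.

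The only real content, beyond bookkeeping, is the formula $(yG)^{DD}\simeq y(G^{ab})$ applied to the non-abelian locally compact groups $W(\bar{U},L)$; this is the one step where the paper refers back to \cite{Fund-group-I} rather than arguing on the spot. Given that input, the remainder is a straightforward passage to the (essentially constant) limit.
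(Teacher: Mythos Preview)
Your proposal is correct and follows essentially the same route as the paper: identify $\pi_1(\bar{U}_W,p_{\bar{U}})$ with $\underline{W}(\bar{U},q_{\bar{U}})$ via Theorem \ref{thm-big-WEfundgrp}(iii), use the biduality formula $(yG)^{DD}\simeq y(G^{ab})$ from \cite{Fund-group-I}, and invoke the lemma $W(\bar{U},L)^{ab}\simeq C_{\bar{U}}$ from Section \ref{subsect-Weil-group}. Your explicit verification that the transition maps become identities is a welcome elaboration of what the paper leaves implicit in its corollary that $\underline{W}(\bar{U},q_{\bar{U}})^{ab}$ is constant.
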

\begin{proof}
The pro-group object $\pi_1(\bar{U}_W,p_{\bar{U}})^{DD}$ is the projective system of abelian objects given by the groups $(y{W}(\bar{U},L))^{DD}$ for $\overline{K}/L/K$ finite and Galois. But one has $$(y{W}(\bar{U},L))^{DD}=y({W}(\bar{U},L)^{ab})=yC_{\bar{U}}.$$
for any $\overline{K}/L/K$. The second equality has been proved in section \ref{subsect-Weil-group}.
\end{proof}

We simply denote by $t:\bar{U}_W\rightarrow\mathcal{T}$ the canonical map. Since the Weil-étale topos $\bar{U}_W$ is defined over the base topos $\mathcal{T}$, the cohomology groups of $\bar{U}_W$ have a topological structure. To make this precise, we introduce the following notion.
\begin{defn}
The \emph{$\mathcal{T}$-cohomology of $\bar{U}_W$ with coefficients in $\mathcal{A}$} is defined as
$$H^n_{\mathcal{T}}(\bar{U}_W,\mathcal{A}):=R^n(t_*)(\mathcal{A})$$
\end{defn}

\begin{cor}\label{corH^0}
For any abelian object $\mathcal{A}$ of $\mathcal{T}$, one has
$$H^0_{\mathcal{T}}(\bar{U}_W,t^*\mathcal{A})=\mathcal{A}\mbox{ and }H^0(\bar{U}_W,t^*\mathcal{A})=\mathcal{A}(*)$$
where $\mathcal{A}(*)$ denotes the group of global sections of the
abelian object $\mathcal{A}$ of $\mathcal{T}$.
\end{cor}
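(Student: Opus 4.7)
The plan is to derive both identities directly from the fact, already established in Theorem \ref{thm-big-WEfundgrp}(i), that the structure map $t : \bar{U}_W \to \mathcal{T}$ is connected and locally connected. Recall that, by definition, $H^0_{\mathcal{T}}(\bar{U}_W,-) = t_*$, so the first identity amounts to showing that the unit of adjunction
$$\mathcal{A} \longrightarrow t_* t^* \mathcal{A}$$
is an isomorphism for every abelian object $\mathcal{A}$ of $\mathcal{T}$.

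First, I would record the following general fact, which is the whole content of the statement: a morphism of topoi $t : \mathcal{E} \to \mathcal{S}$ is connected if and only if $t^*$ is fully faithful, if and only if the unit $\mathrm{id} \to t_* t^*$ is an isomorphism (this is used implicitly already in the proof of Proposition \ref{prop-nice} to conclude full faithfulness of $t^*$ from $t \circ j \simeq u$). Applying this to the connected morphism $t : \bar{U}_W \to \mathcal{T}$ of Theorem \ref{thm-big-WEfundgrp}(i), we get $t_* t^* \mathcal{A} \simeq \mathcal{A}$ for any object (in particular any abelian object) $\mathcal{A}$ of $\mathcal{T}$, which is the first claimed identity
$$H^0_{\mathcal{T}}(\bar{U}_W, t^*\mathcal{A}) = t_* t^* \mathcal{A} \simeq \mathcal{A}.$$

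For the second identity, I would factor the global sections functor through $t_*$. Let $e_{\mathcal{T}} : \mathcal{T} \to \underline{Sets}$ denote the canonical morphism; then $H^0(\bar{U}_W, \mathcal{F}) = (e_{\mathcal{T}} \circ t)_*(\mathcal{F}) = e_{\mathcal{T},*} \, t_* \mathcal{F}$ for any abelian sheaf $\mathcal{F}$. Taking $\mathcal{F} = t^* \mathcal{A}$ and using the first identity gives
$$H^0(\bar{U}_W, t^*\mathcal{A}) = e_{\mathcal{T},*} (t_* t^*\mathcal{A}) \simeq e_{\mathcal{T},*}(\mathcal{A}) = \mathcal{A}(*),$$
where the last equality is just the definition of global sections of an object of $\mathcal{T}$.

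There is no real obstacle here: both identities are formal consequences of the connectedness of $t$ combined with the description of $H^0_{\mathcal{T}}$ as $t_*$ and of $H^0$ as the composition with the terminal geometric morphism. The only point worth stating carefully is the equivalence \emph{$t$ connected $\Leftrightarrow$ unit $\mathrm{id} \to t_* t^*$ is iso}, which is standard topos theory (see \cite{SGA4} or \cite{elephant} C3.3) and has already been invoked in Proposition \ref{prop-nice}.
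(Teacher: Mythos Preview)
Your proof is correct and follows essentially the same approach as the paper's own proof: use connectedness of $t$ (from Theorem \ref{thm-big-WEfundgrp}(i)) to conclude $t_*t^*\mathcal{A}\simeq\mathcal{A}$, then compose with $e_{\mathcal{T}}:\mathcal{T}\rightarrow\underline{Sets}$ to get the second identity. The paper's argument is more terse but identical in content.
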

\begin{proof}
We have
$$H^0_{\mathcal{T}}(\bar{U}_W,t^*\mathcal{A}):=t_*t^*\mathcal{A}=\mathcal{A}$$
since $t:\bar{U}_W\rightarrow\mathcal{T}$ is connected, i.e. $t^*$
is fully faithful. Let $e_{\mathcal{T}}$ be the unique map
$e_{\mathcal{T}}:\mathcal{T}\rightarrow\underline{Sets}$. We have
$$H^0(\bar{U}_W,t^*\mathcal{A}):=(e_{\mathcal{T}*}\circ t_*)\,t^*\mathcal{A}=e_{\mathcal{T}*}\mathcal{A}=\mathcal{A}(*).$$
\end{proof}

\begin{cor}\label{corH^1representable}
For any abelian locally compact topological group ${A}$, one has
$$H^1_{\mathcal{T}}(\bar{U}_W,t^*{A})=\underline{Hom}_{Top}(C_{\bar{U}},{A})
\mbox{ and }H^1(\bar{U}_W,t^*{A})={Hom}_{cont}(C_{\bar{U}},{A}).$$
\end{cor}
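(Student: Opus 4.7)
The plan is to reduce the cohomological statement to a classification of torsors, then to transport torsors via the equivalence $SLC_{\mathcal{T}}(\bar{U}_W)\simeq B_{\underline{W}(\bar{U},q_{\bar{U}})}$ established in the preceding corollary, and finally to read off the abelian invariants via Corollary~\ref{cor-pi1-WE-ab=CU}. I will treat the two statements together, the $\mathcal{T}$-valued one first.

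Step 1: interpret $R^{1}t_{*}(t^{*}A)$. Since $A$ is a locally compact abelian group, $t^{*}A$ is an abelian group object of $\bar{U}_{W}$, and $R^{1}t_{*}(t^{*}A)$ is the sheaf on $\mathcal{T}$ associated to the presheaf $L\mapsto H^{1}(\bar{U}_{W}/t^{*}L,t^{*}A)$, which classifies $t^{*}A$-torsors relative to $\mathcal{T}$. I would first show that every $t^{*}A$-torsor $\mathcal{P}$ in $\bar{U}_{W}$ is locally constant over $\mathcal{T}$: by definition there is a cover $\mathcal{F}\twoheadrightarrow e$ together with a $t^{*}A$-equivariant isomorphism $\mathcal{P}\times\mathcal{F}\simeq t^{*}A\times\mathcal{F}$, which is exactly the definition (after section \ref{subsect-loc-cstant}) of a locally constant object. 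Hence $\mathcal{P}$ lies in the essential image of $\sigma^{*}:SLC_{\mathcal{T}}(\bar{U}_{W})\hookrightarrow\bar{U}_{W}$.

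Step 2: transport to the classifying topos. Using the equivalence
$$\Phi:SLC_{\mathcal{T}}(\bar{U}_{W})\simeq B_{\underline{W}(\bar{U},q_{\bar{U}})}$$
given by the preceding corollary and compatible with the point $p_{\bar{U}}$, the category of $t^{*}A$-torsors in $\bar{U}_{W}$ which are locally constant over $\mathcal{T}$ is equivalent to the category of $yA$-torsors in $B_{\underline{W}(\bar{U},q_{\bar{U}})}$, where $yA$ carries the trivial $\underline{W}(\bar{U},q_{\bar{U}})$-action (because $t^{*}A$ corresponds under $\Phi$ to the trivial $\underline{W}(\bar{U},q_{\bar{U}})$-module $yA$). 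Standard classifying-topos yoga identifies such torsors, up to isomorphism, with continuous crossed homomorphisms $\underline{W}(\bar{U},q_{\bar{U}})\to yA$ modulo principal ones; since $A$ is abelian and the action is trivial, these are just continuous homomorphisms of pro-groups.

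Step 3: abelianize. A continuous homomorphism from the (strict) pro-group $\underline{W}(\bar{U},q_{\bar{U}})$ to the abelian group $A$ factors uniquely through the pro-abelian quotient $\underline{W}(\bar{U},q_{\bar{U}})^{DD}$, which by Corollary~\ref{cor-pi1-WE-ab=CU} is canonically isomorphic to the actual topological group $C_{\bar{U}}$. Together with Step~2 this gives a natural isomorphism
$$R^{1}t_{*}(t^{*}A)\simeq\underline{Hom}_{Top}(C_{\bar{U}},A),$$
i.e.\ the first formula; evaluating the above sheaf of $\mathcal{T}$ on the one-point space and applying Corollary~\ref{corH^0} yields the second formula $H^{1}(\bar{U}_{W},t^{*}A)=Hom_{cont}(C_{\bar{U}},A)$.

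The main obstacle I expect is Step~2, specifically making precise the bijection between $t^{*}A$-torsors in $\bar{U}_{W}$ that are locally constant over $\mathcal{T}$ and $yA$-torsors in $B_{\underline{W}(\bar{U},q_{\bar{U}})}$ with trivial pro-group action, and doing so $\mathcal{T}$-indexedly so as to obtain the sheafified statement (not just global $Hom$). This requires the equivalence $\Phi$ to be compatible with the structure morphisms to $\mathcal{T}$, which is ensured by the preceding corollary, plus a careful verification that the image of $t^{*}A$ under $\Phi$ is the object $yA$ with trivial $\underline{W}(\bar{U},q_{\bar{U}})$-action (this can be checked on stalks at $p_{\bar{U}}$, which is the point where $\Phi$ restricts to the forgetful functor). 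Once this compatibility is in hand the rest is formal manipulation of $\mathrm{Hom}$ in $\mathcal{T}$.
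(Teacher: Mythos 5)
Your plan follows the paper's route: both reduce $H^1_{\mathcal{T}}(\bar{U}_W,t^*A)$ to $\underline{Hom}_{\mathcal{T}}(\pi_1(\bar{U}_W,p_{\bar{U}}),yA)$ and then factor the pro-group $\underline{W}(\bar{U},q_{\bar{U}})$ through its Hausdorff abelianization $C_{\bar{U}}$ using Corollary~\ref{cor-pi1-WE-ab=CU}; the paper invokes the $H^1$\,/\,$\pi_1$ identity directly and chains $\underrightarrow{lim}_L\,y\,\underline{Hom}_{Top}(W(\bar{U},L),A)=\underrightarrow{lim}_L\,y\,\underline{Hom}_{Top}(W(\bar{U},L)^{ab},A)=y\,\underline{Hom}_{Top}(C_{\bar{U}},A)$, whereas your Steps~1--2 reprove that identity via torsor classification and the $SLC_{\mathcal{T}}$ equivalence, which is the same idea made explicit. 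One small correction to your final step: Corollary~\ref{corH^0} does not by itself convert $R^1t_*(t^*A)$ into $H^1(\bar{U}_W,t^*A)$; what is actually needed, and what the paper supplies, is the exactness of $e_{\mathcal{T}*}\simeq s^*_{\mathcal{T}}$ (it commutes with inductive limits, being an inverse-image functor), which makes the Leray spectral sequence for $t$ degenerate in degree one and gives $H^1(\bar{U}_W,t^*A)=e_{\mathcal{T}*}\bigl(R^1t_*(t^*A)\bigr)=Hom_{cont}(C_{\bar{U}},A)$.
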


\begin{proof}
Let $A$ be an abelian locally compact group. One has
\begin{eqnarray*}
H^1_{\mathcal{T}}(\bar{U}_W,t^*yA)&=&\underline{Hom}_{\mathcal{T}}(\pi_1(\bar{U}_W,p_{\bar{U}}),yA)\\
&=&\underrightarrow{lim}\,\underline{Hom}_{\mathcal{T}}(yW(L,\bar{U}),yA)\\
&=&\underrightarrow{lim}\,y(\underline{Hom}_{Top}(W(L,\bar{U}),A))\\
&=&\underrightarrow{lim}\,y(\underline{Hom}_{Top}(W(L,\bar{U})^{ab},A))\\
&=&y(\underline{Hom}_{Top}(C_{\bar{U}},A)).
\end{eqnarray*}
Here $\underline{Hom}_{Top}(C_{\bar{U}},A)$ is the group of continuous morphisms from $C_{\bar{U}}$ to $A$, endowed with the compact-open topology.

Consider the unique map
$e_{\mathcal{T}}:\mathcal{T}\rightarrow\underline{Sets}$. This map has a canonical section $s_{\mathcal{T}}$ such that $e_{\mathcal{T}*}\simeq s^*_{\mathcal{T}}$. Hence the direct image functor $e_{\mathcal{T}*}:\mathcal{T}\rightarrow\underline{Sets}$ commutes with arbitrary inductive limits (see \cite{SGA4} IV.4.10). Then the first cohomology group
\begin{eqnarray*}
H^1(\bar{U}_W,t^*A)&=&e_{\mathcal{T}*}H^1_{\mathcal{T}}(\bar{U}_W,t^*A)\\
&=&e_{\mathcal{T}*}y\underline{Hom}_{Top}(C_{\bar{U}},A)\\
&=&{Hom}_{cont}(C_{\bar{U}},A)
\end{eqnarray*}
is the discrete group of continuous morphisms from $C_{\bar{U}}$ to $A$.
\end{proof}

\begin{cor}
There is a \emph{fundamental class}
$$\theta_{\bar{U}}\in
H^1(\bar{U}_W,\widetilde{\mathbb{R}})={Hom}_{cont}(C_{\bar{U}},\mathbb{R})$$ given by the
canonical continuous morphism
$$\theta_{\bar{U}}:C_{\bar{U}}\longrightarrow\mathbb{R}.$$
\end{cor}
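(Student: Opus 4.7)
The plan is to read off the statement directly from Corollary \ref{corH^1representable} and then exhibit the claimed morphism explicitly. Specialising that corollary to the locally compact abelian group $A=\mathbb{R}$ gives the identification
$$H^1(\bar{U}_W,\widetilde{\mathbb{R}})=\mathrm{Hom}_{cont}(C_{\bar{U}},\mathbb{R}),$$
so the only substantive task is to produce a distinguished continuous homomorphism $\theta_{\bar{U}}:C_{\bar{U}}\to\mathbb{R}$.

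First I would recall the adelic absolute value $|\cdot|_{\mathbb{A}}:\mathbb{A}_K^{\times}\to\mathbb{R}_{>0}$, defined as the product of the normalised local absolute values $|\cdot|_v$ over all places $v$ of $K$. This map is continuous, and by the product formula it is trivial on the subgroup $K^{\times}$ of principal id\`eles, hence descends to a continuous morphism $C_K\to\mathbb{R}_{>0}$. Next I would check that it is trivial on the subgroup $\prod_{v\in\bar{U}}\mathcal{O}_{K_v}^{\times}$ which, by definition, is the kernel of $\prod_{v\in\bar{U}}K_v^{\times}\to\prod_{v\in\bar{U}}\mathbb{R}_{>0}$; since the normalised absolute value at a place $v\in\bar{U}$ factors through this latter quotient (it does so trivially for archimedean places by definition, and for ultrametric places because units have valuation $0$), the map $C_K\to\mathbb{R}_{>0}$ factors through the quotient $C_{K,S}=C_{\bar{U}}$ displayed in (\ref{exact-sequence-S-idele}). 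Composing this continuous factorisation with $\log:\mathbb{R}_{>0}\to\mathbb{R}$ yields the desired continuous morphism $\theta_{\bar{U}}:C_{\bar{U}}\to\mathbb{R}$, which under the identification above represents a canonical class in $H^1(\bar{U}_W,\widetilde{\mathbb{R}})$.

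There is no genuine obstacle here: the content of the corollary is essentially that the cohomological computation of Corollary \ref{corH^1representable} promotes the classical absolute-value character of the $S$-id\`ele class group to a distinguished Weil-\'etale cohomology class with coefficients in $\widetilde{\mathbb{R}}$. The only thing to be mildly careful about is the sign/normalisation convention for $|\cdot|_{\mathbb{A}}$, but any normalisation yields a canonical class up to a positive real scalar, which is harmless for the present statement.
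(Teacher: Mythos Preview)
Your proposal is correct and matches the paper's approach: the paper in fact gives no proof for this corollary, treating the identification $H^1(\bar{U}_W,\widetilde{\mathbb{R}})=\mathrm{Hom}_{cont}(C_{\bar{U}},\mathbb{R})$ from Corollary~\ref{corH^1representable} together with the existence of the canonical absolute-value character as immediate. Your explicit construction of $\theta_{\bar{U}}$ via the adelic absolute value composed with $\log$ is exactly the map the paper has in mind, as confirmed by the remark following the corollary, which identifies $\theta_{\overline{Spec\,\mathbb{Z}}}$ with the logarithm morphism on $\mathbb{R}^{\times}_+$ and describes $\theta_{\bar{U}}$ as its pullback.
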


\begin{rem}
Recall that $C_{\overline{Spec\,\mathbb{Z}}}=Pic(\overline{Spec\,\mathbb{Z}})=\mathbb{R}^{\times}_+$.
For any $\bar{U}$, the fundamental class $\theta_{\bar{U}}$ is the
pull back of the logarithm morphism:
$$\theta_{\overline{Spec\,\mathbb{Z}}}:=\mbox{\emph{log}}\in
H^1(\overline{Spec\,\mathbb{Z}}_W,\widetilde{\mathbb{R}})=Hom_{cont}(\mathbb{R}^{\times}_+,\mathbb{R})
$$
along the map $\bar{U}\rightarrow\overline{Spec\,\mathbb{Z}}$.
\end{rem}

The maximal compact subgroup of $C_{\bar{U}}$, i.e. the kernel of the absolute value map $C_{\bar{U}}\rightarrow\mathbb{R}^{\times}_+$, is denoted by $C^1_{\bar{U}}$. The Pontraygin dual $(C^1_{\bar{U}})^D$ is a discrete abelian group.
\begin{prop}
For any connected \'etale $\bar{X}$-scheme $\bar{U}$, we have canonically
\begin{eqnarray*}
H^n(\bar{U}_{W},\mathbb{Z})
&=&\mathbb{Z}\mbox{ for $n=0$}\\
&=&0 \mbox{ for $n=1$}\\
&=&(C^1_{\bar{U}})^D \mbox{ for $n=2$}.
\end{eqnarray*}
\end{prop}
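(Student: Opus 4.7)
The plan is to derive all three values from the short exact sequence
$$0 \longrightarrow \mathbb{Z} \longrightarrow \widetilde{\mathbb{R}} \longrightarrow \widetilde{\mathbb{S}^1} \longrightarrow 0$$
in $\bar{U}_W$, obtained by applying the exact functor $t^*$ to the analogous sequence in $\mathcal{T}$. The latter is exact because the covering map $\mathbb{R}\to\mathbb{S}^1$ admits local sections, so $y\mathbb{R}\twoheadrightarrow y\mathbb{S}^1$ is an epimorphism in $\mathcal{T}$. The value $H^0(\bar{U}_W,\mathbb{Z})=\mathbb{Z}$ is then immediate from Corollary \ref{corH^0}.

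Next I would feed the sequence into its long exact cohomology sequence. The outer terms are already known: Proposition \ref{prop-R-cohomology} gives $H^0(\widetilde{\mathbb{R}})=H^1(\widetilde{\mathbb{R}})=\mathbb{R}$ and $H^n(\widetilde{\mathbb{R}})=0$ for $n\geq 2$; Corollary \ref{corH^0} gives $H^0(\widetilde{\mathbb{S}^1})=\mathbb{S}^1$; and Corollary \ref{corH^1representable} identifies $H^1(\widetilde{\mathbb{R}})$ with $\mathrm{Hom}_{cont}(C_{\bar{U}},\mathbb{R})=\mathbb{R}\cdot\theta_{\bar{U}}$ (since any continuous homomorphism from $C_{\bar{U}}$ to $\mathbb{R}$ must vanish on the compact subgroup $C^1_{\bar{U}}$ and hence factor through $\theta_{\bar{U}}$), and $H^1(\widetilde{\mathbb{S}^1})$ with $C_{\bar{U}}^D$. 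The relevant portion thus reads
$$0 \to \mathbb{Z} \to \mathbb{R} \to \mathbb{S}^1 \to H^1(\bar{U}_W,\mathbb{Z}) \to \mathbb{R}\cdot\theta_{\bar{U}} \xrightarrow{\,\partial\,} C_{\bar{U}}^D \to H^2(\bar{U}_W,\mathbb{Z}) \to 0,$$
where the final $0$ comes from $H^2(\widetilde{\mathbb{R}})=0$. Since $\mathbb{R}\to\mathbb{S}^1$ is already surjective, the connecting map into $H^1(\mathbb{Z})$ vanishes, so $H^1(\bar{U}_W,\mathbb{Z})=\ker(\partial)$.

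The heart of the argument is the analysis of $\partial$. Functoriality of the cohomology identifications shows that $\partial$ sends $\lambda\theta_{\bar{U}}$ to the composite character $C_{\bar{U}}\xrightarrow{\theta_{\bar{U}}}\mathbb{R}\xrightarrow{\exp(2\pi i\lambda\cdot)}\mathbb{S}^1$. Because $\theta_{\bar{U}}$ is surjective onto $\mathbb{R}$, this composite is nontrivial for every $\lambda\neq 0$, giving $\ker(\partial)=0$ and hence $H^1(\bar{U}_W,\mathbb{Z})=0$. For the cokernel, I would apply Pontryagin duality to the short exact sequence $0\to C^1_{\bar{U}}\to C_{\bar{U}}\xrightarrow{\theta_{\bar{U}}}\mathbb{R}\to 0$ of locally compact abelian groups, which is exact and yields
$$0 \longrightarrow \widehat{\mathbb{R}} \longrightarrow C_{\bar{U}}^D \longrightarrow (C^1_{\bar{U}})^D \longrightarrow 0.$$
Under the standard identification $\widehat{\mathbb{R}}\simeq\mathbb{R}$ via $r\mapsto(x\mapsto e^{2\pi irx})$, the inclusion on the left agrees with $\partial$, so $\mathrm{coker}(\partial)\simeq (C^1_{\bar{U}})^D$ and $H^2(\bar{U}_W,\mathbb{Z})\simeq (C^1_{\bar{U}})^D$.

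The main point to verify carefully is that the map $\partial$ really coincides with the Pontryagin-dual inclusion $\widehat{\mathbb{R}}\hookrightarrow C_{\bar{U}}^D$; this is essentially a matter of tracking normalizations, since both maps send a real number $\lambda$ to the character obtained by pulling the exponential $\mathbb{R}\to\mathbb{S}^1$ back along $\lambda\cdot\theta_{\bar{U}}$. A secondary point, invoked above, is the surjectivity of $\theta_{\bar{U}}$; this follows from the corresponding surjectivity of $C_K\to\mathbb{R}^{\times}_+$ together with the fact that the subgroup $\prod_{w\in\bar{U}}\mathcal{O}^{\times}_{K_w}$ modded out to form $C_{\bar{U}}$ lies in the kernel of the absolute value.
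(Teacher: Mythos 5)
Your proof is correct and follows essentially the same route as the paper: the exponential short exact sequence $0\to\mathbb{Z}\to\widetilde{\mathbb{R}}\to\widetilde{\mathbb{S}}^1\to 0$, the computation of $H^*(\bar{U}_W,\widetilde{\mathbb{R}})$, and the identification $H^1(\bar{U}_W,t^*A)\simeq\mathrm{Hom}_{cont}(C_{\bar{U}},A)$. The one small variation is in degree $1$: the paper reads off $H^1(\bar{U}_W,\mathbb{Z})=\mathrm{Hom}_{cont}(C_{\bar{U}},\mathbb{Z})=0$ directly from Corollary \ref{corH^1representable} with $A=\mathbb{Z}$, while you instead extract it as $\ker\bigl(H^1(\widetilde{\mathbb{R}})\to H^1(\widetilde{\mathbb{S}}^1)\bigr)$ — which is the same group, since that kernel is exactly $\mathrm{Hom}_{cont}(C_{\bar{U}},\mathbb{Z})$ — and your Pontryagin-duality identification of the cokernel with $(C^1_{\bar{U}})^D$ just spells out a step the paper leaves implicit.
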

\begin{proof}
The result for $n=0$ follows from Corollary \ref{corH^0}. By  Corollary \ref{corH^1representable}, we have $$H^1(\bar{U}_{W},\mathbb{Z})=Hom_{c}(C_{\bar{U}},\mathbb{Z})=0.$$
Moreover we have an isomorphism
\begin{eqnarray*}
H^1(\bar{U}_{W},\widetilde{\mathbb{S}}^1)=Hom_{c}(C_{\bar{U}},\mathbb{S}^1)=C_{\bar{U}}^D.
\end{eqnarray*}
The exact sequence of topological groups $0\rightarrow\mathbb{Z}\rightarrow\mathbb{R}\rightarrow\mathbb{S}^1\rightarrow0$
induces an exact sequence
$0\rightarrow\mathbb{Z}\rightarrow \widetilde{\mathbb{R}}\rightarrow \widetilde{\mathbb{S}}^1\rightarrow0$
of abelian sheaves on $\bar{U}_{W}$, where $\widetilde{\mathbb{R}}:=t^*(y\mathbb{R})$ and $\widetilde{\mathbb{S}}^1:=t^*(y\mathbb{S}^1)$. The induced long exact sequence
$$0=H^1(\bar{U}_{W},\mathbb{Z})\rightarrow H^1(\bar{U}_{W},\widetilde{\mathbb{R}})\rightarrow
H^1(\bar{U}_{W},\widetilde{\mathbb{S}}^1)\rightarrow
H^2(\bar{U}_{W},\mathbb{Z})\rightarrow
H^2(\bar{U}_{W},\widetilde{\mathbb{R}})=0$$
is canonically identified with
$$0\rightarrow Hom_{c}(C_{\bar{U}},\mathbb{R})\rightarrow Hom_{c}(C_{\bar{U}},\mathbb{S}^1)\rightarrow
H^2(\bar{U}_{W},\mathbb{Z})\rightarrow 0$$
and we obtain $H^2(\bar{U}_{W},\mathbb{Z})=(C^1_{\bar{U}})^D$.
\end{proof}

\subsection{The Weil-étale topos and the axioms for the conjectural Lichtenbaum topos}
Lichtenbaum conjectured in \cite{Lichtenbaum} the existence of a Grothendieck topology for an arithmetic scheme $X$ such that the Euler characteristic of the cohomology groups of the constant sheaf $\mathbb{Z}$ with compact support at infinity gives, up to sign, the leading term of the zeta-function $\zeta_X(s)$ at $s=0$. We call the category of sheaves on this conjectural site the \emph{conjectural Lichtenbaum topos}, which we denote by  $\bar{X}_L$. In \cite{Fund-group-I} Section 5.2 we gave a list of axioms that should be satisfied by the conjectural topos $\bar{X}_L$, in the case where $X=Spec(\mathcal{O}_F)$. We refer to them as Axioms $(1)-(9)$. We also showed in \cite{Fund-group-I} that any topos satisfying these axioms gives rise to complexes of \'etale sheaves computing the expected Lichtenbaum cohomology. The main motivation for the present work is to provide an example of a topos (the Weil-étale topos) satisfying Axioms $(1)-(9)$. This shows that that Axioms $(1)-(9)$ are consistent, and this gives a natural computation of the base change from the Weil-étale cohomology to the étale cohomology (see Corollary \ref{thm-Lichtenbaum-conj} below). Axioms $(1)-(9)$ are recalled in the proof of Theorem \ref{thm-existence-proof}.

The morphism $\gamma:\bar{U}_W\rightarrow\bar{U}_{et}$ induces a morphism $\varphi_{\bar{U}}$ of fundamental pro-groups. Applying the functor $(-)^{DD}$, we obtain a morphism $\varphi_{\bar{U}}^{DD}$ of abelian fundamental pro-groups.
\begin{cor}\label{cor-WEtopo-CFT}
The composite morphism
$$\varphi_{\bar{U}}^{DD}\circ r_{\bar{U}}:\,C_{\bar{U}}\simeq \pi_1(\bar{U}_W)^{ab}\longrightarrow \pi_1(\bar{U}_{et})^{ab}$$
is the reciprocity law of class field theory.
\end{cor}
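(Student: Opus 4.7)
The plan is to trace the morphism $\varphi_{\bar{U}}$ through the pro-system of classifying topoi that pro-represents each fundamental group, and to verify that after applying $(-)^{DD}$ one recovers exactly the Artin reciprocity map associated to the class formation $(\pi_1(\bar{U}_{et},q_{\bar{U}}),\varinjlim C_{\bar{V}})$.

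First, I would identify $\pi_1(\bar{U}_{et},q_{\bar{U}})$ and the morphism $\varphi_{\bar{U}}$ at the pro-level. By the description recalled just before Definition \ref{def-CU}, $\pi_1(\bar{U}_{et},q_{\bar{U}})=\varprojlim \mathrm{Gal}(\bar{V}/\bar{U})$, where $\bar{V}$ runs over pointed Galois \'etale covers of $(\bar{U},q_{\bar{U}})$; equivalently, for each finite Galois $\bar{F}/L/K$, one obtains a factor $\mathrm{Gal}(L^{un}/K)$ where $L^{un}/K$ denotes the maximal sub-extension of $L/K$ unramified over $\bar{U}$. By Theorem \ref{thm-big-WEfundgrp} and Step 1 of its proof, the composite $\bar{U}_W\to B_{W(\bar{U},L)}\xrightarrow{\gamma_*} B^{sm}_{\mathrm{Gal}(L^{un}/K)}$ factors through $\bar{U}_{et}$, and the induced morphism of pro-groups is the projective system of canonical surjections
\[
\varphi_{\bar{U},L}\colon W(\bar{U},L)\twoheadrightarrow \mathrm{Gal}(L^{un}/K),
\]
where this surjection is the one obtained from the Weil-group extension $1\to C_{L^{un}}^{v}\to W^{v}_{L^{un}/K}\to \mathrm{Gal}(L^{un}/K)\to 1$ after pushing out the compact subgroup generated by the $\widetilde{W}^1_{K_u}$'s. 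The key point here is to check that the topos-theoretic morphism $\gamma$, pulled through the universal pro-torsor constructed in Step 1 of the proof of Theorem \ref{thm-big-WEfundgrp}, coincides with the usual Weil-to-Galois surjection — this follows from the definition of $\textsc{Tors}(\bar{U},L)$ and the compatibility of Data \ref{choices-X} and \ref{choices-U}.

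Next, I would apply the double dual functor level-wise. On the source, Corollary \ref{cor-pi1-WE-ab=CU} (proved in section \ref{subsect-Weil-group}) identifies $W(\bar{U},L)^{DD}=yW(\bar{U},L)^{ab}=yC_{\bar{U}}$, and this identification is canonical and independent of $L$; on the target, one has $\mathrm{Gal}(L^{un}/K)^{DD}=y\,\mathrm{Gal}(L^{un}/K)^{ab}$. Hence the composite $\varphi_{\bar{U}}^{DD}\circ r_{\bar{U}}$ is, at each finite level $L$, the map
\[
C_{\bar{U}}\xrightarrow{\sim} W(\bar{U},L)^{ab}\xrightarrow{\varphi_{\bar{U},L}^{ab}} \mathrm{Gal}(L^{un}/K)^{ab}.
\]
By the very construction of the relative Weil group $W^{v}_{L^{un}/K}$ from the fundamental class in $H^2(\mathrm{Gal}(L^{un}/K),C_{L^{un}}^{v})$ (Tate's theorem; cf.\ \cite{Tate} and \cite{Neukirch}), and by the functoriality of the norm residue symbol for sub-$S$-idèle quotients, this composite is precisely the Artin reciprocity map of the class formation $(\mathrm{Gal}(L^{un}/K),C_{L^{un}}^v)$ associated to $\bar{U}$ at the level of $L$.

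Finally, I would pass to the limit. Since the topological pro-group $\underline{W}(\bar{U},q_{\bar{U}})^{ab}$ is essentially constant equal to $C_{\bar{U}}$ (Corollary to section \ref{subsect-Weil-group}), and since taking abelianizations is compatible with the transition maps by Corollary \ref{cor-exple-of-loc-sections}, the pro-finite level-wise maps $C_{\bar{U}}\to \mathrm{Gal}(L^{un}/K)^{ab}$ assemble into the full reciprocity map
\[
\rho_{\bar{U}}\colon C_{\bar{U}}\longrightarrow \pi_1(\bar{U}_{et},q_{\bar{U}})^{ab},
\]
as defined by class field theory for the formation $(\pi_1(\bar{U}_{et},q_{\bar{U}}),\varinjlim C_{\bar{V}})$.

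The main obstacle I expect is the identification carried out in the first paragraph, namely verifying that the morphism of pro-fundamental-groups induced by $\gamma\colon \bar{U}_W\to\bar{U}_{et}$ coincides, under the respective pro-representations, with the canonical Weil-to-Galois surjection. Everything else amounts to unwinding definitions or invoking standard class field theory, but this compatibility requires carefully comparing the universal pro-torsor $\{\textsc{Tors}(\bar{U},L)\}$ of Theorem \ref{thm-big-WEfundgrp} with the pro-system of finite pointed \'etale covers computing $\pi_1(\bar{U}_{et})$; the comparison should be mediated by Theorem \ref{thm-pull-back} at each closed point and by the morphism of sites $\gamma^{*}$ of Proposition \ref{prop-morph-sites-etale-loc-sections} at the generic point.
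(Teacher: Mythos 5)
Your proposal is correct and follows essentially the same route as the paper: identify $\varphi_{\bar{U}}$ at each finite level as the canonical surjection $W(\bar{U},L)\twoheadrightarrow G(L^{un}/K)$ via the description of $\gamma$ by morphisms of sites, then apply $(-)^{DD}$ and observe that the resulting map $C_{\bar{U}}\to G(L^{un}/K)^{ab}$ is the Artin map because the Weil group is constructed from the fundamental class. The paper makes the last step explicit by a commutative diagram comparing $C_K\simeq W_{L/K}^{ab}\to G(L/K)^{ab}$ with $C_{\bar{U}}\simeq W(\bar{U},L)^{ab}\to G(L^{un}/K)^{ab}$, where you instead invoke Tate's construction directly; also note that the verification you flag as the main obstacle is handled in the paper simply by observing the commutativity of the site-level square, not via Theorem \ref{thm-pull-back}.
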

\begin{proof}
 The fundamental group
$$\pi_1(\bar{U}_W,p_{\bar{U}})=\underline{W}(\bar{U},q_{\bar{U}}):=\{W(\bar{U},L)\mbox{ , for $\overline{K}/L/K$ finite Galois}\}$$ can be seen as the automorphism group of the pro-torsor
$$\{\textsc{Tors}(\bar{U},L):=\pi^*EW(\bar{U},L),\mbox{ for $\bar{F}/L/K$ finite Galois}\}$$
in $\bar{X}_W$. Consider the morphism of fundamental groups induced by $\gamma$:
$$\varphi_{\bar{U}}:\pi_1(\bar{U}_W,p_{\bar{U}})\longrightarrow\pi_1(\bar{U}_{et},q_{\bar{U}})$$
It follows from the definition of $\gamma$, in terms of morphism of left exact sites (see Proposition \ref{prop-morph-sites-etale-loc-sections}), that $\varphi_{\bar{U}}$ is the morphism of topological pro-groups
$$\{W(\bar{U},L)\mbox{, $\overline{K}/L/K$ finite Galois}\}\rightarrow \{G(L'/K)\mbox{, $\overline{K}/L'/K$ finite Galois unramified over $\bar{U}$}\}$$
is given by the compatible family of morphisms $W(\bar{U},L)\rightarrow G(L^{un}/K)$, where $L^{un}$ is the maximal sub-extension of $L/K$ which is unramified over $\bar{U}$. Indeed, the previous statement follows from the fact that the following square is commutative, where $K_{\bar{U}}/K$ is the maximal sub-extension of $\overline{K}/K$ unramified over $\bar{U}$:
\[ \xymatrix{
\bar{U}_{W}\ar[r]^{\gamma}\ar[d]&\bar{U}_{et}\ar[d]\\
B_{\underline{W}(\bar{U},q_{\bar{U}})}\ar[r]&B^{sm}_{G(K_{\bar{U}}/K)}
}\]
The commutativity of this square in turn follows from the description of these morphisms in terms of morphisms of sites, which is given in Proposition \ref{prop-morph-sites-etale-loc-sections} and (\ref{morphism-sites-from-UL-to-BW(UL)}).

Hence the morphism $\varphi_{\bar{U}}^{DD}\circ r_{\bar{U}}$ is given by the family of compatible morphisms
$$C_{\bar{U}}\simeq W(\bar{U},L)^{ab}\rightarrow G(L^{un}/K)^{ab}$$
indexed over the finite Galois sub-extensions $\overline{K}/L/K$. Let us fix such a sub-extension $L/K$. We consider the usual relative Weil group $W_{L/K}$, which is given with maps
$W_{L/K}\rightarrow G_{L/K}$ and $C_K\simeq W_{L/K}^{ab}$, where $C_K$ is the id\`ele class group of $K$. The corollary now follows from the commutative diagram
\[ \xymatrix{
C_K\ar[r]^{\simeq}\ar[d]&W_{L/K}^{ab}\ar[d]\ar[r]&G(L/K)^{ab}\ar[d]\\
C_{\bar{U}}\ar[r]^{\simeq\,\,\,\,}&W(\bar{U},L)^{ab}\ar[r]&G(L^{un}/K)^{ab} }\]
since the first row is the reciprocity map of class field theory.
\end{proof}
\begin{rem}\label{remark-WK-WU}
Let $K_{\bar{U}}/K$ be the maximal sub-extension of $\overline{K}/K$ unramified over $\bar{U}$. The map $$\underleftarrow{lim}\,\varphi_{\bar{U}}:\underleftarrow{lim}\,\pi_1(\bar{U}_W,p_{\bar{U}})=W(\bar{U},q_{\bar{U}})\longrightarrow \underleftarrow{lim}\,\pi_1(\bar{U}_{et},q_{\bar{U}})=G(K_{\bar{U}}/K)$$
sits in the following commutative square
\[ \xymatrix{
W_{K}\ar[d]\ar[r]&G_K\ar[d]\\
W(\bar{U},q_{\bar{U}})\ar[r]&G(K_{\bar{U}}/K) }\]
\end{rem}
\begin{thm}\label{thm-existence-proof}
The Weil-\'etale topos $\bar{X}_W$ satisfies Axioms $(1)-(9)$ of \cite{Fund-group-I} Section 5.2.
\end{thm}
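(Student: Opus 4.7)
The plan is to verify each of the nine axioms in turn by quoting the corresponding result already established in the present paper, so the proof is mainly an exercise in matching our constructions against the list in \cite{Fund-group-I}. I would organise the verification in three blocks corresponding to the nature of each axiom: the topos-theoretic axioms (existence of the morphism to $\bar{X}_{et}$ and the base $\mathcal{T}$, connectedness and local connectedness), the axioms about local structure (behavior at closed points), and the cohomological/class-field-theoretic axioms (fundamental group, fundamental class, and compatibility with the reciprocity map).

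First I would dispatch the structural axioms. The existence of $\bar{X}_W$ as a Grothendieck topos defined over $\mathcal{T}$ is Definition \ref{defn-wettoposX} together with the structure map $t:\bar{X}_W\to\mathcal{T}$; that this map is connected and locally connected is Proposition \ref{prop-nice}, extended to any connected $\bar{X}$-scheme $\bar{U}$ by Theorem \ref{thm-big-WEfundgrp}(i). The canonical morphism $\gamma:\bar{X}_W\to\bar{X}_{et}$ to the Artin--Verdier \'etale topos is Corollary \ref{cor-morphism-gamma-WE-et}, and the compatible $\mathcal{T}$-point $p_{\bar{X}}$ is Proposition \ref{prop-point-XL-and-j}. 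For the local structure axioms I would invoke Theorem \ref{thm-pull-back}: at each closed point $v\in\bar{X}$ the square
\[\xymatrix{B_{W_{k(v)}}\ar[d]_{i_v}\ar[r]^{\alpha_v}&B^{sm}_{G_{k(v)}}\ar[d]^{u_v}\\ \bar{X}_W\ar[r]^{\gamma}&\bar{X}_{et}}\]
is 2-cartesian, and the corollary following it yields that $i_v$ is a closed embedding with the expected fiber $B_{W_{k(v)}}$.

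For the cohomological block I would appeal, first, to Theorem \ref{thm-big-WEfundgrp}(iii), which identifies $\pi_1(\bar{U}_W,p_{\bar{U}})$ with the strict locally compact topological pro-group $\underline{W}(\bar{U},q_{\bar{U}})$; second, to Corollary \ref{cor-pi1-WE-ab=CU}, which gives the canonical topological isomorphism $r_{\bar{U}}\colon C_{\bar{U}}\simeq\pi_1(\bar{U}_W,p_{\bar{U}})^{DD}$; and third, to Corollaries \ref{corH^0} and \ref{corH^1representable}, which produce the required computation of $H^0$ and $H^1$ with coefficients in any locally compact abelian group. The existence and naturality of the fundamental class $\theta_{\bar{U}}\in H^1(\bar{U}_W,\widetilde{\mathbb{R}})$, obtained as the continuous logarithm $C_{\bar{U}}\to\mathbb{R}$, together with the compatibility of $r_{\bar{U}}$ with the Artin reciprocity map established in Corollary \ref{cor-WEtopo-CFT}, covers the class-field-theoretic axioms. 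Finally, Proposition \ref{prop-R-cohomology} (vanishing of $H^n(\bar{X}_W,\widetilde{\mathbb{R}})$ for $n\ge 2$) provides whichever degree-bound axiom appears in the list.

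The main subtlety, and probably the step to treat most carefully, is the verification of the axiom(s) tying together $\gamma$, the $\mathcal{T}$-structure, and the reciprocity map at closed points. Concretely, one must check that the identifications $i_v^*\gamma^*\simeq\alpha_v^*u_v^*$ and the fundamental class $\theta_{\bar{U}}$ are mutually compatible with the local reciprocity maps $K_v^{\times}\to W_{k(v)}^{ab}$ for every $v$, and that $\underline{W}(\bar{U},q_{\bar{U}})$ fits in the expected square of Remark \ref{remark-WK-WU}. This will follow from tracing through the construction of $\gamma^*$ given in Proposition \ref{prop-morph-sites-etale-loc-sections} and the definition of $i_v$, but it is the only point where the verification is not an immediate quotation of an already-proved statement; everything else is bookkeeping against the established results.
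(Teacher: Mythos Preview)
Your proposal is essentially correct and follows the same approach as the paper: the proof is a bookkeeping exercise that matches each axiom to a result already established, and you have identified the right ingredients (Corollary~\ref{cor-morphism-gamma-WE-et}, Theorem~\ref{thm-big-WEfundgrp}, Corollary~\ref{cor-pi1-WE-ab=CU}, Corollary~\ref{cor-WEtopo-CFT}, Theorem~\ref{thm-pull-back}, Proposition~\ref{prop-R-cohomology}). Two minor remarks: the existence of the fundamental class $\theta_{\bar U}$ and the computations of $H^0$, $H^1$ in Corollaries~\ref{corH^0}--\ref{corH^1representable} are consequences of the axioms rather than axioms themselves, so you need not cite them here; and the functoriality axioms (covariant and contravariant functoriality of $r_{\bar U}$, and compatibility of the Galois action on $\pi_1(\bar V_W)^{ab}$ with that on $C_{\bar V}$) are disposed of in the paper simply by invoking the standard functoriality of the global Weil group, via Remark~\ref{remark-WK-WU}, rather than by any new argument.
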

\begin{proof}
Recall from  \cite{Fund-group-I} Section 5.2 the following expected properties of the conjectural Lichtenbaum topos.
\begin{enumerate}
\item \emph{There is a morphism $\gamma:\bar{X}_{W}\rightarrow \bar{X}_{et}$.}

\item \emph{The topos $\bar{X}_{W}$ is defined over $\mathcal{T}$. The structure map $\bar{X}_{W}\rightarrow\mathcal{T}$
is connected locally connected and $\bar{X}_{W}$ has a $\mathcal{T}$-point $p$. For any connected étale
$\bar{X}$-scheme $\bar{U}$, the object $\gamma^*\bar{U}$ of $\bar{X}_{W}$ is connected over $\mathcal{T}$.}

\item \emph{There is a canonical isomorphism $r_{\bar{U}}:C_{\bar{U}}\simeq\pi_1(\bar{U}_{W})^{ab}$
such that the composition
$$C_{\bar{U}}\simeq\pi_1(\bar{U}_{W})^{ab}\rightarrow\pi_1(\bar{U}_{et})^{ab}$$
is the reciprocity law of class field theory, where the second morphism is induced by $\gamma$.}

\item \emph{The isomorphism $r_{\bar{U}}$ is covariantly
functorial for any map $\bar{V}\rightarrow\bar{U}$ of connected \'etale $\bar{X}$-schemes.}

\item \emph{For any Galois étale cover $\bar{V}\rightarrow\bar{U}$ of étale $\bar{X}$-schemes, the
conjugation action on $\pi_1(\bar{V}_{W})^{ab}$ corresponds to the Galois action on $C_{\bar{V}}$}.

\item \emph{The isomorphism $r_{\bar{U}}$ is contravariantly functorial for
an étale cover.}

\item \emph{For any closed point $v$ of $\bar{X}$, one has a pull-back of
topoi:
\[ \xymatrix{
B_{W_{k(v)}}\ar[d]_{i_v}\ar[r]^{\alpha_v}&B^{sm}_{G_{k(v)}}\ar[d]_{u_v}\\
\bar{X}_{W}\ar[r]^{\gamma}&\bar{X}_{et} }\]
}

\item \emph{For any closed point $w$ of a connected étale $\bar{X}$-scheme $\bar{U}$, the composition
$$B_{W_{k(w)}}\longrightarrow \bar{U}_{W}\longrightarrow B_{C_{\bar{U}}}$$
is the morphism of classifying topoi induced by the canonical
morphism of topological groups ${W_{k(w)}}\rightarrow{C_{\bar{U}}}$.}

\item \emph{For any \'etale $\bar{X}$-scheme $\bar{U}$, one has $H^n(\bar{U}_{W},\widetilde{\mathbb{R}})=0$ for any $n\geq2$.}\\
\end{enumerate}

Indeed, Axiom (1) is given by Corollary \ref{cor-morphism-gamma-WE-et} and Axiom (2) is given by Theorem \ref{thm-big-WEfundgrp} (i) and (ii). Axiom (3) is given by Corollary \ref{cor-pi1-WE-ab=CU} and Corollary \ref{cor-WEtopo-CFT}. Axioms (4), (5), (6) follow from the usual functorial properties of the Weil group (see Remark \ref{remark-WK-WU}). Axiom (7) is given by Theorem \ref{thm-pull-back}. Axiom (8) follows immediately from the description of the morphisms
$$\bar{U}_W\rightarrow B_{W(\bar{U},L)}\rightarrow B_{C_{\bar{U}}}\mbox{ and }i_v:B_{W_{k(v)}}\rightarrow\bar{U}_W$$
in terms of morphisms of left exact sites (see (\ref{morphism-sites-from-UL-to-BW(UL)}) and Theorem \ref{thm-pull-back} respectively).
Finally, Axiom (9) is given by Proposition \ref{prop-R-cohomology}.
\end{proof}

We denote by $\varphi:X_{W}\rightarrow \bar{X}_{W}$ the natural open embedding, and by $H_c^n(X_{W},\mathcal{A}):=H^n(\bar{X}_{W},\varphi_!\mathcal{A})$ the cohomology with compact support with coefficients in the abelian sheaf $\mathcal{A}$.
\begin{cor}\label{thm-Lichtenbaum-conj}\textbf{\emph{(Lichtenbaum's formalism)}}
Assume that $F$ is totally imaginary. We denote by $\tau_{\leq2}R\gamma_*$ the truncated functor of the total derived functor $R\gamma_*$. Then one has:
\begin{itemize}
\item $\mathbb{H}^n(\bar{X}_{et},\tau_{\leq2}R\gamma_*(\varphi_!\mathbb{Z}))$ is finitely generated and zero for $n\geq4$.
\item The canonical map $$\mathbb{H}^n(\bar{X}_{et},\tau_{\leq2}R\gamma_*(\varphi_!\mathbb{Z}))\otimes\mathbb{R}\longrightarrow H_c^n(X_{W},\widetilde{\mathbb{R}})$$
is an isomorphism for any $n\geq0$.
\item There exists a fundamental class $\theta\in{H}^1(\bar{X}_{W},\widetilde{\mathbb{R}})$. The complex of finite dimensional vector spaces
    $$...\rightarrow{H}_c^{n-1}(X_{W},\widetilde{\mathbb{R}})\rightarrow {H}_c^n(X_{W},\widetilde{\mathbb{R}})\rightarrow {H}_c^{n+1}(X_{W},\widetilde{\mathbb{R}})\rightarrow...$$
    defined by cup product with $\theta$, is acyclic.
\item The vanishing order of the Dedekind zeta function $\zeta_F(s)$ at $s=0$ is given by
 $$\textsl{ord}_{s=0}\zeta_F(s)=\sum_{n\geq0}(-1)^n\,n\,\textsl{dim}_{\mathbb{R}}\,H_c^n(X_{W},\widetilde{\mathbb{R}})$$
\item The leading term coefficient $\zeta^*_F(s)$ at $s=0$ is given by the Lichtenbaum Euler characteristic:
$$\zeta^*_F(s)=\pm\prod_{n\geq0}|\mathbb{H}^n(\bar{X}_{et},\tau_{\leq2}R\gamma_*(\varphi_!\mathbb{Z}))_{\textsl{tors}}|^{(-1)^n}/
\textsl{det}(H_c^n(X_{L},\widetilde{\mathbb{R}}),\theta,B^*)$$
where $B^n$ is a basis of $\mathbb{H}^n(\bar{X}_{et},\tau_{\leq2}R\gamma_*(\varphi_!\mathbb{Z}))/\textsl{tors}$.
\end{itemize}
\end{cor}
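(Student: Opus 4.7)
The plan is to reduce everything to Theorem \ref{thm-existence-proof} and to invoke the general formalism developed in \cite{Fund-group-I}. Indeed, the author has already shown there that any topos satisfying Axioms $(1)$-$(9)$ produces complexes of \'etale sheaves whose hypercohomology realizes Lichtenbaum's conjectural cohomology groups and whose Euler characteristic yields, up to sign, the leading term of $\zeta_F(s)$ at $s=0$. Since Theorem \ref{thm-existence-proof} verifies those nine axioms for $\bar{X}_W$, the present corollary should be a direct specialisation of that general machinery.

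More concretely, I would proceed as follows. First, identify the complex $\tau_{\leq 2}R\gamma_*(\varphi_!\mathbb{Z})$ with the distinguished complex of \'etale sheaves produced by the formalism of \cite{Fund-group-I} applied to the topos $\bar{X}_W$. The pull-back square in Axiom $(7)$ (which is Theorem \ref{thm-pull-back}) together with Axiom $(8)$ gives an explicit description of the stalks of $R^i\gamma_*(\varphi_!\mathbb{Z})$ at each closed point in terms of cohomology of $B_{W_{k(v)}}$, and Axiom $(3)$ combined with Corollary \ref{cor-pi1-WE-ab=CU} controls the generic stalks via the class formation. A straightforward stalk-by-stalk analysis then yields that $\mathbb{H}^n(\bar{X}_{et},\tau_{\leq 2}R\gamma_*(\varphi_!\mathbb{Z}))$ is finitely generated and vanishes for $n\geq 4$, using here the hypothesis that $F$ is totally imaginary (so that the archimedean contribution is controlled).

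For the second and third bullets, pass to $\mathbb{R}$-coefficients. By Proposition \ref{prop-R-cohomology}, $H^n(\bar{X}_W,\widetilde{\mathbb{R}})$ vanishes for $n\geq 2$, so $\tau_{\leq 2}R\gamma_*\widetilde{\mathbb{R}}=R\gamma_*\widetilde{\mathbb{R}}$, and consequently the natural map
\[
\mathbb{H}^n(\bar{X}_{et},\tau_{\leq 2}R\gamma_*(\varphi_!\mathbb{Z}))\otimes\mathbb{R}\longrightarrow H_c^n(X_W,\widetilde{\mathbb{R}})
\]
is induced by the Leray spectral sequence for $\gamma$, and is an isomorphism. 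The fundamental class $\theta\in H^1(\bar{X}_W,\widetilde{\mathbb{R}})\simeq\mathrm{Hom}_{cont}(\mathrm{Pic}(\bar{X}),\mathbb{R})$ is taken to be the Arakelov degree map, as already constructed before the statement of the corollary. Acyclicity of the complex given by cup product with $\theta$ then reduces, via the product decompositions $W(\bar{U},L)\simeq W^1(\bar{U},L)\times\mathbb{R}$ used in Proposition \ref{prop-prop8}, to an elementary computation: one separates off the $\mathbb{R}$-factor on which $\theta$ is non-zero, and the resulting compact contribution has vanishing higher cohomology by \cite{MatFlach}.

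Finally, the last two bullets are formal consequences. Knowing $\dim_{\mathbb{R}}H_c^n(X_W,\widetilde{\mathbb{R}})$ in each degree (from Corollary \ref{corH^1representable} and the duality above) and comparing with the classical formula of Dirichlet gives the order of vanishing $\mathrm{ord}_{s=0}\zeta_F(s)$ as the prescribed alternating sum; the leading term then emerges by combining the acyclicity of the $\theta$-complex with the explicit computation of $\mathbb{H}^n(\bar{X}_{et},\tau_{\leq 2}R\gamma_*(\varphi_!\mathbb{Z}))$ in low degrees, exactly as in Lichtenbaum's original computation recovered here. The main obstacle I expect is the first bullet: proving finite generation and vanishing for $n\geq 4$ requires a careful stalk-wise analysis of $R^i\gamma_*(\varphi_!\mathbb{Z})$ for $i\leq 2$, handling archimedean places (where $W_{k(v)}\simeq\mathbb{R}$ contributes $\mathbb{Z}$-cohomology in arbitrarily high degree unless cut off by the truncation) and reconciling these with finite generation over the whole of $\bar{X}$; it is precisely here that the totally imaginary hypothesis is essential.
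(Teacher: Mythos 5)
Your first paragraph exactly reproduces the paper's argument, which is a one-line reduction: the corollary follows from Theorem \ref{thm-existence-proof} by applying Theorem 6.3 of \cite{Fund-group-I}. The remainder of your proposal sketches how that black-boxed theorem might be proved, but since the paper simply cites it, the approach is the same.
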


\begin{proof}
By \cite{Fund-group-I} Theorem 6.3, this follows from Theorem \ref{thm-existence-proof}.
\end{proof}

\end{document}